\documentclass[11pt]{article}
\usepackage[T1]{fontenc}
\usepackage[utf8]{inputenc}
\usepackage[english]{babel}
\usepackage{lmodern}
\usepackage{array}
\usepackage{amsmath}
\usepackage{amsthm}
\usepackage{amssymb}
\usepackage{amsfonts}
\usepackage{fancyhdr}
\usepackage{enumitem}
\usepackage{stmaryrd}
\usepackage{dsfont}
\usepackage{xcolor}
\usepackage{csquotes}
\usepackage{float}
\usepackage{tikz}
\usepackage{tikz-cd}
\usepackage[margin=2cm,nohead]{geometry}
\usepackage{hyperref}
\usepackage[affil-it]{authblk}
\usepackage{orcidlink}

\newcommand{\Q}{\mathbb{Q}}
\newcommand{\R}{\mathbb{R}}

\newcommand{\N}{\mathbb{N}}
\newcommand{\C}{\mathbb{C}}
\newcommand{\h}{\mathcal{H}}
\newcommand{\p}{\mathbb{P}}

\newcommand{\ana}{\mathbf{A}}
\newcommand{\Rat}{\mathrm{Rat}}

\newcommand{\SL}{\mathrm{SL}}
\newcommand{\GL}{\mathrm{GL}}
\newcommand{\res}{\mathrm{res}}
\newcommand{\M}{\mathrm{M}}
\newcommand{\pr}{\mathrm{pr}}

\newcommand{\Spec}{\mathrm{Spec}}

\newcommand{\Int}{\mathrm{Int}}

\newtheorem{thm}{Theorem}[section]
\newtheorem*{theoreme}{Theorem}
\newtheorem{theorem}{Theorem}

\newtheorem{prop}[thm]{Proposition}
\newtheorem{proposition}{Proposition}

\newtheorem{nota}[thm]{Notation}
\newtheorem{lem}[thm]{Lemma}

\newtheorem{defi}[thm]{Definition}

\newtheorem{rem}[thm]{Remark}

\newtheorem{cor}[thm]{Corollary}

\newtheorem{ex}[thm]{Example}

\title{Continuity of the Julia sets in non-archimedean and hybrid settings}
\author{Alexandre Roy \orcidlink{0009-0004-0193-008X}%
  \thanks{Electronic address: \texttt{alexandre.roy@unicaen.fr}\\ 2020 Mathematics Subject Classification : 37P50, 37P40, 14D06, 11S82, 14G22\\ Keywords and phrases : Berkovich spaces, Non-archimedean dynamics, Hybrid spaces, Dynamical moduli space}}
\affil{Université Caen Normandie, CNRS, Normandie Univ, LMNO UMR6139, F-14000 CAEN, FRANCE }
\date{}

\begin{document}

\maketitle
\begin{abstract}
Let $k$ be a complete, non-archimedean. We study the dynamics of families of one-variable rational functions parametrized by Berkovich spaces over $k$ and over $\C$ equipped with the hybrid norm. In both settings, we show an analogue of $\mathrm{Ma\tilde{n}\acute e -Sad-Sullivan}$'s theorem about the continuity of the Julia set (in the Hausdorff topology). The non-archimedean case relies on the maximum modulus principle, which we show for analytic functions over analytic spaces without boundary.
\end{abstract}

\section{Introduction}

Y. Luo in \cite{LuoTreesBarycentricExtension} studied degenerations of sequences of rational maps and constructed a limit rational map defined over a non-archimedean field. And he uses the description of the Julia set of this limit rational map to obtain a result over the complex rational map. 
Namely, he showed the following result:
\begin{theoreme}$\mathrm{(Luo, Theorem 1.5)}$

A hyperbolic component $\mathcal{H}$ admits bounded escape if and only if it has nested Julia sets.
\end{theoreme}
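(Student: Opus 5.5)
Luo's theorem is about a hyperbolic component $\mathcal{H}$ in the moduli space $\mathcal{M}_d$ of degree $d$ rational maps. \emph{Bounded escape} means that every degenerating sequence $[f_n]\in\mathcal{H}$ has a subsequence whose rescaling limit on the Berkovich projective line $\mathbf{P}^1_{\mathbb{L}}$, over the field of Robinson's asymptotic numbers, has Julia set contained in a bounded (finite) tree. Equivalently, the limit map has no Julia points of "infinite depth". \emph{Nested Julia sets} means that for such sequences, the complex Julia sets $J(f_n)$ degenerate in a nested pattern, i.e. the limiting dynamics on the tree have bounded combinatorics.

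I would prove the two directions separately, passing to the non-archimedean limit $\mathbf{f}$ and using hyperbolicity to transfer structure between $f_n$ and $\mathbf{f}$.

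\textbf{Step 1: the limit map.} Given a degenerating sequence $[f_n]\subset \mathcal{H}$, I fix a non-principal ultrafilter and scales $\varepsilon_n\to 0$. I then form the limit map $\mathbf{f}$ over the complete non-archimedean field of asymptotic sequences. Since every critical point of $f_n$ is attracted to an attracting cycle, with multipliers uniformly controlled on compact pieces of $\mathcal{H}$, I show that $\mathbf{f}$ is "hyperbolic" in the Berkovich sense: every critical point lies in the Fatou set. This uses the standard non-archimedean classification of Fatou components (Rivera-Letelier).

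\textbf{Step 2: bounded escape implies nested.} If the Julia set $J(\mathbf{f})$ lies in a finite subtree, I use the Berkovich Julia set structure together with the dichotomy "Julia set contained in a finite tree versus containing type I points". From this I derive that $J(\mathbf{f})$ is a Cantor-like set with bounded branching. I then pull back the annuli in the complement of that tree to $\mathbb{P}^1(\mathbb{C})$ for large $n$ along the ultrafilter. This produces a nested family of annuli separating pieces of $J(f_n)$, which is exactly the nesting statement.

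\textbf{Step 3: nested implies bounded escape.} Conversely, from nested annuli of moduli bounded below I build a finite invariant tree in $\mathbf{P}^1_{\mathbb{L}}$ that contains $J(\mathbf{f})$. The moduli estimates transfer to Berkovich path lengths via the rescaling.

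\textbf{Main obstacle.} I expect the hard step to be the transfer of combinatorial information between the complex maps $f_n$ and $\mathbf{f}$: showing that Berkovich annuli correspond to genuine complex annuli of large modulus, uniformly in $n$.

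This is precisely where continuity of Julia sets, in the style of Mañé–Sad–Sullivan, along a family parametrized by a hybrid space is needed. In that setting $J(f_n)$ converges in the Hausdorff sense to $J(\mathbf{f})$, and the present paper aims to supply this.
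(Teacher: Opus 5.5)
\paragraph{Where the proposal breaks.} Note first that the paper does not prove this theorem. It quotes it from Luo as motivation, and what it says about Luo's situation contradicts the step you treat as the key input. Hausdorff convergence $J(f_n)\to J(\mathbf f)$ via a Ma\~n\'e--Sad--Sullivan type theorem over a hybrid base is false in exactly the bounded-escape regime.

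Suppose the multipliers along $[f_n]$ are bounded, say by $M$ in period $l$. Then for repelling cycles $z_n$ of period $l$ and any rescaling exponents $\varepsilon_n\to 0$ you get
\[
1<|(f_n^l)'(z_n)|^{\varepsilon_n}\le M^{\varepsilon_n}\to 1 .
\]
Consequently a cluster point of $(f_n,z_n)^{\varepsilon_n}$ in $(\mathrm{Rat}_d\times\mathbb{P}^1)^{hyb}$ is a type I periodic point of the limit whose multiplier $\lim|(f_n^l)'(z_n)|^{\varepsilon_n}$ has absolute value $1$. That point is indifferent, hence lies in the Fatou set of the limit, so points of $J(f_n)$ accumulate on the limit's Fatou set.

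This is exactly why the introduction says that in Luo's case $J(\mathbf f)\subset H_k$ and the Julia sets do \emph{not} degenerate continuously. It is also why the hypothesis of Theorem~\ref{Julia set moves continuously hybrid setting} (indifferent points hybridly persistently non-repelling) fails here. Finally, Theorem~\ref{InGoodHyperbolicComponentsJuliaSetContinuousIFFMulitpliersBlowUp} ties continuous degeneration to all multipliers blowing up, the opposite of bounded escape. So the paper does not supply your key input, and in this regime that input is simply false.

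\paragraph{Definitions and the missing construction.} Both notions are also misdefined, and this hides the real content of the theorem. As the introduction recalls, bounded escape is existential: \emph{some} degenerating sequence in $\mathcal H$ has all its multipliers bounded. It is not a condition on the rescaling limits of every degenerating sequence. Having nested Julia sets is a topological property of the Julia set of a map in $\mathcal H$ (roughly, the existence of nested Julia components). It is not a description of how $J(f_n)$ degenerates.

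For ``nested $\Rightarrow$ bounded escape'' you are therefore given neither a sequence nor a limit map $\mathbf f$. You must \emph{construct} a degeneration inside $\mathcal H$ and prove that its multipliers stay bounded. One candidate is a quasiconformal deformation supported on the Fatou set that sends the moduli of the annuli separating nested Julia components to infinity. Your Step~3 presupposes $\mathbf f$ and only builds a tree, so this direction is not addressed.

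For ``bounded escape $\Rightarrow$ nested'', the usable non-archimedean information is precisely that the limit has no repelling type I cycles and $J(\mathbf f)\subset H_k$. A proof has to transfer the tree structure of $J(\mathbf f)$ inside $H_k$ back to $\mathbb{P}^1(\C)$, in the form of essential annuli of large modulus in the Fatou sets of the $f_n$ that separate Julia components. That is a modulus estimate along the rescaling, not a Hausdorff-continuity argument. Your Step~2 (``finite subtree'', ``Cantor-like set'') does not set it up.
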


Admitting bounded escape means that there exists $[f_n] \in \mathcal{H}$ which degenerates and such that all of its multipliers remain bounded. 

So he manages to control the Julia set of complex rational maps using the Julia set of the constructed non-archimedean limit. In his case, the Julia set of the limit rational map is entirely contained in the hyperbolic space $H_k \subset \p^{1,an}_k$ of the Berkovich projective line. In this situation the Julia set of the sequence $f_n$ will not degenerate continuously towards the Julia set of the limit rational map. Indeed to hope for such continuity, one needs that there are type I points in the Julia set of the limit rational map. 

The aim of this article is to obtain some conditions ensuring that the Julia set of the complex rational maps degnerates continuously towards the Julia set of the non-archimedean limit map. In order to have a formal definition of this continuity, one needs to work over some compactification of the space, $\Rat_d$, of rational maps of degree $d$. 

Studying degeneracies of archimedean objects using non-archimedean limits as did Luo comes from numerous work over time. For example, J. Kiwi \cite{KiwiPolynomialDynamics} studied the degeneracy of merophormic families of rational maps using the field of Laurent series. Similar ideas were also used by C. Favre \cite{FavreDegenerationInHybridSpace}, \cite{FavreBlowupMultipliers} to work over meromorphic families of rational maps. At the same time, L. DeMarco and X. Faber \cite{DeMarcoFaber} used this kind of techniques to study a degenerating limits of equilibrium measures.

At the same time, L. DeMarco and T. McMullen \cite{DeMarcoMcMullenTreesDynamicsPolynomials} studied a compactification of the space of polynomial maps and they get a boundary which is a $\R$-tree and seems to be connected with the hybrid spaces as introduced by V. Berkovich.

Studying this hybrid spaces have given new results in the study of these degenerating phenomena and we can, for example, quote the work of J. Poineau \cite{PoineauDynamique} who showed that over this hybrid spaces (and over a more general class of spaces) the equilibrium measure is continuous.

Simultaneously, J. Poineau defined in \cite{PoineauCompactificationHybride} (see subsection \ref{constructionHybridCompactification}) the valuative compactification of any variety $X$ over a non-trivially valued field $k$. In this compactification, denoted $X^\urcorner$, the analytification, $X^{an}$, of $X$ over $k$ (which coincides with $X(\C)$ if $k= \C$) is embedded as an open subset and the boundary is a quotient of an open subset of the analytification of $X$ over $k$ trivially valued. So, in the case where $k= \C$, the boundary is an object of non-archimedean nature and so it gives the same idea as the limit rational map constructed by Luo. The fact that the Berkovich spaces can give a different description of Luo's construction was also proved by C. Favre and C. Gong in \cite{FavreGong}. Their description gave a numerous number of results, see for example \cite{GongMultiplierScale}, \cite{FavreGongRescalingLimits}, \cite{LooperYap}.  

Here, we will work over $\Rat_d^\urcorner$ which is the valuative compactification of $\Rat_d$ defined by Poineau. Since the boundary of this space is locally a Berkovich space over $\C$ trivially valued, the first question we would like to investigate is the continuity of the Julia set in the non-archimedean case. 

\bigskip
\begin{center}
\textbf{The non-archimedean case}
\end{center}

In a famous paper \cite{MSS}, R. Ma$\mathrm{\tilde{n}}$é, P. Sad and D. Sullivan studied the stability of the Julia set of a family of rational maps parametrized by a complex analytic spaces. Their result can be stated as follows.

\begin{theoreme} $\mathrm{(Ma\tilde{n}\acute e -Sad-Sullivan)}$

Let $X$ be a complex connected manifold and $f : X\times \p^1(\C) \rightarrow X\times \p^1(\C)$ be a holomorphic family of rational maps parametrized by $X$. For $\lambda \in X$, let us denote $f_\lambda : \p^1(\C) \rightarrow \p^1(\C)$ the associated rational map.

Let $x\in X$. Then the following conditions are equivalent :
\begin{enumerate}
\item The number of attracting cycles of $f_\lambda$ is locally constant at $x$.
\item The maximum period of an attracting cycle of $f_\lambda$ is locally bounded at $x$. 
\item The Julia set moves holomorphically at $x$. 
\item For all $y$ sufficiently close to $x$, every periodic point of $f_y$ is attracting, repelling or persistently indifferent.
\item The Julia set depends continuously on $\lambda$ (in the analytic topology) on a neighborhood of $x$.
\end{enumerate}
\end{theoreme}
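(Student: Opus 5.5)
We would prove the equivalences through the cycle $(1)\Rightarrow(2)\Rightarrow(4)\Rightarrow(3)\Rightarrow(5)\Rightarrow(1)$. Throughout we work on a small connected neighborhood $U$ of $x$ in $X$. The two basic tools are the holomorphic motions of periodic points and the $\lambda$-lemma.

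\textbf{Step $(1)\Rightarrow(2)$.} Suppose the number $N$ of attracting cycles is constant on $U$.
\begin{itemize}
\item By the implicit function theorem, an attracting cycle of $f_x$ of period $p$ persists as an attracting cycle of period $p$ on a neighborhood of $x$.
\item Shrinking $U$, all $N$ cycles of $f_x$ persist on $U$.
\item Since the count stays equal to $N$, no new attracting cycle appears on $U$.
\item Hence the periods on $U$ are bounded by the maximal period at $x$.
\end{itemize}

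\textbf{Step $(2)\Rightarrow(4)$.} This is the heart of the argument. Suppose attracting periods are bounded by $P$ on $U$, and suppose some $y\in U$ has a non-persistently indifferent cycle of period $n$.
\begin{itemize}
\item Its multiplier $\lambda\mapsto\rho(\lambda)$, defined on a branch near $y$, is a nonconstant holomorphic function with $|\rho(y)|=1$.
\item By the open mapping theorem, $|\rho|<1$ somewhere near $y$, so an attracting cycle of period $n$ appears.
\item Periods $n\le P$ must therefore be handled more carefully: a bound on the period alone does not directly forbid this.
\item The standard trick is to perturb within the locus where $\rho$ is a suitable root of unity. There the cycle is parabolic, and a small further perturbation creates attracting cycles of arbitrarily high period, via Douady--Hubbard/Shishikura-type bifurcation arguments. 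This would contradict boundedness.
\item Equivalently, one shows that the set of parameters with bounded attracting period is open-dense-stable.
\end{itemize}

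\textbf{Step $(4)\Rightarrow(3)$.} Assume (4) holds on $U$.
\begin{itemize}
\item Repelling periodic points are dense in $J(f_\lambda)$.
\item Under (4), repelling cycles never collide with other periodic points.
\item Multipliers of modulus exactly 1 are constant on their branches, so repelling points stay repelling.
\item Hence each repelling periodic point extends to a holomorphic function $U\to\p^1$, at least on a simply connected $U$, and these functions have pairwise disjoint graphs.
\item The $\lambda$-lemma of MSS then extends this motion of a dense set to a holomorphic motion of its closure $J(f_\lambda)$, conjugating the dynamics.
\end{itemize}

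\textbf{Step $(3)\Rightarrow(5)$.} A holomorphic motion is jointly continuous, by the quasiconformality contained in the $\lambda$-lemma. Hence $\lambda\mapsto J(f_\lambda)$ is continuous in the Hausdorff topology.

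\textbf{Step $(5)\Rightarrow(1)$.} Assume $J(f_\lambda)$ varies continuously on $U$.
\begin{itemize}
\item An attracting cycle at $\lambda_0$ has an immediate basin containing a critical point. This basin persists for $\lambda$ close to $\lambda_0$, so the attracting count is lower semicontinuous.
\item For upper semicontinuity, suppose new attracting cycles keep appearing near $x$. They must emerge from indifferent cycles of $f_x$.
\item A parabolic or Siegel/Cremer point passing to attracting forces an implosion of the Julia set near that point. A parabolic point lies in $J$ but becomes a Fatou point, and the basin then opens out.
\item This contradicts Hausdorff continuity, which one checks by a lower-semicontinuity argument on $J$.
\end{itemize}

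\textbf{Main obstacle.} We expect the hardest step to be $(2)\Rightarrow(4)$. It requires producing attracting cycles of unbounded period from a non-persistent indifferent cycle. We would first try to use density of Misiurewicz-type or parabolic bifurcation parameters together with a count of critical points: there are at most $2d-2$ attracting cycles, and we would bound their periods.
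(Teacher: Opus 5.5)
Your steps $(1)\Rightarrow(2)$, $(4)\Rightarrow(3)$ and $(3)\Rightarrow(5)$ are fine in outline. In $(4)\Rightarrow(3)$ you also need that every repelling point of $f_\lambda$ continues back to $x$, so that the extended motion maps $J(f_x)$ \emph{onto} $J(f_\lambda)$. The paper itself only quotes this theorem, so the comparison is with the standard McMullen argument that its Theorem \ref{JuliaSetMovesContinuouslyNonArchi} imitates.

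The genuine gap is in $(5)\Rightarrow(1)$. Your claim that new attracting cycles near $x$ ``must emerge from indifferent cycles of $f_x$'' is false. For $z^2+c$ at a Misiurewicz parameter $c_0$, every cycle of $f_{c_0}$ is repelling. Yet centers of hyperbolic components of unbounded period accumulate at $c_0$, so the attracting count is not locally constant there. Moreover $K_{c_0}=J_{c_0}$ and $c\mapsto K_c$ is upper semicontinuous (Douady), so $J$ is even continuous \emph{at} $c_0$. Hence no contradiction can be extracted at $x$ itself; it must be found at a nearby parameter. Your mechanism yields none anyway. A parabolic or Cremer point turning attracting only produces Fatou points converging to a point of $J$, which is compatible with Hausdorff continuity. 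Lower semicontinuity of $J$ is automatic because repelling cycles persist (compare Lemma \ref{JuliaSetSemiContinuous}), so appealing to it cannot give a contradiction. What has to fail is upper semicontinuity: Julia points of nearby maps accumulating on a \emph{Fatou} point. Parabolic implosion does produce this, but only through Lavaurs maps and along special perturbations.

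The missing idea is to go through $(4)$. Let $(t_1,z_1)$ be a non-persistently indifferent point of period $n$. Moving along its branch if necessary, you may assume $\rho(t_1)\neq 1$, so the branch is a graph $t\mapsto z(t)$ and $\rho$ is a nonconstant, hence open, holomorphic function.
\begin{itemize}
\item At some $t_2$ arbitrarily close to $t_1$ you get $\rho(t_2)=e^{2\pi i\theta}$ with $\theta$ Diophantine. By Siegel's theorem $z(t_2)$ is a Siegel center, hence lies in $F(f_{t_2})$.
\item For $t$ arbitrarily close to $t_2$ you have $|\rho(t)|>1$, so $z(t)\in J(f_t)$, and continuity fails at $t_2$.
\end{itemize}
This is the complex counterpart of the paper's non-archimedean argument. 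There, an indifferent Fatou point plus continuity forces $|\rho|\le 1$ on the nearby periodic-point variety, and the maximum modulus principle gives $|\rho|\equiv 1$. Over $\C$ you must first pass to a Siegel parameter, because parabolic and Cremer points lie in $J$.

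Then close the cycle with $(4)\Rightarrow(1)$. Under $(4)$, on each connected component of the period-$n$ variety over a small simply connected $U$, the sets $\{|\rho|<1\}$, $\{|\rho|=1\}$ and $\{|\rho|>1\}$ are all open, so the type is constant on the component. An attracting component is an unramified proper cover of $U$, hence a graph. So the attracting cycles over $U$ are exactly the continuations of those of $f_x$, and their number is constant.

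Finally, your $(2)\Rightarrow(4)$ has the right idea but leaves its key step unproved. Once $\rho(t_1)$ is a primitive $q$-th root of unity, use conservation of the holomorphic index $\sum 1/(1-\mu)$ over the fixed points of $f_t^{nq}$ near $z(t_1)$. At a parameter where $\rho^q-1$ is small and positive, this gives $q\sum_j 1/(1-\mu_j)=1/(\rho^q-1)+O(1)$ for the satellite cycles. So one of them has $\mathrm{Re}\,1/(1-\mu_j)>1/2$, i.e.\ it is an attracting cycle of exact period $nq$. Your ``open-dense-stable'' remark and the $2d-2$ bound play no role.
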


A goal of this article is to get a counterpart of this theorem in a different setting. First, one wants to replace $\C$ by a non-archimedean complete valued field, $k$. T. Silverman studied this question in \cite{TSilvermanNonArchiLambdaLemma}. He wanted to show an equivalence which looked like $3 \iff 4$ by replacing a holomorphic motion by an analytic motion. He manages to get an equivalence in the case where $k$ is algebraically closed and $X$ is of dimension one. 

Unfortunately, condtion $3$ is not equivalent to the others. It is because in the analogous of the point $4$ that he gets, there is an extra condition on the repelling periodic point. In Silverman's work, the indifferent point must not be unstable which as we will see in lemma \ref{Unstably and not persistently indifferent is the same} is the exact counterpart of not persistently indifferent but the repelling points must also have ,,multiplicity one''. This means that if $f$ is a rational map over a non-archimedean field $k$ then, all its repelling points must be in $k$ and not in its algebraic closure. The problem is that if we take $X$ a $k$-analytic space and $x\in X\backslash X(k)$, then in general the residue field at $x$ will be complete but not algebraically closed. Hence the repelling point of the rational map associated to $x$ will likely have repelling points not in its residue field.

Silverman showed in his example 4.6 that the Julia set of the family $f_\lambda(z) = z^2 + \lambda$ for $\lambda\in \ana^{1,an}_k \backslash D^{an}(0,1)$ does not move holomorphically but the Julia set moves continuously (see example \ref{z^2+ T has only repelling points}). 

The first goal of this article is to focus on point $5$ in this equivalence and get a similar theorem over any complete valued field (even trivially valued) and with $X$ of any dimension. 
\begin{theorem}(infra theorem \ref{JuliaSetMovesContinuouslyNonArchi})

Let $k$ be a non-archimedean complete valued field and $X$ a $k$-analytic space with no boundary. Let $f$ be an analytic family of rational maps and $x \in X$. Let us assume that $f_x$ has type I repelling points. Then the following are equivalent :
\begin{itemize}
    \item The Julia set depends continously on $\lambda$ on a neighborhood of $x$, 
    \item For all $y$ sufficiently close to $x$, every point of $f_y$ is attracting, repelling or persistently indifferent. 
\end{itemize}
\end{theorem}

The only difference with the theorem of Ma$\mathrm{\tilde{n}}$é-Sad-Sullivan is that we require the rational map to have type I repelling points which is not a condition that exists in the complex setting since it is always true. And we will see in the section \ref{Section continuity in the hybrid settings} that this condition is necessary to have a continuity of some degenerating Julia sets.

Furthermore, if the residual characteristic of $k$ is $0$ or bigger than the degree of the family of rational maps, we can also show the points $1,2$ and $5$ are equivalent.

\begin{proposition}(infra propositon \ref{JuliaSetsMovesContinuouslyIfAttractingBounded})

Let $k$ be a non-archimedean complete valued field and $X$ a $k$-analytic space with no boundary. Let $f$ be an analytic family of rational maps of degree $d \geq 2$. Let us assume that $\mathrm{char}(\tilde{k}) = 0$ or $\mathrm{char}(\tilde{k}) = p >d$. Let $x\in X$ such that $f_x$ has type I repelling points. 

Then the following are equivalent :
\begin{enumerate}
    \item The Julia set depends continously on $\lambda$ on a neighborhood of $x$, 
    \item The number of attracting cycles is constant on a neighborhood of $x$,
    \item The maximum period of an attracting cycle of $f_\lambda$ is locally bounded at $x$.
\end{enumerate}
\end{proposition}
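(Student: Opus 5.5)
The plan is to reduce everything to the preceding Theorem (infra theorem \ref{JuliaSetMovesContinuouslyNonArchi}): under the type I repelling hypothesis, condition 1 is equivalent to the condition that for all $y$ near $x$ every periodic point of $f_y$ is attracting, repelling or persistently indifferent. It therefore suffices to prove $(2)\Rightarrow(3)$ (trivial: a locally constant number of attracting cycles gives locally finitely many cycles, and the hypothesis that their periods stay bounded is part of what needs checking, see below), $(3)\Rightarrow$ \emph{no non-persistent indifferent points} nearby, and \emph{no non-persistent indifferent points} $\Rightarrow (2)$.

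The step $(3)\Rightarrow$ persistence will be by contradiction, in the MSS style. Suppose there are $y$ arbitrarily close to $x$ and a periodic point of period $n$ for $f_y$ with multiplier $|\lambda_y|=1$ that is not persistently indifferent. By lemma \ref{Unstably and not persistently indifferent is the same}, the multiplier function of this cycle, defined analytically on a neighborhood of $y$ (possibly after a finite étale base change to follow the periodic point), is nonconstant in absolute value. By the maximum modulus principle for spaces without boundary proved in the paper, it cannot attain a local maximum of $|\cdot|$ at $y$. Hence there are parameters $y'$ arbitrarily close to $y$ where this cycle is attracting. The hypothesis on the residual characteristic ($0$ or $p>d$) is used here. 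The key point is that the residue of $f^{n}$ near an indifferent cycle behaves like a rotation of order $m$ prime to $p$, so that a perturbation produces, via the analogue of a parabolic bifurcation, attracting cycles of period $nm$, with $m$ roughly unbounded. Iterating this bifurcation argument produces, in every neighborhood of $x$, attracting cycles of arbitrarily large period, contradicting (3).

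For persistence $\Rightarrow (2)$, note that under persistence the locus where a given cycle is attracting is open, since $|\lambda|<1$ is an open condition. Persistence also shows that this locus is closed near $x$: a cycle that becomes indifferent on the boundary of that locus would be non-persistently indifferent. So each attracting cycle of $f_x$ continues to nearby parameters, and no new one appears. The number of attracting cycles is also finite and bounded, since in residual characteristic $0$ or $p>d$ each attracting cycle attracts a critical point (the non-archimedean Fatou–Shishikura bound, available under this tameness hypothesis), giving at most $2d-2$ cycles. Local constancy then follows, which also yields the bounded periods needed in $(2)\Rightarrow(3)$.

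I expect the main obstacle to be the bifurcation step: showing that a non-persistent indifferent cycle produces attracting cycles of unbounded period nearby. This is where tameness ($p>d$) is essential, because wild ramification can make the multipliers of nearby cycles all repelling. I would first try to work in a suitable coordinate near the indifferent point, where $f^n$ has reduction of the form $z\mapsto \tilde\lambda z+\dots$ with $\tilde\lambda$ a root of unity of order prime to $p$. I would then count fixed points of $f^{nm}$ using the Weierstrass preparation theorem, and apply maximum modulus to their multipliers.
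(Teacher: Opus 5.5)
Your overall architecture matches the paper's: the cycle $1\Rightarrow2\Rightarrow3\Rightarrow1$ is routed through the persistence criterion of Theorem \ref{JuliaSetMovesContinuouslyNonArchi}. Your persistence $\Rightarrow(2)$ step (types constant along components of $C^n$, at most $2d-2$ attracting cycles in tame residue characteristic) is essentially the paper's $1\Rightarrow2$.

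The genuine gap is $(3)\Rightarrow$ persistence.

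\textbf{A sign slip.} If $|\lambda|$ has no local maximum at $(y,t)$, you get nearby \emph{repelling} points, not attracting ones. Attracting points come from applying Theorem \ref{maximum modulus} to $1/\lambda$. Even then they have period $n$, which contradicts (3) only when $n$ exceeds the bound.

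\textbf{The bifurcation mechanism does not exist here.} For small $n$ you rely on a ``non-archimedean parabolic bifurcation'' producing attracting cycles of period $nm$ with $m$ unbounded. In this setting, cycles born from a parabolic parameter under small perturbation keep multipliers in the residue class of $1$, so they are indifferent, not attracting. For example, take $f_\lambda(z)=\lambda z+z^2$ with $\lambda=-1+\delta$ and $0<|\delta|<1$. The $2$-cycle (the roots of $z^2+(\lambda+1)z+(\lambda+1)$) has multiplier $5-(\lambda-1)^2=1+4\delta-\delta^2$, which has absolute value $1$.

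\textbf{No rotation structure at the point.} At a non-persistent point, Lemma \ref{Unstably and not persistently indifferent is the same} puts the multiplier over the Gauss point. So its reduction is transcendental there, and ``behaves like a rotation of order $m$'' has no content. Tameness does not repair this, and ``iterating the bifurcation'' is not an argument.

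\textbf{The missing idea.} You never need to manufacture attracting cycles. Let $N-1$ bound the attracting periods near $x$.
\begin{itemize}
\item For $n\ge N$: on the component of $C^n$ through an indifferent $(y,t)$ one has $|(f^n)'|\ge 1$. Maximum modulus applied to $1/(f^n)'$ then gives $|(f^n)'|\equiv 1$, which is persistence. Consequently, for periods $\ge N$ the type is constant along components of $C^n$; in particular repelling cycles of period $\ge N$ stay repelling.
\item For $n<N$: suppose $(y,t)$ is not persistent. Maximum modulus applied to $(f^n)'$ yields repelling points $(y_i,t_i)$ of period dividing $n$ with $(y_i,t_i)\to(y,t)$. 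Since $J_{y_i}$ is perfect and type I repelling points are dense, each $t_i$ is approximated by repelling points of period $\ge N$. These transport to repelling points of $f_y$ that accumulate at $t$. Hence $t\in J_y$, contradicting that indifferent type I points lie in the Fatou set.
\end{itemize}
The residue-characteristic hypothesis enters only to make $f_x$ have finitely many attracting cycles (Theorem \ref{NumberAttractingPointsBounded}), which is needed for $1\Rightarrow2$.

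\textbf{A smaller gap in $(2)\Rightarrow(3)$.} As written it is not an argument: finiteness at each parameter bounds nothing uniformly, and deferring to persistence is circular. The direct fix is as follows. Each attracting cycle of $f_x$ is a simple root of $f_x^{p}(z)-z$, because $|(f_x^p)'-1|=1$. So it continues to an attracting cycle of the same period for $y$ near $x$. Since the count is constant, these continued cycles exhaust the attracting cycles of $f_y$, and the periods are bounded.
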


All of these results come from the fact that the maximum modulus principle, which states that a holomorphic map atteins its maximum if and only if it is constant, is also true over a non-archimedean field, in the sense that if an analytic map atteins its maximum then, its absolute value is constant (see theorem \ref{maximum modulus}). This result has already been proven, independently, in a more general setting by W. Gubler-J. Rabinoff in \cite{GublerRabinoffPSHFunctions}, Theorem 1.5. The two proofs are using different tools. Our proof relies directly on the theory of graded reduction of M. Temkin \cite{TemkinReductionGradueeII}. Their proof comes from connectedness by curves of analytic spaces and then relies on results on psh- functions on curves which by construction relies on graph theory.

As a consequence of these theorem, we show that the continuity of the Julia set is independant of the field. In the sense that if the Julia set moves continuously on a $k$-analytic space $X$ and for any field extension $K$ of $k$, the Julia set moves continously on $X_K = X \hat{\otimes} K$. Furthermore, we show that if $x\in X$ then the Julia set moves continuously on a neighborhood of $x$ if and only if it moves continuously on a neighborhood of all $y \in (\pi^K_X)^{-1}(x)$. We refer the reader to subsection \ref{subsection continuity independant of the field} for the precise statements.

Lee \cite{LeeJStability} and Benedetto-Lee \cite{BenedettoLeeJStability} have studied some phenomena of stability of the Julia set. They showed that the expanding rational maps over a non-archimedean field are J-stable. The tools they use are really different of ours since they only consider rational maps of $\Rat_d^{an}(k)$ and not all the points of the Berkovich analytification of $\Rat_d$.

Finally, we also show that if the Julia set moves continuously on a neighborhood of $f\in \Rat_d^{an}$, then for any $M\in \SL^{an}_{2,\mathcal{H}(f)}$ the Julia set moves also continuously on a neighborhood of $M\cdot f$. Since the set $\SL^{an}_{2,\mathcal{H}(f)}$ which acts on $f$ depends on the point, it is different than in the complex case. This allows us to have some results about the stability of the Julia set on $\M_d^{an}$ and not only on $\Rat_d^{an}$ (see subsection \ref{subsection continuity on M_d^an}). 

\bigskip
\begin{center}
\textbf{The degenerating case}
\end{center}

Now, we would like to look at the continuity of the Julia set in the case of some degenerating family of rational maps. So, we would like to look at the stability of the Julia set over $\Rat_d^\urcorner$. Since, it is a quotient space of an open subset of the hybrid analytification of $\Rat_d$, we will study the stability of the Julia in $\Rat_d^{hyb}$. Let us recall the idea of this analytification. The hybrid analytification of a variety $X$ over a non-trivially valued field $k$ is given with a projection $\pr : X^{hyb} \rightarrow [0,1]$. For each $\varepsilon >0$, $\pr^{-1}(\varepsilon)$ is the analytification of $X$ over the completion of $k$ equipped with its absolute value to the power $\varepsilon$ and $\pr^{-1}(0)$ is the analytiifaction of $X$ over $k$ equipped with the trivial absolute value.

The first idea would be that the Julia set will be stable over some open subset $U \subset \Rat_d^{hyb}$ if it is stable on $U_0 = U \cap \Rat_d^{an,0}$ (which corresponds to the boundary of the compactification) and on $U_{>0} = U \cap \Rat_d(\C)$. Unfortunately it is not true, see example \ref{continuity in hybrid is not continuity on U_0 and U_>0}. The fact that the Julia set degenerates continuously is a strong condition and it is in general not true. But there is still a similar result as the theorem of Ma$\mathrm{\tilde{n}}$é-Sad-Sullivan in this setting. We have a equivalence which looks like $4 \iff 5$.

\begin{theorem}(infra theorem \ref{Julia set moves continuously hybrid setting})

Let $\C$ be equipped with the hybrid norm and $X$ a flowing $\C_{hyb}$-analytic space. Let $f$ be an analytic family of rational maps and $x \in X$. 

\begin{itemize}
\item If $x \in X_{>0}$, then the Julia set moves continuously on a flowing neighborhood of $x$ if and only if it moves continuously on a neighborhood $U_{\pr(x)}$ of $x$ seen in the fiber $X_{\pr(x)}$,

\item If $x\in X_0$, then the Julia set moves continuously on a flowing neighborhood of $x$ if and only if there is a flowing neighborhood $U$ of $f_x$ such that :
\begin{enumerate}
\item the Julia sets moves continuously on $U_{>0}$,
\item for all $y\in U_0$, every indifferent point of $f_y$ is hybridly persistently non-repelling.
\end{enumerate} 
\end{itemize} 
\end{theorem}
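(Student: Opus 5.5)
I would prove the two bullets separately, using the structure of the hybrid space: over a flowing neighborhood, $\pr^{-1}(]0,1])$ is homeomorphic to a product-like family of fibers $X_\varepsilon$, each isomorphic (up to rescaling the absolute value by $\varepsilon$) to a complex analytic space. The fiber $X_0$ is a Berkovich space over $\C$ trivially valued. The Julia set of $f_y$ depends only on the rational map, not on the power of the absolute value. Rescaling $|\cdot|\mapsto|\cdot|^\varepsilon$ induces a homeomorphism $\p^{1}(\C)\to\p^{1,an}_{\C,\varepsilon}$ sending $J(f_y)$ to itself.

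\textbf{Case $x\in X_{>0}$.} I would use that a flowing neighborhood of $x$ can be taken of the form $\bigcup_{\varepsilon\in I}\Phi_\varepsilon(U_{\pr(x)})$, with $I$ a small interval around $\pr(x)$ and $\Phi_\varepsilon$ the flow. The flow identifies the rational maps fiberwise, so $f_{\Phi_\varepsilon(y)}=f_y$. The Hausdorff distance between $J(f_y)$ and $J(f_{y'})$ for $y,y'$ in nearby fibers is then controlled by two terms:
\begin{itemize}
\item the continuity of $J$ inside the fiber $U_{\pr(x)}$;
\item the uniform continuity in $\varepsilon$ of the identification $\p^{1}(\C)\to\p^{1,hyb}$ on the compact set $\p^1(\C)$.
\end{itemize}
This gives both directions; the converse is immediate by restricting to a fiber.

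\textbf{Case $x\in X_0$.} For the forward direction, continuity on a flowing $U$ gives continuity on $U_{>0}$, which is condition 1. For condition 2, I argue by contradiction. Suppose some $y\in U_0$ has an indifferent point that is not hybridly persistently non-repelling. Then arbitrarily close to $y$, in $U_{>0}$, there are parameters where the continuation of this point is repelling, giving type I repelling points of $J$ near it. I would then use the maximum modulus principle (Theorem \ref{maximum modulus}) on $X_0$, as in the proof of Theorem \ref{JuliaSetMovesContinuouslyNonArchi}. The multiplier $|\lambda|$ is $\le 1$ on $U_0$ near $y$ and equals $1$ at $y$, so it is locally constant there. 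Hence the point is not in $J(f_y)$ in the non-archimedean sense: it lies in a Fatou component, or its non-repelling nature persists. This contradicts the Hausdorff continuity at $y$.

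For the converse, fix $y\in U_0$ and a sequence $y_n\to y$. If $y_n\in U_0$, I would use Theorem \ref{JuliaSetMovesContinuouslyNonArchi}, noting that hybridly persistently non-repelling implies persistently indifferent within $U_0$. If $y_n\in U_{>0}$, I need two facts:
\begin{itemize}
\item \textbf{lower semicontinuity:} every point of $J(f_y)$ is approximated by points of $J(f_{y_n})$;
\item \textbf{upper semicontinuity:} limits of $J(f_{y_n})$ lie in $J(f_y)$.
\end{itemize}
Lower semicontinuity comes from density of repelling periodic points and their continuation via the hybrid implicit function theorem for $f^n(z)=z$ with multiplier $\neq 1$. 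Upper semicontinuity is the main obstacle. A limit point lying in the Fatou set of $f_y$ must be excluded. I would show that Fatou points of $f_y$ near type II/III points have neighborhoods that are eventually mapped into attracting or indifferent domains. Condition 2 ensures that the indifferent cycles do not become repelling for $y_n$, so these domains persist hybridly. I expect to need a uniform version of this persistence over $U_{>0}$, obtained by combining condition 1 with compactness of the hybrid $\p^1$.
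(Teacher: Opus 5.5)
Your case $x\in X_{>0}$ is essentially the paper's argument: you transport along the flow and use flow-invariance of $J$. Take the full trajectory $T(U_{\pr(x)})$ rather than a small interval of times, so that the neighborhood is actually flowing. For nets inside $U_0$ you also argue as the paper does, via Theorem \ref{JuliaSetMovesContinuouslyNonArchi}.

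\textbf{The gap.} It is the step you flag yourself: upper semicontinuity at $y\in U_0$ along nets $y_i\in U_{>0}$. You propose to show that every Fatou point of $f_y$, including type II/III points, has a hybrid neighborhood missing the $J(f_{y_i})$, because non-archimedean Fatou domains ``persist hybridly''. Nothing in the hypotheses gives this. Condition 2 only constrains type I periodic points of $f_y$ and the components of $(C^n)_{>0}$ through them. It says nothing about whole Fatou components, which in the non-archimedean setting include isometric indifferent components and possibly non-periodic ones. The ``uniform version over $U_{>0}$'' you say you will need is precisely the unproved content, so as written this direction is not established.

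\textbf{The missing idea.} You never need to look at the Fatou set of $f_y$. Repelling cycles are dense in the complex Julia sets, so it suffices to treat $(f_{y_i},z_i)\to(f_y,z)$ with $z_i$ repelling of period $n_i$.
\begin{enumerate}
\item Put $(f_{y_i},z_i)$ in the connected component $V^i$ of $(C^{n_i}_V)_{>0}$. By condition 1 (via Ma\~n\'e--Sad--Sullivan), periodic points do not change type along $V^i$, so every point of $V^i$ is repelling.
\item Take a point $(f_y,z_i^y)$ of $C^{n_i}_V$ over $y$ in the closure of $V^i$. It cannot be attracting, because $|(f^{n_i})'|$ is continuous on the hybrid space and $>1$ on $V^i$.
\item It cannot be indifferent either, because condition 2 would force the components of $(C^{n_i})_{>0}$ accumulating on it to be attracting or indifferent-Fatou.
\item Hence $z_i^y$ is repelling and lies in $J(f_y)$. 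The paper then takes $(f_y,z)$ as a cluster point of the $(f_y,z_i^y)$, as in the proof of Theorem \ref{JuliaSetMovesContinuouslyNonArchi}, and concludes by closedness of $J(f_y)$.
\end{enumerate}

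\textbf{A smaller issue in your forward direction.} The maximum modulus principle on $X_0$ only gives the first clause of the definition, persistent indifference on $X_0$. Also, $t\notin J(f_y)$ is automatic for a type I indifferent point, so no argument is needed there. What condition 2 actually requires is a dichotomy on each component $V^i$ of $(C^n_V)_{>0}$. That comes from the complex maximum modulus principle on the fibers $V^i_\varepsilon$, which are connected because $V^i$ is flowing. Once continuity gives $|(f^n)'|\le 1$ on $V^i$, it is either $\equiv 1$ or $<1$ there. Your contradiction argument assumes that failing condition 2 produces nearby Julia points. It skips exactly the case of a component mixing attracting and Siegel-type points, which is what this step rules out.
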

The main difference here is that one requires that the indifferent point are ,,hybridly persistently non-repelling''. This means that they either come from some complex attracting point or some complex indifferent Fatou point. In this setting, there is no maximum modulus principle and so, the non-archimedean indifferent point could come from complex attracting points. We also study the continuous motion over the hybrid compactification of $\M_d$ and show that it is equivalent for the Julia set to moves continuously on a neighborhood of $f\in \Rat_d^\urcorner$ or of $[f] \in \M_d^\urcorner$.

We conclude by making a connection between the fact that the Julia set of a sequence of rational maps degenerates continuously and the fact that the multipliers of the sequence of rational maps blow-up in the sense of \cite{FavreBlowupMultipliers}, \cite{GongMultiplierScale}. And we show that if a sequence of rational maps is in some good hyperbolic connected component, then its Julia set degenerates continuously if and only if all its multipliers blow-up.

\begin{theorem}(infra theorem \ref{InGoodHyperbolicComponentsJuliaSetContinuousIFFMulitpliersBlowUp})

Let $U$ be a hyperbolic connected component of $\M_d(\C)$ such that the Julia sets do not have indifferent periodic point. Let $\tilde{f}_i$ be a degenerating net in $U$.

Then the Julia set of $\tilde{f}_i$ degenerates continuously if and only if all of the multipliers of its repelling points blow-up.
\end{theorem}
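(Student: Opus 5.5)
Since $U$ is a hyperbolic component with no indifferent periodic points in the Julia sets, the plan is to reduce to the second bullet of the hybrid theorem (theorem \ref{Julia set moves continuously hybrid setting}) at the boundary point $f_0 \in \M_d^\urcorner$ to which $\tilde f_i$ accumulates. We may pass to subnets and work in $\Rat_d^{hyb}$ rather than $\M_d^\urcorner$, using the stated equivalence between continuous motion near $f\in\Rat_d^\urcorner$ and near $[f]\in\M_d^\urcorner$.

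On $U_{>0}$ the maps are complex hyperbolic maps in a single hyperbolic component. By Mañé--Sad--Sullivan the Julia set moves holomorphically, hence continuously, on a neighborhood of the archimedean part of any limit. So condition 1 of the hybrid theorem is automatic, and continuity of the degeneration is equivalent to condition 2: every indifferent periodic point of the limit map $f_y$, for $y$ in the trivially valued fibre near the limit, is hybridly persistently non-repelling.

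The core step links multipliers to type of periodic points of the limit. Multipliers of periodic points are algebraic functions on $\Rat_d$, so a multiplier $\lambda_i$ of a cycle of $\tilde f_i$ defines a continuous function on the hybrid space. The hybrid norm at $\pr = \varepsilon_i \to 0$ is $|\lambda_i|^{\varepsilon_i}$, and its limit is the non-archimedean absolute value of the corresponding multiplier of the limit map. A repelling cycle thus has a blowing-up multiplier (in the scale sense of \cite{FavreBlowupMultipliers}, \cite{GongMultiplierScale}) exactly when the limit cycle is repelling (norm $>1$), and otherwise the limit cycle is indifferent, since the norm $\geq 1$ cannot drop below $1$ for repelling cycles. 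Attracting cycles of $\tilde f_i$ in $U$ stay attracting, with limit norm $\le 1$, and these give exactly the "hybridly persistently non-repelling" indifferent points.

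For the forward direction, assume all repelling multipliers blow up. Then every indifferent point of the limit comes from a complex attracting cycle (there are no indifferent complex cycles), so condition 2 holds and the hybrid theorem gives continuity. For the converse, assume some repelling multiplier does not blow up. Then the limit has an indifferent cycle arising from complex repelling cycles of nearby maps, and we need it to be persistently repelling nearby, i.e.\ not hybridly persistently non-repelling. Condition 2 then fails, so continuity fails.

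The main obstacle is this converse. One must check that nearby points $y\in U_0$, and not only the limit, inherit an indifferent cycle that is approached by complex repelling cycles and by no attracting or Fatou cycles. This requires tracking all cycles of a given period via the finite multiplier map over the hybrid space and using that attracting cycles in $U$ are bounded in number and period, so they cannot account for it. I would handle this with a counting argument on the fibres of the periodic-point scheme: the number of cycles of period $n$ is fixed, and the attracting ones are locally constant in $U$.
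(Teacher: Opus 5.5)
Your outline follows the paper's proof. You pass from $\M_d^\urcorner$ to $\Rat_d^\urcorner$ with Corollary~\ref{continuously on compactification of Ratd is the same as on compactification of Md}. You identify $\lim|(f_i^l)'(z_i)|^{\varepsilon_i}$ with $|(f^l)'(z)|$, so that a repelling multiplier blows up exactly when the limit periodic point is repelling rather than indifferent. You then conclude with Theorem~\ref{Julia set moves continuously hybrid setting}, using Ma\~n\'e--Sad--Sullivan on the hyperbolic component for the archimedean part.

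The ``main obstacle'' you place in the converse does not exist, and you should drop it.

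\begin{itemize}
\item Condition 2 of the hybrid theorem is universally quantified over $y\in U_0$, and $f$ lies in every flowing neighbourhood of itself. So a single indifferent point of $f$ that is a limit of repelling points of the $f_i$ already negates it.
\item You do not need nearby $y$ to inherit such a cycle.
\item You also do not need that no attracting or Fatou cycles accumulate on it: one component of $V_{>0}$ containing a repelling point already violates both alternatives of the definition.
\end{itemize}

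There is an even more direct argument, which is how the paper argues in the remark just before the theorem. The repelling points $z_i\in J(f_i)$ accumulate, after rescaling by the flow, at an indifferent type I periodic point $z$ of $f$. Such a point lies in the Fatou set, so $(f,z)$ is in the closure of $\bigcup_i J(f_i)$ but not in $J(f)$. Continuity therefore already fails along the given net, and no counting argument on the periodic-point scheme is needed.

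If some step deserves extra care, it is the forward direction, which you treat as immediate.

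\begin{itemize}
\item Theorem~\ref{Julia set moves continuously hybrid setting} is a statement about a whole flowing neighbourhood of $f$. Its archimedean part need not be contained in the hyperbolic component.
\item Your sentence ``on $U_{>0}$ the maps are complex hyperbolic maps in a single hyperbolic component'' conflates the hyperbolic component with that neighbourhood.
\item The blow-up hypothesis only concerns the given net. What is really being used is the pointwise content of the proof of that theorem along $f_i\to f$.
\end{itemize}

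The paper passes over this in the same way, so it is not a divergence from its proof. It is, however, where the real work would lie.
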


\begin{center}
\textbf{Organisation of the article}
\end{center}

In section \ref{sectionEspacesBerko}, we recall the definition of the Berkovich spaces over a complete field and over a hybrid field. This allows us to recall the definition of the valuative compactification defined by J. Poineau in \cite{PoineauCompactificationHybride} which will be the focus on section \ref{Section continuity in the hybrid settings}. Finally, we prove a maximum modulus principle in the case of a Berkovich space over a complete non-archimedean field, see theorem \ref{maximum modulus}. In section \ref{sectionJuliaSet}, we recall the definition of the Julia set over a non-archimedean field and we recall the known results about the periodic points in this case. We show that the Julia set can be completely understood by taking any field extension of the base field, see corollary \ref{Taking the Julia set over any extension does not change it}.  In section \ref{Section continuity in the non archimedean settings}, we show the analogue of the Ma$\mathrm{\tilde{n}}$é-Sad-Sullivan theorem in the non-archimedean case and give some examples. Then, we show that the continuity does not dependant on the field. Finally, we study the continuous motion on $\M_d^{an}$. In the section \ref{Section continuity in the hybrid settings}, we study first the polynomial case, then we show the equivalent of the Ma$\mathrm{\tilde{n}}$é-Sad-Sullivan theorem in the hybrid case and study the continuous motion on the compactification, $\M_d^\urcorner$ of $\M_d(\C)$. We conclude by linking the continuous degeneracy of the Julia set with the blow-up of multipliers.  

\begin{center}
\textbf{Acknowledgement}
\end{center}

I would like to warmly thank Jérôme Poineau for his many advices, many ideas, his support throughout this work and his careful reading. I am also thankful to Thomas Gauthier for his time, our discussion about this subject and for pointing out the result of T. Silverman on this matter and to Charles Favre for answering my questions.
  
\section{Berkovich spaces}\label{sectionEspacesBerko}

In this section, we first give the definition of the Berkovich spaces over a complete valued field which will be the setting of the section \ref{Section continuity in the non archimedean settings} to study the continuity of the Julia set. Then, we define the hybrid spaces which will be the setting of the section \ref{Section continuity in the hybrid settings}. We also define the hybrid compactification as defined by J. Poineau in \cite{PoineauCompactificationHybride}, since we will apply some results to the hybrid compactification$, \Rat_d^\urcorner$, of $\Rat_d$. Finally, we will present the graded reduction in the sense of M. Temkin \cite{TemkinReductionGradueeII} and use it to get a maximum modulus principle in the case of Berkovich spaces over a complete valued field.
 
\subsection{Berkovich spaces over Banach rings}

Let $(A, \vert\vert \cdot \vert\vert)$ be a Banach ring. For this first subsection, we use the definitions of V. Berkovich \cite{BerkovichLivre}.

Let us start with the definition of the analytification of $\ana^n_A$, denoted by $\ana^{n,an}_A$, which is the affine space of dimension $n$ over $A$. Let us focus on the underlying topological space although this space has a structure of a locally ringed space.

\begin{defi} 
Let us denote $\ana^{n,an}$ the affine space of dimension $n$ over $A$.

The underlying space is the set of all bounded multiplicative semi-norms over $A[T_1, \cdots, T_n]$. Hence, it is the set of applications :
\begin{equation*}
|\cdot| : A[T_1, \cdots, T_n] \rightarrow \R_{\geq 0}
\end{equation*}
such that :
\begin{itemize}
\item $|0| = 0$ and $|1| = 1$,
\item $\forall P,Q \in A[T_1, \cdots, T_n], |PQ| = |P| |Q|$,
\item $\forall P,Q \in A[T_1, \cdots, T_n], |P + Q| \leq |P| + |Q|$,
\item $\forall a\in A, |a| \leq \vert\vert a \vert\vert.$
\end{itemize}

We denote by $\mathcal{M}(A)$ the space $\ana^{0,an}_A$ and call it the Berkovich spectrum of $A$. We have a projection $\pr : \ana^{n,an}_A \rightarrow \mathcal{M}(A)$ induced by the injection $A \hookrightarrow A[T_1, \cdots, T_n]$.

If $x \in \ana^{n,an}_A$, we denote $|\cdot|_x$ the associated semi-norm. The ring $A[T_1, \cdots, T_n]/(ker~|\cdot|_x)$ is an integral domain, so we may look at its fraction field. As $|\cdot|_x$ is inducing an absolute value on it, we may take its completion and we denote it $\mathcal{H}(x).$

There is a topology on $\ana^{n,an}_A$. Let us take the coarsest topology such that for all $P \in A[T_1, \cdots, T_n]$, the applications
\begin{center}
$\begin{cases}
\ana^{n,an}_A &\rightarrow \R_{\geq 0},\\
|\cdot|_x &\mapsto |P|_x
\end{cases}
$
\end{center}
are continuous. With this topology, $\ana^{n,an}_A$ is Hausdorff and locally compact. Furthermore, $\mathcal{M}(A)$ is compact. The projection $\pr : \ana^{n,an}_A \rightarrow \mathcal{M}(A)$ is continuous. 
\end{defi}

\begin{ex}
An example of a Banach ring we will use is the hybrid field. Let $(k,|\cdot|)$ be non-trivially valued field, then we define on $k$ a hybrid norm $|\cdot|_{hyb}$ such that
\begin{center}
$|x|_{hyb} = \max(|x|, |x|_{trivial})$.
\end{center}
It gives a Banach ring. 
\end{ex}

\begin{prop}
If $X$ is a scheme of locally finite presentation over $A$ where $A$ is a geometric base ring, which includes valued and hybrid fields, then we may consider its analytification. It is a $A$-analytic space in the sense of Berkovich which we denote by $X^{an}$.
\end{prop}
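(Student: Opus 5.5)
The plan is to build the analytification $X^{an}$ locally from affine pieces and then glue. This follows Berkovich's construction over a geometric base ring, in the form used by Lemanissier–Poineau.

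\textbf{Affine case.} Suppose $X=\Spec(A[T_1,\dots,T_n]/I)$ with $I=(P_1,\dots,P_m)$ finitely generated; this is possible because $X$ is locally of finite presentation.
\begin{itemize}
\item Define $X^{an}$ as the closed subset of $\ana^{n,an}_A$ consisting of the semi-norms $x$ with $|P_j|_x=0$ for all $j$. It is closed because each map $x\mapsto|P_j|_x$ is continuous.
\item Give it the structure sheaf $\mathcal{O}_{\ana^{n,an}_A}/I\mathcal{O}_{\ana^{n,an}_A}$. This requires knowing that $\ana^{n,an}_A$ is a locally ringed space whose structure sheaf is coherent, so that the ideal sheaf generated by $I$ is coherent and the quotient is a genuine $A$-analytic space.
\item Check that the result does not depend on the presentation. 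Any two presentations differ by an $A$-algebra isomorphism $A[T]/I\cong A[S]/J$. This lifts to polynomial maps in both directions, and these induce mutually inverse analytic morphisms between the two closed subspaces.
\item Equivalently, state a universal property. $X^{an}$ represents the functor on $A$-analytic spaces $Y\mapsto \mathrm{Hom}_A(\mathcal{O}(X),\mathcal{O}(Y))$. This gives the canonical morphism $X^{an}\to X$ of locally ringed spaces, final among analytic spaces mapping to $X$.
\end{itemize}

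\textbf{Gluing.} Cover a general $X$ by affine opens $U_i$ of finite presentation.
\begin{itemize}
\item For a principal open $D(g)\subset U_i$, the universal property identifies $D(g)^{an}$ with the open set $\{|g|\neq 0\}$ of $U_i^{an}$.
\item Since the intersections $U_i\cap U_j$ are covered by principal opens, the analytifications $U_i^{an}$ glue along the open subsets $(U_i\cap U_j)^{an}$.
\item The cocycle condition follows from uniqueness in the universal property.
\item The glued space inherits the universal property, so it is independent of the chosen cover.
\end{itemize}

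\textbf{Main obstacle.} The hard step is the local theory of $\ana^{n,an}_A$ over a general Banach ring: that its structure sheaf is coherent and its local rings are noetherian, so that closed analytic subspaces defined by finitely generated ideals make sense. This is exactly where the hypothesis that $A$ is a geometric base ring enters. I will not reprove it. I will cite Lemanissier–Poineau, where valued fields and hybrid fields such as $(k,|\cdot|_{hyb})$ are shown to be geometric base rings and the coherence results are established, together with \cite{BerkovichLivre} for the field case. For the present paper, we only need the existence of $X^{an}$ and its functoriality in $X$.
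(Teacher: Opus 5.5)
Your proposal is correct and follows the same route the paper sketches: take $\ana^{n,an}_A$ for affine space, cut out the closed analytic subspace defined by the finitely generated ideal $I$, then glue the affine pieces, with the foundational input cited from Lemanissier--Poineau (Th\'eor\`eme 4.1.4). Your use of the universal property to prove independence of the presentation, to identify principal opens and to get the cocycle condition spells out details that the paper leaves inside that citation.
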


\begin{rem}
For a precise definition of geometric base ring, we may refer for example to the book of T. Lemanissier - J. Poineau (\cite{LemanissierPoineau}, Définition 3.3.8).
\end{rem}

Let us recall how to define this analytification based on the proof of Lemanissier-Poineau, Théorème 4.1.4 \cite{LemanissierPoineau} and one can find this analytification in the work of Berkovich.

\begin{itemize}
\item First step : If $X = \ana^n_A$, then $X^{an} = \ana^{n,an}_A$.

\item Second step : If $X$ is a closed subscheme of $\ana^n_A$, then it is defined by an ideal $I$ finitely generated of $\mathcal{O}(\ana^n_A)$ and $X^{an}$ is the closed analytic of $\ana^{n,an}_A$ defined by the sheaf of ideals generated by $I$. 

\item Last step : If $X$ is locally of finite presentation, then $X = \bigcup U_i$ where the $U_i$ are affine varieties locally of finite presentation which are analytifing as previously, then $X^{an}$ is obtained by gluing the $U_i^{an}$.
\end{itemize}

\begin{prop}
If $X$ and $Y$ are two $A$-schemes locally of finite presentation and $f : X \rightarrow Y$ a scheme morphism, then we can consider its analytification $f$ to get $f^{an} : X^{an} \rightarrow Y^{an}$.
\end{prop}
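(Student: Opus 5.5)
The plan is to follow the same three-step construction used to define $X^{an}$: first treat morphisms between affine spaces, then between closed subschemes of affine spaces, and finally glue. In every step the key fact is that a morphism of finitely presented $A$-algebras $\varphi : B \rightarrow C$ induces a map between multiplicative semi-norms, $|\cdot|_x \mapsto |\cdot|_x \circ \varphi$. The work is to check that this map is continuous, that it is a morphism of locally ringed spaces, and that it glues.

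\textbf{Affine case.} Take a morphism $f : \ana^m_A \rightarrow \ana^n_A$, given by polynomials $P_1, \dots, P_n \in A[T_1,\dots,T_m]$, and write $\varphi : A[S_1,\dots,S_n] \rightarrow A[T_1,\dots,T_m]$ for $S_j \mapsto P_j$. We set $f^{an}(x) = |\cdot|_x \circ \varphi$. This is again a bounded multiplicative semi-norm: boundedness holds because $\varphi$ restricts to the identity on $A$. Continuity follows because, for any $Q$, the map $x \mapsto |Q|_{f^{an}(x)} = |\varphi(Q)|_x$ is continuous by the definition of the topology on $\ana^{m,an}_A$. Next, an analytic function on an open $V \subset \ana^{n,an}_A$ is locally a uniform limit of rational functions without poles. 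Composing with $\varphi$ sends it to an analytic function on $(f^{an})^{-1}(V)$, and this defines the sheaf map $\mathcal{O} \rightarrow f^{an}_*\mathcal{O}$. This map is local because $|\varphi(g)|_x = |g|_{f^{an}(x)}$, so $g$ vanishes at $f^{an}(x)$ exactly when $g \circ f^{an}$ vanishes at $x$.

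\textbf{Closed subschemes.} Suppose $X = V(I) \subset \ana^m_A$ and $Y = V(J) \subset \ana^n_A$, and the morphism $X \rightarrow Y$ is induced by $\varphi$ with $\varphi(J) \subset I$. (Every morphism of affine schemes lifts to the ambient affine spaces in this way, by lifting the images of the coordinates.) Then every point of $X^{an}$ kills $I$, hence kills $\varphi(J)$. So $f^{an}$ maps $X^{an}$ into $Y^{an}$, and the sheaf map factors through the quotient by the ideal sheaf generated by $J$.

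The expected main obstacle is independence of choices. Two lifts $\varphi, \varphi'$ differ by elements of $I$, and these vanish on $X^{an}$ both pointwise and in the structure sheaf, so the induced maps agree. This same uniqueness gives functoriality, $(g\circ f)^{an} = g^{an} \circ f^{an}$, and compatibility with restriction to principal open subsets $D(h)$. For the latter we would use that $D(h)^{an}$ is the open set $\{|h| \neq 0\}$, realised as a closed subset of $\ana^{m+1,an}_A$ via $hT_{m+1} - 1$. Proving these statements over a general geometric base ring may require the results of Lemanissier--Poineau rather than a direct argument.

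\textbf{General case.} Cover $Y$ by affine opens $V_j$, and cover each $f^{-1}(V_j)$ by affine opens $U_{ij}$. By the previous step the maps $(f|_{U_{ij}})^{an}$ exist. By uniqueness and compatibility with restriction they agree on overlaps, which are covered by principal affine opens. Hence they glue to a morphism $f^{an} : X^{an} \rightarrow Y^{an}$, and this morphism is independent of the chosen covers.
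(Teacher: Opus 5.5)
Your proposal is correct and takes the same route the paper relies on. The paper gives no separate proof of this statement: it only recalls the three-step construction (affine space, closed subschemes cut out by finitely generated ideals, gluing) from Lemanissier--Poineau, Th\'eor\`eme 4.1.4, and your argument is the morphism-level version of exactly that construction.

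One phrase carries more weight than it shows: ``so the induced maps agree'', at the level of structure sheaves, and likewise the identification of $D(h)^{an}$ with $\{|h|\neq 0\}$. For sections that are genuinely analytic rather than polynomial, this needs either a closedness result for ideals of the local rings or the universal property of $\ana^{n,an}_A$. That is precisely the Lemanissier--Poineau input you already flag, so it is deferred in the same way the paper defers it, not a gap.
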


This analytification preserves most of the properties of the morphism of schemes. In the case where the morphism is finite, we have the following result : 

%\begin{lem}Proposition 5.6.4 de \cite{LemanissierPoineau} \label{propriétés des morphismes passent aux changements de base}

%Soit $\mathcal{A}$ un anneau de base géométrique. Soit $\phi : X \rightarrow Y$ un morphisme d'espace $\mathcal{A}$-analytique. Soit $x \in X$ et supposons que $\phi$ est fini et plat en $x$. 

%Soit $\psi : Z \rightarrow Y$ un morphisme d'espaces $\mathcal{A}$-analytiques, alors le morphisme $\phi_Z : X \times_Y Z \rightarrow Z$ est fini et plat en tout point au-dessus de $x$. 
%\end{lem}

\begin{lem}\label{morphisme fini sur meme dimension est ouvert} Lemma 3.2.4 of \cite{BerkovichLivre}

Let $\phi : X \rightarrow Y$ a finite morphsim of $k$-analytic spaces such that $\dim(X) = \dim(Y)$ and $X$ is locally irreducible. Then $\phi$ is an open morphism.
\end{lem}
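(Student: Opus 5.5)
The plan is to reduce openness to local statements and use the structure of finite morphisms together with dimension theory. Since openness is local on $X$, fix $x\in X$, an open neighbourhood $U$ of $x$, and set $y=\phi(x)$. We must show that $\phi(U)$ contains a neighbourhood of $y$. Because $\phi$ is finite, it is topologically proper with finite fibres. Shrinking $U$, we may assume $U\cap\phi^{-1}(y)=\{x\}$. Using the Hausdorff property and the properness of $\phi$, we can then find an open neighbourhood $V$ of $y$ such that $\phi^{-1}(V)=U'\sqcup W$, where $U'\subset U$ is an open neighbourhood of $x$ and $W$ avoids $x$. The restriction $\phi|_{U'}:U'\to V$ is again finite, so it suffices to prove that $\phi|_{U'}$ is surjective onto some neighbourhood of $y$. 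We may also shrink so that $U'$ is irreducible, which uses local irreducibility of $X$ at $x$, and so that $\dim U'=\dim_x X$.

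The key step is a dimension count. A finite morphism has closed image, so $\phi(U')$ is a closed analytic subset of $V$, namely the support of the coherent sheaf $\phi_*\mathcal{O}_{U'}$. Finite morphisms preserve dimension, so $\dim \phi(U')=\dim U'$. Moreover $\phi(U')$ is irreducible, being the image of an irreducible space. If we knew that $Y$ were irreducible of dimension $\dim U'$ near $y$, then $\phi(U')$ would be a closed analytic subset of full dimension in an irreducible space, hence equal to $V$, and we would be done. In general $Y$ need not be locally irreducible. So I would instead argue pointwise on irreducible components: it is enough to show that $\phi(U')$ contains a neighbourhood of $y$, and for this we want $\phi(U')$ to contain every irreducible component of $Y$ through $y$.

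This is the main obstacle. If $Y$ had two components through $y$, one of which is not hit by $U'$, then $\phi$ could fail to be open; a normalization map is the typical example. Two readings avoid this. Under the hypothesis $\dim X=\dim Y$ read as equidimensionality near each point, one uses Berkovich's setting, where "locally irreducible" applies in the relevant sense and the target is taken of pure dimension. The alternative is to reduce to the case where $Y$ is locally irreducible by working with $\phi(U')$ itself as the target: openness of $X\to\phi(X)$.

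I would follow Berkovich's argument. Pass to affinoid domains, where $\phi$ corresponds to a finite homomorphism of $k$-affinoid algebras $\mathcal{B}\to\mathcal{A}$. Injectivity modulo nilpotents on the relevant component follows from the equality of Krull dimensions together with integrality of $\mathcal{A}$, using local irreducibility. Then apply the going-down theorem for integral extensions, which needs the target normal, or use Noether normalization into a polydisc and the openness of finite maps onto a normal (smooth) polydisc. This gives the required local surjectivity onto a neighbourhood.
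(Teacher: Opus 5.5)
The paper does not prove this lemma; it only quotes it from Berkovich's book. Your proposal therefore has to stand on its own. As a proof of the statement \emph{as stated here}, it has a gap that cannot be closed, because that statement is false.

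You identify the obstruction yourself, but it is fatal, not technical. With local irreducibility imposed on the source, take $X=\mathbb{A}^1\sqcup\mathbb{A}^1$ mapped onto the two axes of $Y=\{xy=0\}$. Then $\phi$ is finite, $X$ is smooth and $\dim X=\dim Y=1$. Yet the image of the open subset given by the first copy is the $x$-axis, which is not a neighbourhood of the origin in $Y$. The global equality $\dim X=\dim Y$ is also too weak: take $X=Y\sqcup\{\mathrm{pt}\}$ with $Y$ smooth.

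The correct statement puts local irreducibility on the \emph{target} and requires $X$ to be equidimensional with $\dim_x X=\dim_{\phi(x)}Y$. Under those hypotheses your ``key step'' is already a complete proof. Isolate $x$ in its fibre, take $V$ irreducible, and observe that $\phi(U')$ is a Zariski-closed subset of $V$ of dimension $\dim V$, hence equal to $V$. You should commit to that statement instead of hedging between readings.

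The fallbacks you propose do not work.
\begin{itemize}
\item Proving openness of $X\to\phi(X)$ replaces the conclusion by a weaker one, and even that fails. In the example above, $\phi$ is surjective onto $\phi(X)=Y$ and not open.
\item Shrinking first does not help, since $\phi(U')$ need not be locally irreducible away from $y$. Take $k$ non-trivially valued with $\mathrm{char}(k)\neq 2$, and the finite map $\mathbb{A}^2\to W=\{x^2=y^2z\}$, $(u,v)\mapsto(uv,v,u^2)$. Its image germ at the origin is irreducible. But every neighbourhood of $(0,0)$ contains points $(s,0)$ with $s\neq 0$, whose image $(0,0,s^2)$ lies on the two local sheets $x=\pm\sqrt{z}\,y$ of $W$. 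A small neighbourhood of $(s,0)$ reaches only one of these sheets.
\item The going-down route assumes a normal target, which is stronger than local irreducibility. Moreover, going-down for $\mathcal{B}\to\mathcal{A}$ is a statement about prime ideals. It yields openness for the Zariski topology on spectra of affinoid algebras, not for the Berkovich topology. The Berkovich topology is exactly what your dimension count over arbitrarily small neighbourhoods $V$ handles.
\item Appealing to ``openness of finite maps onto a polydisc'' after Noether normalization is the lemma itself for a particular target, so as stated it is circular.
\end{itemize}
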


\begin{rem}
The fact that the morphsim has finite fibers is enough see for example Théorème 5.2.9 of \cite{LemanissierPoineau}. 
\end{rem}

%T. Lemanissier et J. Poineau ont aussi montré que l'analytification de certains morphismes finis est ouverte.

%\begin{prop}\label{les analytifies des morphismes finis entre schemas reduits sont ouverts} Proposition 5.6.2 \cite{LemanissierPoineau}
%Soit A un anneau géométrique et $\phi : X \rightarrow Y$ un morphisme d'espaces $A$-analytiques. Soit $x \in X$ tel que $\phi$ est fini en $x$. Supposons que $\mathcal{O}_{Y, \phi(x)}$ est réduit et que $\mathcal{O}_{X,x}$ est un $\mathcal{O}_{Y, \phi(x)}$-module sans torsion. Alors le morphisme $\phi$ est ouvert en $x$.

%En particulier si $X, Y$ sont deux $A$-schéma réduits, localement de présentation finie et si $\phi : X \rightarrow Y$ est un morphisme de schéma fini. Alors, $\phi^{an} : X^{an} \rightarrow Y^{an}$ est ouvert.  
%\end{prop}

\subsection{Hybrid Berkovich spaces}

Let us now focus on the case where the Banach ring $A$ is a field equipped with a hybrid norm. We present the section 2 of Poineau's article \cite{PoineauCompactificationHybride}, we won't prove the result but recall the definitions.

\begin{prop}
Let $(k, |\cdot|)$ be a valued field and let us denote $k_{hyb}$ the field equipped with the hybrid  norm. Then, the Berkovich spectrum is :
\begin{center}
$\mathcal{M}(k_{hyb}) = [0,1]$,
\end{center} 
where the identification comes from the association to any $0\leq \varepsilon \leq 1$ to the absolute value $|\cdot|^\varepsilon$, $|\cdot|^0$ is the trivial absolute value.

Hence, the residue fields $\mathcal{H}(\varepsilon)$ are the completion of $k$ equipped with the absolue value $|\cdot|^\varepsilon$. Let us denote them $\hat{k}_\varepsilon$.   

A $k_{hyb}$-analytic space $X$ is equipped with a projection $\pr : X \rightarrow \mathcal{M}(k_{hyb})$. 

For all $\varepsilon \in [0,1], \pr^{-1}(\varepsilon) =: X_\varepsilon$ is a $\mathcal{H}(\varepsilon)$-analytic space. 
\end{prop}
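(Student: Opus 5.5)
The strategy is to describe $\mathcal{M}(k_{hyb})$ directly, by classifying the bounded multiplicative seminorms on $k$, and then to read off the fiber statements from this description.

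\textbf{Step 1: the spectrum is contained in the interval.} Since $k$ is a field, any multiplicative seminorm $|\cdot|_x$ on $k$ with $|1|_x=1$ is an absolute value: if $a\neq 0$ then $|a|_x|a^{-1}|_x=1$, so $|a|_x\neq 0$. Boundedness gives $|a|_x\le \max(|a|,1)$ for every $a\in k$.

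\textbf{Step 2: the case $|a|\le 1$.} Suppose first that $|a|\le 1$. Applying the bound to $a^n$ gives $|a|_x^n\le 1$, hence $|a|_x\le 1$.

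\textbf{Step 3: the case $|a|<1$.} If $|a|<1$, the same argument refines to $|a|_x\le 1$. Since $|a^{-1}|>1$, the bound also gives $|a^{-1}|_x\le |a|^{-1}$, which yields $|a|_x\ge |a|$.

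\textbf{Step 4: $|\cdot|_x$ is a power of $|\cdot|$.} At this point the seminorm $|\cdot|_x$ is an absolute value satisfying two properties. First, $|a|_x\le 1$ whenever $|a|\le 1$. Second, it is squeezed between $|a|$ and $1$ on the open unit ball of $|\cdot|$.

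If $|\cdot|_x$ is trivial on all $a$ with $|a|<1$, then it is trivial, and we get $\varepsilon=0$.

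Otherwise, we use the standard comparison argument. Two absolute values $v,w$ with $\{v<1\}\subseteq\{w<1\}$, or with $w\le 1$ on $\{v\le 1\}$, are equivalent, provided $w$ is nontrivial; this is Artin's lemma. Comparing $|a|_x$ with $|b^m/a^n|$ for rational approximations gives $|\cdot|_x=|\cdot|^\varepsilon$ for some $\varepsilon>0$. The lower bound $|a|^\varepsilon=|a|_x\ge |a|$ for $|a|<1$ then forces $\varepsilon\le 1$.

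This step is the main obstacle. The difficulty is checking Artin's lemma carefully in the archimedean case, for example $k=\C$, where the usual valuation-theoretic argument has to be replaced by Ostrowski-type reasoning. The same rational-exponent comparison still works, because it only uses multiplicativity.

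\textbf{Step 5: every point of the interval occurs.} Conversely, each $|\cdot|^\varepsilon$ with $\varepsilon\in[0,1]$ satisfies $|a|^\varepsilon\le\max(|a|,1)$. It is also a multiplicative seminorm; in the archimedean case, $|\cdot|^\varepsilon$ for $\varepsilon\le 1$ still satisfies the triangle inequality.

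\textbf{Step 6: topology of the identification.} The map $\varepsilon\mapsto |\cdot|^\varepsilon$ is a continuous bijection from $[0,1]$, a compact space, to the Hausdorff space $\mathcal{M}(k_{hyb})$. It is therefore a homeomorphism. Continuity holds because $\varepsilon\mapsto |a|^\varepsilon$ is continuous for each $a$; at $\varepsilon=0$ this uses the convention $|0|^0=0$, which matches the trivial absolute value.

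\textbf{Step 7: the fibers.} The residue field $\mathcal{H}(\varepsilon)$ is by definition the completion of $k/\ker$ for the absolute value $|\cdot|^\varepsilon$, which is exactly $\hat k_\varepsilon$. The projection $\pr$ comes from the structure morphism.

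It remains to identify the fiber $X_\varepsilon=\pr^{-1}(\varepsilon)$ with a $\mathcal{H}(\varepsilon)$-analytic space. First do this for $\ana^{n,an}_{k_{hyb}}$: a seminorm on $k[T]$ lying over $|\cdot|^\varepsilon$ extends uniquely by continuity to $\hat k_\varepsilon[T]$, and conversely. Then pass to closed subspaces and gluings, following the three-step construction of the analytification recalled above. The structure sheaf statement is the base-change compatibility in \cite{LemanissierPoineau}.
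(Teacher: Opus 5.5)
The paper does not prove this proposition. It recalls it from Section~2 of \cite{PoineauCompactificationHybride}, saying explicitly that it will not prove the result but only recall the definitions. So your proposal is a self-contained verification rather than a variant of an argument in the paper, and it is essentially correct.

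Its content is an elementary classification of the points, and Step~4 can be streamlined:
\begin{itemize}
\item Steps~1--3 show that for every $a\neq 0$, $\log|a|_x$ lies between $0$ and $\log|a|$. For $|a|>1$ apply Step~3 to $a^{-1}$; for $|a|=1$ apply Step~2 to $a$ and $a^{-1}$.
\item Applying this to $a^m b^{-n}$ gives $m\log|a|_x\geq n\log|b|_x$ whenever $m\log|a|>n\log|b|$.
\item Fix $b$ with $|b|>1$. Rational approximation of $\log|a|/\log|b|$ then yields $\log|\cdot|_x=\varepsilon\log|\cdot|$ with $\varepsilon=\log|b|_x/\log|b|\in[0,1]$.
\end{itemize}
This uses only multiplicativity, so the ``main obstacle'' you flag in Step~4 is not one. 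No Artin or Ostrowski input is needed, the archimedean case is literally the same, and $\varepsilon=0$ needs no separate case. Incidentally, the strict form of the lemma you quote requires $v$, not $w$, to be non-trivial; with $v$ trivial the hypothesis $\{v<1\}\subseteq\{w<1\}$ is vacuous.

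Two points should be made explicit.

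First, you silently use that $|\cdot|$ is non-trivial, which the paper assumes when it defines the hybrid norm. Without it, $\mathcal{M}(k_{hyb})$ is a single point. This hypothesis is exactly what gives injectivity in Step~6, and it supplies an element with $0<|a|<1$ for the bound $\varepsilon\leq 1$.

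Second, in Step~7 the coefficientwise extension only gives a bijection between $\pr^{-1}(\varepsilon)\subset\ana^{n,an}_{k_{hyb}}$ and $\ana^{n,an}_{\hat k_\varepsilon}$. To get a homeomorphism, note that the restriction map is continuous. On each closed polydisc $\{|T_i|\leq r\}$ it is then a homeomorphism onto its image, since the source is compact and the target Hausdorff. The open subsets $\{|T_i|<r\}$ of the fiber lie in these images and cover it.

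The remaining steps (closed subspaces, gluing, and the structure sheaf via \cite{LemanissierPoineau}) are at the level of detail the paper itself adopts, since it defers the whole statement to the literature.
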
 

\begin{rem}
If $X$ is a $k_{hyb}$-analytic space, where $k$ is a complete field, for all $\varepsilon > 0, X_\varepsilon$ have isomorphic underlying topologic spaces: all the spaces $\mathcal{H}(\varepsilon)$-analytic spaces are $k$-analytics spaces equipped with a different normalisation. And for $\varepsilon = 0, X_0$ is an analytic space over a trivially valued field.

In the case, where $k$ is an archimedean field, then for $\epsilon > 0$, one has complex analytic spaces and for $\varepsilon = 0$ one finds a Berkovich space of non-archimedean nature. 

Hence, the hybrid fields can link archimedean and non-archimedean phenomena.   
\end{rem}

Poineau introduced the notion of flow which allows to define an equivalence relation mandatory to the construction of a hybrid compactification. Let us present its definition.

\begin{defi}
Let $\varepsilon \in [0,1]$, then we define :
\begin{center}
$I_\varepsilon :=
		\begin{cases}
        [0, +\infty[ & \mathrm{if}~ \varepsilon = 0\\
        [0, \frac{1}{\varepsilon}] & \mathrm{else.}    
        \end{cases}$
\end{center}
Let us denote $I^*_\varepsilon := I_\varepsilon \backslash \lbrace 0 \rbrace$.

Furthermore, if $S$ is a $k_{hyb}$-analytic space, then with the projection $\pr : S \rightarrow \mathcal{M}(k_{hyb})$, for all $x\in S$, we define $I_x := I_{\pr(x)}$.
\end{defi}

\begin{lem}
Let $x \in \ana^{n,an}_{k_{hyb}}$ and $\alpha \in I_x$, then the application :
\begin{center}
$P \in k[T_1, \cdots, T_n] \mapsto |P(x)|^\alpha \in \R_{\geq 0}$ 
\end{center}
defines a point of $\ana^{n,an}_{k_{hyb}}$ which we denote by $x^\alpha$. We have $\pr(x^\alpha) = \alpha ~ \pr(x)$.

Furthermore, if $\alpha \in I^*_x$, then the fields $\mathcal{H}(x)$ and $\mathcal{H}(x^\alpha)$ are isomorphic.
\end{lem}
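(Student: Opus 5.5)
We need to check three things: that $x^\alpha$ is a bounded multiplicative seminorm on $k[T_1,\dots,T_n]$ for the hybrid norm, that $\pr(x^\alpha)=\alpha\,\pr(x)$, and that $\mathcal{H}(x)\cong\mathcal{H}(x^\alpha)$ when $\alpha>0$.

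\textbf{Seminorm axioms.} Write $\varepsilon=\pr(x)$. The conditions $|0|=0$, $|1|=1$ and multiplicativity are immediate for $|\cdot|_x^\alpha$.

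For the triangle inequality, I split into two cases according to the nature of $|\cdot|_x$.
\begin{itemize}
\item If $|\cdot|_x$ is non-archimedean (for instance $\varepsilon=0$, or $k$ non-archimedean), then $|P+Q|_x^\alpha\le\max(|P|_x,|Q|_x)^\alpha$. This holds for every $\alpha\ge 0$, so $I_0=[0,+\infty[$ is allowed.
\item If $|\cdot|_x$ is archimedean and $\varepsilon>0$, I use the theorem of Gelfand--Mazur/Ostrowski type. The restriction to $k$ is $|\cdot|^\varepsilon$, and in the archimedean case $x$ comes from a complex point evaluated with $|\cdot|_\infty^\varepsilon$. Then $|\cdot|_x^\alpha=|\cdot|_\infty^{\alpha\varepsilon}$ with $\alpha\varepsilon\le 1$, and it is classical that $|\cdot|_\infty^s$ satisfies the triangle inequality for $s\le 1$, since $(a+b)^s\le a^s+b^s$. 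This is precisely why $I_\varepsilon=[0,1/\varepsilon]$.
\item If $k$ is non-archimedean, the non-archimedean case applies for all $\varepsilon$, and $\alpha\varepsilon\le 1$ is still needed only for boundedness.
\end{itemize}

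For boundedness, on $a\in k$ we have $|a|_x^\alpha=|a|^{\alpha\varepsilon}$. Since $\alpha\varepsilon\in[0,1]$ (or equals $0$ when $\varepsilon=0$), the inequality $|a|^{\alpha\varepsilon}\le\max(|a|,1)=\|a\|_{hyb}$ holds, with the case $a=0$ checked separately. The same computation shows that the restriction to $k$ is $|\cdot|^{\alpha\varepsilon}$, hence $\pr(x^\alpha)=\alpha\,\pr(x)$. For $\alpha=0$ we get the trivial absolute value on the domain $k[T]/\ker$, extended by $0$ on the kernel; this is still multiplicative and bounded.

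\textbf{Residue fields.} For $\alpha>0$, the seminorms $|\cdot|_x$ and $|\cdot|_x^\alpha$ have the same kernel $\mathfrak{p}$, so they define absolute values on the same fraction field $\mathrm{Frac}(k[T]/\mathfrak{p})$. These two absolute values are powers of one another, so they have the same Cauchy sequences and the same null sequences. Hence the two completions are canonically isomorphic as fields (the identity extends), which gives $\mathcal{H}(x)\cong\mathcal{H}(x^\alpha)$, with the absolute value raised to the power $\alpha$.

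The main subtlety is the triangle inequality in the archimedean fibre. It requires identifying archimedean points with $|\cdot|_\infty^{\varepsilon}$ composed with evaluation, which relies on Ostrowski's theorem together with Gelfand--Mazur applied to $\mathcal{H}(x)$.
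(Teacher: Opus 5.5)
Your proposal is correct. Note that the paper gives no proof of this lemma: it is quoted from Section~2 of Poineau's article, so there is no argument in the text to compare with.

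Your verification goes through as written.
\begin{itemize}
\item Multiplicativity and the normalisations are immediate.
\item Boundedness and $\pr(x^\alpha)=\alpha\,\pr(x)$ follow from $|a|_x^\alpha=|a|^{\alpha\varepsilon}$ with $\alpha\varepsilon\in[0,1]$.
\item The convention $0^0=0$ you use for $\alpha=0$ is the right one, indeed the only one that keeps $|0|=0$.
\item For $\alpha>0$, the kernels coincide and $|\cdot|_x$, $|\cdot|_x^\alpha$ have the same Cauchy and null sequences. This gives $\mathcal{H}(x)\cong\mathcal{H}(x^\alpha)$, isometric after raising to the power $\alpha$.
\end{itemize}

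One small correction in the archimedean fibre. Writing $|\cdot|_x^\alpha=|\cdot|_\infty^{\alpha\varepsilon}$ presumes that $|\cdot|$ on $k$ is the restriction of $|\cdot|_\infty$, which holds for $k=\C$. In general, Ostrowski gives $\mathcal{H}(x)\cong\R$ or $\C$ with $|\cdot|_\infty^{s}$, where $2^{s}=|2|_x=|2|^{\varepsilon}\le 2^{\varepsilon}$. Hence $s\alpha\le\alpha\varepsilon\le 1$, and your subadditivity argument applies unchanged.

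If you want to avoid Gelfand--Mazur and the archimedean/non-archimedean case split, there is an elementary uniform argument. For integers $m$ one has $|m|_x=|m|^{\varepsilon}\le m^{\varepsilon}$. Expanding $(a+b)^N$ and using concavity of $t\mapsto t^{\varepsilon}$ gives $|a+b|_x^N\le\sum_j\binom{N}{j}^{\varepsilon}|a|_x^j|b|_x^{N-j}\le (N+1)^{1-\varepsilon}\bigl(|a|_x^{1/\varepsilon}+|b|_x^{1/\varepsilon}\bigr)^{N\varepsilon}$. Letting $N\to\infty$ shows that $|\cdot|_x^{1/\varepsilon}$ satisfies the triangle inequality, hence so does $|\cdot|_x^{\alpha}$ for every $\alpha\le 1/\varepsilon$. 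For $\varepsilon=0$, the same expansion with $|m|_x\le 1$ gives the ultrametric inequality, so every $\alpha\ge 0$ is allowed. This works for an arbitrary valued field $k$ without identifying the archimedean points.

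Your route has a different advantage: it shows explicitly that on an archimedean fibre the flow only rescales the exponent in $|P(z)|_\infty^{s}$. That is the picture the paper relies on later, for instance $|f'_{x^\alpha}(z^\alpha)|=|f'_x(z)|^\alpha$.
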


We may now define the flow:

\begin{defi}
Let 
\begin{center}
$D(\ana^{n,an}_{k_{hyb}}) := \bigcup_{x \in \ana^{n,an}_{k_{hyb}}} \lbrace x\rbrace \times I_x^\alpha \subset \ana^{n,an}_{k_{hyb}} \times \R_{> 0}$.
\end{center}
Then the flow is the application :
\begin{center}
$\Phi : \begin{cases} 
		D(\ana^{n,an}_{k_{hyb}}) &\rightarrow \ana^{n,an}_{k_{hyb}},\\
		(x,\alpha)&\mapsto x^\alpha.
		\end{cases}$
\end{center}
\end{defi}

\begin{prop}
The flow is a continuous and open application.
\end{prop}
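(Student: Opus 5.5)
We need to prove that the flow $\Phi : D(\ana^{n,an}_{k_{hyb}}) \rightarrow \ana^{n,an}_{k_{hyb}}$, $(x,\alpha)\mapsto x^\alpha$, is continuous and open. We treat the two properties separately; in both we use that the topology on $\ana^{n,an}_{k_{hyb}}$ is the coarsest making all evaluations $x\mapsto |P(x)|$ continuous, for $P\in k[T_1,\dots,T_n]$.

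\textbf{Continuity.} It suffices to show that for every $P$ the map $(x,\alpha)\mapsto |P(x^\alpha)| = |P(x)|^\alpha$ is continuous on $D(\ana^{n,an}_{k_{hyb}})$. This map is the composite of $(x,\alpha)\mapsto (|P(x)|,\alpha)$, which is continuous, with $(t,\alpha)\mapsto t^\alpha$ on $\R_{\geq 0}\times\R_{>0}$. The latter is jointly continuous, including at $t=0$, because $\alpha$ stays positive.

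\textbf{Openness: reductions.} Since $\Phi(x,\alpha)=\Phi(x^\alpha,1)$ and $(x^\alpha)^\beta=x^{\alpha\beta}$, it suffices to show that $\Phi(W)$ is a neighbourhood of $x^{\alpha_0}$ for every point $(x_0,\alpha_0)$ and every open $W$ containing it. We may shrink $W$ to a product $(V\times ]\alpha_0-\delta,\alpha_0+\delta[)\cap D$, where $V$ is a basic open
\[ V=\{x : a_i<|P_i(x)|<b_i,\ i=1,\dots,m\}. \]

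\textbf{Openness: candidate neighbourhood.} Set $y_0=x_0^{\alpha_0}$. The natural candidate is
\[ V'=\{y : a_i^{\alpha_0}<|P_i(y)|<b_i^{\alpha_0}\}\cap \pr^{-1}(J), \]
with $J$ a small interval around $\pr(y_0)$. For $y\in V'$ we take $\alpha=\alpha_0$ and try $x=y^{1/\alpha_0}$. This requires $1/\alpha_0\in I_y$, i.e.\ $\pr(y)\leq\alpha_0$, or equivalently $\pr(y^{1/\alpha_0})\leq 1$. Then $x\in V$ and $\Phi(x,\alpha_0)=y$.

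\textbf{Openness: the boundary constraint.} The main obstacle is exactly this constraint $\pr(y)/\alpha_0\leq 1$. If $\pr(x_0)<1$, then $\pr(y_0)<\alpha_0$, and we choose $J$ so that $\pr(y)<\alpha_0$ on $V'$. If $\pr(x_0)=1$, there are two subcases.
\begin{itemize}
\item If $\alpha_0<1/\pr(x_0)$ is not at the edge, we instead use a varying exponent $\alpha=\pr(y)$, given $\pr(y)>0$ near $y_0$ when $\pr(y_0)>0$. Then $x=y^{1/\alpha}$ lies in the fiber over $1$, and continuity of $(y,\alpha)\mapsto y^{1/\alpha}$ gives $x\in V$ and $\alpha$ close to $\alpha_0$ for $y$ near $y_0$.
\item The case $\pr(y_0)=0$ is handled with $\alpha=\alpha_0$, since then the constraint is automatic near $y_0$.
\end{itemize}
The remaining delicate case is $\pr(x_0)\in]0,1]$ with $\alpha_0$ extremal. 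There we combine both choices: set $\alpha=\max(\alpha_0\,\cdot$ something$,\ \pr(y))$, or more simply $\alpha(y)=\pr(y)\alpha_0/\pr(y_0)$, which is continuous near $y_0$ and gives $x=y^{1/\alpha(y)}$ with $\pr(x)=\pr(x_0)\leq 1$. Continuity of the inverse flow in $y$ then shows $x\in V$ for $y$ close to $y_0$. Choosing this uniform rescaling throughout, whenever $\pr(y_0)>0$, avoids most case distinctions.
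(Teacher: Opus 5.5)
Your argument is correct. The paper does not prove this statement itself; it only cites Proposition~2.10 of Poineau. So your proposal is a self-contained replacement for that citation rather than a variant of an argument in the paper.

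Continuity follows exactly as you say from $|P(x^\alpha)|=|P(x)|^\alpha$ and joint continuity of $(t,\alpha)\mapsto t^\alpha$ on $\R_{\geq 0}\times\R_{>0}$. For openness, the content of your last paragraph is the right idea. For each $(x_0,\alpha_0)\in D$ you construct a continuous local section $s$ of $\Phi$ near $y_0=x_0^{\alpha_0}$ with $s(y_0)=(x_0,\alpha_0)$:
\begin{itemize}
\item If $\pr(x_0)>0$, take $s(y)=\bigl(y^{\pr(x_0)/\pr(y)},\,\pr(y)/\pr(x_0)\bigr)$ on the open set $\{\pr>0\}$.
\item If $\pr(x_0)=0$, take $s(y)=(y^{1/\alpha_0},\alpha_0)$ on the open set $\{\pr<\alpha_0\}$.
\end{itemize}
Then $s^{-1}(W)\subset\Phi(W)$ is an open neighbourhood of $y_0$.

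I would write the proof in exactly this form. State explicitly the two membership checks: $\pr(x_0)/\pr(y)\in I_y^*$, which holds since $\pr(x_0)\le 1$, and $s(y)\in D$, which reduces to $\pr(y)\le 1$. Continuity of $s$ then follows from continuity of $\Phi$ and of $\pr$.

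Your intermediate case analysis should be dropped, because it is muddled:
\begin{itemize}
\item The fixed exponent $\alpha=\alpha_0$ already handles every case with $\pr(x_0)<1$, including $\alpha_0$ extremal, since then $\pr(y_0)<\alpha_0$.
\item The bullet ``$\pr(y_0)=0$'' cannot occur under $\pr(x_0)=1$, since then $\pr(y_0)=\alpha_0$.
\item The only case genuinely needing a varying exponent is $\pr(x_0)=1$, $\alpha_0<1$.
\item The half-formed ``$\max(\alpha_0\cdot\text{something},\pr(y))$'' should be removed.
\end{itemize}
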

For the proof, we may refer to the proposition 2.10 of Poineau \cite{PoineauCompactificationHybride}.

We can now define the notion of a point's trajectory and a set's trajectory.

\begin{defi}
Let $x \in \ana^{n,an}_{k_{hyb}}$, then the trajectory of the point $x$ is the set $T(x)$ defined by :
\begin{center}
$T(x) := \Phi(x, I_x^*) = \lbrace x^\alpha, \alpha \in I_x^*\rbrace.$ 
\end{center}
\end{defi}

\begin{rem}
Let $x, y\in \ana^{n,an}_{k_{hyb}}$. If $y \in T(x)$ then $T(y) = T(x)$. 
\end{rem}

This result will allow us to define an equivalence relation by using the points' trajectories.

We can further define the set's trajectory. 
\begin{defi}\label{def trajectoire ensemble}
Let $V$ be a subset of $\ana^{n,an}_{k_{hyb}}$, then the trajectory of $V$ is the set
\begin{center}
$T(V) := \bigcup_{x\in V} T(x) \subset \ana^{n,an}_{k_{hyb}}$.
\end{center}

If $V = T(V)$, then $V$ will be said to be flowing. 

Furthermore, if $V, V'$ are two subsets of $\ana^{n,an}_{k_{hyb}}$, then $T(V\cup V') = T(V) \cup T(V')$ and $T(V\cap V') = T(V) \cap T(V')$.
\end{defi}

\begin{defi}\label{flowing k hyb -analytic space}
A flowing local $k_{hyb}$-analytic model is the data of an integer $n$, a flowing open $U \subset \ana^{n,an}_{k_{hyb}}$ and a closed analytic subset of $U$.

A flowing $k_{hyb}$-analytic space is a $k_{hyb}$-analytic space obtained by gluing flowing local $k_{hyb}$-analytic models along flowing open subsets.
\end{defi}

\subsection{Construction of the hybrid compactification}\label{constructionHybridCompactification}
In this part, we present the construction of a hybrid compactification, we are following the sections 3 and 4 of Poineau's article \cite{PoineauCompactificationHybride}. We present the main results and we often refer to the article to see the proofs.

First, the construction only works on an open of $X^{hyb}$ where $X$ is a variety on $k$ and $X^{hyb}$ means the analytification $X$ on $k_{hyb}$. The goal is here to remove a generic fiber from $X^{hyb}$.

M. Raynaud (\cite{RaynaudGeometrieRigide}), P. Berthelot (\cite{BerthelotLivre}) , V. Berkovich (\cite{BerkovichSchemaFormel1}, \cite{BerkovichSchemaFormel2}) then A. Thuillier (\cite{ThuillierGeometrieToroidale}) noticed that the non-archimedean spaces can be used to define a notion of a generic fiber on formal schemes. As in the section 3 of Poineau \cite{PoineauCompactificationHybride}, we present Thuillier's construction.

Let $\mathcal{X}$ be a formal scheme on $k_0$, which is the notation for $k$ trivially valued, which is locally algebraic. To this formal scheme, we associate a generic fiber $\mathcal{X}^\beth$ which is a $k_0$-analytic space and an application $r_{\mathcal{X}} : \mathcal{X}^\beth \rightarrow \mathcal{X}_s$ which is anticontinuous i.e. the inverse image of an open is closed.

We present the construction in the affine case, but it exists in a more general setting.

Let $\mathcal{X} = X$ be an affine variety, $X = \Spec A$. Then,
\begin{center}
$X^\beth = \mathcal{M}(A)$
\end{center}
where $A$ is trivially valued. The application $r_X : \mathcal{M}(A) \rightarrow \Spec A$ is the reduction such that for $x\in \mathcal{M}(A),$
\begin{center}
$r_X(x) = \lbrace a\in A, |a(x)| <1 \rbrace$.
\end{center}

This allows us to define the part which will be the the boundary of the hybrid compactification.

\begin{defi}
Let $X$ be a $k$-variety, then let us define
\begin{center}
$X_\infty := X^{an}_0 \backslash X^\beth$
\end{center}
it is an open of $X^{an}_0$. Hence, it is a $k_0$-analytic space.
\end{defi}

Let us look at some examples.
\begin{ex}\label{exemple de X infty}
If $X = \ana^1_k$, let us denote $\eta_{a,r} \in \ana^{1,an}_{k_0}$ the semi-norm $P = \sum a_k(T-a)^k \mapsto \mathrm{max}|a_k|_0 r^k$. As $k_0$ is trivially valued, $\eta_{a,r} \leq 1 \iff \eta_{a,r}(T-a) \leq 1 \iff \eta_{a,r}(T) \leq 1 \iff r\leq 1$. Hence,

\begin{center}
$(\ana^1_k)_\infty = \lbrace x\in \ana^{1,an}_{k_0}, \exists P\in k[T] |P(x)| >1\rbrace = \lbrace \eta_{0,r}, r\in \R_{>1}\rbrace$.
\end{center}

Equally, if $X = \mathbf{G}_{m,k}$ then

\begin{align*}
(\mathbb{G}_{m,k})_\infty &= \lbrace x\in \mathbb{G}^{an}_{m,k_0}, \exists P\in k[T, T^{-1}] |P(x)| >1\rbrace \\
&= \lbrace x\in \mathbb{G}^{an}_{m,k_0}, \mathrm{max} (|T(x)|, |T^{-1}(x)|) >1\rbrace \\
&= \lbrace \eta_{0,r}, r\in \R_{>0}, r\neq 1\rbrace.
\end{align*}
\end{ex}

Now that we have defined $X_\infty$, we may define the $k_{hyb}$-analytic space that will be quotiented to get the compactification.

\begin{defi}
Let $X$ be a $k$ variety, then we define
\begin{center}
$X^+ := X^{hyb} \backslash X^\beth$.
\end{center}
It is an open of $X^{hyb}$ and so it is a $k_{hyb}$-analytic space. One may remark that $X^+_0 = X_\infty$.
\end{defi}

We also have some results about the morphisms.

\begin{prop}
Let $X, Y$ be two $k$-varieties and $f : X \rightarrow Y$ a proper morphism. Then the analytification $f^{hyb} : X^{hyb} \rightarrow Y^{hyb}$ is proper and restricts to a morphism $f^+ : X^+ \rightarrow Y^+$.
\end{prop}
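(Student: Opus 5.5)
Since properness of the analytification is established in Poineau's work, the natural plan splits the statement into two parts. The first is that $f^{hyb}$ is proper. The second is that $f^{hyb}$ maps $X^+$ into $Y^+$, equivalently that $(f^{hyb})^{-1}(Y^\beth) \subset X^\beth$, so that the restriction is well defined. I will handle them in that order.

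\emph{Properness of $f^{hyb}$.} I first reduce to the case where $X$ and $Y$ are affine over $Y$. Concretely, I cover $Y$ by affine opens $V_j$; properness is local on the target, and $(V_j)^{hyb}$ is an open cover of $Y^{hyb}$. By Chow's lemma, a proper morphism is dominated by a projective one $X' \to X \to Y$ with $X' \to X$ surjective. It therefore suffices to treat $\p^N_{V} \to V$ and closed immersions. A closed immersion analytifies to a closed immersion, hence is proper, by the second step of the construction recalled above. For $\p^{N,hyb}_V \to V^{hyb}$, I use that $\p^{N,hyb}$ is the quotient of compact pieces $\{|T_i| \le 1\}$ covering it, one for each standard chart. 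The preimage of a compact $K \subset V^{hyb}$ is then a finite union of sets of the form $K \times \overline{D}^N$, and these are compact because $\ana^{n,an}$ is locally compact with closed balls over compacta compact. Finally, since the image of a compact set under the continuous map $X'^{hyb}\to X^{hyb}$ is compact and surjectivity is preserved after analytification, I conclude that $f^{hyb}$ is proper (topologically proper and Hausdorff).

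\emph{Restriction to $X^+$.} I need to show that for $x \in X^{hyb}$ with $f(x) \in Y^\beth$ we have $x \in X^\beth$. Since $X^\beth$ lives in the fibre over $0$, I first check that $\pr(x)=0$, which is automatic because $f$ commutes with $\pr$. In the trivially valued fibre I then apply the valuative criterion. A point $x \in X_0^{an}$ corresponds to a morphism $\Spec \mathcal{H}(x) \to X$. Saying that $x\in X^\beth$ means that this morphism extends to $\Spec \mathcal{H}(x)^\circ$, where $\mathcal{H}(x)^\circ$ is the valuation ring. If $f(x) \in Y^\beth$, then the composite extends to $\Spec \mathcal{H}(x)^\circ \to Y$. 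The valuative criterion of properness then gives a lift to $X$, and so $x \in X^\beth$. In the affine case this is exactly the condition $|a(x)|\le 1$ for all $a\in A$ that defines $\mathcal{M}(A)$ with $A$ trivially valued.

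\emph{Main obstacle.} I expect the main difficulty to be the identification $X^\beth = \{x : \Spec\mathcal{H}(x)\to X \text{ extends to } \mathcal{H}(x)^\circ\}$ for non-affine $X$. This requires Thuillier's gluing of $\beth$ over affine covers. Given that identification, the valuative criterion finishes the argument. Properness of $f^+$ itself then follows because $X^+ = (f^{hyb})^{-1}(Y^+)$ holds as an equality by the same argument, and the base change of a proper map to an open subset of the target is proper.
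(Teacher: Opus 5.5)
Your proof is correct. The paper gives no argument of its own: it only refers to Proposition~4.2 of \cite{PoineauCompactificationHybride}. Your text is therefore a self-contained version of what that citation packages, and it has two ingredients.
\begin{itemize}
\item The first is the general fact that the analytification of a proper morphism is topologically proper. You re-derive it via Chow's lemma, the compactness of closed polydiscs of $\ana^{m+N,an}_{k_{hyb}}$ above a compact subset of $V^{hyb}$, and the preservation of surjectivity under analytification.
\item The second is the valuative description of $X^\beth$ coming from Thuillier's construction: $X^\beth$ is the set of $x\in X_0^{an}$ for which $\Spec\mathcal{H}(x)\to X$ extends to $\Spec\mathcal{H}(x)^\circ$. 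You combine this with the existence part of the valuative criterion. That criterion holds for arbitrary valuation rings, so it applies to $\mathcal{H}(x)^\circ$, including when $\mathcal{H}(x)$ is trivially valued.
\end{itemize}
The citation buys brevity. Your version makes the mechanism visible and gives more than the statement asks: the identity $(f^{hyb})^{-1}(Y^\beth)=X^\beth$, hence $X^+=(f^{hyb})^{-1}(Y^+)$ and properness of $f^+$.

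A few small points to tidy.
\begin{itemize}
\item The inclusion $X^\beth\subset(f^{hyb})^{-1}(Y^\beth)$ is plain functoriality, valid for any morphism. It is not ``the same argument'' as the valuative-criterion step.
\item The sets you write as $K\times\overline{D}^N$ are not topological products in Berkovich geometry. Their compactness should be justified by embedding the affine $V$ into some $\ana^m$ and seeing them as closed subsets of a closed polydisc.
\item The phrase ``affine over $Y$'' should simply say that $Y$ may be assumed affine.
\end{itemize}
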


For the proof, we may refer to the proposition 4.2 of Poineau \cite{PoineauCompactificationHybride}.

Finally, to define the hybrid compactification, it remains to define an equivalence relation.

\begin{defi}
Let $X$ be a $k$-variety and let $x,y \in X^{hyb}$. We say that $x,y$ are equivalent through the flow if $T(x) = T(y)$ and we denote it $x \Phi y$.
\end{defi}

\begin{lem}
Let $X$ be a $k$-variety, then $\Phi$ is an equivalence relation.
\end{lem}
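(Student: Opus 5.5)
We have to show that the relation $x \Phi y \iff T(x) = T(y)$ on $X^{hyb}$ is reflexive, symmetric and transitive. Since it is defined as equality of the sets $T(x)$ and $T(y)$, and equality of sets is an equivalence relation, the formal verification is immediate. First, $T(x)=T(x)$ gives reflexivity. Second, $T(x)=T(y)$ implies $T(y)=T(x)$, giving symmetry. Third, $T(x)=T(y)$ and $T(y)=T(z)$ give $T(x)=T(z)$, which is transitivity.

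The one point to check is that $T(x)$ is well defined for every $x\in X^{hyb}$ when $X$ is an arbitrary $k$-variety rather than $\ana^n_k$. The flow was only defined on $\ana^{n,an}_{k_{hyb}}$, so I would extend it as follows.

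First, cover $X$ by affine opens $U_i$ and embed each $U_i$ as a closed subscheme of some $\ana^{n_i}_k$. For $x\in U_i^{hyb}$ and $\alpha\in I_x$, the seminorm $P\mapsto |P(x)|^\alpha$ still vanishes on the defining ideal. Hence $x^\alpha$ lies in $U_i^{hyb}$, and it is described intrinsically as the seminorm $|\cdot|_x^\alpha$ on $\mathcal{O}(U_i)$.

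Next, the sets $U_i^{hyb}$ are flowing. Indeed, $x\in U_i^{hyb}$ is characterised by the kernel of $|\cdot|_x$ on local functions, and raising to the power $\alpha>0$ does not change this kernel. So $x^\alpha$ does not depend on the chosen chart, and the local flows glue to a flow on $X^{hyb}$.

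With the flow defined globally, $T(x)=\Phi(x,I_x^*)$ makes sense for all $x$, and the three properties follow as above. I also note that, by the earlier remark ($y\in T(x)\Rightarrow T(y)=T(x)$), the equivalence classes are exactly the trajectories. The only mild obstacle is this chart-independence of $x^\alpha$, which reduces to the fact that the construction is intrinsic in the seminorm.
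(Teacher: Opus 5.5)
Your proof is correct. The paper gives no argument of its own and only refers to Lemma 4.8 of \cite{PoineauCompactificationHybride}. With the definition as stated there, $x\Phi y \iff T(x)=T(y)$, the lemma is indeed a tautology, as you observe.

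What you add is the check that the flow, which the paper only defines on $\ana^{n,an}_{k_{hyb}}$ (and implicitly on flowing spaces obtained by gluing), makes sense on $X^{hyb}$ for an arbitrary $k$-variety. That check is sound. However, the phrase ``the sets $U_i^{hyb}$ are flowing'' presupposes the global flow you are constructing. What you actually need, and what your kernel argument gives, is twofold. First, each overlap $U_i^{hyb}\cap U_j^{hyb}$, covered by loci $\{|g|\neq 0\}$ of principal opens, is stable under the local flows. Second, the two local flows agree there, because $|a/g^m|_x^\alpha=|a|_x^\alpha/|g|_x^{m\alpha}$.

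Note also where the real mathematical content lies: in the remark $y\in T(x)\Rightarrow T(y)=T(x)$, not in the equivalence-relation check. That remark follows from $x^1=x$, $(x^\alpha)^\beta=x^{\alpha\beta}$, and $I_{x^\alpha}=\alpha^{-1}I_x$ for $\alpha\in I_x^*$. You invoke it (together with $y\in T(y)$) only for your closing identification of the equivalence classes with trajectories, which is correct. It would become the actual substance of the lemma if the relation were defined instead by ``$y\in T(x)$''.
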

For the proof, we may refer to the lemma 4.8 of \cite{PoineauCompactificationHybride}.

We can now define the hybrid compactification :
\begin{defi}\label{immersion de Xan dans compactification}
Let $X$ be a $k$-variety, the set
\begin{center}
$X^\urcorner := X^+/\Phi$
\end{center}
is called the valuative compactification of $X$ or hybrid compactification. We equip this set with the quotient topology, so in particular the image of a subset $V$ of $X^+$ is open if and only if $T(V)$ is open in $X^+$.

Let us denote $q : X^+ \rightarrow X^\urcorner$ the quotient application. 

Let us denote $\delta X := q(X_\infty)$ the boundary of the compactification.

Finally, let us denote $i$ the following immersion:
\begin{center}
$i : (X \otimes_k \hat{k})^{an} = X_1^+ \rightarrow X^+ \rightarrow X^\urcorner$,
\end{center} 
where $\hat{k}$ is the completion of $k$.
\end{defi}

\begin{prop}\label{prop section continue partie archi} Lemma 4.11 and 4.15 of \cite{PoineauCompactificationHybride}.

Let $k$ be a field equipped with a non trivial absolute value. Let $X$ be a $k$ variety, then we may consider its analytification $X^{an}$ with respect to this norm on $k$.

And $i : X^{an} \rightarrow X^\urcorner$ is an homeomorphism into its image.
\end{prop}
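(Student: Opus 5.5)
The plan is to factor $i$ as the composite of the inclusion $j : X^{an} = X^+_1 \hookrightarrow X^+$ and the quotient map $q : X^+ \to X^\urcorner$, and to prove three things in turn: $i$ is continuous, $i$ is injective, and $i$ is open onto its image. Continuity is formal, since $X^+_1 = \pr^{-1}(1)$ carries the subspace topology from $X^{hyb}$, hence from the open subset $X^+$, and $q$ is continuous by definition of the quotient topology.

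For injectivity I take $x,y \in X^+_1$ with $T(x) = T(y)$. Then $y = x^\alpha$ for some $\alpha \in I^*_x = ]0,1]$, and $\pr(y) = \alpha\,\pr(x) = \alpha$. Since $\pr(y) = 1$, this forces $\alpha = 1$, so $y = x$. I also record the stronger fact that each trajectory of a point of $X^+_{>0}$ meets $X^+_1$ in exactly one point. Indeed, for $z$ with $\pr(z) = \varepsilon > 0$, the point $z^{1/\varepsilon}$ lies in $X^+_1$ because $1/\varepsilon \in I_z$, and it is the only such point by the computation above. Note also that trajectories of points in $X_\infty = X^+_0$ stay in $X^+_0$, so they never meet $X^+_1$.

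The main step is openness onto the image. I take an open $V \subset X^+_1$ and must show that $i(V)$ is open in $i(X^+_1)$. By definition of the quotient topology, it suffices to show that $T(V)$ is open in $X^+$, because then $q(T(V)) = q(V)$ is open in $X^\urcorner$. By the remark above,
\begin{equation*}
T(V) = \Phi\bigl(\{(x,\alpha) : x \in V,\ \alpha \in ]0,1]\bigr)\big|_{\text{inside } X^+_{>0}}
= \{ z \in X^+_{>0} : z^{1/\pr(z)} \in V \}.
\end{equation*}
Now consider the map $r : X^+_{>0} \to X^+_1$ given by $z \mapsto z^{1/\pr(z)}$. On local models in $\ana^{n,an}_{k_{hyb}}$ it reads $|P(r(z))| = |P(z)|^{1/\pr(z)}$, with $\pr$ continuous and positive on $X^+_{>0}$. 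Hence each function $z \mapsto |P(r(z))|$ is continuous, and since the topology is the coarsest one making these evaluations continuous, $r$ is continuous. The open subset $X^+_{>0} = \pr^{-1}(]0,1])$ of $X^+$ is flowing and stable under the flow, so the gluing along flowing opens is compatible with $r$. Therefore $T(V) = r^{-1}(V)$ is open in $X^+_{>0}$, hence open in $X^+$.

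Putting these together, $i$ is a continuous injection sending open sets to open sets of $X^\urcorner$, and it is even an open embedding. It is therefore a homeomorphism onto its image. The only delicate point I expect is the continuity of $r$ near points where $\pr$ is small, together with checking compatibility with the gluing of local models. Both are handled by working on $X^+_{>0}$ only, where $1/\pr$ is continuous, and by the fact that the charts are flowing opens, as in Definition \ref{flowing k hyb -analytic space}.
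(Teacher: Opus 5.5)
Your proof is correct. The paper gives no argument of its own here: the statement is quoted from Lemmas 4.11 and 4.15 of Poineau's article. So there is no in-text proof to match, and what you have written is a self-contained proof using only the facts about the flow recalled in Section 2.

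The inputs you actually use are:
\begin{itemize}
\item $X^\beth$ lies in the fibre over $0$, so $X^+_{>0}=\pr^{-1}((0,1])$ is an open subset of $X^+$ containing $X^+_1$;
\item the identities $\pr(x^\alpha)=\alpha\,\pr(x)$ and $(x^\alpha)^\beta=x^{\alpha\beta}$, which give both injectivity and the formula $T(V)=\{z\in X^+_{>0} : z^{1/\pr(z)}\in V\}$;
\item the fact, stated explicitly in the paper's definition of $X^\urcorner$, that $q(V)$ is open exactly when $T(V)$ is open in $X^+$;
\item continuity of the retraction $r(z)=z^{1/\pr(z)}$.
\end{itemize}

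For the last point, you could instead write $r(z)=\Phi(z,1/\pr(z))$ and invoke the continuity of the flow (Poineau's Proposition 2.10, recalled in the paper) rather than checking the generating functions $|P|$ by hand. Your direct check has the small advantage of showing that nothing depends on whether $k$ is archimedean.

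Compared with the bare citation, your argument also gives a sharper statement: $i$ is an open embedding with image $q(X^+_{>0})$. Since trajectories never mix the fibre over $0$ with the fibres over $(0,1]$, this yields the decomposition $X^\urcorner=i(X^{an})\sqcup\delta X$, with $\delta X$ closed.

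One cosmetic point: the restriction symbol in your display is superfluous, since $T(V)=\Phi(V\times(0,1])$ already lies in $X^+_{>0}$.
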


\begin{ex}
We saw at the example \ref{exemple de X infty} that $(\ana^1_k)_\infty = \lbrace \eta_{0,r}, r>1\rbrace$, so $\delta (\ana^1_k)$ is a unique point. And $(\mathbb{G}_{m,k})_\infty = \lbrace \eta_{0,r}, r>0, r\neq 1\rbrace$, so $\delta (\mathbb{G}_{m,k})$ is two points. 
\end{ex}

Finally, $X^\urcorner$ has many topological properties.
\begin{prop}
Let $X$ be a $k$-variety. Then, $X^\urcorner$ is Hausdorff and compact. Furthermore, $X^{an}$ is dense in $X^\urcorner$. 

Furthermore, $X^\urcorner$ is locally path-connected and if $X$ is connected, $X^\urcorner$ is path-connected.

If $k$ is countable, then $X^\urcorner$ is metrizable.
\end{prop}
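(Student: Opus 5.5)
The proposition bundles several topological facts about $X^\urcorner = X^+/\Phi$: Hausdorff and compact, $X^{an}$ dense, local path-connectedness (and path-connectedness when $X$ is connected), and metrizability for countable $k$. I would reduce to the affine case and, inside it, to a compact "slice" of $X^+$ that meets every trajectory.

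\textbf{Step 1: the slice.} Cover $X$ by finitely many affine opens $U_j=\Spec A_j$ and choose generators $f_{j,1},\dots,f_{j,m_j}$ of each $A_j$. Set $N(x)=\max_{j,l}\log|f_{j,l}(x)|$, suitably restricted to the chart containing $x$. On $X^+$ the point $x$ lies outside $X^\beth$, so some generator satisfies $|f(x)|>1$, and hence $N(x)>0$. Along the flow $N(x^\alpha)=\alpha N(x)$. Therefore each trajectory $T(x)$ meets the locus
\[
S=\lbrace x : N(x)=1 \rbrace \cap \lbrace \pr(x)\leq 1\rbrace ,
\]
possibly after enlarging $S$ to allow $N(x)\geq 1$ with $\pr(x)=1$, to handle the fibre $\varepsilon=1$ where $\alpha\le 1$.

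\textbf{Step 2: compactness and Hausdorffness.} The slice $S$ is closed and bounded in finitely many $\ana^{n,an}_{k_{hyb}}$, so it is compact. Since $q|_S$ is surjective, $X^\urcorner$ is compact. For Hausdorffness, I would show that the equivalence relation $\Phi$ is closed in $X^+\times X^+$ and that $q$ is open; openness follows from the flow $\Phi$ being open, which was stated earlier. A closed relation together with an open quotient map gives a Hausdorff quotient.

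\textbf{Step 3: density of $X^{an}$.} Take a point of $\delta X$ represented by $x\in X_\infty$. I would show that $x$ is a limit of points $y$ with $\pr(y)=\varepsilon>0$ and $y\in X^+$. The fibres over $\varepsilon>0$ accumulate on the fibre over $0$; I would prove this by density of $X^{an}_\varepsilon$ near $X^{an}_0$ in $X^{hyb}$, using that $\pr$ is open for flat $X$ or arguing locally in $\ana^n$. Applying the flow sends each such $y$ to $y^{1/\varepsilon}\in X_1^+$, so $q(y)$ lies in $i(X^{an})$.

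\textbf{Step 4: connectivity and metrizability.} Local path-connectedness descends from $X^{hyb}$, which is locally path-connected by the results on Berkovich spaces over geometric base rings. If $X$ is connected, then $X^{an}$ is path-connected, and it is dense with locally path-connected closure, which gives path-connectedness of $X^\urcorner$. When $k$ is countable, $X^\urcorner$ is second countable: use opens defined by $|P|$ with $P$ having coefficients in $k$ and rational bounds. A compact Hausdorff second-countable space is metrizable by Urysohn.

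\textbf{Main obstacle.} The hardest step is proving that the relation $\Phi$ is closed. This requires controlling limits $x_n^{\alpha_n}\to y$ with $x_n\to x$, and in particular excluding $\alpha_n\to 0$ or $\alpha_n\to\infty$. I would handle this using $N$: we have $\alpha_n = N(x_n^{\alpha_n})/N(x_n)$, and $N(x_n)$ tends to $N(x)>0$ while $N(x_n^{\alpha_n})$ tends to $N(y)>0$, so $\alpha_n$ stays bounded away from $0$ and $\infty$. Positivity of $N$ here is exactly why $X^\beth$ is removed.
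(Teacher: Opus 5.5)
The paper does not prove this statement; it refers to Poineau (Propositions 4.16, 4.22, 4.23, Theorem 4.19 and Lemma 4.20 of \cite{PoineauCompactificationHybride}). Your sketch therefore has to stand on its own. Its overall architecture is reasonable and Step 4 is fine, but Steps 1--2 contain a concrete error. You claim $N>0$ on $X^+$. This is false, because removing $X^\beth$ only constrains the fibre over $0$, while $X^+_{>0}=X^{hyb}_{>0}$ and there $N$ takes every value in $[-\infty,+\infty)$. For instance, take $X=\ana^1$ with generator $T$ and the point $0\in X^{an}$.

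Consequently, the trajectory $\lbrace x^\alpha, 0<\alpha\leq 1/\pr(x)\rbrace$ of any point with $N(x^{1/\pr(x)})<1$ never meets $\lbrace N=1\rbrace$. What your slice is missing is the bounded part $\lbrace \pr=1, N\leq 1\rbrace$ of $X^{an}$. The enlargement you propose, $\lbrace \pr=1, N\geq 1\rbrace$, goes the wrong way: it is unbounded and destroys compactness. For affine $X$ the correct slice is $\lbrace N=1, \pr\leq 1\rbrace\cup\lbrace \pr=1, N\leq 1\rbrace$.

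The same oversight affects your proof that the relation is closed. When $x$ or $y$ lies in a positive fibre, $N(x_n)$ need not converge to a positive number, so you must control $\alpha_n=\pr(x_n^{\alpha_n})/\pr(x_n)$ instead. The mixed case $\alpha_n\to 0$ must then be excluded by showing that the limit would satisfy $|f|\leq 1$ for all generators and lie over $0$, i.e.\ would belong to $X^\beth$.

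The more serious gap is the reduction to affine charts, which fails for compactness. A point of $U_{j,0}^{an}$ with $N_j>0$ lies outside $U_j^\beth$, but it may still lie in $X^\beth$ because its centre is in another chart. So $\lbrace N_j=1\rbrace\cap X^{an}_0$ is not contained in $X^+$, $q$ is not defined there, and discarding those points kills compactness.

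Take $X=\p^1$. Every point of $X^{an}_0$ has a centre, so $X^+=X^{hyb}_{>0}$ and $X^\urcorner=i(X^{an})$ has empty boundary. Yet the slice $\lbrace |T|=e\rbrace$ of the chart $\ana^1$ contains $\eta_{0,e}$ over $0$, which lies in $X^\beth$ with centre $\infty$. Nor can you glue the $U_j^\urcorner$: an open immersion $U\subset X$ induces no map $U^\urcorner\to X^\urcorner$ (the boundary of $(\ana^1)^\urcorner$ is a point, that of $(\p^1)^\urcorner$ is empty).

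So for non-affine $X$ you need a global device to produce a compact subset of $X^+$ meeting every trajectory. For instance, you could embed $X$ in a proper $\bar X$ (Nagata) and use a flow-homogeneous function measuring the approach to $\bar X\setminus X$. Nothing in your sketch supplies this. The Hausdorff part, by contrast, can be repaired chartwise, since each $U_j^{hyb}$ is stable under the flow.

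Finally, Step 3 rests on $X_\infty\subset\overline{X^{hyb}_{>0}}$, which is the actual content of the density statement. Working in $\ana^n$ does not produce points of $X$. Openness of $\pr$ on $X^{hyb}$ is not among the results recalled in the paper, so this step needs its own argument, for instance Noether normalization together with openness of finite morphisms.
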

For the proofs, we may refer to the propositions 4.16, 4.22 and 4.23, to the theorem 4.19 and to the lemma 4.20 of \cite{PoineauCompactificationHybride}.

\subsection{Graded reduction and maximum modulus principle}
In this section we use M. Temkin's article \cite{TemkinReductionGradueeII}. We want to use the reduction of affinoid spaces but not necessarily with strictly affinoid spaces. To deal with the reduction in this case, Temkin used graded reductions instead of the classical ones. We recall his definitions here in order to prove a maximum modulus principle over non-archimedean spaces. 

First let us recall the definition of graded rings.
\begin{defi}
Let $G$ be an abelian group, we write it multiplicatively. A $G$ graded ring is a ring $A$ with a unity $1$ 
\begin{center}
$A = \bigoplus_{g\in G} A_g$
\end{center}
such that $A_hA_g \subset A_{gh}$ and $1 \in A_1$. 

A homomorphism of $G$ graded ring is a homomorphism $\phi : A\rightarrow B$ such that $\phi(A_g) \subset B_g, \forall g\in G$.

A graded field is a graded ring with no non-zero homogeneous ideals. It is equivalent to the condition that all non-zero homogeneous element of $A$ are invertible. 

A graded ring $A$ is called local graded ring if it has a unique maximal homogeneous ideal $m$. 

Let $K$ be a graded field. We say that a graded subring $A\subset K$ is a graded valuation ring if it is a local graded ring whose quotient field is $K$ and for any non-zero homogeneous $f\in K$, at least one of the two elements $f, f^{-1}$ belongs to $A$. Any local graded ring $B\subset K$ is dominated by a graded valuation subring of $K$. 

Let $k\subset K$ be two graded fields. We denote by $\p_{K/k} = \p_K$ the set of all graded valuation rings $\mathcal{O}$ such that $k\subset \mathcal{O}$ and the quotient field of $\mathcal{O}$ is $K$. One endows $\p_K$ with the weakest topology with respect to which the set $\lbrace \mathcal{O}\in \p_K, f\in \mathcal{O}\rbrace$ is open for any homogeneous element $f\in K$.

Given $X\subset \p_K$ and $A\subset K$, we define $X\lbrace A\rbrace = \lbrace \mathcal{O}\in \p_K, f\in \mathcal{O}$ for all homogeneous $f\in A\rbrace$.
\end{defi}

Let $k$ be a non-archimedean field, possibly trivially valued.

Let us define some graded rings constructed with Banach $k$-algebras.

\begin{defi}
Let $B$ be a Banach $k$-algebra. Let us denote by $\rho_B$ the spectral semi-norm of $B$. Then, for all positive real number $r$, we set $B_r^\circ = \lbrace x\in B, \rho_B(x)\leq r\rbrace, B_r^{\circ\circ} = \lbrace x\in B, \rho_B(x) < r\rbrace$ and $\tilde{B}_r = B_r^\circ/B_r^{\circ\circ}$.

Let 
\begin{center}
$\tilde{B} = \bigoplus_{r>0} \tilde{B}_r$.
\end{center}
We call it the reduction ring of $B$. 

Any bounded homomorphism $B \rightarrow D$ of Banach $k$-algebras induces a homomorphism $\tilde{B} \rightarrow \tilde{D}$ of $\R_{>0}$ graded $\tilde{k}$-algebras.
\end{defi}

This allows Temkin to define a reduction for the good germs.

\begin{defi}\label{Definition chi_x}
The germ corresponding to a punctural $k$-analytic space $(X,x)$ is denoted $X_x$. A germ is called good if the point $x$ admits an affinoid neighborhood in $X$. 

Let $(Y,x)$ be a punctural $k$-analytic space such that the germ is good. Let $X= M(A)$ be an affinoid neighborhood of $x$, we have a bounded morphism $\chi_x : A \rightarrow \mathcal{H}(x)$, and we set 
\begin{center}
$\widetilde{X_x} = \p_{\widetilde{\mathcal{H}(x)}}(\widetilde{\chi_x}(\tilde{A}))$.
\end{center}
It does not depend on the choice of the neighborhood $X$. 

If $f = (f_1, \cdots, f_n) \in \mathcal{O}_{X,x}$ and $r = (r_1, \cdots, r_n) \in \R^*_+$, then we define $X_x\lbrace r_i^{-1}f_i\rbrace$ as the germ corresponding to the punctured space $(Z,x)$ with $Z = \lbrace y \in X, |f_i(y)| \leq r_i\rbrace$.
\end{defi}

\begin{rem}
Temkin actually defines the reduction of germs for all germs, but we only need the definition in this case.
\end{rem}

We will need the following lemma of Temkin (lemma 4.4 \cite{TemkinReductionGradueeII}).
\begin{lem}\label{lemma Temkin}
Given a good germ $X_x$ and a family $f = (f_1, \cdots, f_n)\subset \mathcal{O}_{X,x}$, one has $\widetilde{X_x\lbrace r_i^{-1}f_i\rbrace} = \widetilde{X_x}\lbrace \tilde{f_i}\rbrace$ where $r_i = |f_i(x)|$ and $\tilde{f_i}$ is the image of $f_i$ in $\widetilde{\mathcal{H}(x)}$.
\end{lem}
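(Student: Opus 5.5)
The plan is to reduce to a single function and then prove the two inclusions separately. The first inclusion is formal; the second is where the real content lies.

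\emph{Reduction to one function.} We have $Z=\bigcap_i \{y\in X,\ |f_i(y)|\le r_i\}$, so the germ $X_x\lbrace r_i^{-1}f_i\rbrace$ is obtained by imposing the conditions one after another. On the reduction side, $\widetilde{X_x}\lbrace \tilde f_1,\dots,\tilde f_n\rbrace=\bigcap_i \widetilde{X_x}\lbrace\tilde f_i\rbrace$. The germ $X_x\lbrace r_1^{-1}f_1\rbrace$ is again good, being an affinoid domain in an affinoid neighbourhood. So an induction on $n$ reduces us to $n=1$, with $f=f_1$ and $r=|f(x)|$. Shrinking the affinoid neighbourhood $X=\mathcal{M}(A)$ of $x$, which is allowed since $\widetilde{X_x}$ does not depend on it, we may assume $f\in A$.

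Set $Z=X\lbrace r^{-1}f\rbrace=\mathcal{M}(A_Z)$ with $A_Z=A\lbrace r^{-1}T\rbrace/(T-f)$. The restriction $A\to A_Z$ is bounded and compatible with $\chi_x$, so it induces $\tilde A\to\tilde A_Z$ over $\widetilde{\mathcal{H}(x)}$. Consequently every graded valuation ring containing $\widetilde{\chi_x}(\tilde A_Z)$ also contains $\widetilde{\chi_x}(\tilde A)$, which gives $\widetilde{Z_x}\subset\widetilde{X_x}$. Moreover, on $Z$ we have $\rho_{A_Z}(f)\le r$, so $f$ defines an element of $(\tilde A_Z)_r$ whose image in $\widetilde{\mathcal{H}(x)}_r$ is $\tilde f$; this image is nonzero because $|f(x)|=r$. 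Hence every $\mathcal{O}\in\widetilde{Z_x}$ contains $\tilde f$, and we get $\widetilde{Z_x}\subset\widetilde{X_x}\lbrace\tilde f\rbrace$.

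\emph{The reverse inclusion (the main obstacle).} Let $\mathcal{O}\in\widetilde{X_x}$ with $\tilde f\in\mathcal{O}$. We must show that $\mathcal{O}$ contains $\widetilde{\chi_x}(\tilde h)$ for every $h\in A_Z$ with $\rho_{A_Z}(h)\le s$ and $|h(x)|=s$. Elements of $A_Z$ are limits of $P(f)=\sum_m a_m f^m$ with $a_m\in A$ and $\rho_A(a_m)r^m\to 0$. Choosing $P$ with $\rho_{A_Z}(h-P(f))<s$ gives $\widetilde{h(x)}=\widetilde{P(f)(x)}$. The difficulty is that $P(f)$ is only bounded by $s$ on $Z$, not on $X$, so the individual terms $a_m f^m$ need not be.

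What I would try first is to use the compatibility, due to Temkin in the affinoid case, of graded reduction with Laurent and Weierstrass domains. By this compatibility, the graded reduction of $\tilde A_Z$ is generated over the image of $\tilde A$ by $\tilde f$ and by the $\widetilde{g}$ for $g\in A$ with $\rho_{A_Z}(g)$ strictly smaller than $\rho_A(g)$. After suitably localising $\tilde A$, i.e. shrinking $X$ around $x$, one then wants to show that these extra generators are dominated, at the level of $\widetilde{\mathcal{H}(x)}$, by monomials $\tilde a\tilde f^{\,m}$ with $\tilde a\in\widetilde{\chi_x}(\tilde A)$. Each such monomial lies in $\mathcal{O}$ by assumption, and $\mathcal{O}$ is a graded valuation ring, so it would then contain $\widetilde{\chi_x}(\tilde h)$. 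Making this domination precise via a Shilov-boundary argument on $Z$ is the step I expect to be delicate. Once it is done, $\mathcal{O}\in\p_{\widetilde{\mathcal{H}(x)}}(\widetilde{\chi_x}(\tilde A_Z))=\widetilde{Z_x}$, which concludes the proof.
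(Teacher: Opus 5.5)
\textbf{There is a genuine gap: the reverse inclusion, which is the real content of the lemma, is not proved.}

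Your reduction to $n=1$ and your proof of $\widetilde{Z_x}\subset\widetilde{X_x}\lbrace\tilde f\rbrace$ are correct (with $r>0$, as implicit in the definition of $X_x\lbrace r^{-1}f\rbrace$). The paper does not prove this lemma; it only quotes Lemma 4.4 of \cite{TemkinReductionGradueeII}. So your argument has to stand on its own, and $\widetilde{X_x}\lbrace\tilde f\rbrace\subset\widetilde{Z_x}$ is left open.

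The ``compatibility'' you invoke carries no information. Polynomials $\sum a_m f^m$ with $a_m\in A$ are dense in $A_Z$, so every homogeneous element of $\tilde A_Z$ is already the class of the restriction of some $g\in A$. The claim that the classes with $\rho_{A_Z}(g)<\rho_A(g)$ lie in every $\mathcal{O}\in\widetilde{X_x}$ containing $\tilde f$ is therefore just a restatement of what must be shown. The announced ``domination via a Shilov-boundary argument'' is not carried out, and it is not the right tool. The point $x$ need not be a Shilov point of $Z$, and you need a statement valid for all graded valuation rings of $\widetilde{\mathcal{H}(x)}$ at once.

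\textbf{The missing idea is integrality.}

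First, the spectral seminorm of $A\lbrace r^{-1}T\rbrace$ is $\sum a_mT^m\mapsto\max_m\rho_A(a_m)r^m$. Hence $\widetilde{A\lbrace r^{-1}T\rbrace}=\tilde A[\tilde T]$, with $\tilde T$ homogeneous of degree $r$.

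Second, the surjection $A\lbrace r^{-1}T\rbrace\to A_Z$, $T\mapsto f$, is a finite homomorphism of $k$-affinoid algebras. Its graded reduction $\tilde A[\tilde T]\to\tilde A_Z$ is therefore integral. This is the graded analogue, available in Temkin's work, of the classical fact that finite homomorphisms of affinoid algebras induce integral maps on reductions.

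Third, apply $\widetilde{\chi_x}$. Every homogeneous $u\in\widetilde{\chi_x}(\tilde A_Z)$ of degree $s$ then satisfies $u^N+c_1u^{N-1}+\cdots+c_N=0$, with $c_i\in\widetilde{\chi_x}(\tilde A)[\tilde f]$ homogeneous of degree $s^i$.

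Finally, let $\mathcal{O}\in\widetilde{X_x}$ contain $\tilde f$, so all $c_i\in\mathcal{O}$. If $u\notin\mathcal{O}$, then $u^{-1}\in m_{\mathcal{O}}$. Dividing the equation by $u^N$ would give $1\in m_{\mathcal{O}}$, a contradiction. Hence $\mathcal{O}\supset\widetilde{\chi_x}(\tilde A_Z)$, that is, $\mathcal{O}\in\widetilde{Z_x}$. This argument needs neither a shrinking of $X$ nor any Shilov-boundary input.
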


In the case where a point of an affinoid space is in the interior, it is easy to compute this reduction and this will allow us to get the maximum modulus principle.
Let us recall the definition of the interior of an affinoid space of Berkovich (\cite{BerkovichLivre}, Definition 2.5.7).

\begin{defi}
Let $k$ be a non-archimedean value field and $X = M(\mathcal{A})$ be a $k$-affinoid space. To every point $x\in  X$, one has an associated bounded $k$-homomorphism $\chi_x : \mathcal{A} \rightarrow \mathcal{H}(x)$ as in definition \ref{Definition chi_x}. It is called inner if there is a admissible epimorphism $\pi : k\lbrace r_1^{-1}T_1, \cdots, r_n^{-1}T_n\rbrace \rightarrow \mathcal{A}$ such that $|\chi_x(\pi(T_i))|_{\mathcal{H}(x)} < r_i, \forall 1\leq i \leq n$. 

The interior of $X$, denoted $\Int(X)$ is the set of points corresponding to inner bounded homomorphisms $\chi_x : \mathcal{A} \rightarrow \mathcal{H}(x)$.
\end{defi}

\begin{prop}\label{reduction is all of p when the point is in the interior}
Let $k$ be a non-archimedean value field and $X$ be a $k$-affinoid space. Let $x \in X$ be a point of the interior $\Int(X)$. Then, one has $\widetilde{X_x} = \p_{\widetilde{\mathcal{H}(x)}}$
\end{prop}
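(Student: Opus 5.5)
We have to show that $\widetilde{X_x} = \p_{\widetilde{\mathcal{H}(x)}}$. Since $\widetilde{X_x} = \p_{\widetilde{\mathcal{H}(x)}}(\widetilde{\chi_x}(\tilde{A}))$, the set of graded valuation rings of $\widetilde{\mathcal{H}(x)}$ over $\tilde{k}$ that contain $\widetilde{\chi_x}(\tilde{A})$, it suffices to prove one thing. Namely, every homogeneous element of $\widetilde{\chi_x}(\tilde{A})$ is \emph{integral} over $\tilde{k}$, or at least lies in every graded valuation ring $\mathcal{O}\in\p_{\widetilde{\mathcal{H}(x)}}$. The natural route is to use the definition of inner points. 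Choose an admissible epimorphism $\pi : k\lbrace r_1^{-1}T_1,\dots,r_n^{-1}T_n\rbrace \to \mathcal{A}$ with $|\chi_x(\pi(T_i))| < r_i$ for all $i$. Put $f_i=\pi(T_i)$ and $s_i = |f_i(x)| < r_i$.

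\textbf{Step 1: identify generators of $\widetilde{\chi_x}(\tilde{A})$.} Since $\pi$ is admissible, every element of $\mathcal{A}$ is a convergent series $\sum a_\nu f^\nu$ with $|a_\nu| r^\nu \to 0$. Passing to graded reductions, I expect the image $\widetilde{\chi_x}(\tilde{A})$ to be generated over $\tilde{k}$ by the homogeneous pieces of the images of the $f_i$. More precisely, take a homogeneous element $\tilde a$ of degree $t$, coming from $a\in\mathcal{A}$ with $\rho(a)\le t$. If $|a(x)| < t$, its image in $\widetilde{\mathcal{H}(x)}_t$ is $0$. If $|a(x)| = t$, write $a=\sum a_\nu f^\nu$ with $\max|a_\nu|r^\nu$ comparable to $\rho(a)$; this comparability is where admissibility and the spectral norm must be handled carefully, possibly after replacing $\rho$ by the quotient norm up to powers. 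Then $a(x) = \sum a_\nu f(x)^\nu$. Since $s_i<r_i$, only finitely many monomials can reach absolute value $t$ at $x$, and the reduction of $a(x)$ is the reduction of the finite sum of those dominant terms. Hence $\widetilde{\chi_x(a)}$ is a polynomial over $\tilde{k}$ in the reductions $\tilde f_i(x)\in \widetilde{\mathcal{H}(x)}_{s_i}$.

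\textbf{Step 2: each $\tilde f_i(x)$ lies in every $\mathcal{O}\in \p_{\widetilde{\mathcal{H}(x)}}$.} This is the key point and the expected obstacle, because a graded valuation ring containing $\tilde k$ need not contain an arbitrary element of degree $s_i$. My first attempt would use the strict inequality $s_i<r_i$ via the spectral radius. The elements $f_i$ satisfy $\rho(f_i)\le r_i$, and in the graded reduction $\tilde f_i$ is read in degree $r_i$, where its image under $\chi_x$ vanishes because $s_i<r_i$. This suggests the cleaner formulation that $\widetilde{\chi_x}$ kills all the generators $\tilde{T_i}$ of $\tilde{A}$ in their natural degrees $r_i$. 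The reduction of the Tate algebra is $\tilde{k}[\tilde T_1,\dots,\tilde T_n]$ with $\tilde T_i$ of degree $r_i$. Combined with Step 1, this shows $\widetilde{\chi_x}(\tilde{A})$ is just $\tilde{k}$, or lies in the image of $\tilde{k}$, at least when $\rho$ on $\mathcal{A}$ matches the residue norm. Then the condition defining $\p_{\widetilde{\mathcal{H}(x)}}(\widetilde{\chi_x}(\tilde A))$ is automatic, and equality follows.

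\textbf{Step 3: handle the norm comparison.} The subtle part is that $\tilde{A}$ is defined using the spectral semi-norm of $\mathcal{A}$, not the quotient norm from the Tate algebra. So an element with $\rho(a)=t$ may only have a representative of quotient norm $>t$. To handle this, I would replace $a$ by $a^m$: $\rho(a^m)=\rho(a)^m$, and the quotient norm of $a^m$ is at most $C\rho(a)^m$ for large $m$ up to $\epsilon$-loss. I would then use that if $\tilde{a}^m$ lies in every graded valuation ring, so does $\tilde a$, since valuation rings are integrally closed. If a gap remains, a fallback is to invoke Lemma \ref{lemma Temkin} after shrinking to the Laurent domain $X\{s_i'^{-1}f_i\}$ with $s_i<s_i'<r_i$, which is still a neighbourhood of $x$ and so does not change the germ. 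Its reduction then equals $\widetilde{X_x}\{\tilde f_i\}$, from which one can read off that no constraint is imposed.
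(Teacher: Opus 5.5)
Your reduction (show that every homogeneous element of $\widetilde{\chi_x}(\tilde{\mathcal A})$ lies in every $\mathcal O\in\p_{\widetilde{\mathcal H(x)}}$) is right. The observation you finally reach in Step 2 is also exactly the paper's key point: $\widetilde{\chi_x}\circ\tilde\pi$ sends $\tilde T_i$, of degree $r_i$, to the class of $\chi_x(\pi(T_i))$ in $\widetilde{\mathcal H(x)}_{r_i}$, which is $0$ since $|\chi_x(\pi(T_i))|<r_i$.

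The genuine gap is in passing from the image of $\widetilde{k\{r^{-1}T\}}$ to all of $\tilde{\mathcal A}$, and none of your steps bridges it.

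\begin{itemize}
\item \textbf{Step 1 is false as stated.} When several terms $a_\nu f(x)^\nu$ of absolute value $>t$ cancel, the degree-$t$ class of $a(x)$ is not a polynomial in their reductions.
\item \textbf{Step 2 as titled fails.} The classes $\tilde f_i(x)$ of degree $s_i$ are the wrong generators, because they need not lie in every graded valuation ring. For example, take the closed unit disc and $x=\eta_{0,s}$ with $s<1$: the class of $T(x)$ in degree $s$ is transcendental over $\tilde k$.
\item \textbf{The conclusion $\widetilde{\chi_x}(\tilde{\mathcal A})=\tilde k$ is false in general.} It holds only in the special case you hedge on. Take $\mathcal A=K=k(\zeta)$, a finite extension with $\tilde K_1\neq\tilde k_1$, presented by $T\mapsto\zeta$ from $k\{r^{-1}T\}$ with $r>|\zeta|$. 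The unique point is inner, and the image is all of $\tilde K$.
\item \textbf{Step 3 does not repair this.} After passing to $\mathcal A_{\mathrm{red}}$ you do get $\|a^m\|\le C\rho(a)^m$. But the constant $C>1$ again permits cancelling terms of size up to $Ct^m$, which is the Step 1 problem once more.
\item \textbf{The fallback through Lemma \ref{lemma Temkin} is circular.} That lemma needs the radii to equal $|f_i(x)|$. A Laurent domain that is a neighbourhood of $x$ has the same germ, so it only yields $\widetilde{X_x}=\widetilde{X_x}$.
\end{itemize}

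The missing ingredient is integrality. Since $\pi$ is finite (an admissible epimorphism), $\tilde{\mathcal A}$ is integral over $\tilde\pi(\tilde k[\tilde T_1,\dots,\tilde T_n])$. This is Temkin's graded analogue of the classical fact that finite homomorphisms of affinoid algebras induce integral homomorphisms of reductions (see \S 3 of \cite{TemkinReductionGradueeII}).

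Granting it, your Step 2 observation shows that every homogeneous element of $\widetilde{\chi_x}(\tilde{\mathcal A})$ is integral over $\tilde k$. Graded valuation rings are integrally closed: if a homogeneous $y$ is not in $\mathcal O$, then $y^{-1}\in\mathfrak m_{\mathcal O}$, and dividing a homogeneous integral equation of degree $n$ by $y^n$ puts $1$ in $\mathfrak m_{\mathcal O}$. So every such ring contains this image, and $\widetilde{X_x}=\p_{\widetilde{\mathcal H(x)}}$.

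This integrality is exactly what the paper's proof uses, implicitly, when it writes $\widetilde{X_x}=\p_{\widetilde{\mathcal H(x)}}\{\widetilde{\chi_x}(\tilde\pi(\widetilde{k\{r^{-1}T\}}))\}$. Alternatively, as the paper notes, you can bypass it by invoking Temkin's Theorem 5.2.
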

\begin{proof}
One may use Theorem 5.2 (\cite{TemkinReductionGradueeII}) or use a direct argument. Let us denote $X = \mathcal{M}(A), \chi_x : A \rightarrow \mathcal{H}(x)$. By hypothesis, one has a admissible epimorphism $\pi : k\lbrace r_1^{-1}T_1, \cdots, r_n^{-1}T_n\rbrace \rightarrow \mathcal{A}$ such that $|\chi_x(\pi(T_i))|_{\mathcal{H}(x)} < r_i, \forall 1\leq i \leq n$. 

Since $\widetilde{X_x} = \p_{\widetilde{\mathcal{H}(x)}}\lbrace \widetilde{\chi_x}(\widetilde{\pi}(\widetilde{k\lbrace r_1^{-1}T_1, \cdots, r_n^{-1}T_n\rbrace})) \rbrace$, one needs to check that 
\begin{center}
$\widetilde{\chi_x}(\widetilde{\pi}(\widetilde{k\lbrace r_1^{-1}T_1, \cdots, r_n^{-1}T_n\rbrace})) = 0$. 
\end{center} 
For all $i, \widetilde{\chi_x}(\widetilde{\pi})(\widetilde{r_i^{-1}T_i}) \in \widetilde{\mathcal{H}(x)}_{r_i} $ and since $|\chi_x(\pi(T_i))|_{\mathcal{H}(x)} < r_i$, one gets $\widetilde{\chi_x}(\widetilde{\pi})(\widetilde{r_i^{-1}T_i}) = 0$.

Finally, $\widetilde{X_x} = \p_{\widetilde{\mathcal{H}(x)}}$.
\end{proof}

\begin{prop}\label{elements dans tous les anneaux de valuation sont les inversibles}
Let $k$ be a graded field and $K$ a graded field over $k$. Let us recall that $\p_K$ is the set of all graded valuation rings $\mathcal{O}$ with $k \subset \mathcal{O}$. Given a graded valuation ring $\mathcal{O}$, one denotes by $m_\mathcal{O}$ its unique maximal ideal. 

Then, for any non-zero homogeneous element $f\in K$ such that $f \in \bigcap_{\mathcal{O} \in \p_K} \mathcal{O}$, then
\begin{center}
$f \in \bigcap_{\mathcal{O} \in \p_K} \mathcal{O}\backslash m_\mathcal{O}$ and $f^{-1} \in \bigcap_{\mathcal{O} \in \p_K} \mathcal{O}$.
\end{center}
\end{prop}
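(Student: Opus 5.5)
The plan is to argue by contradiction, using only the two defining properties of a graded valuation ring: it is local, and for every non-zero homogeneous $g\in K$ it contains $g$ or $g^{-1}$. The one non-trivial input is the fact, recalled in the definition, that any local graded ring $B\subset K$ is dominated by a graded valuation subring of $K$. I would also check that the rings produced this way contain $k$, so that they actually lie in $\p_K$.

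\textbf{Step 1: $f$ is a unit in every $\mathcal{O}\in\p_K$.}
Fix a non-zero homogeneous $f\in\bigcap_{\mathcal{O}\in\p_K}\mathcal{O}$. Then $f^{-1}\in K$ is homogeneous, since $K$ is a graded field and its non-zero homogeneous elements are invertible with homogeneous inverse. Suppose, for contradiction, that $f\in m_{\mathcal{O}_0}$ for some $\mathcal{O}_0\in\p_K$. Then $f^{-1}\notin\mathcal{O}_0$, because otherwise $1=ff^{-1}\in m_{\mathcal{O}_0}$.

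\textbf{Step 2: build a ring in $\p_K$ that contains $f^{-1}$ but not $f$.}
Consider the graded subring $B=\mathcal{O}_0[f^{-1}]\subset K$. It contains $k$ and is graded, since it is generated by homogeneous elements. In $B$ the element $f$ is invertible, so $f$ lies in no proper homogeneous ideal of $B$.

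Choose a maximal homogeneous ideal $\mathfrak{n}$ of $B$ (by Zorn, among homogeneous ideals not containing $1$). Localize $B$ at the multiplicative set of homogeneous elements not in $\mathfrak{n}$. This gives a local graded ring $B_{(\mathfrak{n})}\subset K$ containing $k$. By the recalled domination fact, it is dominated by a graded valuation ring $\mathcal{O}_1$ with quotient field $K$, so $\mathcal{O}_1\in\p_K$. Since $f^{-1}\in B\subset\mathcal{O}_1$ and $f$ is a unit of $B$, $f$ is a unit in $\mathcal{O}_1$.

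\textbf{Step 3: a ring in $\p_K$ in which $f$ is a non-unit.}
Step 2 alone gives no contradiction, so I use the ring $\mathcal{O}'=\mathcal{O}_0[f^{-1}]$ in the opposite direction. The cleaner route is to consider $A=k[f]\subset K$ together with its homogeneous prime ideal generated by $f$.

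\emph{Case (a): $f$ is transcendental over $k$ in the graded sense.} Then $(f)$ is a proper homogeneous prime of $A$. Localize $A$ at $(f)$ and dominate by a graded valuation ring $\mathcal{O}_2\in\p_K$. Then $f\in m_{\mathcal{O}_2}$.

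\emph{Case (b): $f$ is algebraic over $k$.} Then $f$ is integral over $k$ and satisfies a homogeneous equation with a non-zero constant term from the graded field $k$. Since $k\subset\mathcal{O}$ and $k$ is a graded field, every non-zero homogeneous element of $k$ is a unit in every $\mathcal{O}$. From the equation one solves for $f^{-1}$ as a polynomial in $f$ with coefficients in $k$, so $f^{-1}\in\bigcap\mathcal{O}$ directly.

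\textbf{Step 4: reduce Case (a) to Case (b).}
In Case (a) it remains to show that $f^{-1}\in\bigcap\mathcal{O}$ fails, which would mean the hypothesis $f\in\bigcap\mathcal{O}$ is inconsistent. Take $A=k[f^{-1}]$ and the prime $(f^{-1})$; dominating its localization gives some $\mathcal{O}\in\p_K$ with $f^{-1}\in m_{\mathcal{O}}$. Then $f\notin\mathcal{O}$, contradicting the hypothesis. Hence $f$ is algebraic over $k$, and Case (b) gives $f^{-1}\in\bigcap\mathcal{O}$. Then $f$ is a unit in each $\mathcal{O}$, so $f\notin m_{\mathcal{O}}$, which is both claims.

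\textbf{Main obstacle.}
The delicate point is the graded-algebraic bookkeeping. One must verify that $(f^{-1})$ is a proper homogeneous prime in $k[f^{-1}]$ when $f$ is graded-transcendental. One must also verify that the graded-algebraic case really yields a relation with an invertible constant term, which needs $K$ to have no zero divisors among homogeneous elements. I would handle this by checking that $\sum c_i f^i=0$ with homogeneous $c_i\in k$ can be taken minimal and therefore with $c_0\neq0$.
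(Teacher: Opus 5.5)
Your argument is correct, but only Step 4 together with Step 3(b) does any work. Steps 1--2 and the localization of $k[f]$ at $(f)$ in Step 3(a) are never used and can be cut. That leaves the chain: $f\in\bigcap_{\mathcal{O}\in\p_K}\mathcal{O}$ forces $f$ to be algebraic over $k$, hence $f^{-1}\in k[f]\subset\bigcap_{\mathcal{O}\in\p_K}\mathcal{O}$, hence $f$ is a unit in every $\mathcal{O}$.

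The paper uses the same key construction: localize $k[f^{-1}]$ at a homogeneous prime containing $f^{-1}$ and dominate the result by a graded valuation ring. It organizes this contrapositively. It starts from some $\mathcal{O}$ with $f\in m_{\mathcal{O}}$ and notes that $f$ is then not a unit of $k[f]$. It then asserts, without proof, that $f^{-1}$ is not a unit of $k[f^{-1}]$, and produces $\mathfrak{B}\in\p_K$ with $f\notin\mathfrak{B}$.

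Your algebraic/transcendental dichotomy is exactly the justification of that unproved implication. If $f\in k[f^{-1}]$, clearing denominators gives a monic relation for $f$ over $k$. A minimal homogeneous relation then has a non-zero, hence invertible, constant term, because $f$ is a unit of $K$ and powers of $f$ can be cancelled. So $f^{-1}\in k[f]$.

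What each route buys:
\begin{itemize}
\item Your route is self-contained and gives the sharper conclusion that such an $f$ is algebraic over $k$ with $f^{-1}\in k[f]$.
\item The paper's route is shorter, and it shows that your ``main obstacle'' is unnecessary. You never need $(f^{-1})$ to be prime, only that $f^{-1}$ is a non-unit of $k[f^{-1}]$, which is precisely $f\notin k[f^{-1}]$. After that, any maximal homogeneous ideal containing $f^{-1}$ will do for the localization and domination step.
\end{itemize}
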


\begin{proof}

Let $x \in K$ be non-zero homogeneous element such that there exists $\mathcal{O} \in \p_K$ with $x\in m_\mathcal{O}$. Then $x$ is not invertible in $\mathcal{O}$. Hence, $x$ is not invertible in $A$ the subgraded ring of $K$ generated by $x$ and $k$. This implies that $x^{-1}$ is not invertible in $B$ the subgraded ring of $K$ generated by $x^{-1}$ and $k$. Let us denote $\mathfrak{p}$ a prime ideal of $B$ containing $x^{-1}$ and $B_\mathfrak{p}$ the localization with respect to this ideal. This is a local subgraded ring of $K$ with $x^{-1}$ in its maximal ideal. By Zorn lemma, there is a maximal subring of $K$ respecting this condition. By A. Ducros (\cite{DucrosStructureCourbeAnalytique}, 2.2.40.1) this gives a valuation ring $\mathfrak{B}$ containing $k$ and $x^{-1}$ is in its maximal ideal and so, $x \notin \mathfrak{B}$. This concludes the proof. 
\end{proof}

This allows us to get a maximum modulus principle.
\begin{thm}\label{maximum modulus}
Let $k$ be a non-archimedean valued field and $X$ be a connected $k$-affinoid space such that $\Int(X) = X$. Let $f \in \mathcal{O}(X)$ such that there exists $x \in X$ with $|f(x)| = r = \max_{y\in X} |f(y)|$, then for all $y\in X, |f(y)| = r$.
\end{thm}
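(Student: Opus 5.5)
The plan is to show that the set
\[
Z = \lbrace y \in X,\ |f(y)| = r\rbrace
\]
is both open and closed in $X$. Connectedness of $X$ together with $x \in Z$ then gives $Z = X$. Closedness is immediate: $y\mapsto |f(y)|$ is continuous, and $Z$ is the preimage of $\lbrace r\rbrace$. The real content is openness at an arbitrary point $y \in Z$. At such a point $|f(y)| = r$ is the maximum of $|f|$ on $X$, and $y \in \Int(X)$ by hypothesis.

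For openness we work with germs and graded reductions. Since $|f| \leq r$ on all of $X$, we have $X = \lbrace z\in X, |f(z)|\leq r\rbrace$. At the level of germs at $y$ this reads $X_y = X_y\lbrace r^{-1}f\rbrace$ with $r = |f(y)|$. Lemma \ref{lemma Temkin} then gives $\widetilde{X_y} = \widetilde{X_y}\lbrace \tilde f\rbrace$, where $\tilde f$ is the image of $f$ in $\widetilde{\mathcal{H}(y)}_r$. This image is nonzero and homogeneous because $|f(y)| = r > 0$; the case $r=0$ is trivial, since then $f$ vanishes identically in absolute value. Since $y$ is an interior point, Proposition \ref{reduction is all of p when the point is in the interior} gives $\widetilde{X_y} = \p_{\widetilde{\mathcal{H}(y)}}$. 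Combining the two, every graded valuation ring $\mathcal{O}\in\p_{\widetilde{\mathcal{H}(y)}}$ contains $\tilde f$. Proposition \ref{elements dans tous les anneaux de valuation sont les inversibles} now says that $\tilde f^{-1}$ lies in every such $\mathcal{O}$ as well. Hence
\[
\widetilde{X_y}\lbrace \tilde f^{-1}\rbrace = \widetilde{X_y}.
\]

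Next we go back from reductions to germs, and this is where the main obstacle lies. Since $f$ does not vanish at $y$, we may shrink $X$ to an affinoid neighborhood $V$ of $y$ on which $f$ is invertible. On $V$, Lemma \ref{lemma Temkin} applied to $f^{-1}$ (with $|f^{-1}(y)| = r^{-1}$) gives $\widetilde{X_y\lbrace r f^{-1}\rbrace} = \widetilde{X_y}\lbrace \tilde f^{-1}\rbrace = \widetilde{X_y}$. We then need that a subdomain germ with the same reduction as the whole germ is the whole germ. Temkin proves that for good germs the graded reduction detects the germ: for an affinoid-domain germ $Y_y\subset X_y$, one has $Y_y = X_y$ if and only if $\widetilde{Y_y} = \widetilde{X_y}$ (Theorem 4.5 of \cite{TemkinReductionGradueeII}, the bijection between germs and their reductions). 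We intend to invoke this. It yields that $\lbrace z, |f^{-1}(z)|\leq r^{-1}\rbrace$ is a neighborhood of $y$, so $|f| \geq r$ near $y$.

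Combined with $|f|\leq r$ everywhere, this gives $|f| = r$ on a neighborhood of $y$. Thus $Z$ is open, and the connectedness argument concludes. The delicate points to check are that $\tilde f^{-1}$ is indeed the reduction of $f^{-1}$ at $y$, and that the germ--reduction correspondence from Temkin applies in the non-strict setting, possibly with $k$ trivially valued. Temkin's graded theory is designed precisely for that generality.
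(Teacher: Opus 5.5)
Your proposal is correct and follows the paper's proof step for step. The chain is the same: Temkin's Lemma 4.4 applied to $r^{-1}f$, the interior hypothesis giving $\widetilde{X_y}=\p_{\widetilde{\mathcal{H}(y)}}$, invertibility of $\tilde f$ in every graded valuation ring, Lemma 4.4 again for $rf^{-1}$, and Temkin's Theorem 4.5 to pass back from reductions to germs. The only difference is that you make the globalization explicit as an open--closed argument on $\lbrace y, |f(y)|=r\rbrace$, running the local argument at every maximum point; the paper compresses this into ``it is enough to prove it on a small neighborhood of $x$ since $X$ is connected''.
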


\begin{proof} 

First, if $r= 0$, then the result is clear. Let us now assume that $r>0$.

It is enough to prove it on a small neighborhood of $x$ since $X$ is connected, and so we can assume that $f$ does not vanish on $X$ and $f^{-1} \in \mathcal{O}(X)$. By proposition \ref{reduction is all of p when the point is in the interior}, one has $\widetilde{X_x} = \p_{\widetilde{\mathcal{H}(x)}}$. By lemma \ref{lemma Temkin} (lemma 4.4 of \cite{TemkinReductionGradueeII}), $\widetilde{X_x\lbrace r^{-1}f\rbrace} = \widetilde{X_x}\lbrace \tilde{f}\rbrace$ but by hypothesis $\widetilde{X_x\lbrace r^{-1}f\rbrace} = \widetilde{X_x}$ so $\tilde{f} \in \widetilde{\mathcal{H}(x)}$ is in all graded subvaluation ring of $\widetilde{\mathcal{H}(x)}$. Hence, by proposition \ref{elements dans tous les anneaux de valuation sont les inversibles}, $\tilde{f}$ is in no maximum ideal of those graded rings which implies that $\tilde{f}^{-1}$ is in all graded valuation rings. Applying lemma \ref{lemma Temkin} (4.4 of \cite{TemkinReductionGradueeII}), one gets
\begin{center}
$\widetilde{X_x\lbrace r f^{-1}\rbrace} = \widetilde{X_x}\lbrace \tilde{f}^{-1} \rbrace = \widetilde{X_x}$.
\end{center}
So, by Theorem 4.5 of \cite{TemkinReductionGradueeII} $X_x\lbrace r f^{-1}\rbrace$ and $X_x$ define the same subdomain of $X_x$. Hence, on a neighborhood $U$ of $x$, for all $y\in U, |f^{-1}(y)|\leq r$ and so $|f(y)|=r$. 
\end{proof}

\begin{rem}
This result also follows from Theorem 1.5 of Gubler-Rabinoff (\cite{GublerRabinoffPSHFunctions}) which states that the local maximum principle stands for all plurisubharmonic functions on an analytic space. Since for any $f\in \mathcal{O}(X), \log |f|$ is a plurisubharmonic function, their theorem is more general result. The two results were proved independently and the two proofs differ in their construction. In their proof, they use the connectedness by curves and prove the result on curves, which by construction comes from graph theory.
\end{rem}

\section{Non-archimedean Julia set}\label{sectionJuliaSet}
In this section, we will recall some results on the Berkovich Julia sets and on certains periodic points. The last part of this section is dedicated to showing how the Julia sets behave with field extensions.

\subsection{First definitions}

Let us first recall some basic definitions about the Julia set and periodic points. For more details, we refer the reader, for example, to the book of J. Milnor \cite{MilnorDynamicsOneComplexVariable} in the complex setting and to the book of R. Benedetto \cite{BenedettoLivre} in the non-archimedean setting.

In complex dynamics, the Julia set is a compact subset of $\p^1(\C)$ which characterises the set where the dynamics is chaotic. Its complement, the Fatou set is where the dynamics behaves nicely. There are a lot of equivalent definitions of these sets. Let us give a few. 

\begin{defi}
Let $f$ be a rational map of degree $d \geq 2$ over $\C$. We say that a point $x\in \p^1(\C)$ is periodic if there exists $n\in \N$, such that $f^{\circ n}(x) =x$. We say that $x$ is a :
\begin{itemize}
\item repelling if $|(f^{\circ n})'(x)|>1$,
\item indifferent if $|(f^{\circ n})'(x)| = 1$ and parabolic if $(f^{\circ n})'(x)$ is a root of unity,
\item attracting if $|(f^{\circ n})'(x)|<1$.
\end{itemize} 

Let us define the Fatou set $\mathcal{F}_f$ as the biggest subset of $\p^1(\C)$ on which the family $(f, f^{\circ 2}, f^{\circ 3}, \cdots)$ is normal. We define the Julia set as $J_f = \p^{1}(\C) \backslash \mathcal{F}_f$.

The Julia set can be caracterized as the following set :
\begin{center}
$\lbrace x\in \p^1(\C), \forall x\in U$ open $\bigcup_{n\in \N} f^{\circ n}(U)$ omits at most 2 points$\rbrace$.
\end{center}

The Julia set is also the support of the equilibrium measure, $\mu_f$, of $f$. It is the only probability measure that verifies the two following conditions :
\begin{itemize}
\item $f^*\mu = d\mu$,
\item the support of $\mu$ is disjoint from the exceptionnal set,
\end{itemize}
where the exceptionnal set is 
\begin{center}
$\mathrm{Exc}(f) := \lbrace a \in \p^1(\C), f^{-1}(a) = \lbrace a \rbrace \rbrace$
\end{center}
and contains at most two points.

This equilibrium measure can be obtained by taking the limit as $n \rightarrow \infty$ of the pull-backs of any Dirac measure, supported on a point not in the exceptionnal set, by $f^{\circ n}$. 
\end{defi}
And we have some links between periodic points and the Julia set. 
\begin{prop} 
Any repelling point is a point in the Julia set, any attracting point is a point in the Fatou set. Any parabolic point is in the Julia set. 

Furthermore, the repelling points are dense in the Julia set. 
\end{prop}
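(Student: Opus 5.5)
The plan is to treat the four assertions separately. The first three are local statements about a periodic orbit. The density statement is global and goes through Montel's theorem. Throughout we replace $f$ by $g=f^{\circ n}$, where $n$ is the period; this is harmless because $J_{f^{\circ n}}=J_f$ (normality of $(f^{\circ k})_k$ and of $(g^{\circ k})_k$ are equivalent, since $f,\dots,f^{\circ(n-1)}$ are finitely many uniformly continuous maps of $\p^1(\C)$). After conjugating by a Möbius map we may assume the periodic point is $0$, so $g(0)=0$ and $g'(0)=\lambda$.

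\emph{Attracting case.} If $|\lambda|<1$, choose $\rho$ with $|\lambda|<\rho<1$. On a small disc $D$ around $0$ we have $|g(z)|\leq\rho|z|$, so $g(D)\subset D$ and $g^{\circ k}\to 0$ uniformly on $D$. Hence $(g^{\circ k})$ is normal on $D$ and $0\in\mathcal{F}_f$.

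\emph{Repelling case.} If $|\lambda|>1$, suppose $(g^{\circ k})$ were normal on a neighbourhood of $0$. Since $g^{\circ k}(0)=0$ for all $k$, a subsequence would converge locally uniformly near $0$ to a holomorphic limit $h$ that is finite at $0$. By Weierstrass, the derivatives would then converge, so $(g^{\circ k_j})'(0)\to h'(0)$. But $(g^{\circ k})'(0)=\lambda^k\to\infty$, a contradiction. Hence $0\in J_f$.

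\emph{Parabolic case.} If $\lambda$ is a primitive $q$-th root of unity, replace $g$ by $g^{\circ q}$, so that $g(z)=z+az^{m+1}+O(z^{m+2})$ with $a\neq 0$. Here $g$ is not the identity because $\deg g\geq 2$. By induction,
\[
g^{\circ k}(z)=z+kaz^{m+1}+O(z^{m+2}),
\]
so the $(m+1)$-st derivative of $g^{\circ k}$ at $0$ is $(m+1)!\,ka$, which is unbounded. The same Weierstrass argument as in the repelling case then forbids normality near $0$, so $0\in J_f$.

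\emph{Density of repelling points.} We would follow Milnor's argument. First, $J_f$ is nonempty and perfect, hence infinite; this is the standard consequence of complete invariance together with Montel. Let $U$ be an open set meeting $J_f$. Shrinking $U$, we may choose it to avoid the finitely many critical values of $f$ and the exceptional set. Because $\deg f\geq 2$, we can then pick, on a smaller disc $U'\subset U$, two distinct holomorphic inverse branches $g_1,g_2$ of $f$ with $g_1(z)\neq z$, $g_2(z)\neq z$ and $g_1(z)\neq g_2(z)$. Suppose $U'$ contained no periodic point. Then the cross-ratio functions
\[
h_k(z)=\frac{f^{\circ k}(z)-g_1(z)}{f^{\circ k}(z)-z}\cdot\frac{g_2(z)-z}{g_2(z)-g_1(z)}
\]
would omit the three values $0,1,\infty$: each equality $f^{\circ k}(z)\in\{z,g_1(z),g_2(z)\}$ forces $z$ to be periodic. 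Montel's theorem would then make $(h_k)$ normal on $U'$, and hence $(f^{\circ k})$ normal on $U'$, contradicting $U'\cap J_f\neq\emptyset$. So periodic points are dense in $J_f$. By the first three cases, a periodic point lying in $J_f$ is repelling or indifferent.

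The main obstacle is removing the indifferent periodic points from this density statement. For this I would invoke Fatou's theorem that $f$ has only finitely many non-repelling cycles, with at most $6d-6$ of them (or Shishikura's sharp bound $2d-2$). Granting this, $J_f$ is perfect, so deleting finitely many points from a dense subset of $J_f$ leaves a set that is still dense in $J_f$. Therefore the repelling points are dense in the Julia set. We do not reprove Fatou's bound here; it is classical (see \cite{MilnorDynamicsOneComplexVariable}).
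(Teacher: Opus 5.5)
Your proof is correct and is the standard textbook argument from Milnor: Weierstrass convergence of derivatives for the repelling and parabolic cases, the Montel cross-ratio argument for density of periodic points, and Fatou's finiteness of non-repelling cycles. The paper gives no proof of its own; it recalls this proposition as a classical fact and refers to Milnor. One wording fix in your last step: the periodic points produced by Montel need not lie in $J_f$, so argue instead that for $z_0\in J_f$ and a neighbourhood $N$, the open set $N\setminus F$ (where $F$ is the finite set of non-repelling periodic points) still meets the perfect set $J_f$, hence contains a periodic point, which must then be repelling.
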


Over any non-archimedean complete valued field, one wishes to extend this notion. One can define the repelling, indifferent and attracting points in the same way. One can also define the classical Fatou set, $\mathcal{F}_{f,classical}$, as  the biggest subset of $\p^1(\C)$ on which the family $(f, f^{\circ 2}, f^{\circ 3}, \cdots)$ is equicontinuous and the classical Julia set, $J_{f, classical}$ as its complement in $\p^1(k)$.

\begin{prop}
Any repelling point is in the classical Julia set and any indifferent or attracting point is in the Fatou set. 
\end{prop}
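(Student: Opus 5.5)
We have to prove: over a complete non-archimedean field $k$, every repelling periodic point of a rational map $f$ lies in the classical Julia set, and every indifferent or attracting periodic point lies in the classical Fatou set. Throughout, equicontinuity is taken with respect to the chordal metric on $\p^1(k)$.

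\emph{Reductions.} First we reduce to a fixed point. Let $x$ be periodic of period $n$. The families $\{f^{\circ m}\}$ and $\{(f^{\circ n})^{\circ m}\}$ are equicontinuous at exactly the same points: each $f^{\circ (nm+j)}$ with $0\le j<n$ is one of finitely many uniformly continuous maps $f^{\circ j}$ composed with $(f^{\circ n})^{\circ m}$. So the classical Fatou sets of $f$ and $f^{\circ n}$ agree, and we may assume $f(x)=x$ with multiplier $\lambda=f'(x)$. Next, conjugating by a Möbius map in $\mathrm{PGL}_2(k)$ is a bi-Lipschitz isometry-up-to-constants for the chordal metric. It therefore preserves equicontinuity, and we may assume $x=0$. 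The multiplier is conjugation-invariant, so $\lambda$ is unchanged.

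\emph{Non-repelling case ($|\lambda|\le 1$).} Write $f(z)=\lambda z+\sum_{i\ge2}a_iz^i$ as a convergent power series on a small disc $D(0,r)$. Shrinking $r$ so that $|a_i|r^{i-1}\le 1$ for all $i\ge 2$, the strong triangle inequality gives $|f(z)-f(w)|\le|z-w|$ for $z,w\in D(0,r)$. Indeed,
\[
f(z)-f(w)=(z-w)\Big(\lambda+\sum_i a_i\,\tfrac{z^i-w^i}{z-w}\Big),
\]
and each term $\tfrac{z^i-w^i}{z-w}$ has absolute value at most $r^{i-1}$. Hence $f$ is $1$-Lipschitz on $D(0,r)$ and maps it into itself, since $f(0)=0$. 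By induction every iterate $f^{\circ m}$ is $1$-Lipschitz on $D(0,r)$. On this small disc the chordal and Euclidean metrics agree up to a constant, so the family $\{f^{\circ m}\}$ is equicontinuous on $D(0,r)$, and $0$ lies in the Fatou set.

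\emph{Repelling case ($|\lambda|>1$).} Choose $r$ small enough that $|a_i|r^{i-1}<|\lambda|$ for all $i\ge2$. The same computation, now using the equality case of the ultrametric inequality, gives $|f(z)-f(w)|=|\lambda||z-w|$ on $D(0,r)$. Take any $w\neq0$ in this disc. As long as the iterates stay in $D(0,r)$, we have $|f^{\circ m}(w)|=|\lambda|^m|w|$, which grows, so some iterate first leaves $D(0,r)$. At that first exit time $m$,
\[
|f^{\circ m}(w)-f^{\circ m}(0)|=|f^{\circ m}(w)|\ge r .
\]
Since $w$ can be taken arbitrarily close to $0$, the family $\{f^{\circ m}\}$ is not equicontinuous at $0$, so $0$ lies in the Julia set.

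\emph{Main obstacle.} The delicate step is making the choice of radius rigorous: one needs the power series to converge on $D(0,r)$ with the coefficient bounds above, and the chordal and affine distances to be comparable there. Both follow from the expansion $f=P/Q$ with $Q(0)\neq0$, shrinking $r$ as needed.
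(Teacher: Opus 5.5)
Your proof is correct. The paper does not prove this proposition; it recalls it as a standard fact from Benedetto's book. Your argument is essentially the standard proof found there: reduce to a fixed point at $0$, then use the ultrametric inequality on a small disc. When $|\lambda|\le 1$ this makes $f$ $1$-Lipschitz, and when $|\lambda|>1$ it makes $f$ scale distances exactly by $|\lambda|$. So it gives a self-contained justification of what the paper takes from the literature.

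Two small points are worth making explicit.

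\begin{itemize}
\item At the first exit time, $f^{\circ m}(w)$ no longer lies in $D(0,r)$, so comparability of the chordal and affine metrics on $D(0,r)$ does not literally apply. Since $f^{\circ m}(0)=0$, you can instead use $\Delta(y,0)=\min(|y|,1)$ for $y\in k$. This gives chordal distance at least $\min(r,1)$, which is what non-equicontinuity at $0$ needs.
\item Picking $w\neq 0$ arbitrarily close to $0$ requires $k$ to be non-trivially valued. This is automatic here, because a multiplier with $|\lambda|>1$ cannot exist over a trivially valued field. In that case $\p^1(k)$ is discrete and every point is trivially in the classical Fatou set.
\end{itemize}
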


Unfortunately, in some cases the classical Julia set can be empty.
\begin{ex}
Let $k$ a complete, non-archimedean, valued field and $f(z) = z^d$ where $d\geq 2$. Then, $J_{f, classical} = \emptyset$. A reason is that in this case, all the periodic points are indifferent or attracting if the residual characteristic of $k$ divides $d$.
\end{ex}

An answer to this problem is to work over the Berkovich space $\p^{1,an}_k$ instead of $\p^1(k)$. 
\begin{defi}
Any $\eta \in \ana^{1,an}_k$ extends from $k[T]$ to $k(T)$ by $ \eta (\frac{P}{Q})= \frac{\eta (P)}{\eta (Q)}$ and allowing the value $\infty$ if $\eta (Q) = 0$.

Then, any rational map $f$ over $k$ extends to a map $f : \p^{1,an}_k \rightarrow \p^{1,an}_k$ by 
\begin{center}
$f(\eta) := P \mapsto \eta(P\circ f)$ for all $P\in k[T]$.
\end{center}

Furthermore, for all $a\in k, f(\eta_a) = \eta_{f(a)}$. 

Finally, one can define the Berkovich Julia set as :
\begin{center}
$J_f := \lbrace x\in \p^{1,an}_k, \forall x\in U$ open $\bigcup_{n\in \N} f^{\circ n}(U)$ omits a finite number of points$\rbrace$.
\end{center}
Then, it is a compact subset of $\p^{1,an}_k$ and $J_f \cap\p^1(k) = J_{f,classical}.$

\cite{FavreRiveraLetelierBrolinEquidistribution} defined an equilibrium measure of $f, \mu_f$ on $\p^{1,an}_k$ which is a probability measure supported on the Julia set, invariant by $f$.

Finally, one can consider the periodic points of $f$ in $\p^{1,an}_k$. We say that a point $\eta \in \p^{1,an}_k$ is periodic if $f^n(\eta) = \eta$. If $\eta \notin \p^1_k$, we say that 
\begin{itemize}
\item $\eta$ is repelling if the local degree of $f^n$ at $\eta$ is $\geq 2$,
\item $\eta$ is indifferent if the local degree of $f^n$ at $\eta$ is 1.
\end{itemize}
The repelling points are in the Julia set and the type III and IV indifferent point are in the Fatou set.
\end{defi}
\subsection{Properties of the Berkovich Julia set}

First let us study how the Julia set behaves under field extension, which will allow us to assume that the field we consider are algebraically closed. 

\begin{lem}\label{Taking a field extension does not change the julia set}
Let $f$ be a rational map of degree $d$ defined over a non-archimedean complete valued field $k$. Let $K$ be a field extension of $k$. Let us denote $J_k$ and $J_K$ the Julia set defined over $k$ and $K$, respectively. Let us denote $\pi : \p^{1,an}_K \rightarrow \p^{1,an}_k$ the projection. Then

\begin{center}
$\pi(J_K) = J_k$.
\end{center}
\end{lem}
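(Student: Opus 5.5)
We prove the two inclusions $J_k \subset \pi(J_K)$ and $\pi(J_K) \subset J_k$ separately. Both rest on three standard properties of the projection $\pi : \p^{1,an}_K \rightarrow \p^{1,an}_k$. First, $\pi$ is continuous, surjective, and open; openness comes from the fact that it is the base change map for a projective line, or, restricted to $\ana^1$, from the description of discs. Second, $\pi$ commutes with the dynamics, $\pi \circ f_K = f_k \circ \pi$, because $f$ has coefficients in $k$ and $f(\eta)$ is defined by $P \mapsto \eta(P\circ f)$. Third, the fibres of $\pi$ over type I points of $\p^1(k)$ are single points, and fibres are compact.

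\textbf{Step 1: $\pi(J_K) \subset J_k$.} Take $\eta \in J_K$ and an open neighbourhood $V$ of $\pi(\eta)$ in $\p^{1,an}_k$. Then $\pi^{-1}(V)$ is an open neighbourhood of $\eta$. Since $\eta \in J_K$, the union $\bigcup_n f_K^{\circ n}(\pi^{-1}(V))$ omits only finitely many points of $\p^{1,an}_K$. Applying $\pi$, surjectivity and equivariance give that $\bigcup_n f_k^{\circ n}(V)$ contains $\pi$ of that union. Its complement is therefore contained in the set of points whose entire fibre is omitted, which is a subset of the image of a finite set and hence finite. So $\pi(\eta) \in J_k$.

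\textbf{Step 2: $J_k \subset \pi(J_K)$.} Here we use the characterisation of the Julia set as the closure of preimages. Choose a point $a \in J_k$ that is not exceptional. The Julia set is the closure of the backward orbit of $a$, and more generally it is the set of accumulation points of $f_k^{-n}(b)$ for any non-exceptional $b$; this follows from the omitted-points definition via Montel-type statements over Berkovich spaces, as in \cite{BenedettoLivre}. Alternatively, we use that the equilibrium measure is $\mu_{f} = \lim d^{-n}(f^{n})^*\delta_{\zeta_{Gauss}}$ with support $J$, and that $\pi_*\mu_{f,K} = \mu_{f,k}$. This identity follows because $\pi$ maps the Gauss point to the Gauss point, commutes with pullback of Dirac masses (fibres of $f_K$ lie over fibres of $f_k$ with matching multiplicities), and $\pi_*$ is weakly continuous. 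Then $\pi(\mathrm{supp}\,\mu_{f,K}) = \mathrm{supp}\,\pi_*\mu_{f,K}$, using compactness of $\p^{1,an}_K$ and continuity of $\pi$, which gives $\pi(J_K) = J_k$. This route in fact yields both inclusions at once.

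\textbf{Main obstacle.} The delicate point is the identity $\pi_*(f_K^*\delta_y) = f_k^*\delta_{\pi(y)}$, more precisely the compatibility of local degrees under base change. For $x \in \p^{1,an}_k$ we need $\sum_{x' \in \pi^{-1}(x),\, f(x')=y} \deg_{x'} f_K$ to be distributed so that the pushforward gives $\deg_x f_k$ times $\delta_x$ after averaging over $y \in \pi^{-1}(f(x))$. We plan to handle this by checking it on the dense set of type I points, where multiplicities are algebraic and behave well under base change, and then passing to the limit by continuity.

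Should this step fail, we fall back on Step 1 together with a direct argument. Repelling periodic points of $f_k$ and their preimages lift to $K$ with the same local behaviour, so the closure of preimages of a lifted Julia point projects onto $J_k$ by compactness.
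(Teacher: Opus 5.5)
Your proof is correct and follows the paper's skeleton. You prove $\pi(J_K)\subset J_k$ by the omitted-points argument: you argue directly where the paper uses contraposition, but the observation is the same, namely that every point of $\p^{1,an}_K$ over a point omitted by $\bigcup_n f_k^{\circ n}(V)$ is omitted by $\bigcup_n f_K^{\circ n}(\pi^{-1}(V))$, and fibres are non-empty. The reverse inclusion comes from $\pi_*\mu_{f,K}=\mu_{f,k}$, as in the paper.

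The real difference is how that identity is obtained. The paper appeals to uniqueness of the equilibrium measure, but it only checks $f_*$-invariance of $\pi_*\mu_{f,K}$ and that its support avoids the exceptional set. $f_*$-invariance alone does not single out $\mu_f$: a Dirac mass at a non-exceptional type I fixed point is also invariant. The uniqueness statement recalled in the paper needs $f^*\mu=d\mu$. Checking $f_k^*(\pi_*\mu_{f,K})=d\,\pi_*\mu_{f,K}$ is exactly your compatibility $\pi_*\circ f_K^*=f_k^*\circ\pi_*$. So your route, through $\mu_f=\lim_n d^{-n}(f^{\circ n})^*\delta_{\zeta_{\mathrm{Gauss}}}$ and weak continuity of $\pi_*$, makes explicit the ingredient the paper's parenthetical remark passes over. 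Your use of $\pi(\mathrm{supp}\,\nu)=\mathrm{supp}\,\pi_*\nu$ also gives both inclusions at once, so Step 1 becomes redundant. The paper instead keeps a measure-free argument for the first inclusion and uses the measure only through $\mu_{f,k}(\pi(J_K))=1$.

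Your plan for the key identity (check it at type I points, then pass to the limit) is a detour. Type I points are not dense in $\p^{1,an}_K$ when $K$ is trivially valued. Also, for $y\in\p^1(K)$ transcendental over $k$ the point $\pi(y)$ is not of type I, so nothing gets simpler there. Prove it at every $y$ instead:

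If $f^{-1}(\mathcal{M}(A))=\mathcal{M}(B)$ with $\pi(y)\in\mathcal{M}(A)$, the fibre algebra of $f_K$ over $y$ is $(B\otimes_A\mathcal{H}(\pi(y)))\otimes_{\mathcal{H}(\pi(y))}\mathcal{H}(y)$. The local factor at $x\in f_k^{-1}(\pi(y))$ has dimension $\deg_x f_k$. After base change, its local factors are exactly the points of $\pi^{-1}(x)\cap f_K^{-1}(y)$, with dimensions equal to their local degrees. Hence $\sum_{x'}\deg_{x'}f_K=\deg_x f_k$ for each single $y$; no averaging over $y$ is involved.

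Since $(f^{\circ n})^*\delta_{\zeta_{\mathrm{Gauss}}}$ is a finite sum of Dirac masses, induction on $n$ gives $\pi_*(f_K^{\circ n})^*\delta_{\zeta_{\mathrm{Gauss}}}=(f_k^{\circ n})^*\delta_{\zeta_{\mathrm{Gauss}}}$ exactly. The only limit needed is $n\to\infty$. Your fallback arguments can be dropped; lifting periodic points is delicate anyway, since fibres of $\pi$ over non-classical periodic points need not consist of periodic points. Openness of $\pi$ is never used.
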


\begin{proof}
Since $\pi$ is continuous and $J_K$ is compact, so is $\pi(J_K)$.

Firstly, we show that $\pi(J_K) \subset J_k$. Let $x \notin J_k$, then there exists $x \in U$ an open subset of $\p^{1,an}_k$ such that $\bigcup f^n(U)$ omits an infinite number of points of $\p^{1,an}$. Then $\bigcup f^n(\pi^{-1}(U))$ omits an infinite number of points. So, if $y \in \pi^{-1}(x)$, then $y \notin J_K$ and so $x\notin \pi(J_K)$. 

Secondly, let us denote $\mu_{f,k}$ and $\mu_{f,K}$ the equilibrium measures of $f$ seen as a rational map defined over $k$ and $K$, respectively. Then, we have $\pi_* \mu_{f,K} = \mu_{f,k}$. (We check that $f_*(\pi_* \mu_{f,K}) = \pi_* \mu_{f,K}$ and the support of $\pi_* \mu_{f,K}$ is clearly disjoint from the exceptional set which characterizes the equilibrium measure.)

So, we have $\mu_{f,k} (\pi(J_K)) = \mu_{f,K}(J_K) =1$. Since $\pi(J_K)$ is closed, it gives $J_k \subset \pi(J_K)$.
\end{proof}

\begin{rem}
We will later show that if $K$ is the completion of an algebraic extension of $k$, then $\pi^{-1}(J_k) = J_K$ (proposition \ref{Taking an algebraic field extension does not change the julia set}) and if $K$ and $k$ are both algebraically closed, then there exists a continuous morphism : the canonical section, $\sigma_{K/k} : \p^{1,an}_k \rightarrow \p^{1,an}_K$, such that $\sigma_{K/k}(J_k) = J_K$ (proposition \ref{Taking the Julia set over a transcendental extension does not change it}). 
\end{rem}
%\begin{lem}\label{repelling points of type I are dense in the Julia Set}
%Let $f$ be a rational map defined over a non-archimedean complete value field $k$. Let us denote $R = \lbrace x \in \p^1(\overline{k}) | \exists n, f^n(x)$ is a repelling point $\rbrace$. If $R$ is not empty, then $\overline{R} = J_k$, the Julia set of $f$ defined in $\p^{1,an}_k$.
%\end{lem}
%
%\begin{proof}
%By lemma \ref{Taking a field extension does not change the julia set}, one can assume that $k$ is algebraically closed. 
%
%Since $R \subset J_k$, we have $\overline{R} \subset J_k$. By Theorem 8.15 of Benedetto (\cite{BenedettoLivre}), if $f^{-1}(\overline{R}) \subset \overline{R}$, then $J_k \subset R$.
%
%Let $x \in f^{-1}(\overline{R})$ and $x \in U$ open. Since $f$ is quasi-finite, $f : \p^{1,an}_k \rightarrow \p^{1,an}_k$ is open (\textcolor{red}{Mettre ref}) and $f(U)$ is open. Since $f(x) \in f(U) \cap \overline{R}$, there exists $r \in R \cap f(U)$ and so $f^{-1}(r) \in U \cap R$ so $x \in \overline{R}$.
%\end{proof}
%
%It is already known (\textcolor{red}{A enveler})
\begin{lem}\label{classical Julia set is dense}
Let $f$ be a rational map defined over a non-archimedean complete valued field $k$. If the classical Julia set $J_{f,classical}$ is non-empty, then it is dense in the Julia set $J_f$.
\end{lem}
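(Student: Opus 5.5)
The plan is to show that the closure $\overline{J_{f,classical}}$ (taken in $\p^{1,an}_k$) is a closed, nonempty, completely invariant subset of $J_f$, and then to use the minimality of the Julia set among such sets. We have $\overline{J_{f,classical}} \subset J_f$ because $J_f \cap \p^1(k) = J_{f,classical}$ and $J_f$ is compact, hence closed. So only the reverse inclusion needs work.

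First step: reduce to the case $k$ algebraically closed. Let $K = \widehat{\overline{k}}$ and let $\pi : \p^{1,an}_K \to \p^{1,an}_k$ be the projection. By lemma \ref{Taking a field extension does not change the julia set}, $\pi(J_K) = J_k$. Classical points of $J_K$ that are defined over $\overline{k}$ map to type I points of $\p^{1,an}_k$, and these lie in $J_k$. If the classical Julia set over $K$ is dense in $J_K$, then by continuity of $\pi$ the image of this classical set is dense in $J_k$. One subtlety remains: $J_{f,classical}$ over $k$ means the $k$-rational points, while the images of $\overline{k}$-points are type I points of $\p^{1,an}_k$. I would interpret $J_{f,classical}$ as the type I points of $J_f$, which is harmless here. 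From now on I assume $k$ is algebraically closed.

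Second step: invariance. Set $R = \overline{J_{f,classical}}$. The classical Julia set is completely invariant: $f^{-1}(J_{f,classical}) = J_{f,classical}$, since $f$ maps type I points to type I points and $J_f$ is completely invariant. Because $f$ is continuous, $f(R) \subset R$. To get $f^{-1}(R) \subset R$, I use that $f$ is open, which follows from lemma \ref{morphisme fini sur meme dimension est ouvert}. Take $x \in f^{-1}(R)$ and an open $U \ni x$. Then $f(U)$ is an open neighbourhood of $f(x) \in R$, so it contains some $r \in J_{f,classical}$. A preimage of $r$ lying in $U$ is a classical Julia point, so $x \in R$.

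Third step: conclude. I want to invoke the standard fact (Benedetto's book, Theorem 8.15 type statement, or Favre--Rivera-Letelier) that $J_f$ is the smallest closed, completely invariant set not contained in the exceptional set. Alternatively: $J_f$ is the closure of the backward orbit of any non-exceptional point of $J_f$. A point $z \in J_{f,classical}$ is non-exceptional, since exceptional type I points are superattracting and hence Fatou. Its backward orbit lies in $R$ and accumulates on all of $J_f$ by equidistribution of preimages towards $\mu_f$, whose support is $J_f$. This gives $J_f \subset R$.

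I expect the main obstacle to be this last point: making sure the backward-orbit density or minimality statement applies to a classical point and is cited correctly. The first step also needs care about what "classical" means for a non-algebraically-closed $k$.
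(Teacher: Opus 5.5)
Your argument is essentially the paper's proof: openness of $f$ (Lemma \ref{morphisme fini sur meme dimension est ouvert}) gives $f^{-1}(\overline{J_{f,classical}})\subset \overline{J_{f,classical}}$, and the minimality statement in Benedetto's Theorem 8.15 then yields $J_f\subset\overline{J_{f,classical}}$. The paper applies this directly over $k$, without your reduction step, and your observation that classical Julia points are non-exceptional supplies a hypothesis the paper leaves implicit.

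If you keep the reduction to $K=\widehat{\overline{k}}$, there is a slip. Points of $K$ that are transcendental over $k$ do not project to type I points, so density of $J_{K,classical}$ in $J_K$ does not by itself give density of type I points in $J_k$. To fix it, run your Step 3 on a $\overline{k}$-point of $J_K$ lying over a point of $J_{f,classical}$. Such a point exists because $\pi(J_K)=J_k$ and the fibres over rigid points consist of $\overline{k}$-points. Its backward orbit stays in $\p^1(\overline{k})$ and is dense in $J_K$, and its image under $\pi$ is then a dense set of type I points of $J_k$.
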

\begin{proof}

By Theorem 8.15 of Benedetto (\cite{BenedettoLivre}), if $f^{-1}(\overline{J_{f,classical}}) \subset \overline{J_{f,classical}}$, then $J_f \subset \overline{J_{f,classical}}$. So it suffices to check this condition.

Let $x \in f^{-1}(\overline{J_{f,classical}}) $ and $x \in U$ open. Since $f$ has finite fibers, $f : \p^{1,an}_k \rightarrow \p^{1,an}_k$ is open (lemma \ref{morphisme fini sur meme dimension est ouvert}) and $f(U)$ is open. Since $f(x) \in f(U) \cap \overline{J_{f,classical}}$, there exists $y\in f(U)$ such that $y \in J_{f,classical}$. Hence, there exists $r \in U$ with $f(r) = y \in J_{f,classical}$, so $r \in J_{f,classical} \cap U$.
\end{proof}

In fact, when the classical Julia set is compact in $\p^1(\overline{k})$, then the classical Julia set is equal to the Julia set. 

We will need the following lemma :
\begin{lem}\label{compact of k are compact in berkovich}
Let $k$ be a complete, algebraically closed valued field. Let $E$ be a subset of $\p^1(k)$. Then the following are equivalent :
\begin{itemize}
\item $E$ is a compact subset of $\p^1(k)$,
\item $\overline{E}^{\p^{1,an}_k} = E$ in $\p^{1,an}_k$.
\end{itemize}
\end{lem}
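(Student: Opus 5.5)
The plan is to compare the topology of $\p^1(k)$ with the subspace topology it inherits from $\p^{1,an}_k$ (these agree, since type I points are closed and the chordal metric topology is induced), and to use the tree structure of $\p^{1,an}_k$ to detect limit points that are not of type I. Since $\p^1(k)$ embeds homeomorphically into $\p^{1,an}_k$, a subset $E\subset \p^1(k)$ is compact in $\p^1(k)$ if and only if it is compact as a subset of $\p^{1,an}_k$. As $\p^{1,an}_k$ is Hausdorff and compact, a subset is compact exactly when it is closed in $\p^{1,an}_k$, which is the condition $\overline{E}^{\p^{1,an}_k}=E$. So the statement reduces to this single observation, provided the homeomorphism onto the image is justified.

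The main step is therefore to check that the inclusion $\p^1(k)\hookrightarrow \p^{1,an}_k$, $a\mapsto \eta_a$, is a homeomorphism onto its image. I would argue on the affine chart $\ana^1$ and treat $\infty$ via $z\mapsto 1/z$.

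\emph{Continuity.} For each $P\in k[T]$, the map $a\mapsto |P(a)|$ is continuous on $k$, so the inclusion is continuous for the coarsest topology defining $\ana^{1,an}_k$.

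\emph{Openness onto the image.} For $a\in k$ and $r>0$, the set $\{x\in \ana^{1,an}_k : |(T-a)(x)|<r\}$ is open in $\ana^{1,an}_k$. Its intersection with $k$ is the open disc $D(a,r)$, so open discs of $k$ come from open subsets of the Berkovich space.

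With this established, the two directions are immediate. If $E$ is compact in $\p^1(k)$, then its image is compact in the Hausdorff space $\p^{1,an}_k$, hence closed, so $\overline{E}^{\p^{1,an}_k}=E$. Conversely, if $E$ is closed in the compact space $\p^{1,an}_k$, then it is compact there, and hence compact in the subspace $\p^1(k)$ by the homeomorphism.

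I expect no real obstacle here; the only care needed is the homeomorphism onto the image near $\infty$. Algebraic closedness of $k$ is not actually needed for this argument, and I would note that remark after the proof.
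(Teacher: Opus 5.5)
Your proposal is correct and follows essentially the same route as the paper. Both arguments rest on two facts: open Berkovich sets restrict to open subsets of $k$, and the open discs $D(a,r)$ are traces of Berkovich open discs. Together with the fact that $\p^{1,an}_k$ is compact Hausdorff, these give the equivalence; the paper phrases this directly with open covers, while you package it as ``the inclusion is a homeomorphism onto its image''. Your chart argument at $\infty$ is a cleaner substitute for the paper's somewhat terse reduction to $\infty\notin E$, and your remark that algebraic closedness of $k$ is not needed is also correct.
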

\begin{proof}
If $k = \C$, then $\p^{1,an}_k = \p^1(k)$ and the result is clear.

Let us now assume that $k$ is non-archimedean, then $\p^1(k)$ is not compact, hence we may assume that $\infty \notin E$.

$\implies$ Let us first show that $E \subset \p^{1,an}_k$ is compact. Let $U_i$ be an open covering of $E$ in $\p^{1,an}_k$. Then $U_i\cap k$ is an open covering of $E$ in $k$. So, me way find a finite subcovering which will be a covering of $E$ in $\p_k^{1,an}$. Hence, $E$ is compact in $\p_k^{1,an}$.

Since the Berkovich spaces over a field are Hausdorff, we get that $E$ is closed in $\p^{1,an}$.

$\Longleftarrow$ Let $U_i$ be an open covering of $E$ in $k$, we may assume that all $U_i$ are open balls in $k$ : $U_i = B(a_i, r_i)$. 

By hypothesis $E$ is closed in $\p^{1,an}_k$ so it is compact in $\p^{1,an}_k$. Since, $B^{an}(a_i, r_i)$ is an open covering of $E$ in $\p^{1,an}_k$ we might find a finite covering $B^{an}(a_j, r_j)$. So, $B(a_j, r_j)$ is a finite subcovering of $E$ in $k$, so $E$ is compact in $\p^1(k)$. 
\end{proof}

\begin{lem}
Let $k$ be a non-archimedean, complete, valued field and $f \in \Rat_d(k)$ of degree $\geq 2$. Then, the following are equivalent :
\begin{itemize}
\item $J_f = J_{f, classical}$
\item $J_{f,classical}$ is a non-empty, compact in $\p^1(\overline{k})$.
\end{itemize}
\end{lem}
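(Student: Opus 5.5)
First reduce to $k$ algebraically closed. Let $K=\widehat{\overline{k}}$ with projection $\pi:\p^{1,an}_K\to\p^{1,an}_k$. Lemma \ref{Taking a field extension does not change the julia set} gives $\pi(J_K)=J_k$. The classical Julia set over $K$ is the natural reading of "$J_{f,classical}$ in $\p^1(\overline{k})$". I would therefore prove the equivalence over $K$ and transport it down with $\pi$. The points of $\p^1(\overline{k})$ embed in $\p^{1,an}_K$ as type I points, and under $\pi$ they go to type I points of $\p^{1,an}_k$ (Galois orbits). Interpreting $J_f=J_{f,classical}$ over $k$ as "$J_f$ consists only of type I points", this property is preserved in both directions by $\pi$. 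The reason is that $\pi$ sends type I points to type I points, and sends non-type-I points to non-type-I points, because the fibre over a rigid point consists of rigid points. From now on assume $k$ is complete and algebraically closed.

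($\Leftarrow$) Suppose $J_{f,classical}$ is non-empty and compact in $\p^1(k)$. By Lemma \ref{classical Julia set is dense}, $J_{f,classical}$ is dense in $J_f$, so $J_f=\overline{J_{f,classical}}^{\p^{1,an}_k}$. By Lemma \ref{compact of k are compact in berkovich}, compactness gives $\overline{J_{f,classical}}^{\p^{1,an}_k}=J_{f,classical}$. Hence $J_f=J_{f,classical}$. One caveat: Lemma \ref{compact of k are compact in berkovich} is stated for $E$ avoiding $\infty$. If $\infty\in J_{f,classical}$, I would first conjugate by a Möbius transformation in $\SL_2(k)$ so that some Fatou type I point sits at $\infty$. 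Such a point exists because the Fatou set contains type I points, for instance attracting ones, or points near a type II Fatou point. Conjugation transports both Julia sets, so this is harmless.

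($\Rightarrow$) Suppose $J_f=J_{f,classical}$. The Berkovich Julia set of a rational map of degree $\geq 2$ is non-empty, since it carries the equilibrium measure $\mu_f$, which is a probability measure. Hence $J_{f,classical}$ is non-empty. $J_f$ is compact in $\p^{1,an}_k$, hence closed, so $\overline{J_{f,classical}}^{\p^{1,an}_k}=J_{f,classical}$. The converse direction of Lemma \ref{compact of k are compact in berkovich}, after the same Möbius normalisation, then gives compactness in $\p^1(k)$.

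The main obstacle is making the reduction to $\overline{k}$ rigorous, that is, checking that "$J_f$ is made of type I points" really is invariant under $\pi$ in both directions. Going up, one needs $\pi^{-1}(J_k)\subseteq J_K$. I would argue this via the Galois action, since $J_K$ is Galois-stable and $\pi$ is a quotient by $\mathrm{Gal}$ on $\p^{1,an}_K$ for $K=\widehat{\overline{k}}$. Alternatively, I would invoke the later Proposition \ref{Taking an algebraic field extension does not change the julia set} if it may be used. A secondary subtlety is the existence of a type I Fatou point to move to $\infty$. If $J_f$ were all of the type I points this would fail, but that is impossible because $J_f$ is compact while $\p^1(k)$ is not compact.
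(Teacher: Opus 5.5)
Your argument in the algebraically closed case is correct, and it is the argument the paper relies on. The lemma is stated there without a separate proof. What is intended is the combination of Lemma~\ref{classical Julia set is dense} with Lemma~\ref{compact of k are compact in berkovich}, run exactly as in the second half of the proof of Proposition~\ref{if classical julia set compact, then one attracting only} over $\C_v$. Your M\"obius normalisation only spells out the ``we may assume $\infty\notin E$'' already made in Lemma~\ref{compact of k are compact in berkovich}.

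The reduction to $K=\widehat{\overline{k}}$, however, contains a false step. You identify type I points of $\p^{1,an}_k$ with Galois orbits of points of $\p^1(\overline{k})$, i.e.\ rigid points. You then assert that $\pi$ sends type I points to type I points ``because the fibre over a rigid point consists of rigid points''. That fibre property only shows that points not of type I map to non-rigid points; it says nothing in the other direction.

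Suppose $\overline{k}$ is not complete (e.g.\ $k=\Q_p$). A point $a\in K\setminus\overline{k}$ is transcendental over $k$, so $\pi(a)$ is the norm $P\mapsto|P(a)|$ on $k[T]$, with zero kernel. It is a type I point of $\p^{1,an}_K$ whose image is not rigid. Hence $J_K\subset\p^1(K)$ does not imply that $J_k$ consists of rigid points unless you also know $J_K\subset\p^1(\overline{k})$, which your argument never establishes. So the implication from the second condition to your reading of the first condition over $k$ is not proved.

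The repair is to define the type of $y\in\p^{1,an}_k$ as the type of any point of $\pi^{-1}(y)$. This is well defined because the fibres of $\pi$ are Galois orbits and Galois acts by type-preserving homeomorphisms. Then combine it with $\pi^{-1}(J_k)=J_K$, from Proposition~\ref{Taking an algebraic field extension does not change the julia set} or your Galois-stability argument. But with this definition, the statement over $k$ is merely a rewording of the statement over $K$.

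Some reinterpretation is indeed forced, because the lemma fails if $J_{f,classical}\subset\p^1(k)$ is read literally. Take $f(z)=z^2+1/p$ over $\Q_p$ with $p$ odd. Every point of its Julia set over $\C_p$ has absolute value $p^{1/2}\notin|\Q_p^\times|$, so over $\Q_p$ one has $J_{f,classical}=\emptyset\neq J_f$. Meanwhile the classical Julia set over $\C_p$ is a nonempty compact Cantor set. The cleanest course, and the one implicit in the paper, is to read the lemma over $\widehat{\overline{k}}$ from the start and drop the reduction.
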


We also have some results about the density of repelling points. 

\begin{prop}\label{Repelling points are dense}(Theorem 10.71, \cite{BakerRumelyBook})

Let $K$ be a non-archimedean, algebraically closed, valued field and $f$ be a rational function of degree $d \geq 2$ defined over $K$. Then, the repelling points of $f$ in $\p_K^{1,an}$ are dense in $J_f$.
\end{prop}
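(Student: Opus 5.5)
Since this is Theorem 10.71 of \cite{BakerRumelyBook}, we could simply cite it. Here is how I would prove it directly, by a nested-domain fixed point argument.

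Fix $x \in J_f$ and an open neighbourhood $W$ of $x$. Choose inside $W$ a small connected open affinoid neighbourhood $D$ of $x$: an open disc if $x$ is of type I or IV, and a small simple domain (a finite union of discs with a type II/III point in its boundary) if $x$ is of type II or III. We may assume $D$ avoids the at most two exceptional points. By the definition of $J_f$ together with Theorem 8.15 of \cite{BenedettoLivre}, $J_f$ is perfect and contained in the closure of the backward orbit of any non-exceptional point. We will use this to find $n\geq 1$ and a connected component $V$ of $f^{-n}(D)$ with $\overline{V} \subset D$.

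To produce $V$, pick a point $z \in J_f \cap D$ different from $x$, possible since $J_f$ is perfect. Its backward orbit accumulates everywhere on $J_f$, in particular at $x$. Because $f$ is open and finite (lemma \ref{morphisme fini sur meme dimension est ouvert}), the components of $f^{-n}(D)$ containing preimages of $z$ close to $x$ shrink. I would justify this shrinking by the standard fact that $f^n$ expands the diameters of Berkovich discs, equivalently that Fatou-type equicontinuity fails near $x$. The map $f^n : V \rightarrow D$ is then a finite surjective morphism of connected affinoid-like domains with $V$ strictly inside $D$.

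The core step is that such a map has a repelling fixed point in $\overline{V}$. There are two cases.
\begin{itemize}
\item If $V$ and $D$ are open discs, write $D = D(a,R)$ and $V = D(b,r)$ with $r<R$. Consider $g(z) = f^n(z) - z$ on $V$. Since $f^n(V) = D \supsetneq V$, the Weierstrass degree / Newton polygon of $g$ on $V$ is at least $1$, so $g$ has a zero $c \in V$; this is a type I fixed point. Its multiplier is the linear coefficient of $f^n$ at $c$. By the non-archimedean Schwarz lemma (the coefficient comparison $|(f^n)'(c)|\, r \geq R$ when $f^n$ has degree one on $V$), we get $|(f^n)'(c)| \geq R/r > 1$, so $c$ is repelling.
\item If $f^n|_V$ has degree $\geq 2$, or $V$ is not a disc, I would instead look at the unique boundary point $\zeta$ of $V$ lying on the path from $V$ to $\partial D$. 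This point is of type II. By the fixed point theorem on Berkovich trees (\cite{BakerRumelyBook}, Thm.\ 10.83-style), the map retracted to the convex hull has a fixed point there. Its local degree is then $\geq 2$, so $\zeta$ is a repelling type II point; otherwise we are reduced to the degree one disc case above.
\end{itemize}
In either case the repelling point lies in $\overline{V} \subset D \subset W$, which proves density.

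I expect the main obstacle to be this last dichotomy. Ruling out the indifferent case requires a careful comparison of radii: the degree one case uses that $V \subsetneq D$ strictly, and the higher degree case uses Rivera-Letelier's description of local degrees along the tree. A secondary technical point is the shrinking of the components $V$ in the second paragraph. There I would first try to argue by contradiction: if the components did not shrink, the iterates would be equicontinuous on a neighbourhood of $x$, contradicting $x\in J_f$.
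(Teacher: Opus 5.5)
The paper does not prove this statement; it only quotes Baker--Rumely. Your direct argument has genuine gaps, the main one at the core fixed point step. In your second case you assert that the boundary point $\zeta$ of $V$ on the path towards $\partial D$ is a fixed point of $f^n$. This cannot happen. Since $V$ is a connected component of $f^{-n}(D)$, one has $f^n(\partial V)\subset \partial D$. Also $\partial D\cap \overline{V}=\emptyset$, because $\overline{V}\subset D$ and $D$ is open (for discs, $f^n(\zeta_{b,r})=\zeta_{a,R}\neq\zeta_{b,r}$).

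Nor can the degree $\geq 2$ case be handled with type I points. Take $f(z)=z+cz^2$ with $|c|=R>1$. Then $V=D(0,1)$ is a component of $f^{-1}(D(0,R))$ with $\overline{V}\subset D(0,R)$, and $f|_V$ has degree $2$. Yet the only classical fixed point in $V$ is $0$, with multiplier $1$. The repelling fixed point is the type II point $\zeta_{0,1/R}$, which lies in the interior of $V$.

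What is missing is precisely the fixed point theorem, due to Rivera-Letelier, that such a $V$ contains a repelling fixed point of $f^n$. This is the real content of the cited result. For discs you can derive it from your first step:
\begin{itemize}
\item Take the classical fixed point $c\in V$ given by your Weierstrass argument, and write $f^n(z)=c+\sum_{k\geq 1}a_k(z-c)^k$ on $V$.
\item Then $f^n(\zeta_{c,s})=\zeta_{c,\rho(s)}$ with $\rho(s)=\max_k |a_k|s^k$.
\item If $|a_1|\leq 1$, use that $\rho(s)\to R>r$ as $s\to r$. Convexity of $\log\rho$ in $\log s$ gives a largest $s_0<r$ with $\rho(s_0)=s_0$.
\item At $s_0$ the right slope is an integer $\geq 2$, so $\zeta_{c,s_0}\in V$ is a fixed point of local degree $\geq 2$.
\end{itemize}
For the non-disc domains you use around type II and III points, you give no argument at all.

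The second gap is the construction of $V$ itself. It is not a standard fact that the components of $f^{-n}(D)$ containing preimages of $z$ near $x$ must shrink. Nothing in your argument prevents such a component from containing one of the finitely many points of $\partial D$. Your claim that otherwise the iterates would be equicontinuous near $x$ is unsupported.

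A workable route uses only the finiteness of boundaries and equidistribution of preimages. The key observation is: if $f^n(\partial D_i)\cap\overline{D_j}=\emptyset$, then every component of $f^{-n}(D_j)$ meeting $D_i$ has closure in $D_i$. Now take domains $D_1,\dots,D_N\subset W$ meeting $J_f$, with pairwise disjoint closures and $N$ larger than the number of boundary points of each $D_i$. Take $n$ with $f^n(D_i)\cap D_j\neq\emptyset$ for all $i,j$. Then each $i$ has such a $j$. Pulling back along a cycle of this relation gives a component of some $f^{-m}(D_i)$ whose closure lies in $D_i$. The fixed point theorem must then be applied to it, in general for non-disc domains.

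Finally, $J_f$ is perfect only when it is not a single point. In the potential good reduction case, $J_f$ is one exceptional type II point. There you can neither choose $z\neq x$ nor make $D$ avoid the exceptional set. The statement still holds, because that point is a repelling fixed point, but this case must be treated separately.
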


And J.-P. Bézivin also proved the following : 

\begin{thm}(Théorème 3, \cite{BezivinPointsRepDenses})

Let $K$ be a non-archimedean, algebraically closed, valued field and $f$ be a rational function of degree $d \geq 2$ defined over $K$. If $f$ has a repelling point defined over $\p^1(K)$, then the repelling points are dense in the classical Julia set. Furthermore, every point of the classical Julia set is an accumulation point of repelling periodic points. 
\end{thm}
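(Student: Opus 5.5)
The plan is to reduce to a repelling fixed point, show that its backward orbit accumulates at every point of $J_{f,classical}$, and then build a repelling periodic point near each such preimage from an inverse branch that contracts a small disk into itself. The second assertion implies the first, so it suffices to prove: every $x \in J_{f,classical}$ is a limit of repelling periodic points in $\p^1(K)$.

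\textbf{Reduction and linearization.} Let $z_0 \in \p^1(K)$ be the given repelling point, of period $n$ and multiplier $\lambda$ with $|\lambda|>1$. Since $f$ and $f^{\circ n}$ have the same classical Julia set and the same repelling periodic points, I replace $f$ by $f^{\circ n}$ and assume $z_0$ is fixed. Conjugating by a Möbius map, I may assume $z_0 = 0$ and that neither $0$ nor the point $x$ under study is $\infty$. By the non-archimedean Koenigs linearization (or directly from the Weierstrass degree of $f$ on a small disk), there is $\rho>0$ such that for every $r \leq \rho$:
\begin{itemize}
\item $f$ maps $D(0,r)$ bijectively onto $D(0,|\lambda| r)$,
\item $|f(z)| = |\lambda||z|$ on $D(0,r)$.
\end{itemize}

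\textbf{Step 1: backward orbit of $0$ accumulates at $x$.} Fix $x \in J_{f,classical}$ and a small disk $D$ around $x$. Since $(f^{\circ m})$ is not equicontinuous on $D$, the non-archimedean Montel theorem (Hsia's version) shows that $\bigcup_m f^{\circ m}(D)$ omits at most one point of $\p^1(K)$, which would then be exceptional. A repelling fixed point is not exceptional, because an exceptional point is totally invariant and hence superattracting. So there exist $m$ and $w \in D$ with $f^{\circ m}(w) = 0$. The backward orbit of $0$ is infinite and $f^{\circ m}$ has finitely many critical points, so varying $m$ I can choose $w$ with $(f^{\circ m})'(w) \neq 0$. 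Then there are a disk $D' \subset D$ around $w$ and a radius $s \leq \rho$ such that $f^{\circ m}: D' \to D(0,s)$ is a bijection (a Weierstrass degree one computation).

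\textbf{Step 2: returning from $0$ to $D'$.} Since $0 \in J_{f,classical}$, the same Montel argument applied to a tiny disk $V = D(0,t)$ gives $j$ with $w \in f^{\circ j}(V)$. I shrink $V$ so that, in addition, $f^{\circ j}$ maps some smaller disk $V' \subset V$ around a preimage of $w$ bijectively onto a disk $D'' \subset D'$ around $w$; this again uses that $w$ can be taken off the finitely many critical values of the relevant iterates. Now consider the composite
\[ g = (f^{\circ m}|_{D'})^{-1} \circ (f^{\circ k}|_{D(0,s|\lambda|^{-k})})^{-1} \circ \cdots \]
More precisely, I use the inverse branches: from $D''$, pull back by $f^{\circ j}$ into $V'$, then by $f^{\circ k}$ (using the linearization) into $D(0, t|\lambda|^{-k})$, then by $f^{\circ m}$ into $D'$. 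For $k$ large, the contraction factor $|\lambda|^{-k}$ beats the fixed Lipschitz constants of the two other inverse branches. Hence the composite sends a disk around $w$ strictly into itself with Lipschitz constant less than $1$. A contracting self-map of a complete disk has a unique fixed point $p$. This $p$ is periodic for $f$ of period dividing $m+k+j$, with multiplier of absolute value greater than $1$, so it is a repelling periodic point in $D$. Letting $D$ shrink gives the second assertion; density in $J_{f,classical}$ follows.

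\textbf{Main obstacle.} The main obstacle is Step 2: controlling the images of disks under $f^{\circ m}$ and $f^{\circ j}$ precisely enough (Weierstrass degree, non-archimedean Schwarz lemma) that the three inverse branches exist on nested disks and compose correctly. A secondary point is ensuring that the chosen preimages avoid critical points. I would handle both by exploiting the freedom to pick $w$ among infinitely many preimages, together with the fact that the critical points form a finite set.
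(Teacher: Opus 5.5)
The paper gives no proof of this statement; it imports it from Bézivin, so your argument has to stand on its own. Its architecture is reasonable: Hsia's Montel theorem takes you from near $x$ to the repelling fixed point $0$ and back, and then a contraction is built from three single-valued inverse branches with the factor $|\lambda|^{-k}$. Once those branches exist, the fixed-point step is fine.

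The gap is the step you called secondary: finding $w\in D$ with $f^{\circ m}(w)=0$ and $(f^{\circ m})'(w)\neq 0$. The remark that $f^{\circ m}$ has finitely many critical points does not give this, because the critical set grows with $m$. A preimage $w$ of $0$ is critical as soon as its orbit meets a critical point of $f$ before reaching $0$. So the bad preimages form the whole backward orbits of the critical points lying in $\bigcup_m f^{-m}(0)$, and these can exhaust everything except $0$ itself.

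Concretely, take $d\geq 3$, $|\lambda|>1$ and $f(z)=\lambda z(z-1)^{d-1}$. Then $|f'(0)|=|\lambda|>1$, $f^{-1}(0)=\{0,1\}$ and $f'(1)=0$. Every $w\neq 0$ in the backward orbit of $0$ has $1$ as the last point of its orbit before $0$, so $(f^{\circ m})'(w)=0$ whenever $f^{\circ m}(w)=0$. Take $x=1\in J_{f,classical}$ (by complete invariance) and a disk $D\ni 1$ with $0\notin D$. Then no disk $D'\subset D$ is mapped injectively by an iterate onto a disk containing $0$. So the branch $(f^{\circ m}|_{D'})^{-1}$ on $D(0,s)$ that your Step 2 needs does not exist, and the same objection applies to the choice of $v$ near $0$.

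You cannot simply drop injectivity. The composite $F=f^{\circ j}\circ f^{\circ k}\circ f^{\circ m}$ does map a small disk around $w$ onto a strictly larger disk containing it, and a Newton polygon argument then produces a fixed point of $F$. However, that fixed point need not be repelling. For example, $z\mapsto z^2/\pi$ with $|\pi|<1$ maps $\overline{D}(0,1)$ onto $\overline{D}(0,|\pi|^{-1})$, yet its fixed points $0$ and $\pi$ have multipliers $0$ and $2$, so neither is repelling. A genuinely extra ingredient is needed; two possible repairs follow.

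\begin{enumerate}
\item Run your argument with a repelling cycle lying outside the postcritical set $\bigcup_{n\geq 1}f^{\circ n}(\mathrm{Crit}(f))$. Then every iterated preimage is non-critical and Steps 1--2 go through verbatim. That set contains only finitely many cycles, so it suffices to know that $f$ has infinitely many repelling cycles, but this must be proved separately.
\item Keep the branched map and estimate $|F'(p)|$ at the fixed point directly, using $|f^{\circ m}(z)|=|c|\,|z-w|^{e}$ near $w$ together with $|\lambda|^{k}|c|\,|p-w|^{e}\approx |v|$. In characteristic $0$ this gives $|F'(p)|\to\infty$ as $k\to\infty$. In positive characteristic, wild ramification can make $(f^{\circ m})'$ vanish to order $\geq e$ at $w$, and the estimate breaks down.
\end{enumerate}
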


Since the classical Julia set is dense in the Berkovich Julia set, one gets the following result.

\begin{prop}\label{repelling points of type I are dense in the Julia Set}
Let $K$ be a non-archimedean, algebraically closed, valued field and $f$ be a rational function of degree $d \geq 2$ defined over $K$. If $f$ has a repelling point defined over $\p^1(K)$, then the type I repelling points are dense in the Berkovich Julia set.
\end{prop}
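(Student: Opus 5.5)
The plan is to combine Bézivin's theorem with the density lemma \ref{classical Julia set is dense}. Write $R$ for the set of type I repelling periodic points of $f$. Each point of $R$ is repelling, hence lies in $J_{f,classical}$, so $R \subset J_{f,classical} \subset J_f$. Since $J_f$ is closed in $\p^{1,an}_K$, we get $\overline{R} \subset J_f$, and what remains is the reverse inclusion $J_f \subset \overline{R}$, with the closure taken in $\p^{1,an}_K$.

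First, the hypothesis gives a repelling point in $\p^1(K)$, and such a point lies in $J_{f,classical}$, so $J_{f,classical} \neq \emptyset$. Lemma \ref{classical Julia set is dense} then gives that $J_{f,classical}$ is dense in $J_f$. Second, Bézivin's theorem (Théorème 3 of \cite{BezivinPointsRepDenses}) gives that every point of $J_{f,classical}$ is an accumulation point of repelling periodic points in $\p^1(K)$, i.e.\ of points of $R$. It follows that $J_{f,classical} \subset \overline{R}^{\p^1(K)}$, the closure taken in $\p^1(K)$.

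The only step needing care is transferring this closure in $\p^1(K)$ to a closure in $\p^{1,an}_K$. The inclusion $\p^1(K) \hookrightarrow \p^{1,an}_K$ is continuous: the topology on $\p^{1,an}_K$ is defined by the functions $\eta \mapsto |P(\eta)|$, and on type I points these restrict to $a \mapsto |P(a)|$, which are continuous for the metric topology. Hence if $r_n \to a$ in $\p^1(K)$, then $r_n \to a$ in $\p^{1,an}_K$. Near $\infty$ the same argument applies in the chart $1/T$, or one simply notes that $\p^1(K)$ is a subspace whose induced topology is the metric one. Therefore $\overline{R}^{\p^1(K)} \subset \overline{R}^{\p^{1,an}_K}$.

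Combining the steps gives
\[
J_f = \overline{J_{f,classical}}^{\p^{1,an}_K} \subset \overline{\overline{R}^{\p^1(K)}}^{\p^{1,an}_K} = \overline{R}^{\p^{1,an}_K},
\]
which is the required inclusion. I expect no real obstacle here: the content sits in Bézivin's theorem and in Lemma \ref{classical Julia set is dense}, and the argument only has to verify compatibility of the two topologies, as done above.
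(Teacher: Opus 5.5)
Your proposal is correct and follows the paper's route: the paper obtains this statement by combining Bézivin's Théorème 3 with Lemma~\ref{classical Julia set is dense}. Your check that closures taken in $\p^1(K)$ lie inside closures taken in $\p^{1,an}_K$ spells out a step the paper leaves implicit.
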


\subsection{Properties of periodic points}

In this section, we will mostly recall some results of J. Rivera-Letelier (\cite{RiveraLetelierEspaceHyperboliqueEtDynamique}, \cite{RiveraLetelierPeriodicPointsOfRationalMaps}) and R. Benedetto (\cite{BenedettoLivre}). We will sometimes use them in a more general setting.

Firstly, one has a bound on the number of attracting points if the residue characteristic is $0$ or bigger than the degree.

\begin{thm}\label{NumberAttractingPointsBounded}(Theorem 10.17, \cite{BenedettoLivre}) 

Let $k$ be a non-archimedean complete, valued field and denote by $p$ the characteristic of its residue field. Let $f$ be a rational function of degree $d\geq 2$. Suppose $p = 0$ or $p>d$, then there can be at most $2d-2$ attracting periodic cycles. 
\end{thm}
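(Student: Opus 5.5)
The statement is Benedetto's Theorem 10.17, and my plan follows the classical Fatou-type argument: each attracting cycle should attract a critical point. There are at most $2d-2$ critical points counted with multiplicity, so this gives the bound. I first reduce to $k$ algebraically closed by passing to the completion $K$ of an algebraic closure of $k$. An attracting cycle over $k$ stays attracting over $K$, since the multiplier is unchanged. Distinct cycles stay distinct. The residue characteristic is unchanged as well, since the residue field of $K$ is an algebraic closure of $\tilde{k}$, which has the same characteristic. So it suffices to treat $k = K$ algebraically closed.

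Next, let $x$ be an attracting periodic point of exact period $n$ with multiplier $\lambda = (f^{\circ n})'(x)$, $|\lambda|<1$. I consider the immediate basin of the cycle. This is the Fatou component $U$ in $\p^{1,an}_k$ containing $x$, which is $f^{\circ n}$-invariant. The key step is to show that if $U, f(U),\dots,f^{\circ n-1}(U)$ contain no critical point of $f$, then $f^{\circ n}: U \to U$ is injective. It would then be a bijective analytic self-map of a disk-like domain. Once I conjugate $x$ to $0$ and rescale, the non-archimedean Schwarz lemma should give a contradiction. An injective analytic map of an open disk $D(0,r)$ onto itself must have $|(f^{\circ n})'(0)| = 1$, because $h(z)=\sum a_i z^i$ is a bijection of $D(0,r)$ only when $|a_1| r = \max_i |a_i| r^i$ and $|a_1|r = r$. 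Here I use the residue characteristic hypothesis. When $p = 0$ or $p > d$, any local degree $m \le d$ at a point is not divisible by $p$. The map $f$ near a type I point of local degree $m$ then behaves like $z \mapsto z^m$ on a small disk, so a disk containing no critical point is mapped injectively. This tameness is what lets me conclude that on a disk around $x$ without critical points, $f^{\circ n}$ is a bijection onto its image disk.

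Concretely, I will take the largest open disk $D$ around $x$ with $f^{\circ n}(D) \subset D$ on which the iterates converge to the cycle. I will argue as in Benedetto's Chapter 10 that the boundary point of $D$ is a type II point $\zeta$ fixed by $f^{\circ n}$ or mapped into it. If no critical point lies in $D \cup f(D)\cup\dots$, then each $f: f^{\circ i}(D) \to f^{\circ i+1}(D)$ has degree one, by tameness and the Riemann–Hurwitz count for disks, which in the tame case says that the number of critical points in a disk equals the degree minus one. Hence $f^{\circ n}: D\to D$ is a bijection, and the Schwarz lemma forces $|\lambda|=1$, a contradiction. So each attracting cycle captures at least one critical point in the union of the disks around its points. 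The disks attached to different cycles are disjoint, since a point cannot be attracted to two distinct cycles. Therefore the number of cycles is at most the number of critical points, which is $2d-2$.

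The main obstacle is the step relating degree to critical points on disks, that is, a Riemann–Hurwitz statement for the restriction of $f$ to a disk: a degree-$m$ map of disks with $p \nmid m!$ has exactly $m-1$ critical points there. This fails in wild characteristic, since $z\mapsto z^p$ has derivative vanishing identically in the reduction. I would handle it via Newton polygons. For $h(z) = \sum a_i z^i$ mapping $D(0,r)$ onto $D(0,s)$ with Weierstrass degree $m$, the coefficient $i a_i$ of $h'$ satisfies $|i a_i| = |a_i|$ for $i \le m$ when $p>m$ or $p=0$. So $h'$ has Weierstrass degree $m-1$ and exactly $m-1$ zeros in $D(0,r)$.
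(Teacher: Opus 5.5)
The paper gives no proof of this statement; it only cites Benedetto. So I am judging your argument on its own terms.

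The reduction to $k$ algebraically closed and the final count are fine. The step meant to produce a critical point fails. From ``no critical point in $D\cup f(D)\cup\dots\cup f^{\circ n-1}(D)$'' and the tame Riemann--Hurwitz count, you only get that $f^{\circ n}|_D$ has degree one, i.e.\ that it is \emph{injective}. Your Schwarz-lemma contradiction needs $f^{\circ n}(D)=D$, and choosing $D$ as the largest open disk around $x$ with $f^{\circ n}(D)\subset D$ does not give this.

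Write $x=0$ and $f^{\circ n}(z)=\sum_i a_iz^i$. Surjectivity does hold when the maximal radius $r_0$ is forced by the Newton polygon: then $\max_i |a_i|r_0^{i-1}=1$, attained only at indices $i\ge 2$ because $|a_1|<1$, so the degree on $D$ is at least $2$. But $r_0$ can instead be capped by a pole of $f^{\circ n}$ on the sphere $|z|=r_0$. Then $f^{\circ n}(D)$ is a strictly smaller disk, and the Schwarz lemma gives nothing.

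In that case the claim you actually rely on, that every attracting cycle has a critical point in $D\cup\dots\cup f^{\circ n-1}(D)$, is false. Take residue characteristic $0$ or $p\ge 3$, and $g(z)=\lambda z+\mu z^2/(z-b)$ with $|b|=1$ and $0<|\mu|<|\lambda|<1$.
\begin{itemize}
\item $0$ is an attracting fixed point with multiplier $\lambda$.
\item Every disk $D(0,s)$ with $s>1$ contains the pole $b$, so your $D$ is $D(0,1)$.
\item On $D(0,1)$ one has $|g(z)|=|\lambda||z|$, so $g$ is injective there with $g(D)=D(0,|\lambda|)\subsetneq D$.
\item The two critical points $b\bigl(1\pm\sqrt{\mu/(\lambda+\mu)}\bigr)$ lie at distance $|\mu/\lambda|^{1/2}$ from $b$, hence outside $D$.
\end{itemize}
These critical points do belong to the immediate basin, since the annulus $|\mu/\lambda|<|z-b|<1$ is mapped into $\overline{D}(0,|\lambda|)$. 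But that basin is not a disk. This is exactly where your passage from the immediate basin to a ``disk-like domain'' breaks: immediate basins need not be disks (they can be domains of Cantor type).

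The missing ingredient is a Riemann--Hurwitz count on domains that are not disks. One way to close the gap:
\begin{itemize}
\item Put a fixed point of $f$ that is not in the cycle at $\infty$; such a point lies outside the basin.
\item In the pole-capped case, let $W$ be the connected component of $(f^{\circ n})^{-1}(D)$ containing $D$. It is a connected open affinoid inside the basin.
\item $W$ strictly contains $D$, because $f^{\circ n}(D)\neq D$.
\item $W$ cannot be a disk: it would then be some $D(0,r')$ with $r'>r_0$ and $f^{\circ n}(D(0,r'))\subset D(0,r')$, contradicting maximality. Hence $W$ has $e(W)\ge 2$ ends.
\item Apply the tame Riemann--Hurwitz formula for connected open affinoids, $2-e(W)=m\,(2-e(D))-c$, to $f^{\circ n}\colon W\to D$ of degree $m$, one step of $f$ at a time (each step has degree at most $d<p$).
\end{itemize}
This gives $c=m-2+e(W)\ge m\ge 1$. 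Hence there is a critical point of $f$ in $W\cup f(W)\cup\dots\cup f^{\circ n-1}(W)$, which lies in the basin of the cycle.

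In the example, $W=D(0,1/|\lambda|)\setminus\overline{D}(b,|\mu|)$ is an annulus mapped with degree $2$ onto $D(0,1)$, and it contains both critical points. Your Newton-polygon count only sees disks, so it cannot detect them.
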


Even if in the archimedean case, the number of indifferent point is bounded, it is not true in the non-archimedean case. In fact, Rivera-Letelier proved the following result : 

\begin{prop}(Proposition 5.8 \cite{RiveraLetelierEspaceHyperboliqueEtDynamique})

Let $\C_p$ be the completion of the algebraic closure of $\Q_p$ and $f \in \Rat_d(\C_p)$ with $d\geq 2$. Then the following are equivalent :
\begin{itemize}
\item $f$ has an indifferent periodic point in $H_{\C_p}$,
\item $f$ has infinitely many indifferent periodic points in $H_{\C_p}$,
\item $f$ has an indifferent periodic point in $\C_p$,
\item $f$ has infinitely many indifferent periodic points in $\C_p$.
\end{itemize}
\end{prop}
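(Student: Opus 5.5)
This is Rivera-Letelier's Proposition 5.8, cited without proof, so I will organise an argument around his theory of indifferent components. The implications ``infinitely many $\Rightarrow$ one'' are trivial in both $H_{\C_p}$ and $\C_p$. It therefore suffices to prove two things: an indifferent periodic point in $\C_p$ produces infinitely many indifferent periodic points in both $\C_p$ and $H_{\C_p}$; and an indifferent periodic point in $H_{\C_p}$ produces one in $\C_p$.

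\textbf{From a type I indifferent point.} Let $z_0\in\C_p$ be an indifferent periodic point of period $n$. Replace $f$ by $g=f^{\circ n}$ and conjugate by a Möbius map so that $z_0=0$ and $g(z)=\lambda z+O(z^2)$ with $|\lambda|=1$. Since $|g'(0)|=1$, $g$ maps a small disc $D(0,r)$ isometrically and bijectively onto itself; this is the standard non-archimedean linearisation/isometry statement, using that the higher coefficients are bounded. By Theorem 8.15-type arguments (or directly), $D(0,r)$ lies in an indifferent Fatou component.

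The key input specific to $\C_p$ is that the residue field $\overline{\mathbb{F}}_p$ is a union of finite fields. Consequently the reduction $\tilde\lambda$ is a root of unity of order $m$ prime to $p$, and the action of $g$ on the closed disc reduces to a polynomial automorphism of finite order over $\overline{\mathbb{F}}_p$. Choosing $m$ and then a $p$-power $p^k$, the iterate $h=g^{\circ mp^k}$ restricted to $\overline{D}(0,r)$ satisfies $|h(z)-z|<r$ uniformly. One then analyses the fixed points of $h^{\circ p^j}$ on the disc: $h^{\circ p^j}(z)-z$ is a power series whose Weierstrass degree on $\overline D(0,r)$ tends to infinity as $j\to\infty$. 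This is Rivera-Letelier's observation that, over $\C_p$, an indifferent disc is a union of Siegel-type discs on which iterates converge to the identity, and that the $p$-power iterates $h^{p^j}$ acquire many new fixed points. These fixed points are periodic points of $f$ lying in the indifferent disc, so they are again indifferent, and there are infinitely many of them in $\C_p$.

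Applying the same reasoning at the level of discs gives infinitely many type II indifferent periodic points as well. Concretely, $g$ permutes the residue classes of $\overline D(0,r)$, so each closed disc $\overline D(0,s)$ with $s\le r$ and $s\in|\C_p^\times|$ is fixed by $g$, and $g$ has local degree $1$ there. Hence $\zeta_{0,s}$ is an indifferent periodic point in $H_{\C_p}$ for every such $s$, and there are infinitely many of them.

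\textbf{From an indifferent point in $H_{\C_p}$.} Let $\xi$ be an indifferent periodic point of period $n$ in $H_{\C_p}$. Then $f^{\circ n}$ has local degree $1$ at $\xi$.
\begin{itemize}
\item If $\xi$ is of type II, after conjugation $\xi=\zeta_{0,1}$ and the reduction $\tilde g$ of $g=f^{\circ n}$ is a Möbius map over $\overline{\mathbb{F}}_p$. Since every Möbius map over $\overline{\mathbb F}_p$ has finite order, some iterate $\tilde g^{\circ N}$ is the identity. Hence $g^{\circ N}$ fixes every residue class, that is, every open disc $D(a,1)$ with $|a|\le1$, and it is a bijective isometry of each such disc. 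Take a residue class not meeting the finitely many preimages of the bad directions. On that disc $g^{\circ N}$ is a power series automorphism, and its fixed points exist by the Weierstrass preparation theorem applied to $g^{\circ N}(z)-z$, unless $g^{\circ N}(z)-z$ has no zeros. To rule out the latter case I would pass to a smaller disc or to $p$-power iterates as in the first part. Any fixed point obtained in this way has multiplier of absolute value $1$, so it is a type I indifferent point.
\item If $\xi$ is of type III or IV, I would reduce to the type II case by looking at a nearby type II point in the same indifferent Fatou component, which is again periodic by Rivera-Letelier's classification of Fatou components.
\end{itemize}

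\textbf{Main obstacle.} The hardest step is showing that $g^{\circ N}(z)-z$, or a $p$-power iterate of it, actually vanishes somewhere on the disc. This is exactly where the structure of $\C_p$, namely the residue field being $\overline{\mathbb F}_p$ and $|p|<1$, enters, and the argument should fail over fields of residue characteristic $0$. I would first try the expansion $h^{\circ p^j}(z)-z=p^j(\cdots)+\text{(higher)}$ and compare Newton polygons as $j\to\infty$.
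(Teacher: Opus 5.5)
The paper gives no proof of this statement; it only cites Rivera-Letelier. So I judge your argument on its own.

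\textbf{What works.} The reduction to two implications is fine. So is the production of infinitely many indifferent points $\zeta_{z_0,s}$ ($0<s<r$) in $H_{\C_p}$ from a type I indifferent point $z_0$.

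\textbf{The gap.} Both remaining implications rely on finding new periodic points \emph{locally}: in a closed disc around $z_0$, or in a generic residue class at an indifferent $\xi\in H_{\C_p}$. This fails. Take $p$ odd and $f(z)=z^2$.
\begin{itemize}
\item $f$ is an isometric bijection of $D(1,1)$, and $z_0=1$ is indifferent with multiplier $2$.
\item The periodic points of $f$ in $D(1,1)$ are exactly the $p$-power roots of unity. A primitive $p^a$-th one lies at distance $|p|^{1/(p^{a-1}(p-1))}$ from $1$.
\item So for every $r<1$, the number of fixed points of $h^{\circ p^j}$ in $\overline D(1,r)$ is bounded independently of $j$. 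For $r<|p|^{1/(p-1)}$ the Weierstrass degree of $h^{\circ p^j}(z)-z$ is $1$ for all $j$.
\end{itemize}
Convergence $h^{\circ p^j}\to\mathrm{id}$ creates no fixed points. By Herman--Yoccoz, indifferent germs over $\C_p$ whose multiplier is not a root of unity are linearizable, so this is the typical picture.

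The type II step fails in the same way. The point $\xi=\zeta_{1+p,|p|^2}$ is an indifferent periodic point of $f$ in $H_{\C_p}$: $p\mid 2^m-1$ for $m$ the order of $2$ mod $p$, and $f^{\circ m}$ is injective on $D(1,1)$. Yet no residue class at $\xi$ inside $\overline D(1+p,|p|^2)$ contains any periodic point. So no choice of class, subdisc or $p$-power iterate will make $g^{\circ N}(z)-z$ vanish there. Your ``main obstacle'' is a real obstruction: at an indifferent $\xi$ one has $\tilde g^{\circ N}=\mathrm{id}$, hence $|g^{\circ N}(T)-T|(\xi)<1$, and nothing forces a zero.

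\textbf{The missing idea.} Look at the boundary of the Fatou component, where the new indifferent points accumulate (at the Gauss point in the example).
\begin{enumerate}
\item Over $\C_p$, the Fatou component $U$ of an indifferent periodic point, of any type, is an indifferent component. Some iterate $g$ maps it bijectively onto itself.
\item Its boundary is non-empty, finite and permuted by $g$ (Rivera-Letelier; cf.\ the use of Theorem 9.7 of Benedetto in the paper). Hence it consists of periodic type II Julia points, which are repelling.
\item At such a point $\eta$, conjugated to the Gauss point and fixed by $g$, the reduction $\tilde g$ has degree $\geq 2$. It is also separable, because $g$ has local degree $1$ in the direction at $\eta$ pointing into $U$.
\item Hence all but finitely many of the infinitely many periodic residue classes of $\tilde g$ avoid critical and bad directions. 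On such a class $D(a,1)$ of period $k$, $g^{\circ k}$ is a bijective isometry.
\item We have $|g^{\circ k}(T)-T|(\eta)=1$, since $\tilde g^{\circ k}\neq\mathrm{id}$, while $|g^{\circ k}(z)-z|<1$ on $D(a,1)$. The argument of Proposition \ref{repelling in H_k implies infinitely many} then gives a fixed point of $g^{\circ k}$ in $D(a,1)$, and it is indifferent.
\end{enumerate}
This yields both missing implications at once. Separability is essential: for $z^p$ the Gauss point is repelling, but every periodic point in $\C_p$ is attracting. This is why that proposition by itself only gives non-repelling points.
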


R. Benedetto generalized this result for $k$ a non-archimedean valued field whose residue field is algebraic over a finite field.

In the general case, a rational map can have infinitely many non-repelling periodic points. Rivera-Letelier proved the following results in the case $k = \C_p$ but it remains true in all generality. 

\begin{prop}\label{repelling in H_k implies infinitely many}(Corollaire 5.6, \cite{RiveraLetelierEspaceHyperboliqueEtDynamique})

Let $f$ be a rational map of degree $d\geq 2$ over a complete, non-archimedean valued field $k$. Then, if $f$ has a repelling periodic point in $H_k$, then $f$ has infinitely many non-repelling periodic points in $\overline{k}$, the algebraic closure of $k$.
\end{prop}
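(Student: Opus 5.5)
The plan is to follow Rivera-Letelier's argument for $\C_p$ and check that each step uses only the local structure of $f$ at a repelling type II point. We first reduce to the case where $k$ is algebraically closed. If $\eta \in H_k$ is a repelling periodic point of period $n$, then any preimage of $\eta$ in $H_{K}$, with $K = \widehat{\overline{k}}$, that is fixed by $f^{n'}$ for some $n'$ is again repelling, since local degrees are multiplicative in a suitable sense. Non-repelling periodic points of $f$ in $\p^1(K)$ lie in $\p^1(\overline{k})$, because periodic points of type I are roots of the polynomials $f^m(z) - z$, which are defined over $k$. So it suffices to produce infinitely many non-repelling periodic points in $\p^1(K)$. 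Replacing $f$ by $f^n$ and conjugating by a Möbius transformation over $K$, we may also assume that $\eta$ is a type II fixed point, in fact the Gauss point $\zeta$. Indeed, a repelling periodic point in $H_k$ has local degree $\geq 2$, so it is not of type III or IV (those would be indifferent or force degree one), and type II points of $\p^{1,an}_K$ are $\mathrm{PGL}_2(K)$-conjugate to $\zeta$.

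Once $\zeta$ is fixed with local degree $m = \deg_\zeta f \geq 2$, the reduction $\tilde f \in \tilde K(z)$ has degree $m \geq 2$, and $\tilde K$ is algebraically closed. A rational map of degree $\geq 2$ over an algebraically closed field has infinitely many periodic points in $\p^1(\tilde K)$, since $\tilde f^{j}(z) = z$ has $m^j + 1$ solutions counted with multiplicity and only finitely many can have small period. For each such periodic residue class $\bar a$ of period $j$, consider the open residue disk $D_{\bar a}$ of $\zeta$. The key step is that $f^j$ maps $D_{\bar a}$ onto the disk $D_{\tilde f^{j}(\bar a)} = D_{\bar a}$ with degree $e = \deg_{\bar a} \tilde f^{j}$ (Rivera-Letelier/Benedetto's description of images of residue disks). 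Conjugating $D_{\bar a}$ to the open unit disk $D(0,1)$, we obtain a power series $g$ mapping $D(0,1)$ onto itself.

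Two cases arise. If $e = 1$, then $g$ is an analytic bijection of $D(0,1)$ with $|g(z) - g(w)| = |z - w|$, and a fixed point exists: for instance, $g(z) - z$ has a zero by the Weierstrass preparation theorem, since its Newton polygon shows a root in the closed disk when the linear coefficient has absolute value $\leq 1$. Such a fixed point $z_0$ has $|g'(z_0)| \leq 1$, so it is non-repelling. If $e \geq 2$, $g(z) - z$ again has zeros in $D(0,1)$, and any fixed point $z_0$ in an open disk that is mapped into itself satisfies $|g'(z_0)| \leq 1$ by the Schwarz lemma (non-archimedean version: $|g(z) - z_0| \leq |z - z_0|$ on $D(0,1)$). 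Hence each periodic residue class contributes at least one non-repelling periodic point of $f$. Distinct classes give distinct points, so we obtain infinitely many.

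The main obstacle is justifying that $g(z) - z$ actually has a zero in the open disk in general, without the $\C_p$-specific assumptions. I would handle it by Weierstrass preparation: write $g(z) - z = \sum c_i z^i$ with $|c_i| \leq 1$, and note that $g$ mapping $D(0,1)$ onto $D(0,1)$ forces $|c_0| < 1$. Hence $\sup_i |c_i| = 1$ is attained at some $i \geq 1$ (using $|c_1| = 1$ when $e \geq 2$, or when $e = 1$ but the coefficient is not $\equiv 1$). The Newton polygon then has a segment of positive slope, which gives a root in $D(0,1)$. The degenerate subcase, where every coefficient of $g(z) - z$ is small, is harmless: then $g(z) - z$ has all roots or none in the smaller region, and I would handle it by shrinking the radius or noting that the fixed points of $g$ then accumulate.
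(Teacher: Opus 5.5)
Your overall route is the same as the paper's: move the repelling point to the Gauss point, use $\deg\tilde f\ge 2$ to get infinitely many periodic residue classes, find a fixed point of $f^j$ in each invariant residue disk, and conclude by the Schwarz lemma that it is non-repelling. The only real difference is how you locate the fixed point. You use the Newton polygon of the Taylor expansion of $g(z)-z$. The paper uses that $|f^n(T)-T|$ is constant on a disk containing no zero or pole of $f^n(T)-T$, hence equal to its value $1$ at the boundary point $\eta_{0,1}$, which contradicts $|f^n(z)-z|<1$ on $D(a,1)$.

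However, your ``key step'' is false as stated. The equality $f^j(D_{\bar a})=D_{\bar a}$ holds only when $\bar a,\tilde f(\bar a),\dots,\tilde f^{j-1}(\bar a)$ are all good directions. In a bad direction (a residue class where the reductions of the normalized numerator and denominator have a common zero), $f$ maps the disk onto all of $\mathbb{P}^{1,an}$, and a fixed point inside it can be repelling. For instance, take $f(z)=z^2+tz/(z-t^2)$ with $0<|t|<1$. Then $\tilde f(z)=z^2$, the class $\bar 0$ is fixed but bad, and $0$ is a fixed point with $f'(0)=-1/t$. The repair is the one the paper makes: $f$ has only finitely many bad directions, and each lies in at most one cycle of $\tilde f$. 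So all but finitely many periodic classes have their whole orbit in good directions, and you should use only those.

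Second, your treatment of the degenerate subcase does not work. Shrinking the radius cannot produce a fixed point: $g(z)=z+c$ with $0<|c|<1$ maps $D(0,1)$ onto itself and has no fixed point. What is true is that this subcase never occurs. If all coefficients of $g(z)-z$ have absolute value $<1$, the reduction of $g$ is the identity. That means $\tilde f^j$ is the identity near $\bar a$, hence everywhere, contradicting $\deg\tilde f^j=m^j\ge 2$. This is exactly the paper's input $|f^n(T)-T|(\eta_{0,1})=1$, and once you use it, your Newton polygon argument gives a zero in $D(0,1)$ in every remaining case.

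Two smaller points:
\begin{itemize}
\item The count ``$m^j+1$ fixed points with multiplicity'' does not by itself give infinitely many periodic points when $\mathrm{char}\,\tilde K=p>0$, because multiplicities can blow up. For $\tilde f(z)=z+z^p$, the iterate $\tilde f^{p^s}(z)=z+z^{p^{p^s}}$ has only $0$ and $\infty$ as fixed points.
\item The reduction to algebraically closed $k$ needs some preimage of $\eta$ in $\mathbb{P}^{1,an}_K$ to be periodic, and you assume this without proof.
\end{itemize}
Both facts are standard, and the paper also just asserts them, but your arguments for them are not proofs as written.
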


\begin{proof}
Let us give the idea of the proof. Since $f$ has a repelling fixed point in $H_k$, it is a type II point (see for example Theorem 8.7 of \cite{BenedettoLivre}). Hence, we may assume that it is $\eta_{0,1}$ the Gauss point. The fact that it is a repelling periodic point means that $\tilde{f}$ has degree bigger or equal to 2. Hence, it has infinitely many periodic points in $\p^1(\overline{k})$ and infinitely many of them are not bad directions in the sense of Theorem 7.34 of \cite{BenedettoLivre}. Hence, we have infinitely many balls $D(a,1) = \lbrace x\in k, |x-a|<1\rbrace$ that satisfies $(f^n)(D(a,1)) \subset D(a,1)$ with $|a| = 1$. Since $f^n(T) \in k(T)$, we have $f^n(T) -T = \frac{P_1(T)}{P_2(T)} \in k(T)$ and $\frac{P_1}{P_2}(D(a,1)) \subset D(0,1)$. It remains true with $D^{an}(a,1) = \lbrace x\in \p^{1,an}_k, x(T-a) <1\rbrace$. 

Since the absolute value of a rational map on the projective Berkovich line is constant outside of the convex hull of its zeros and poles, either for all $x \in D^{an}(a,1), |\frac{P_1}{P_2}(x)| = constant < 1$ or there is an $x \in D^{an}(a,1)$ such that $|\frac{P_1}{P_2}(x)|=0$ and it will be a type I point of $\p^{1,an}_k$. Since $|\frac{P_1}{P_2}(\eta_{0,1})|=1$, the first case is impossible. 

So, we get a periodic point, since $f^n(B^{an}(z,1)) \subset B^{an}(z,1)$ it is non-repelling.   
\end{proof}

\begin{prop}\label{2 attracting points implies infinitely many}(Lemme 6.1 + proof of the Théorème A', \cite{RiveraLetelierEspaceHyperboliqueEtDynamique})

Let $f$ be a rational map of degree $d\geq 2$ over a complete, non-archimedean valued field $k$. If $f$ has an attracting periodic point and at least an other non-repelling periodic point, then it has a repelling periodic point in $H_k$. Hence, it has infinitely many non-repelling periodic points. 
\end{prop}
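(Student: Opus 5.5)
Following the citation (Lemme 6.1 together with the proof of Théorème A' of Rivera-Letelier), I will build a repelling periodic point in $H_k$ out of the two non-repelling cycles, and then conclude with Proposition \ref{repelling in H_k implies infinitely many}. Since the statement involves only periodic points, I will first pass to the completion of an algebraic closure of $k$. This is harmless: repelling points of $H_{\overline{k}}$ project to $H_k$, and being periodic of local degree $\geq 2$ is preserved, up to taking the right iterate. Let $\alpha$ be the attracting periodic point of period $n$, and $\beta$ the other non-repelling periodic point of period $m$. Replacing $f$ by $g=f^{nm}$, both points are fixed. This is also harmless: a repelling periodic point of $g$ in $H_k$ is a repelling periodic point of $f$, since its local degree under some iterate is at least $2$.

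The key step is Rivera-Letelier's Lemme 6.1. Let $B_\alpha$ be the immediate basin of $\alpha$, i.e.\ the Fatou component containing it. I will show that $B_\alpha$ is not all of $\p^{1,an}$ minus a closed set disjoint from $\beta$. Along the segment $[\alpha,\beta]$ in $H_k$ I will consider the boundary point $\zeta$ of $B_\alpha$ that lies on this segment. This point lies in the Julia set, and I will argue that it is fixed by some iterate. Then I will check that its local degree is $\geq 2$. If instead the local degree were $1$, then by the local description of $g$ near type II points (Theorem 7.34 of \cite{BenedettoLivre}, bad directions), the directions at $\zeta$ toward $\alpha$ and toward $\beta$ would be permuted bijectively, and $\zeta$ would be an indifferent point. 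This would contradict its being a boundary point between an attracting basin and a component containing a non-repelling fixed point. The degree count is the following. $g$ maps the direction toward $\alpha$ into itself, and, because $\beta$ is non-repelling, the direction toward $\beta$ as well. Since $\alpha$ is attracting, there are points in a disk around $\alpha$ that contract strictly. This forces $\zeta$ to be a type II point where the reduction $\tilde g$ has degree $\geq 2$, with $\zeta$ moved to the Gauss point after a change of coordinates.

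The main obstacle is precisely this local-degree argument at the boundary point $\zeta$: showing that $\zeta$ is periodic and of type II, rather than a type III or IV indifferent point. I would first try to reduce to the fixed segment $[\alpha,\beta]$. Since $g$ fixes $\alpha$ and $\beta$ and is piecewise linear on the segment with integer slopes, I would use the contraction near $\alpha$ (slope $<1$ in the sense of the hyperbolic metric, via $|g'(\alpha)|<1$) and the non-expansion near $\beta$. These force a point in between at which $g$ maps the segment locally onto itself with some slope $\geq 2$, giving local degree $\geq 2$ at a fixed type II point. Once that point exists, Proposition \ref{repelling in H_k implies infinitely many} yields infinitely many non-repelling periodic points.
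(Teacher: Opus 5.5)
Your preliminary reductions are fine: passing to $\widehat{\overline{k}}$, replacing $f$ by $g=f^{nm}$, and taking $\zeta$ to be the first point of $[\alpha,\beta]$ outside the Fatou component $U$ of $\alpha$, so that $\zeta\in\partial U\subset J$. The paper itself gives no argument beyond the citation to Rivera-Letelier. But the whole content of the statement is that $\zeta$ is periodic, and your proposal does not prove this.

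Your ``degree count'' is circular: it already assumes $g(\zeta)=\zeta$, since it speaks of $g$ sending the directions at $\zeta$ toward $\alpha$ and toward $\beta$ into themselves. The fallback segment argument also fails, for three reasons.
\begin{enumerate}
\item $g$ fixes $\alpha$ and $\beta$ but does not map $[\alpha,\beta]$ into itself. An intermediate value argument therefore only yields a point $\xi$ whose image $g(\xi)$ retracts onto $\xi$, not a fixed point.
\item The slopes of $g$ along segments, for the hyperbolic metric, are local degrees, hence integers $\geq 1$. If $\lambda=g'(\alpha)\neq 0$ then $g(\zeta_{\alpha,r})=\zeta_{\alpha,|\lambda| r}$ for small $r$, which is a translation of slope $1$ toward $\alpha$. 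If $\lambda=0$ the slope there is $\geq 2$. Nothing has slope $<1$.
\item If $\beta$ is indifferent, every $\zeta_{\beta,s}$ with $s$ small is already a fixed point of $g$ on the segment, of local degree $1$ and in the Fatou set.
\end{enumerate}
So producing a fixed point on $[\alpha,\beta]$ is not the difficulty. You need one on $\partial U$, and that requires knowing the shape of $U$.

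The missing ingredient is Rivera-Letelier's description of attracting basins, which is the content behind the cited Lemme 6.1 (see also Benedetto's classification of attracting components). Every point of $U$ is attracted to $\alpha$, and $U$ is one of two things:
\begin{itemize}
\item an open Berkovich disk;
\item a domain of Cantor type, whose complement is a totally disconnected compact set.
\end{itemize}
In the Cantor case $U$ is the whole Fatou set, because no other Fatou component (a connected open set with more than one point) fits inside that complement.

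The point $\beta$ is exactly what excludes the Cantor case. It is a non-repelling type I point, hence in the Fatou set, and $\beta\notin U$ because it is fixed and distinct from $\alpha$. Hence $U$ is a disk and $\partial U=\{\zeta\}$. Since $g(U)=U$ and $g(\partial U)=\partial g(U)$, you get $g(\zeta)=\zeta$.

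No local degree computation is then needed. We have $\zeta\in J\cap H_k$, and periodic points of type III or IV, as well as indifferent periodic points of type II, lie in the Fatou set. So $\zeta$ is a repelling type II point. This is the same dichotomy the paper uses in the proof of Proposition~\ref{if classical julia set compact, then one attracting only}. The last step, via Proposition~\ref{repelling in H_k implies infinitely many}, then goes through as you say.
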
 

In the case where $k = \C_p$ Rivera-Letellier gives a description of the rational map with a finite number of repelling points.

\begin{thm}(Théorème 3, \cite{RiveraLetelierPeriodicPointsOfRationalMaps})

Let $f$ be a rational map of degree $d\geq 2$ over $\C_p$ then the following are equivalent :
\begin{itemize}
\item $f$ does not have any periodic point in $H_p$,
\item $f$ has finitely many non-repelling periodic points in $\p^1(\C_p)$,
\item $f$ has exactly one non-repelling periodic point in $\p^1(\C_p)$.

In this case, $f$ has one attracting periodic point and all the others are repelling.
\end{itemize}
\end{thm}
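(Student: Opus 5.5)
The plan is to prove the three conditions in the order (a)$\Rightarrow$(c)$\Rightarrow$(b)$\Rightarrow$(a). Along the way I also get the final assertion: in that case the unique non-repelling periodic point is attracting.

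\textbf{Step 1: (c)$\Rightarrow$(b).} This is trivial.

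\textbf{Step 2: (b)$\Rightarrow$(a), by contraposition.} Suppose $f$ has a periodic point $\eta\in H_{\C_p}$.
\begin{itemize}
\item If $\eta$ is repelling, Proposition \ref{repelling in H_k implies infinitely many} gives infinitely many non-repelling periodic points in $\p^1(\overline{\C_p})=\p^1(\C_p)$.
\item If $\eta$ is indifferent, Proposition 5.8 of \cite{RiveraLetelierEspaceHyperboliqueEtDynamique} recalled above gives infinitely many indifferent periodic points in $\C_p$.
\end{itemize}
In both cases (b) fails.

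\textbf{Step 3: (a)$\Rightarrow$(c), first half: no classical point is indifferent.} Assume (a), and let $z\in\p^1(\C_p)$ be a non-repelling periodic point of period $n$. Suppose $z$ is indifferent, so $\lambda=(f^n)'(z)$ has $|\lambda|=1$. After a coordinate change, write $f^n(z+h)-z=\lambda h+\sum_{j\ge2}a_jh^j$. For $r>0$ small enough we have $|a_j|r^{j-1}<1$ for all $j\geq 2$. Then $f^n$ maps $D(z,r)$ onto itself, and every disk $D(z,s)$ with $s\le r$ onto itself, with local degree $1$. Consequently the type II point $\eta_{z,r}$ is fixed by $f^n$, which gives an indifferent periodic point in $H_{\C_p}$. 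This contradicts (a), so every non-repelling classical periodic point is attracting.

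\textbf{Step 3, second half: uniqueness.} If there were two distinct non-repelling periodic points, one of them is attracting (by the first half). Proposition \ref{2 attracting points implies infinitely many} then produces a repelling periodic point in $H_{\C_p}$, again contradicting (a). So there is at most one non-repelling periodic point.

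\textbf{Step 4: existence of a non-repelling periodic point.} I would argue with fixed points and the holomorphic index formula, which is purely algebraic and so valid over $\C_p$. If some fixed point has multiplier $1$, it is non-repelling and we are done. Otherwise there are $d+1$ fixed points $z_i$, counted with multiplicity, with multipliers $\lambda_i\neq 1$, and
\begin{center}
$\sum_{i=1}^{d+1}\frac{1}{1-\lambda_i}=1.$
\end{center}
If every $|\lambda_i|>1$, then $|1-\lambda_i|=|\lambda_i|>1$, so each term has absolute value $<1$. By the ultrametric inequality the whole sum would then have absolute value $<1$, which is impossible. Hence some fixed point satisfies $|\lambda_i|\le1$. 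Combined with Step 3, this gives exactly one non-repelling periodic point, and it is attracting. This yields (c) and the last claim.

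\textbf{Main obstacle.} I expect the delicate step to be the first half of Step 3: checking rigorously that an indifferent classical periodic point forces a periodic point in $H_{\C_p}$. The key is to choose $r$ small enough that $f^n$ acts as an isometry on $D(z,r)$, and then to verify that $f^n(\eta_{z,r})=\eta_{z,r}$ on the Berkovich line. A second point to double-check is that Proposition 5.8 of \cite{RiveraLetelierEspaceHyperboliqueEtDynamique} applies to indifferent points of every type in $H_{\C_p}$, as needed in Step 2.
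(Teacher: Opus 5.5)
Your argument is correct. The paper gives no proof of this statement: it is quoted from Rivera-Letelier. What you have done is derive it from results the paper recalls nearby, namely Proposition~\ref{repelling in H_k implies infinitely many}, Proposition~5.8 of \cite{RiveraLetelierEspaceHyperboliqueEtDynamique} and Proposition~\ref{2 attracting points implies infinitely many}, together with two elementary inputs.

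Both points you flag are fine. First, your expansion makes $f^n$ an isometry of $\overline{D}(z,s)$ onto itself for every $0<s\le r$. An analytic map on a closed disk sends its sup-norm point to the sup-norm point of the image disk, so every $\eta_{z,s}$ with $0<s\le r$ is fixed by $f^n$. Second, Proposition~5.8, as recalled, concerns any non-classical periodic point of local degree one, so the case split in Step~2 is exhaustive. A minor point: if you want to apply Proposition~\ref{2 attracting points implies infinitely many} only to points lying in different cycles, do Step~4 before the uniqueness part of Step~3. The non-repelling fixed point is then one of your two points.

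The closest argument actually written in the paper is the proof of Proposition~\ref{if classical julia set compact, then one attracting only}, the analogue over $\C_v$. It obtains ``no periodic point in $H$ implies one attracting point and all others repelling'' globally, through the classification of Fatou components:
\begin{itemize}
\item non-existence of wandering domains via Benedetto's Theorem~11.23, which is where the hypothesis on the discretely valued subfield enters;
\item the fact that indifferent components and disk-type immediate basins have periodic type~II points on their boundary;
\item the same two-attracting-points lemma, to exclude a second basin.
\end{itemize}

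Your route replaces this with local and algebraic arguments. Indifferent classical cycles are excluded directly by the fixed points $\eta_{z,s}$ around them. Existence of a non-repelling fixed point comes from the index formula $\sum_i \frac{1}{1-\lambda_i}=1$, not from the existence of an attracting Fatou component.

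What your approach buys is generality. Your proof of (a)$\Rightarrow$(c) and of the final assertion goes through over any algebraically closed complete non-archimedean field, granting Proposition~\ref{2 attracting points implies infinitely many} in the generality in which the paper states it. The specific arithmetic of $\C_p$ enters only through Proposition~5.8, in the indifferent case of Step~2. What the paper's route buys is extra structural information: the Julia set then consists only of type~I and type~IV points.
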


When the field $k$ is not necessarily $\C_p$ but is the completion of an algebraic closure of a discretely valued field, we have a similar result. 

\begin{prop}\label{if classical julia set compact, then one attracting only}

Let $f$ be a rational map over a complete, algebraically closed, non-archimedean, valued field $\C_v$. Let us assume that there is subfield $K$ which is discretely valued and such that $\overline{K}$ is dense in $\C_v$. Furthermore, assume that $f\in K(z)$. Then, the following are equivalent :
\begin{itemize}
\item The Julia set of $f$ contains only type I and type IV points,
\item $f$ has no periodic point in $H_{\C_v}$.
\end{itemize}

Futhermore in this case, $f$ has one attracting periodic point and all the others are repelling. 

In particular, the following are also equivalent : 
\begin{itemize}
\item The classical Julia set of $f$ is non-empty and compact in $\p^1(\C_v)$,
\item $f$ has no periodic point in $H_{\C_v}$ and no type IV points in its Julia set.
\end{itemize}
\end{prop}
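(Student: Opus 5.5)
The plan is to reduce the statement to a structure theorem for Julia sets of maps with good arithmetic over a discretely valued field, and to handle the two equivalences separately.

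\textbf{First equivalence.} I would prove the contrapositive in both directions.

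Suppose $f$ has a periodic point $\zeta\in H_{\C_v}$. By Theorem 8.7 of \cite{BenedettoLivre}, a periodic point in $H_{\C_v}$ that is repelling is of type II. If $\zeta$ is indifferent of type III or IV, it lies in the Fatou set, and I would instead work with its Fatou component. For a type II repelling point, it lies in $J_f$ and is not of type I or IV, so the first condition fails.

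It remains to treat the case where every periodic point of $H_{\C_v}$ is indifferent. Here I would use the discreteness hypothesis. Since $f\in K(z)$, the periodic type II points of $f$ are defined over a finite extension of $K$ (their radii lie in $|\overline{K}^\times|$). Following Rivera-Letelier's argument for $\C_p$, the existence of an indifferent periodic point in $H$ forces a Fatou component containing an indifferent periodic affinoid. The boundary of that component is a finite set of type II points in the Julia set. These points are not of type I or IV, so again the first condition fails.

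Conversely, assume $f$ has no periodic point in $H_{\C_v}$. I would show that $J_f\cap H_{\C_v}$ contains no type II or type III point.

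If some $\xi\in J_f$ were of type II or III, its forward orbit would stay in $J_f\cap H$. The key tool is that $f$ and a finite extension of $K$ have discretely valued data. I would use the fact that type II Julia points of maps over a discretely valued field with no periodic points in $H$ cannot exist: the orbit of $\xi$ would lie in a finite set of "vertices" of a tree spanned by a fixed finite extension, so it would be preperiodic. This would produce a periodic point in $H$, a contradiction.

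Type III points would be handled by the same argument, using radii in a discrete value group.

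This finiteness/preperiodicity step is the main obstacle. What I would try first is to exploit that $\overline{K}$-rational points are dense and that $f$ maps points of radius in $|L^\times|$ to radius in $|L^\times|$ for a suitable finite $L$. The aim is to bound the denominators of radii along the orbit.

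\textbf{The "Furthermore" clause.} Once there are no periodic points in $H$, Proposition \ref{repelling in H_k implies infinitely many} does not apply. Proposition \ref{2 attracting points implies infinitely many} then shows that $f$ cannot have one attracting point together with another non-repelling point.

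A non-repelling point must exist. The reason is that a fixed point of a map on $\p^{1,an}$ exists in the Fatou set, or else the Julia set is everything, which is impossible here. One checks that an indifferent type I point would have an indifferent periodic ball whose boundary point is periodic in $H$. Hence there is exactly one non-repelling point, and it is attracting.

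\textbf{Second equivalence.} This follows from the first together with Lemma \ref{compact of k are compact in berkovich} and Lemma \ref{classical Julia set is dense}.

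A non-empty compact classical Julia set is closed in $\p^{1,an}$, hence equals $J_f$. This rules out types II, III and IV in $J_f$, so $f$ has no periodic point in $H$ and $J_f$ has no type IV points.

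Conversely, if $f$ has no periodic point in $H$, the first part says $J_f$ consists of type I and IV points only. With no type IV points, $J_f\subset\p^1(\C_v)$ and $J_f$ is compact. It is non-empty because $J_f\neq\emptyset$. By Lemma \ref{compact of k are compact in berkovich}, it is the compact classical Julia set.
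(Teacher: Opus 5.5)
Your proof of ``periodic point in $H_{\C_v}$ $\Rightarrow$ $J_f$ contains a type II point'' and of the forward half of the second equivalence follows the paper. Your argument for the ``Furthermore'' clause is a valid and more direct alternative. An indifferent type I cycle $z$ of period $n$ gives $f^n(\eta_{z,r})=\eta_{z,r}$ for small $r$, and Proposition \ref{2 attracting points implies infinitely many} excludes a second non-repelling cycle. The existence of a non-repelling fixed point should be justified by the fixed-point index formula rather than by the Fatou/Julia dichotomy you state.

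The genuine gap is the converse of the first equivalence, and the converse of the second equivalence depends on it. Assuming $f$ has no periodic point in $H_{\C_v}$, you must show that $J_f$ has no type II or III point, but you only assert this. The mechanism you suggest cannot deliver it:
\begin{enumerate}
\item Grant that the orbit of $\xi=\eta_{a,r}$, with $a\in L$ and $r\in|L^\times|$, stays among $L$-rational type II points. These still form an infinite set (already $\eta_{0,|\pi_L|^m}$, $m\in\mathbb{Z}$), so no preperiodicity follows. For example, under $z^2+c$ with $|c|>1$, the orbit $\eta_{0,|c|^{2^m}}$ is an infinite orbit of $K$-rational type II points.
\item Restricting to the compact set $J_f$ does not help: the radii $|\pi_L|^m$ may tend to $0$ along the orbit, so ``bounding denominators'' gives no finiteness.
\item For type III points the idea does not even start, since their radii lie outside $|\C_v^\times|$.
\item The hypothesis ``no periodic point in $H_{\C_v}$'' plays no role in your sketch before the final contradiction. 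You would therefore in effect be proving that every type II Julia point of such an $f$ is preperiodic. That is a much stronger structural statement, and you give no argument for it.
\end{enumerate}

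The missing input is global information on the Fatou set, which is where the paper really uses the discreteness hypothesis. The paper rules out every kind of Fatou component except one:
\begin{enumerate}
\item By Theorem 11.23 of \cite{BenedettoLivre} (this is where $f\in K(z)$ with $K$ discretely valued enters), a wandering component would lie in the basin of a type II Julia periodic point, which is excluded.
\item An indifferent component is excluded because its boundary consists of type II (pre)periodic Julia points.
\item An attracting component cannot be a disk, since its boundary point would be a type II repelling periodic point.
\item There is only one attracting component, by Proposition \ref{2 attracting points implies infinitely many}.
\end{enumerate}
Hence the Fatou set is a single attracting basin and $J_f$ is its boundary. Suppose some $\eta_{a,r}\in J_f$ were of type II or III, and put the attracting point at $\infty$. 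Then the direction $D^{an}(a,r)$ is an open set disjoint from that basin. Since $J_f$ has empty interior, this direction meets the Fatou set, which gives a second Fatou component, a contradiction. You need an argument of this kind, or an actual proof of your preperiodicity claim, to close that direction.
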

\begin{proof}
For the first equivalence of the proposition, let us assume that the Julia set contains only type I and type IV points.  

If there is a periodic point in $H_{\C_v}$, then it cannot be in the Julia set, because it would be a type II periodic point in the Julia set (see for example Theorem 8.7 of \cite{BenedettoLivre}). So it is in a Fatou connected component $U$. But its boundary is a finite number of type II points that are periodic (see for example Theorem 9.7 of \cite{BenedettoLivre}) and in the Julia set, which is impossible.

Let us now assume that there is no periodic point in $H_{\C_v}$. Let $U$ be a connected component of the Fatou set. Then, by Theorem 11.23 of \cite{BenedettoLivre}, it cannot be a wandering component. Otherwise, it would lie in an attarcting bassin of a Type II Julia periodic point. It cannot be an indifferent component either, because the boundary is composed of type II periodic point. So it is an attracting component but it cannot be a disk, because the boundary of such an attracting component is a type II repelling periodic point so it is a domain of Cantor type. 

There can only be one, otherwise there would be at least two attracting points and so there would be a repelling periodic point in $H_{\C_v}$ by proposition \ref{2 attracting points implies infinitely many}.

So, the Julia set is the boundary of this connected component. Let us assume that there is a type II or type III point in this boundary, then we could write it $\eta_{a,r}, a\in \C_v$. We can assume that the attracting point is $\infty$ and $D^{an}(a,r)$ is an open set disjoint from the attracting bassin of $\infty$. Since the Julia set is of empty interior, $D^{an}(a,r)$ cannot be contained in the Julia set and so there is at least another connected Fatou component, which is impossible. Hence, there is only type I and type IV points in the Julia set.  

The second part of the proof shows that $f$ has only one attracting points and all the others are repelling.

For the second equivalence,

Let us assume that the classical Julia set of $f$ is compact and non-empty. Then, $J_{f,classical} \subset \p^{1,an}_{\C_v}$ is compact. By lemma \ref{classical Julia set is dense}, $J_f$ is the closure of the classical Julia set, hence $J_f = J_{f,classical}$. Hence, the Julia set contains only type I points, so by the first equivalence, there is no periodic points in $H_{\C_v}$ and no type IV points in the Julia sets.

Let us assume that $f$ has no periodic points in $H_{\C_v}$ and no type IV in its Julia set. Then, there is only type I points in the Julia set. So the Julia set is contained in $\p^1(\C_v)$, so it equal to the classical set. Hence, this set is compact. 
\end{proof}

In some cases, there is a more general result :

E. Trucco proved the following when $f$ is tame and a polynomial.
\begin{prop}(Corollary 7.15, \cite{TruccoWanderingFatouComp})

Let $f \in K[z]$ a tame polynomial of degree $d \geq 2$, then the following are equivalent :
\begin{itemize}
\item $J_f = J_{f, classical}$,
\item All the periodic points in $K$ are repelling,
\item There is no critical periodic element in $J_p$.
\end{itemize}  
\end{prop}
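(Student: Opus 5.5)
The plan is to follow the route of the proof of Proposition~\ref{if classical julia set compact, then one attracting only}, with $\infty$ playing the role of the attracting point. By Lemma~\ref{Taking a field extension does not change the julia set} we may assume that $K$ is algebraically closed. We use the standard description $J_f=\partial \mathcal{B}_f(\infty)$, the boundary of the basin of $\infty$ in $\p^{1,an}_K$. Throughout, \enquote{periodic points in $K$} means finite periodic points, since $\infty$ is always superattracting for a polynomial. The plan proves $(1)\Rightarrow(2)\Rightarrow(1)$ first, and then ties in $(3)$ through Trucco's dynamical tree.

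For $(1)\Rightarrow(2)$, suppose some finite periodic point $z\in K$ is not repelling. Then $f$ has the attracting point $\infty$ together with another non-repelling periodic point. Proposition~\ref{2 attracting points implies infinitely many} then gives a repelling periodic point in $H_K$. Such a point is of type II (Theorem 8.7 of \cite{BenedettoLivre}) and lies in $J_f$. This contradicts $J_f=J_{f,classical}\subset\p^1(K)$.

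For $(2)\Rightarrow(1)$, the first step is to show that $f$ has no periodic point in $H_K$.
\begin{itemize}
\item A repelling one is excluded by Proposition~\ref{repelling in H_k implies infinitely many}, which would produce infinitely many non-repelling type I periodic points.
\item An indifferent type II periodic point $\eta$ lies in a periodic Fatou component $U$. Since $U$ is bounded and $f^n(U)=U$, and since $\eta$ has residual degree $1$, the plan is to find a residue class $D$ at $\eta$ that is fixed by some iterate and not a bad direction. Then $f^{nm}(D)\subset D$, and the argument in the proof of Proposition~\ref{repelling in H_k implies infinitely many} (zeros of $f^{nm}(T)-T$ inside $D^{an}$) produces a non-repelling type I periodic point, a contradiction.
\end{itemize}
The second step is to rule out type II and type III points in $J_f$. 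If $\eta_{a,r}\in J_f$ with $r>0$, then the open disk $D^{an}(a,r)$ meets the complement of the basin of $\infty$ in an open set, since $J_f$ has empty interior. So it contains a bounded Fatou component $V$. Tameness now enters through Trucco's no-wandering-domains theorem for tame polynomials: $V$ is preperiodic. A bounded periodic component is either attracting or indifferent, and either way it contains a non-repelling type I periodic point (again by the residue-class argument above), contradicting $(2)$.

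The main obstacle is excluding type IV points from $J_f$. The first thing I would try is Trucco's analysis of tame polynomials. A type IV point $\xi$ is the limit of a decreasing sequence of disks $D_j$ whose boundary points $\partial D_j$ lie in the convex hull of the critical points and of $J_f$. Since these boundary points are type II points not in $J_f$, the disks $D_j$ are Fatou. Tameness means the local degrees along this hull are controlled by critical points. So if the nest never contains a critical point, $f$ is injective on a deep $D_j$ and the orbit of $D_j$ shrinks while accumulating on a periodic Fatou component, which was excluded above. Hence some critical point is recurrent along the nest, and this is exactly a \enquote{critical periodic element in $J_p$} in Trucco's tree language. I would therefore reorganise the argument so that the chain runs $(2)\Rightarrow(3)\Rightarrow(1)$. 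The implication $(2)\Rightarrow(3)$ holds because a periodic critical end of the tree forces, by the same residue-class argument, a non-repelling periodic point. The implication $(3)\Rightarrow(1)$ holds because, with no critical periodic end, every nest of Fatou disks eventually maps injectively, so no type IV point survives in $J_f$. Finally, compactness of $J_{f,classical}$ then follows from Lemma~\ref{compact of k are compact in berkovich}.
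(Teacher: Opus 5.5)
The paper does not prove this statement; it quotes Trucco's Corollary 7.15, which rests on his structure theorem (Theorem A, recalled later in the polynomial subsection): for a tame polynomial over his class of fields, $J_f$ is the disjoint union of its type I points and finitely many grand orbits of repelling type II periodic points. Your step $(1)\Rightarrow(2)$ is fine. So is the exclusion of repelling periodic points in $H_K$ under (2). The part of your proposal meant to replace Theorem A, namely excluding type IV points and linking this to (3), has a genuine gap.

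Your claim that the disks $D_j$ of a nest converging to a type IV point $\xi\in J_f$ are Fatou is false, since every $D_j$ contains $\xi$. In fact their boundary points $\zeta_j$ lie in the basin of $\infty$. If some $\zeta_j$ were in $K_f$, the whole closed disk bounded by $\zeta_j$ would lie in $K_f$, because the sup norm of $f^n$ decreases on smaller disks. Then $\xi$ would be interior to $K_f$, hence Fatou. So there is no periodic Fatou component to accumulate on, and your heuristic runs the wrong way: forward images of a neighbourhood of a Julia point cannot shrink, since their union omits at most finitely many points. Injectivity of $f$ on a deep $D_j$ does not decide whether $\bigcap_j D_j$ is a point or empty. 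Likewise, ``a critical point recurs along the nest'' is not the same as ``the end is periodic''. What is missing is a quantitative analysis of the nest of dynamical level disks around a Julia point, in the Branner--Hubbard/Kiwi style that Trucco adapts. One tracks the moduli of the annuli between consecutive levels and shows that their sum diverges, so the nest shrinks to a type I point, unless the nest is eventually a critical periodic end. In that case it converges to a closed disk whose boundary point is a repelling type II periodic point. Tameness ties the degree on a level disk to the critical points it contains, and the argument is carried out under Trucco's standing hypothesis on the field (a dense subfield algebraic over a discretely valued field). Neither enters your type IV argument. Without this analysis, $(3)\Rightarrow(1)$, $(2)\Rightarrow(3)$ and the type IV part of $(2)\Rightarrow(1)$ remain unproved.

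There is also a fixable gap in your first step. At an indifferent type II periodic point $\eta$, after normalising, the reduction of $f^n$ is an affine map $\tilde w\mapsto\tilde\alpha\tilde w+\tilde\beta$. When $\tilde\alpha=1$ there is no fixed finite direction. In residue characteristic $p$, the iterate that fixes one has identity reduction, so $|f^{nm}(T)-T|$ at $\eta$ is smaller than the radius and the zero-counting argument from the proof of Proposition \ref{repelling in H_k implies infinitely many} gives nothing. In residue characteristic $0$ with $\tilde\beta\neq0$ there is no periodic finite direction at all. A robust replacement, which also handles the type III periodic points you omit (for these the slope below is forced to be $1$), is to write $\eta=\eta_{a,r}$ and move up towards $\infty$. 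The logarithm of the radius of $f^n(\overline{D}(a,s))$ is convex piecewise linear in $\log s$, with slope $1$ just above $r$ and slope $d^n$ eventually. At the first radius $R$ where the slope jumps, $\eta_{a,R}$ is a type II fixed point of $f^n$ whose reduction has degree at least $2$, i.e.\ a repelling one, and your first bullet applies. Two smaller points. First, Trucco shows that wandering components lie in the basin of a periodic orbit, not that they are preperiodic; this suffices once all periodic points in $H_K$ are excluded. Second, the reduction to algebraically closed $K$ via Lemma \ref{Taking a field extension does not change the julia set} is not legitimate, because (1) and (2) refer to $K$-rational points; simply work over an algebraically closed $K$, as Trucco does.
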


\subsection{The Julia set is independant of the field}
In this section we will prove the inverse of the lemma \ref{Taking a field extension does not change the julia set} and see that if we have a field extension $K$ of $k$ and $f$ a rational map defined over $k$, then we can obtain the Julia set $J_K \subset \p^{1,an}_K$ from the Julia set $J_k \subset \p^{1,an}_k$.

\begin{prop}\label{Taking an algebraic field extension does not change the julia set}
Let $f$ be a rational map of degree $d$ defined over a non-archimedean complete valued field $k$. Let $K$ be the completion of an algebraic field extension of $k$. Let us denote $J_k$ and $J_K$ the Julia set defined over $k$ and $K$, respectively. Let us denote $\pi : \p^{1,an}_K \rightarrow \p^{1,an}_k$ the projection. Then

\begin{center}
$\pi^{-1}(J_k) = J_K$.
\end{center}
\end{prop}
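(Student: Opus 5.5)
The plan is to prove the statement by upgrading the inclusion $J_k=\pi(J_K)$ of lemma \ref{Taking a field extension does not change the julia set} to an equality of full preimages. The upgrade uses the fact that, over a completed algebraic closure, the fibres of $\pi$ are Galois orbits and $J_K$ is Galois invariant. I will first treat the case where $K=C$ is the completion of an algebraic closure $\overline{k}$ of $k$. Then I will deduce the intermediate cases.

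\textbf{Step 1 (the case $K=C$).} Let $k^s\subset \overline{k}$ be the separable closure and $G=\mathrm{Gal}(k^s/k)$. By Ax's theorem, the completion of $k^s$ is algebraically closed, so it equals $C$. The group $G$ extends by continuity to an action on $C$ by isometric field automorphisms, and hence acts by homeomorphisms on $\p^{1,an}_C$ via $(\sigma\cdot x)(P)=x(\sigma^{-1}(P))$. By Berkovich (\cite{BerkovichLivre}, Corollary 1.3.6), $\pi:\p^{1,an}_C\to\p^{1,an}_k$ identifies $\p^{1,an}_k$ with the quotient $\p^{1,an}_C/G$. In particular, two points of $\p^{1,an}_C$ have the same image under $\pi$ if and only if they lie in the same $G$-orbit. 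This identification, together with checking that it applies when $k$ is imperfect or trivially valued, is the step I expect to need the most care. If the reference only covers the separable case, I would add the observation that a purely inseparable extension induces a homeomorphism on Berkovich spaces, since Frobenius is injective.

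\textbf{Step 2 (Galois invariance of $J_C$).} Since $f$ has coefficients in $k$, we have $f\circ\sigma=\sigma\circ f$ on $\p^{1,an}_C$ for every $\sigma\in G$. Take $x\notin J_C$, and let $U$ be an open neighbourhood of $x$ such that $\bigcup_n f^n(U)$ omits infinitely many points. Then $\sigma(U)$ is an open neighbourhood of $\sigma(x)$, and $\bigcup_n f^n(\sigma(U))=\sigma\bigl(\bigcup_n f^n(U)\bigr)$ also omits infinitely many points. Hence $\sigma(x)\notin J_C$, so $\sigma(J_C)=J_C$.

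\textbf{Step 3 (conclusion for $K=C$).} Lemma \ref{Taking a field extension does not change the julia set} gives $J_C\subset\pi^{-1}(\pi(J_C))=\pi^{-1}(J_k)$. For the reverse inclusion, let $y\in\pi^{-1}(J_k)$. Then $\pi(y)=\pi(z)$ for some $z\in J_C$, so by Step 1 we have $y=\sigma(z)$ for some $\sigma\in G$. By Step 2, $y\in J_C$.

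\textbf{Step 4 (general $K$).} Let $K$ be the completion of an algebraic extension of $k$. Then $C$ is also the completion of an algebraic closure of $K$, and $\pi_{C/k}=\pi_{K/k}\circ\pi_{C/K}$, where $\pi_{C/K}$ is surjective. Applying Step 3 over $k$ and over $K$ gives
\[
\pi_{C/K}^{-1}(J_K)=J_C=\pi_{C/k}^{-1}(J_k)=\pi_{C/K}^{-1}\bigl(\pi_{K/k}^{-1}(J_k)\bigr).
\]
Applying the surjective map $\pi_{C/K}$ to both ends yields $J_K=\pi_{K/k}^{-1}(J_k)$, which is the statement.
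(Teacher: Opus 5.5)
Your proof is correct, and it takes a genuinely different route from the paper's.

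The paper argues entirely through equilibrium measures. For a finite extension, it combines $\pi_*\mu_{f,K}=\mu_{f,k}$ with the openness of the finite map $\pi$ to show that points of $\pi^{-1}(J_k)$ lie in $\mathrm{supp}\,\mu_{f,K}$. It then reaches the completion of an arbitrary algebraic extension by a transfinite tower. At limit stages it builds a compatible measure on the completed colimit by Stone--Weierstrass (Proposition \ref{measure on the union of field extension has same support}), and checks that this measure is the equilibrium measure.

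You instead go straight to $C=\widehat{\overline{k}}$. You use that the fibres of $\pi$ are Galois orbits, which is the same Berkovich result the paper itself invokes in Subsection \ref{subsection continuity independant of the field}. You note that $J_C$ is Galois invariant because each $\sigma$ is a homeomorphism commuting with $f$. You then descend to an intermediate $K$ through the surjection $\pi_{C/K}$.

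This avoids the ordinal induction and the limit-measure construction. It also isolates the symmetry that the paper's finite step actually relies on. Pushing forward only gives $\int_{\pi^{-1}(\pi(U))} g\circ\pi\,d\mu_{f,K}=\int_{\pi(U)}g\,d\mu_{f,k}$, not the paper's identity with $U$ in place of $\pi^{-1}(\pi(U))$. To get $\mu_{f,K}(U)>0$ for $U$ itself, one needs, for instance, that $\pi^{-1}(\pi(U))$ is a union of Galois translates of $U$ and that $\mu_{f,K}$ is Galois invariant. That is the measure-theoretic counterpart of your Steps 1--2.

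In exchange, the paper's approach describes the equilibrium measure over the colimit field explicitly as a limit of the measures over the $k_j$.

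The one place where your argument needs extra care is the case you already flagged: a trivially valued imperfect $k$, where $\widehat{k^s}\neq C$. Your remark that purely inseparable extensions induce homeomorphisms of Berkovich spaces settles it. Indeed, $|P|=|P^{p^n}|^{1/p^n}$, and $P^{p^n}$ has coefficients in $k$ for $n$ large, so seminorms are determined by their restriction to $k[T]$.
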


To show this we will need the following result. 

\begin{prop}\label{measure on the union of field extension has same support}
Let $(k_i)_{i\in J}$ be a family of complete valued fields and $J$ be an ordered set such that if $i\leq j$ then $k_j$ is a field extension of $k_i$. Let us denote $\pi^{j,i} : \p^{1,an}_{k_j} \rightarrow \p^{1,an}_{k_i}$ the projection for $i \leq j$. 

Let us assume that we have a probability measure $\mu_j$ on $\p^{1,an}_{k_j}$ for all $j$ such that $\pi^{j,i}_*\mu_j = \mu_i$ for all $i\leq j$. Let us denote $S_j$ the support of $\mu_j$ and assume that $(\pi^{j,i})^{-1}(S_i) = S_j$.   

Let us denote $K = \widehat{\mathrm{colim}_{j\in J}k_j}$ and $\pi^j : \p^{1,an}_K \rightarrow \p^{1,an}_{k_j}$ the projection. Then one can define a probability measure $\mu$ on $\p^{1,an}_K$ such that for all $j\in J, \pi^j_*\mu = \mu_j$ and the support $S$ of $\mu$ satisfies $(\pi^j)^{-1}(S_j) = S$ for all $j$. 
\end{prop}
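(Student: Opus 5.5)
The plan is to build $\mu$ as a limit of the compatible system $(\mu_j)$, using the identification of $\p^{1,an}_K$ with the projective limit $\varprojlim_j \p^{1,an}_{k_j}$ at the level of compact spaces. Concretely, I would first show that the natural continuous map $\Pi = (\pi^j)_j : \p^{1,an}_K \rightarrow \varprojlim_j \p^{1,an}_{k_j}$ is a homeomorphism. It is continuous, and both spaces are compact Hausdorff. Injectivity holds because a point of $\p^{1,an}_K$ is a multiplicative seminorm on $K[T]$ (in a chart). Such a seminorm is determined by its values on $\bigcup_j k_j[T]$, which is dense in $K[T]$ for the Gauss-type norms on each degree-bounded piece, and bounded seminorms are continuous. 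Surjectivity holds because a compatible family of seminorms on the $k_j[T]$ glues to a multiplicative seminorm on $\mathrm{colim}\, k_j[T]$, which is bounded by the norm on coefficients. It therefore extends by continuity to $K[T]$. The same description shows that the topology of $\p^{1,an}_K$ is generated by the functions $x \mapsto |P(x)|$ with $P \in \bigcup_j k_j[T]$, again by density. This is the step I expect to need the most care: one has to check that the density argument is uniform enough for the seminorm values, and that the two charts of $\p^1$ glue.

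Once $\p^{1,an}_K = \varprojlim \p^{1,an}_{k_j}$ as compact spaces, the measure is constructed by a Riesz-type argument. Let $\mathcal{C} = \bigcup_j (\pi^j)^*C(\p^{1,an}_{k_j})$. By the homeomorphism and Stone–Weierstrass, $\mathcal{C}$ is a subalgebra of $C(\p^{1,an}_K)$ that separates points and contains constants, and it is a subalgebra because $J$ is directed. It is therefore dense. I would define $\Lambda((\pi^j)^*g) = \int g\, d\mu_j$. This is well defined because $\pi^{j,i}_*\mu_j = \mu_i$ and $\pi^i = \pi^{j,i}\circ\pi^j$ (if $(\pi^i)^*g = (\pi^j)^*h$, compare both at a common upper index, using surjectivity of the projections). $\Lambda$ is positive with $\Lambda(1)=1$, hence of norm $1$, so it extends uniquely to $C(\p^{1,an}_K)$. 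The Riesz representation theorem then gives a probability measure $\mu$ with $\pi^j_*\mu = \mu_j$ by construction.

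For the support, the goal is to show $S = (\pi^j)^{-1}(S_j)$. Since $(\pi^{j,i})^{-1}(S_i) = S_j$, the set $T := (\pi^j)^{-1}(S_j)$ is independent of $j$. For $S \subset T$: $\mu((\pi^j)^{-1}(\p^{1,an}_{k_j}\setminus S_j)) = \mu_j(\p^{1,an}_{k_j}\setminus S_j) = 0$, so the open set $\p^{1,an}_K \setminus T$ is $\mu$-null, hence disjoint from $S$. For $T \subset S$: take $x \in T$ and an open neighbourhood $U$ of $x$. By the inverse-limit topology, $U$ contains a basic open set $(\pi^j)^{-1}(V)$ with $V$ open in $\p^{1,an}_{k_j}$ and $\pi^j(x) \in V$. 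Since $\pi^j(x) \in S_j$, we get $\mu(U) \geq \mu_j(V) > 0$, so $x \in S$. The last step uses that basic opens of the limit are pulled back from a single level, which again uses directedness of $J$ to intersect finitely many conditions.
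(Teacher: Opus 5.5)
Your proposal is correct and follows essentially the paper's argument. Both use Stone--Weierstrass density of the algebra of functions pulled back from the $\p^{1,an}_{k_j}$, a positive functional defined through the $\mu_j$ and extended by density, and a support comparison via neighbourhoods pulled back from a single level. The only difference is organisational: the paper approximates polynomials of $K[T]$ by polynomials of some $k_j[T]$ twice, once for point separation and once for the basic neighbourhoods $\{r_l<|P_l|<s_l\}$, whereas you package both into the homeomorphism $\p^{1,an}_K\cong\varprojlim_j\p^{1,an}_{k_j}$, obtained as a continuous bijection of compact Hausdorff spaces. That packaging also cleanly settles two points the paper passes over quickly: the shrinking of neighbourhoods after approximation, and well-definedness of the functional for non-comparable indices, which you handle via surjectivity of the projections.
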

\begin{proof}
Let us denote $A := \lbrace F \in \mathcal{C}^0(\p^{1,an}_K, \R), \exists j\in J, f\in \mathcal{C}^0(\p^{1,an}_{k_j}, \R), F = f\circ \pi^j\rbrace.$ Then, by the theorem of Stone-Weierstrass (see for example the theorem 7.29 in Chapter II of \cite{HewittStrombergRealAndAbstractAnalysis}) $A$ is dense in $\mathcal{C}^0(\p^{1,an}_K, \R)$. One needs to check that if $x\in \p^{1,an}_K$, there exists $F\in A$ such that $F(x) \neq 0$ and if $x\neq y \in \p^{1,an}_K$, there exists $G\in A$ such that $G(x) \neq G(y)$. 

For the first case, since for all $x\in \p^{1,an}_K$ and for all $j \in J$, there exists $f_j \in \mathcal{C}^0(\p^{1,an}_{k_j}, \R)$ with $f_j(\pi^j(x)) \neq 0$ one can take any $f_j$ and consider $F = f_j \circ \pi^j$.

If $x\neq y$ there exists $P \in K[T]$ such that $|P(x)| \neq |P(y)|$. Let $\epsilon < \max(|P(x)|, |P(y)|)$, then there exists $Q \in \mathrm{colim}_{j\in J}k_j[T], ||Q-P|| <\epsilon$ and so $|Q(x)| \neq |Q(y)|$. Furthermore, $Q \in k_j[T]$ for some $j\in J$. Then $\pi^j(x) \neq \pi^j(y)$ and there exists $f_j \in \mathcal{C}^0(\p^{1,an}_{k_j}, \R)$ such that $f_j(\pi^j(x)) \neq f_j(\pi^j(y))$ and $F = f_j \circ \pi^j \in A$ is such that $F(x) \neq F(y)$. 

First we define $\mu$ for $F = f_j  \circ \pi^j \in A$ by 
\begin{center}
$\int_{\p^{1,an}_K} F d\mu = \int_{\p^{1,an}_{k_j}} f_j d\mu_j$.
\end{center}
One needs to check that if $f_j \circ \pi^j = g_i \circ \pi^i$ then $\int_{\p^{1,an}_{k_j}} f_j d\mu_j = \int_{\p^{1,an}_{k_i}} g_i d\mu_i$. We may assume $i \leq j$ and so $f_j \circ \pi^{j,i} = g_i$ and since $\pi^{j,i}_*\mu_j = \mu_i$ the equality stands. 

Now to define it on all $C^0(\p^{1,an}_K, \R)$, one needs to check that if $F_n, G_n \in A^\N$ converge to the same limit, then $\int F_n d\mu = \int G_n d\mu$. But since, $\p^{1,an}_K$ is compact, if $F_n \rightarrow f =  \lim G_n$, then $||F_n -G_n||_\infty \rightarrow 0$ and with $\int_{\p^{1,an}_K} 1 d\mu = 1$, we get $\int_{\p^{1,an}_K} (F_n - G_n) d\mu \rightarrow 0$.

Finally, we only need to show that $S = (\pi^j)^{-1}(S_j)$ for all $j$. 

First let $j \in J$. Then, $\pi^j_*\mu = \mu_j$ and so $\pi^j(S) \subset S_j$ like in the proof on lemmma \ref{Taking a field extension does not change the julia set} and $S\subset (\pi^j)^{-1}(S_j)$.

Then, since $(\pi^{j,i})^{-1}(S_i) = S_j$ for all $i\leq j$, then $(\pi^j)^{-1}(S_j) = (\pi^i)^{-1}(S_i)$ for all $i,j$.

Let $x\in (\pi^j)^{-1}(S_j)$ for all $j$ and let $U$ be a open set of $\p^{1,an}_K$ containing $x$. One needs to show that there exists $f\in \mathcal{C}^0(U, \R)$ with $\int_U fd\mu \neq 0$. It is enough to show it for a basis of neighborhood of $x$ and so we may assume that $U = \lbrace y\in \p^{1,an}_K, r_1 < |P_1(y)| < s_1, \cdots, r_k < |P_k(y)| < s_k \rbrace$ with $P_l \in K[T]$. As previously, in the proof we may assume that $P_l \in \bigcup_{j\in J} k_j$ and even $P_l \in k_j$. Hence, $\pi^j(U) = V$ is open in $\p^{1,an}_{k_j}$. Since $x\in (\pi^j)^{-1}(S_j)$ there exists $f_j \in \mathcal{C}^0(V, \R)$ with $\int_V f_j d\mu_j \neq 0$. 

Hence, with $f = f_j \circ f_j, \int_U fd\mu \neq 0$. 
\end{proof}

We can now prove the proposition \ref{Taking an algebraic field extension does not change the julia set}.
\begin{proof}
First let us assume that the morphism $\pi : \p^{1,an}_K \rightarrow \p^{1,an}_k$ is finite, so $K$ is a finite algebraic extension of $k$.

%Let us denote $\mu_{f,k}$ and $\mu_{f,K}$ the equilibrium measures of $f$ seen as a rational map over $k$ or $K$. Since $\pi$ is finite, one can take the pull-back of $\mu_{f,k}$ by $\pi$ and if $e = \mathrm{deg} \pi$, then $\mu_{f,K} = \frac{1}{e} \pi^*\mu_{f,k}$. We check that $f_*(\frac{1}{e} \pi^*\mu_{f,k}) = \frac{1}{e} \pi^*\mu_{f,k}$ and the support of $\frac{1}{e} \pi^*\mu_{f,k}$ is disjoint from the exceptional set.  

Since $\pi(J_K) = J_k$, one has $J_K \subset \pi^{-1}(J_k)$.
Let $x\in \pi^{-1}(J_k)$ and $U$ an open subset containing $x$. Since $\pi$ is finite, $\pi(U)$ is also open and there exists $g\in \mathcal{C}^0(\pi(U), \R)$ such that $\int_{\pi(U)} g d\mu_{f,k} \neq 0$ and so $\int_U g\circ \pi~ d\mu_{f,K} = \int_{\pi(U)} g d\mu_{f,k} \neq 0$. Hence, $x \in J_K$.

Now, let $K$ be the completion of an algebraic extension of $k$. Let $\alpha$ be an ordinal such that all the algebraic elements of $K$ over $k$ are $x_\beta$ indexed by all $\beta \leq \alpha$.

Then, we will define a tower of field extensions in the following way : 
\begin{itemize}
\item $k_0 = k$,
\item If $\beta \leq \alpha$ is the successor of an ordinal $\gamma$, then $k_\beta = k_\gamma(x_\gamma, x_\beta)$,
\item If $\beta \leq \alpha$ is a limit ordinal, then $k_\beta = \widehat{\mathrm{colim}_{\gamma <\beta}k_\gamma}$.
\end{itemize}
We will show by transfinite induction that $\forall \beta \leq \alpha, \forall \gamma \leq \beta : (\pi^{\beta, \gamma})^{-1}(J_\gamma) = J_\beta$.

For $\beta = 0$, the result is clear. 

Let $\beta$ be an ordinal such that for all $\delta<\beta$ the result is true. 

If $\beta$ is a successor ordinal $\beta = s(\delta)$, then by the beginning of the proof, $J_\beta = (\pi^{\beta, \delta})^{-1}(J_\delta)$ and the result follows for all $\gamma \leq \delta$.

If $\beta$ is a limit ordinal, then $k_\beta = \widehat{\mathrm{colim}_{\gamma <\beta}k_\gamma}$ and we are in the case of the proposition \ref{measure on the union of field extension has same support}. The only thing that we need to check is that the measure constructed in the proof is indeed the equilibrium measure of $f$ seen as a rational map over $k_\beta$. To check that $f_*\mu = \mu$, it is enough to show that for all $g\in \mathcal{C}^0(\p^{1,an}_{k_\beta}, \R)$ with $g = g_\gamma \circ \pi^{\beta, \gamma}$ we have $\int g\circ f d\mu = \int g d\mu$. 

Which is true since 
\begin{align*}
\int g d\mu &= \int g_j \circ \pi^{\beta, \gamma} d\mu_\gamma\\
&= \int g_j \circ \pi^{\beta, \gamma} \circ f d\mu_\gamma\\
&= \int g_j \circ f \circ \pi^{\beta, \gamma} d\mu_\gamma\\
&= \int g\circ f d\mu.
\end{align*}
The equality in the second line comes from the fact that $f_*\mu_\gamma = \mu_\gamma$ and the equality in the line 3 and 4 come from the fact that $f$ commutes with the projection. 

Furthermore since the support is the inverse image of the support of $\mu_\gamma$ for all $\gamma$ it is disjoint from the exceptional set and $\mu = \mu_\beta$. 
\end{proof}

Let us now focus on the case where both $K$ and $k$ are algebraically closed. In this case it will no longer be true that $J_K = \pi^{-1}(J_k)$ as we will see it in the following example.

\begin{ex}
Let $k = \C$ trivially valued and $K = \overline{\C(T)}$ where $\C(T)$ is trivially valued. Then, any $f \in \Rat_d(k)$ has good reduction and $J_k = \eta_{0,1}$. The map $f$ has also good reduction when we see it in $K$ and $J_K = \eta_{0,1}$ but $\pi(\eta_T) = \eta_{0,1}$ so $J_K \neq \pi^{-1}(J_k)$.
\end{ex}
To avoid this, we will recall the notion of universal point as defined by J. Poineau (\cite{PoineauAngelique}, §3) based on the definition of peaked points of V. Berkovich (\cite{BerkovichLivre}, §5.2). It will allow us to have a continuous section $\sigma_{K/k} : \p^{1,an}_k \rightarrow \p^{1,an}_K$.

\begin{defi}\label{definition universal point and continuous section}
Let $k$ be a complete valued field and $X$ be a $k$-analytic space. A point $x\in X$ is said to be universal if for all complete valued extension $L$ of $k$, the norm on the algebra $\mathcal{H}(x) \hat{\otimes} L$ is multiplicative. 

For $L$ a complete valued field extension of $k$, let us denote $X_L := X \hat{\otimes} L$. Then if $x \in X$ is universal it defines a canonical point $\sigma_{L/k}(x)$ which is the point corresponding to the multiplicative norm on $\mathcal{H}(x) \hat{\otimes} L$. 
\end{defi}

\begin{prop}(Corollaire 3.14 \cite{PoineauAngelique})

If $k$ is algebraically closed, $X$ a $k$-analytic space then all points of $X$ are universal.
\end{prop}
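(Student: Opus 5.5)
The statement to prove is that, for $k$ algebraically closed, every point $x$ of a $k$-analytic space $X$ is universal. In other words, for every complete valued extension $L/k$, the tensor norm on $\mathcal{H}(x)\hat{\otimes}_k L$ is multiplicative.

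The first reduction is that universality is a property of the completed residue field $\mathcal{H}(x)$ alone. So it suffices to show that for any complete valued extension $K/k$ (here $K=\mathcal{H}(x)$) and any $L/k$, the norm on $K\hat{\otimes}_k L$ is multiplicative. By density of the algebraic tensor product, it is enough to work with $K\otimes_k L$ equipped with the quotient (projective tensor) norm. Since multiplicativity passes to completions and filtered unions, we may further reduce to the case where $K$ and $L$ are the completions of finitely generated extensions of $k$.

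The key tool will be orthogonal (Cartesian) bases. Because $k$ is algebraically closed, its value group is divisible and its residue field is algebraically closed. For a finitely generated subextension $E\subset K$, I would use the stability results (Grauert–Remmert / Bosch–Güntzer–Remmert) to show that $E$ admits, in the completion, a topological basis that is orthogonal over $k$. Algebraically closed fields are stable, so every finite-dimensional normed $k$-space has an orthogonal basis. Then $K\otimes_k L$ can be described via orthogonal expansions, and the tensor norm of $\sum a_i\otimes b_j$ becomes the max-type norm of a Laurent-like expansion.

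Given this, multiplicativity reduces to a graded-reduction statement: the graded reduction $\widetilde{K\hat\otimes L}$ should be $\widetilde K\otimes_{\tilde k}\widetilde L$. Here $\tilde k$ denotes Temkin's $\R_{>0}$-graded reduction of $k$, which is a graded field whose degree-$r$ components are nonzero exactly for $r$ in the value group of $k$. The norm is multiplicative as soon as this graded ring is an integral domain. Since $\tilde k$ is algebraically closed as a graded field (its residue field is algebraically closed and $|k^\times|$ is divisible), the tensor product over $\tilde k$ of two graded domains is again a graded domain, by the graded analogue of the classical fact that $A\otimes_F B$ is a domain for domains over an algebraically closed field $F$. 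Combining these steps gives multiplicativity, and hence universality of every point.

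The main obstacle is the first step, namely producing orthogonal bases for the tensor product so that its reduction genuinely is the tensor product of the reductions. Without stability of $k$, reduction may fail to commute with $\hat\otimes$. I would first handle $L$ finitely generated over $k$ by induction on transcendence degree, treating the algebraic and transcendental generators separately. For the algebraic part, completeness and algebraic closedness of $k$ mean there is nothing to do. The transcendental part is essentially Gauss-type norms in one variable, where explicit computation is possible.
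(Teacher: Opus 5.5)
The paper gives no proof of this statement; it only quotes Poineau's Corollaire 3.14. Judged on its own, your argument has a genuine gap, and it sits exactly at the step you flag as the main obstacle.

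\textbf{Stability does not give orthogonal bases.} Stability is vacuous for an algebraically closed field, since it concerns finite field extensions. Orthogonal bases in all finite-dimensional normed $k$-spaces exist over spherically complete fields, but not over, e.g., $k=\C_p$.

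\textbf{The key identity fails.} The identity $\widetilde{K\hat{\otimes}_k L}=\widetilde K\otimes_{\tilde k}\widetilde L$, on which everything rests, is false in general. Let $\zeta\in\ana^{1,an}_k$ be a type IV point: the limit of nested discs $D(a_n,r_n)$ with empty intersection and $r_n\downarrow r>0$. Take $K=L=\mathcal{H}(\zeta)$, the completion of $k(T)$, which is squarely inside your finitely generated case.
\begin{itemize}
\item Since $K/k$ is immediate, $\widetilde K=\tilde k$, so $\widetilde K\otimes_{\tilde k}\widetilde L=\tilde k$.
\item No $k$-subspace of $K$ of dimension $\geq 2$ has an orthogonal basis: two independent vectors rescaled to norm $1$ have proportional reductions, so $|e_1-ce_2|<1$ for some $c\in k$.
\item Let $u=T\otimes 1-1\otimes T=(T-c)\otimes 1-1\otimes(T-c)$. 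Then $\|u\|\le\inf_{c\in k}|T-c|_\zeta=r$.
\item Let $t$ be the image of $T$ in $L$. The disc norm $\eta_{t,r}$ on $L[T]$ restricts to $\zeta$ on $k[T]$, because $|T-\alpha|_\zeta>r$ for every $\alpha\in k$ and $k$ is algebraically closed. So it is a bounded multiplicative seminorm on $K\hat{\otimes}_k L$, and it gives $\rho(u-c)\ge r$ for all $c\in k$, where $\rho$ is the spectral seminorm.
\end{itemize}
Hence $\tilde u$ is a nonzero element of degree $r$ in the reduction of $K\hat{\otimes}_k L$ that does not lie in $\tilde k$. Reduction does not commute with $\hat{\otimes}$ here, so the graded-domain step has nothing to act on.

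\textbf{The induction is also not sound.} The same example shows this in two ways. A norm on $k(T)$ need not be of Gauss type, and type IV norms are precisely the missing case. Moreover, in transcendence degree $\geq 2$ the ``algebraic'' generators are algebraic over $k(t_1,\dots,t_m)$, not over $k$, so algebraic closedness of $k$ does not make that part trivial.

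\textbf{What would be needed.} Your strategy is really adapted to spherically complete $k$, where finite-dimensional normed spaces do have orthogonal bases. To cover fields such as $\C_p$ you need an additional idea that handles immediate extensions directly. For instance, you could describe the expected point over a type IV point as the decreasing limit of the sup-norms on the discs $D_L(a_n,r_n)$, and prove that it coincides with the tensor norm, instead of going through orthogonal bases.
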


\begin{ex}
If $k$ is an algebraically closed, valued field, $L$ is any valued field extension, $X= \ana^{1,an}_k$ and $x\in k$. Then, $\sigma_{L/k}(x)$ is the point in $\ana^{1,an}_L$ corresponding to the point $x$ seen as an element of $L$.

And for all $r>0$, $\sigma_{L/k}(\eta_{x,r})$ is the point $\eta_{x,r} \in \ana^{1,an}_L$.
\end{ex}

\begin{prop}(Corollaire 3.7 \cite{PoineauAngelique})

Let $k$ be a complete valued field and $K$ a field extension of $k$. Let us denote $X_u$ the set of all universal points then, the map $\sigma_{K/k} : X_u \rightarrow X\hat{\otimes} K$ is continuous.

In particular, if $k$ is algebraically closed, then $\sigma_{K/k} : X \rightarrow X\hat{\otimes} K$ is continuous.
\end{prop}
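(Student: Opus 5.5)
The plan is to reduce to the affine case, check continuity against the functions that generate the topology, and prove that continuity by squeezing the tensor norm between two continuous functions of $x$ using almost-orthogonal bases.

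\textbf{Reduction.} Continuity is local, and $X\hat{\otimes}K$ is glued from the base changes of the pieces of $X$. So we may assume that $X$ is a closed analytic subspace of an open subset of $\ana^{n,an}_k$, and even that it is affinoid, say $X=\mathcal{M}(\mathcal{A})$, with $X_K=\mathcal{M}(\mathcal{A}\hat{\otimes}_k K)$. The topology on $X_K$ is the coarsest one making the maps $y\mapsto |F(y)|$ continuous, for $F$ ranging over a dense subset of $\mathcal{A}\hat{\otimes}_k K$; the elements of $\mathcal{A}\otimes_k K$ will do. It therefore suffices to show that for each such $F$ the function
\[
x\in X_u \longmapsto |F(\sigma_{K/k}(x))| = \Vert F\Vert_{\mathcal{H}(x)\hat{\otimes}_k K}
\]
is continuous. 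The equality holds because, by Definition \ref{definition universal point and continuous section}, $\sigma_{K/k}(x)$ is the multiplicative tensor norm on $\mathcal{H}(x)\hat{\otimes}_k K$, evaluated on the image of $F$. Replacing $F$ by a uniformly close element changes this function by a uniformly small amount, so uniform limits preserve continuity and we may restrict to finite sums $F=\sum_{i=1}^m a_i\otimes\lambda_i$ with $a_i\in\mathcal{A}$ and $\lambda_i\in K$.

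\textbf{Squeezing the tensor norm.} Let $V\subset K$ be the finite-dimensional $k$-subspace spanned by the $\lambda_i$, and fix $\alpha<1$. Over the complete field $k$, the finite-dimensional normed space $V$ admits an $\alpha$-cartesian basis $e_1,\dots,e_r$, meaning $|\sum c_je_j|\ge \alpha\max_j|c_j||e_j|$ (classical, cf.\ Bosch--G\"untzer--Remmert). Rewrite $F=\sum_j b_j\otimes e_j$ with $b_j\in\mathcal{A}$. The key claim is that $\alpha$-cartesianity survives base change to any complete extension $L$ of $k$; we use it with $L=\mathcal{H}(x)$. That is, in $L\hat{\otimes}_k K$,
\[
\alpha \max_j |b_j(x)|\,|e_j| \;\le\; \Big\Vert \sum_j b_j(x)\otimes e_j\Big\Vert \;\le\; \max_j |b_j(x)|\,|e_j| .
\]
This is proved by passing to $L\otimes_k V\subset L\hat{\otimes}_k K$, using that the tensor norm restricted to $L\otimes_k V$ is the tensor norm of $L\otimes_k V$ (up to the same constant), and checking the cartesian inequality directly on representatives.

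\textbf{Conclusion.} Both bounds are continuous in $x$, since each $|b_j(x)|$ is. So $\phi(x):=\Vert F\Vert_{\mathcal{H}(x)\hat{\otimes}K}$ is squeezed between the continuous function $M_\alpha(x)=\max_j|b_j(x)||e_j|$ and $\alpha M_\alpha$. On $X_u$ we then get $|\phi(x)-\phi(x')|\le (1-\alpha)\max(M_\alpha(x),M_\alpha(x'))+|M_\alpha(x)-M_\alpha(x')|$. Moreover $M_\alpha\le\alpha^{-1}\phi\le \alpha^{-1}\max_i|a_i|_{\sup}|\lambda_i|$ is bounded on $X$ independently of $\alpha$, so letting $\alpha\to1$ gives continuity of $\phi$.

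\textbf{The particular case.} If $k$ is algebraically closed, every point is universal by Corollaire 3.14 of \cite{PoineauAngelique}, so $X_u=X$ and $\sigma_{K/k}$ is continuous on all of $X$.

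The main obstacle is the base-change stability of $\alpha$-cartesian bases, together with the existence of such bases over an arbitrary complete $k$, including discretely valued and trivially valued fields. If the existence step is delicate, I would first pass to a suitable complete extension of $k$ over which $V\otimes k'$ has good bases, using the compatibility of the universal sections $\sigma_{K/k}$ under towers of extensions.
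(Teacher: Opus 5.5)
The paper gives no proof of this statement (it is quoted from Poineau), so your argument has to stand on its own.

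Most of your plan is correct:
\begin{itemize}
\item reducing to the continuity of $x\mapsto \Vert F\Vert_{\mathcal{H}(x)\hat{\otimes}_k K}$ for $F=\sum_i a_i\otimes\lambda_i\in\mathcal{A}\otimes_k K$;
\item using universality only to identify this function with $|F(\sigma_{K/k}(x))|$;
\item concluding from $\alpha M_\alpha\le\phi\le M_\alpha$ with $\alpha\to1$, which in fact shows the tensor-norm function is continuous on all of $X$.
\end{itemize}
The gap is the lower bound $\Vert\sum_j c_j\otimes e_j\Vert_{L\hat{\otimes}_k K}\ge\alpha\max_j|c_j||e_j|$. The whole proof rests on it, and you only assert it. Checking on representatives handles representations $\sum_i l_i\otimes v_i$ with all $v_i\in V$. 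But the tensor norm of $L\hat{\otimes}_k K$ is an infimum over representations $\sum_i l_i\otimes\kappa_i$ with arbitrary $\kappa_i\in K$. That these cannot beat representations inside $V$ is exactly what must be proved. The obvious route, a $k$-linear projection $K\to V$ of norm close to $1$, needs a Hahn--Banach extension. It fails when $k$ is not spherically complete: if $K$ is spherically complete and $k$ is not, $K$ has no nonzero continuous $k$-linear form at all. So this step is not a formality.

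The step can be closed by a finite-dimensional argument valid over any complete $k$. Fix a representation $z=\sum_j c_j\otimes e_j=\sum_i l_i\otimes\kappa_i$ and put $W=V+\sum_i k\kappa_i$. Since $L\otimes_k W\to L\otimes_k K$ is injective, the identity already holds in $L\otimes_k W$.

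Next, extend $e_1,\dots,e_r$ to an $\alpha\gamma$-cartesian basis $e_1,\dots,e_r,g_1,\dots,g_s$ of $W$, for any $\gamma<1$, adding one vector at a time. Suppose $V'$ is the span built so far, with a $\beta$-cartesian basis, and $u\in W\setminus V'$. Pick $v_0\in V'$ with $|u-v_0|\le\gamma_0^{-1}\mathrm{dist}(u,V')$; this distance is positive because $V'$ is closed. Then $g=u-v_0$ satisfies $|ag+v|\ge\gamma_0\max(|a||g|,|v|)$ for all $a\in k$, $v\in V'$. Hence the enlarged basis is $\gamma_0\beta$-cartesian, and taking $\gamma_0^s=\gamma$ gives the claim.

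Now write $\kappa_i=\sum_j d_{ij}e_j+\sum_m d'_{im}g_m$ and compare coefficients in the free $L$-module $L\otimes_k W$. This gives $c_j=\sum_i l_i d_{ij}$, hence $|c_j||e_j|\le\max_i|l_i||d_{ij}||e_j|\le(\alpha\gamma)^{-1}\max_i|l_i||\kappa_i|$. Letting $\gamma\to1$ yields your inequality, and with it the proof goes through.

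The existence of $\alpha$-cartesian bases is not the delicate point. It is standard over any complete $k$, and over a trivially valued $k$ the norm on a finite-dimensional space takes finitely many values, so an orthogonal basis exists. Your fallback through a further extension of $k$ is therefore unnecessary.

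A smaller point concerns the reduction step. A general $k$-analytic space is not locally a closed subspace of an open subset of $\ana^{n,an}_k$, and points need not have affinoid neighbourhoods. Reduce instead to a neighbourhood of $x$ that is a finite union of affinoid domains $Y_i$. Then use $\pi^{-1}(Y_i)=(Y_i)_K$, the compatibility of $\sigma_{K/k}$ with affinoid domains, and the pasting lemma for finite closed covers.
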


We can now state the result about the Julia set when both $K$ and $k$ are algebraically closed. 

\begin{prop}\label{Taking the Julia set over a transcendental extension does not change it}
Let $k \subset K$ with $k, K$ two complete, algebraically closed, valued fields and $f\in \Rat_d(k)$. Let $\sigma_{K/k} : \p^{1,an}_k \rightarrow \p^{1,an}_K$ the continuous section of the definition \ref{definition universal point and continuous section}.

Then, 
\begin{center}
$\sigma_{K/k}(J_k) = J_K$.
\end{center}
Hence, $J_K$ and $J_k$ are homeomorphic and $\sigma_{K/k} \circ \pi_{K/k} = \mathrm{Id}_{J_K}$.
\end{prop}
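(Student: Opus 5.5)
The plan is to reproduce the strategy of Proposition \ref{Taking an algebraic field extension does not change the julia set}, with the canonical section $\sigma := \sigma_{K/k}$ replacing preimages and the equilibrium measure used to pin down supports. First note that $\pi\circ\sigma = \mathrm{Id}_{\p^{1,an}_k}$: the multiplicative norm on $\mathcal{H}(x)\hat{\otimes}K$ restricts to $|\cdot|_x$ on $k[T]$. Since $\sigma$ is continuous by Corollaire 3.7 of \cite{PoineauAngelique} and $\p^{1,an}_k$ is compact, $\sigma$ is a homeomorphism onto its compact image. Hence once we prove $\sigma(J_k)=J_K$, the homeomorphism $J_k\simeq J_K$ and the identity $\sigma\circ\pi=\mathrm{Id}_{J_K}$ follow at once (if $y=\sigma(x)$ then $\sigma(\pi(y))=\sigma(x)=y$).

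The first key step is that $\sigma$ commutes with $f$, that is $f\circ\sigma=\sigma\circ f$ on $\p^{1,an}_k$. I would prove this on the dense set of type I and type II points, which can be written $x\in k$ and $\eta_{a,r}$ with $a\in k$, $r\in|k^\times|$. By the example following the definition of universal points, $\sigma$ sends these to the same point, and $f$, having coefficients in $k$, acts on them by the same formulas over $k$ and over $K$: type I points go to $f(a)$, and the image of $\eta_{a,r}$ is computed from $f$ and the disc, for instance through the description of $f(D(a,r))$. Both sides are continuous, so density of these points in $\p^{1,an}_k$ gives the identity everywhere.

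Next, define $\nu:=\sigma_*\mu_{f,k}$, a probability measure on $\p^{1,an}_K$ whose support is $\sigma(J_k)$, because $\sigma$ is a homeomorphism onto a closed set. The commutation gives $f_*\nu=\nu$. To conclude $\nu=\mu_{f,K}$, the cleanest route is to use $f^*$ rather than the characterisation by $f_*$ together with avoidance of the exceptional set. I would check $f^*\sigma_*\delta_x=\sigma_*f^*\delta_x$, i.e. that local degrees are preserved by $\sigma$. This holds at type I and II points because over algebraically closed fields the local degree is read off from reductions and multiplicities, which do not change. Then use Favre–Rivera-Letelier equidistribution: $\frac{1}{d^n}(f^n)^*\delta_{x}\to\mu_{f,k}$ for a non-exceptional $x\in k$. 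Pushing forward by the continuous map $\sigma$ gives $\frac{1}{d^n}(f^n)^*\delta_{\sigma(x)}\to\nu$, and $\sigma(x)$ is not exceptional for $f$ over $K$ because exceptional points are finitely many type I points defined over $k$, and $k$ is algebraically closed. Hence $\nu=\mu_{f,K}$, and taking supports gives $\sigma(J_k)=J_K$.

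The main obstacle I expect is the compatibility of $\sigma$ with $f$ at the level of pullbacks, i.e. that local degrees match. For type II points I would reduce by $\mathrm{PGL}_2(k)$ to the Gauss point and compare the reductions $\tilde f$ over $\tilde k\subset\tilde K$. If extending this to types III and IV is delicate, equidistribution only needs it at the relevant type I points, where the multiplicities are those of the algebraic equation $f^n(z)=x$ and are clearly field-independent. That makes the argument essentially formal.
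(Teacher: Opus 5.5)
Your argument is correct, but it takes a different route from the paper.

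\textbf{The paper's route.} The paper does not use the equilibrium measure here. It relies on the density of type I and type II repelling points in both Julia sets (Proposition \ref{Repelling points are dense}) and checks two inclusions by hand. First, $\sigma_{K/k}$ sends repelling points over $k$ to repelling points over $K$. Second, every type I or type II repelling point over $K$ lies in $\sigma_{K/k}(J_k)$. The delicate case is a type II repelling point $\eta_{a,r}$ with $a\in K$. The argument of Proposition \ref{repelling in H_k implies infinitely many} produces a periodic point in the disc. That point lies in $k$ because $k$ is algebraically closed, so the centre can be taken in $k$. Then $\eta_{a,r}=\sigma_{K/k}(\pi(\eta_{a,r}))$, with $\pi(\eta_{a,r})\in J_k$ by Lemma \ref{Taking a field extension does not change the julia set}.

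\textbf{Your route.} You prove the stronger identity $(\sigma_{K/k})_*\mu_{f,k}=\mu_{f,K}$. The iterated preimages of a non-exceptional $x\in k$, with their multiplicities, are the same over $k$ and over $K$. Hence $(\sigma_{K/k})_*\bigl(d^{-n}(f^n)^*\delta_x\bigr)=d^{-n}(f^n)^*\delta_{\sigma_{K/k}(x)}$ holds exactly. Pushing forward the weak convergence given by equidistribution identifies the two measures. Since the support of a pushforward under a continuous map of compact spaces is the image of the support, this gives $\sigma_{K/k}(J_k)=J_K$. This avoids any analysis of type II points, at the price of invoking the equidistribution theorem.

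\textbf{Unnecessary steps.} The commutation $f\circ\sigma_{K/k}=\sigma_{K/k}\circ f$ and the comparison of local degrees at type II points are never used in your final argument, so you can drop them. This is just as well: the density of $k\cup\{\eta_{a,r}: a\in k,\ r\in|k^\times|\}$ in $\p^{1,an}_k$ that you invoke fails when $k$ is trivially valued.

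\textbf{Trivially valued case.} The standard equidistribution statements (Favre--Rivera-Letelier, Baker--Rumely) assume a non-trivial absolute value, so treat trivially valued $k$ separately. There $f$ has good reduction over both $k$ and $K$. Hence $J_k=J_K=\{\eta_{0,1}\}$, and $\sigma_{K/k}(\eta_{0,1})=\eta_{0,1}$.
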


\begin{proof}
Let us denote $R_k := \lbrace x\in \p^{1,an}_k, x$ is a repelling point of type I or II$\rbrace$ and $R_K := \lbrace x\in \p^{1,an}_K, x$ is a repelling point of type I or II$\rbrace$, then it suffices to show that $\sigma_{K/k}(R_k) \subset J_K$ and $R_K \subset \sigma_{K/k}(J_k)$ since $R_k$ and $R_K$ are dense in $J_k$ and $J_K$ by proposition \ref{Repelling points are dense}.

Let $x\in R_k$. If $x$ is a type I point, then $x\in k$ and $f^n(x) = x$ and $|(f^n)'(x)|>1$. Hence if we see $x$ as a point in $K$, by the correspondance $x = \sigma_{K/k}(x), x\in R_K$. So, $\sigma_{K/k}(x) \in R_K\subset J_K$.
If $x$ is a type II point, $x = \eta_{a,r}$ and there exists $M \in \GL_2(k)$ such that $M\cdot x = \eta_{0,1}$ and the reduction of $f^M$ is of degree at least 2. Then, all of this remains true, if we see $\eta_{a,r} \in \p^{1,an}_K,$ by the correspondance $\eta_{a,r} = \sigma(\eta_{a,r}), M\in \GL_2(K)$ and $\sigma_{K/k}(x) \in R_K \subset J_K$.

Let $x\in R_K$. If $x$ is a type I point, then $x\in K$ and $f^n(x) = x$ and $|(f^n)'(x)|>1$. Since $k$ is algebraically closed, $x\in k$ and $x\in \sigma_{K/k}(R_k) \subset \sigma_{K/k}(J_k)$. If $x$ is a type II point, $x = \eta_{a,r}$ then there exists $M \in \GL_2(K)$ such that $M\cdot x = \eta_{0,1}$ and the reduction of $f^M$ is of degree at least 2. Furthermore, if $y\in K, |y|= r$ then we may take $M$ to be 
$\begin{pmatrix}
\frac{1}{y}&\frac{-a}{y}\\
0&1
\end{pmatrix}.$

Since $f^M$ has a reduction of degree $\geq 2$, then by the proof of the proposition \ref{repelling in H_k implies infinitely many}, there exists $z\in K$ such that $(f^M)^n(z) = z$ and $|z|\leq 1$. 

Then, $M^{-1}z$ is a periodic point of $f$ and so is a point of $k$. Since $M^{-1}z = yz+a$, one has $|M^{-1}z -a| = |yz| \leq r$. Hence, $\eta_{a,r} = \eta_{M^{-1}z,r}$ and $\eta_{a,r} \in \sigma_{K/k}(\p^{1,an}_k)$. It remains to show that $\eta_{M^{-1}z,r} \in J_k$. Since $\eta_{M^{-1}z,r} = \pi(\eta_{a,r}) \in \pi(J_K)$, we have that $\eta_{M^{-1}z,r} \in J_k$ by lemma \ref{Taking a field extension does not change the julia set}.
\end{proof}
Let us now explain how one can obtain the Julia set over a field extension in all cases.

\begin{cor}\label{Taking the Julia set over any extension does not change it}
Let $k$ be a complete valued field, $K$ be a complete valued field extension. Then, $\widehat{\overline{K}}$ the completion of an algebraic closure of $K$ is a valued field extension of $\widehat{\overline{k}}$. Let us denote $\pi_{K, k}, \pi_{\widehat{\overline{k}}, k}, \pi_{\widehat{\overline{K}}, K}$ the projections and $\sigma_{\widehat{\overline{K}}, \widehat{\overline{k}}}$ the continuous section. 

Then,
\begin{center}
$\pi_{K,k}(J_K) = J_k$ and $\pi_{\widehat{\overline{K}}, K}\circ \sigma_{\widehat{\overline{K}}, \widehat{\overline{k}}}~ (\pi_{\widehat{\overline{k}}, k}^{-1}(J_k)) = J_K$.
\end{center}
\end{cor}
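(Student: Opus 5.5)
The first equality $\pi_{K,k}(J_K) = J_k$ is exactly lemma \ref{Taking a field extension does not change the julia set}, so nothing new is needed there. For the second equality, I would compare both sides through the algebraically closed completions. Let me write $\bar k := \widehat{\overline{k}}$ and $\bar K := \widehat{\overline{K}}$. Choosing compatible algebraic closures, $\bar K$ is a complete valued extension of $\bar k$, so the section $\sigma := \sigma_{\bar K, \bar k}$ exists and is continuous, since every point over the algebraically closed field $\bar k$ is universal.

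The argument then goes in three steps.

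\begin{enumerate}
\item Since $\bar k$ is the completion of an algebraic extension of $k$, proposition \ref{Taking an algebraic field extension does not change the julia set} gives $\pi_{\bar k, k}^{-1}(J_k) = J_{\bar k}$.
\item Since $\bar k \subset \bar K$ are both complete and algebraically closed, proposition \ref{Taking the Julia set over a transcendental extension does not change it} gives $\sigma(J_{\bar k}) = J_{\bar K}$.
\item Applying lemma \ref{Taking a field extension does not change the julia set} to the extension $\bar K / K$ gives $\pi_{\bar K, K}(J_{\bar K}) = J_K$.
\end{enumerate}

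Chaining these,
\[
\pi_{\bar K, K}\circ\sigma\bigl(\pi_{\bar k,k}^{-1}(J_k)\bigr) = \pi_{\bar K, K}(\sigma(J_{\bar k})) = \pi_{\bar K,K}(J_{\bar K}) = J_K,
\]
which is the desired equality.

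The only point needing care is the setup: we must check that $\bar K$ really contains $\bar k$ as a complete valued subfield. To see this, embed $\overline{k}$ into $\overline{K}$; the valuation on $\overline{K}$ restricts to the unique extension of the valuation of $k$ to $\overline{k}$, and taking closure inside $\bar K$ gives an isometric copy of $\bar k$. We must also check that the projections commute, i.e. $\pi_{\bar K,k} = \pi_{\bar k,k}\circ\pi_{\bar K,\bar k} = \pi_{K,k}\circ\pi_{\bar K,K}$, which is immediate from restricting seminorms. I expect no real obstacle beyond this bookkeeping, since all the substantive content is in the three cited results.
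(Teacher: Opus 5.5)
Your proposal is correct and follows the paper's intended route: the paper states this corollary without a separate proof, as the chaining of Lemma \ref{Taking a field extension does not change the julia set} (for $K/k$ and for $\widehat{\overline{K}}/K$), Proposition \ref{Taking an algebraic field extension does not change the julia set} (for $\widehat{\overline{k}}/k$) and Proposition \ref{Taking the Julia set over a transcendental extension does not change it} (for $\widehat{\overline{K}}/\widehat{\overline{k}}$). Your check that $\widehat{\overline{k}}$ embeds isometrically in $\widehat{\overline{K}}$ is the right piece of bookkeeping. The commutation of the projections, while true, is not actually used in the chain.
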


\section{Continuity of the Julia set in the non-archimedean setting}\label{Section continuity in the non archimedean settings}
Let us now study the conditions to have a continuous motion of the non-archimedean Julia set. Our goal is to prove a non-archimedean version of the theorem of \cite{MSS} on the holomorphic motion of the Julia set.
\subsection{A result on the continuity of the Julia set}
\begin{defi}
Let $k$ be a non-archimedean complete value field and $X$ a $k$-analytic space. An analytic family of rational maps of degree $d\geq 2$, parametrized by $X$, is an analytic map $f : X \times_k \p^{1,an}_k \rightarrow X\times_k \p^{1,an}_k$ such that $\pr_X = \pr_X \circ f$ where $\pr_X : X\times_k \p^{1,an}_k$ is the first projection and for all $x \in X, f_x := f(x, \cdot) : \p^{1,an}_{\mathcal{H}(x)} \rightarrow \p^{1,an}_{\mathcal{H}(x)}$ is a rational map of degree $d\geq 2$.

A pair $(x,z) \in X\times_k \p^{1,an}_k$ can be seen as an element $x\in X$ and $z\in \p^{1,an}_{\mathcal{H}(x)}$ and this will be the point of view used in the rest of this article.
\end{defi}

T. Silverman studied the analytical motion of the Julia set and he got the following theorem in \cite{TSilvermanNonArchiLambdaLemma}.
\begin{thm}
Let $K$ be a complete algebraically closed non-archimedean field, let $U \subset \ana^{1,an}_K$ be a connected set, and let $(f_\lambda)_{\lambda \in U}$ be an analytic family of rational maps. Assume that the type I repelling points are dense in $J_{x_0}$ for some $x_0 \in U$. Then, the following are equivalent :
\begin{enumerate}
\item The Julia sets move analytically on $U$,
\item For all $x\in U, f_x$ has no unstably indifferent periodic point and no type I repelling periodic points with multiplicity greater than 1.
\end{enumerate}
\end{thm}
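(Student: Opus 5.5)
The statement is Silverman's theorem, a non-archimedean $\lambda$-lemma, so the plan follows the classical Mañé--Sad--Sullivan scheme with the parameter space $U$ a connected subset of $\ana^{1,an}_K$. Throughout I will work at type I parameters $\lambda \in U \cap K$ and use the density hypothesis on type I repelling points. The key tool is the analytic implicit function theorem over $K$: a periodic point $z_0$ of $f_{\lambda_0}$ with $(f^n_{\lambda_0})'(z_0) \neq 1$ continues analytically to a periodic point $z(\lambda)$ near $\lambda_0$. For an indifferent or repelling point of multiplicity one this condition holds, since multiplicity one means $z_0$ is a simple root of $f^n_\lambda(z)-z$.

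For $(2)\Rightarrow(1)$, I first show that multipliers of periodic cycles cannot change type. By the maximum modulus principle (theorem \ref{maximum modulus}, applied on affinoid neighborhoods without boundary inside $U$), the function $\lambda\mapsto|(f^n_\lambda)'(z(\lambda))|$ is locally constant as soon as it reaches a local extremum. If it were $=1$ at $\lambda_0$ but not in a neighborhood, the point would be unstably indifferent, which (2) forbids. Hence repelling points stay repelling, and they continue analytically without collision because each has multiplicity one.

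I then define the motion $h_\lambda$ on the dense set $R_{\lambda_0}$ of type I repelling points by $z\mapsto z(\lambda)$. This map is injective, since two distinct repelling points cannot merge (a merge would create a point of multiplicity $\ge2$). It is equivariant, $h_\lambda\circ f_{\lambda_0}=f_\lambda\circ h_\lambda$, and analytic in $\lambda$. Next I extend $h_\lambda$ to the closure $J_{\lambda_0}$ using the non-archimedean $\lambda$-lemma. The distances $|z(\lambda)-w(\lambda)|$ between two such analytic functions, which never vanish, are constant on the connected parameter disc, again by maximum modulus applied to $z-w$ and $(z-w)^{-1}$. So $h_\lambda$ is an isometry on $R_{\lambda_0}$ and extends to the closure. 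Finally I pass to Berkovich points via proposition \ref{repelling points of type I are dense in the Julia Set}.

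For $(1)\Rightarrow(2)$, I argue by contradiction. If some $f_{\lambda_1}$ has an unstably indifferent point, then by maximum modulus the modulus of its multiplier takes values $>1$ at nearby parameters and values $\le1$ at others. Following a repelling cycle into a point where it becomes indifferent, hence Fatou, contradicts the injectivity and equivariance of an analytic motion. Similarly, a repelling point of multiplicity $>1$ splits into several distinct repelling points nearby, which no injective motion can reconcile.

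The hardest step is the extension of the motion to type II--IV points of the Berkovich Julia set and the control of analyticity at non-rigid parameters in $U$. To handle it, I would use the isometry property to transport discs $\eta_{a,r}\mapsto\eta_{h(a),r}$, and then argue by density.
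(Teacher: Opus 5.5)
The paper does not prove this theorem; it quotes it from Silverman. Its discussion of the statement, however, fixes what the multiplicity clause means, and that is where your plan breaks.

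\textbf{The multiplicity clause is misread.} You read ``multiplicity one'' as ``simple root of $f^n_\lambda(z)-z$ at a rigid parameter''. Under that reading the clause is vacuous. At a rigid $\lambda\in U\cap K$, a repelling periodic point has multiplier $\mu$ with $|\mu|>1$, so $\mu\neq 1$ and it is automatically a simple root.

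The clause actually concerns non-rigid parameters $x\in U$. There $\mathcal{H}(x)$ is not algebraically closed, and a type I repelling periodic point of $f_x$ can be a closed point of $\p^{1,an}_{\mathcal{H}(x)}$ of degree $>1$. Equivalently, $C^n_U\to U$ has local degree $>1$ there, so the point bundles several Galois-conjugate repelling points.

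Your proof of $(2)\Rightarrow(1)$ only ever uses the hypotheses at rigid parameters, so it would prove something false. Take $f_\lambda(z)=z^2+\lambda$ on $U=\{|\lambda|>1\}$ with residue characteristic $\neq 2$ (Silverman's Example 4.6, lemma \ref{z^2+ T has only repelling points}). At every rigid parameter, all finite periodic points are repelling and simple, and there are no indifferent points. Yet the Julia set does not move analytically. At $x=\eta_{0,r}$ the two fixed points $\frac{1\pm\sqrt{1-4T}}{2}$ form a single point of degree $2$, because $1-4T$ is not a square in $\mathcal{H}(\eta_{0,r})$.

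\textbf{The failing step.} It is ``they continue analytically without collision''. The implicit function theorem gives germs of sections only at rigid parameters; in the example, it gives genuine branches on each residue disc $D(\lambda_0,|\lambda_0|)$. Nothing in your plan glues these over the connected $U$. The points $\eta_{0,r}$ lie in no open disc contained in $U$, and no branch extends across them.

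What is missing is twofold:
\begin{enumerate}
\item At a repelling point $(x,z)$ over a non-rigid $x$, multiplicity one (local degree one) makes $C^n_U\to U$ a local isomorphism.
\item These local sections must be glued over $U$. For instance, the repelling part of $C^n_U$ is open and closed by the no-unstably-indifferent hypothesis, hence a finite topological covering of $U$. That covering is trivial because connected open subsets of $\ana^{1,an}_K$ are uniquely and locally path-connected.
\end{enumerate}
Your closing remedy $\eta_{a,r}\mapsto\eta_{h(a),r}$ concerns non-rigid points of the dynamical line, not non-rigid parameters, so it does not address this.

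\textbf{The $\lambda$-lemma step has the same problem.} Constancy of $|z(\lambda)-w(\lambda)|$ holds on open discs, via the Newton polygon. Maximum modulus alone does not give it, since the supremum need not be attained. It fails on general $U$. Conjugating $g(z)=(z^p-z)/p$ over $\C_p$ by $z\mapsto\lambda z$, with $\lambda$ in an annulus, yields the analytic motion $h_\lambda(w)=(\lambda_0/\lambda)w$, which rescales distances. So the extension to $\overline{R_{\lambda_0}}$ and the transport of type II points (whose radius must change too) have to be done locally. Parameters contained in no disc of $U$ need separate care.

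Your $(1)\Rightarrow(2)$ sketch is fine in spirit once multiplicity is read correctly. A point of $J_x$ of degree $>1$ over $\mathcal{H}(x)$ cannot be the value at $x$ of an analytic section $\lambda\mapsto h_\lambda(w)$, since such values are $\mathcal{H}(x)$-rational.
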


In this result, he focuses on a notion of an analytical motion in a strong sense which gives some restriction on the result : $U$ is a subset of $\ana^{1,an}_K$ and there is an extra condition on the repelling points $x$ of $U$ : they have to be of multiplicity equal to one. It means that if $x\in U$, then all the repelling points have to be in $\mathcal{H}(x)$ and not in its algebraic closure but in general, this field is not algebraically closed. 

For instance, one may look at a family of rational maps on $D^{an}_\C(0,1) = \lbrace x\in \ana^{1,an}_\C, |T(x)| <1\rbrace$ where $\C$ is trivially valued and $\ana^1_\C = \mathrm{Spec} ~\C[T]$. To any, $x\in D^{an}_\C(0,1), f_x : z\mapsto T(x)z^3 +z^2$ where $T(x)$ is the image of $T \in \mathcal{H}(x)$. And as soon as $x \in D^{an}_\C(0,1)$ is not a type I point, the field $\mathcal{H}(f_x) \cong \C((t))$ will not be algebraically closed.

To avoid this problem, we only look at some continuous motion. This will allow us to delete this condition and to work over any $K$-analytic space. We will define a different notion of ,,unstably indifferen'' point that we will call ,,not persistently indifferent''. We will show that the two notions coincide.

\begin{defi}
Let $f$ be an analytic map and $x\in X$. We say that the Julia moves continuously on a neighborhood of $x$ if there exists a neighborhood $U$ of $x$ such that for all $y\in U$ and all family $(y_i)_{i\in I} \in U^I$ with $y$ in the closure of the $(y_i)$, we have :
\begin{center}
$\overline{\lbrace (f, z) \in X\times_k \p^{1,an}_k, \exists i\in I, y_i = f~and~z \in J_f\rbrace} \cap \pr_X^{-1}(y) = \lbrace (f, z) \in X\times_k \p^{1,an}_k, f=y$ and $z \in J_y\rbrace$
\end{center}
where $\overline{E}$ denotes the closure of the set $E$.

This means that one requires
\begin{center}
$\overline{\bigcup J_{y_i}} \cap \pr^{-1}_X(y) = J_y$
\end{center}
where we see the Julia set $J_z$ embedded in $X \times_k \p^{1,an}_k$ in the fiber $\pr^{-1}_X(z)$.
\end{defi}

\begin{rem}
In the case, where the field $k$ is countable $X\times_k \p^{1,an}_k$ is metrizable and this definition is equivalent to the continuity of the family of compact sets $(\lbrace (x, y), y\in J_x\rbrace)_{x \in X}$ with respect to the Haussdorf distance.
\end{rem}

\begin{defi}\label{Definition of the periodic point}
Let $k$ be a non-archimedean complete valued field and $X$ a $k$-analytic space. Let $f$ be an analytic family of rational maps of degree $d \geq 2$ and $n\in \N$. Let $U$ be a small enough open of $X$ such that for all $x\in X, f_x =\frac{a_d(x)z^d+ \cdots + a_0(x)}{b_d(x)z^d+ \cdots +b_0(x)}$ and all the $a_i$ and $b_i$ are in $\mathcal{O}(U)$. Let $V$ be an open set of $X \times_k \p^{1,an}$ such that $\pr_X(V) \subset U$, then $f_x^n(z)-z = \frac{P_1(z)}{P_2(z)} \in \mathcal{O}(V)$. Hence, one can define $C^n_V := \lbrace (x,z) \in V, |f_x^n(z)-z|=0 \rbrace$ the closed analytic space of the fixed points of $f^n$. For each $(x,z)\in C^n_V, z$ can be seen as type I points in $\p^{1,an}_{\mathcal{H}(x)}$.
\end{defi}

\begin{prop}
Let $k$ be a non-archimedean complete valued field and $X$ a $k$-analytic space. Let $f$ be an analytic family of rational maps of degree $d \geq 2$ and $n\in \N$. Let $C^n_V$ be as in definition \ref{Definition of the periodic point}, then 
\begin{center}
If $x\in \Int (X)$, then for all $(x,t) \in \pr_X^{-1}(x), (x,t) \in \Int (C^n_V)$
\end{center}
where $\pr_X$ is the projection from $C^n_V$ to $X$ via the first coordinate.
\end{prop}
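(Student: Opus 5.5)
The approach is to show that the projection $\pr_X : C^n_V \rightarrow X$ is finite at every point over $x$ when we restrict to a suitable neighborhood, and then use Berkovich's result that finite morphisms preserve interior points: for a finite morphism $\phi : Y \rightarrow Z$ one has $\Int(Y/Z) = Y$, hence $\Int(Y) = \phi^{-1}(\Int(Z))$ (Berkovich, Proposition 3.1.3 and Corollary 2.5.13 of \cite{BerkovichLivre}).

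First, reduce to the affine situation. Let $(x,t) \in C^n_V$ with $\pr_X(x,t)=x$. The point $t$ is a type I point of $\p^{1,an}_{\mathcal{H}(x)}$, so either $t\neq\infty$ or we change coordinates by $z\mapsto 1/z$, which does not affect the statement. We may therefore work in $U\times \ana^{1,an}_k$ with coordinate $z$. After clearing denominators, $C^n_V$ is locally the zero locus of
\[
F(z) = P_1(z) = A_N z^N + \cdots + A_0, \qquad A_i \in \mathcal{O}(U),
\]
which is the numerator of $f^n_x(z)-z$, possibly multiplied by a nonvanishing factor near $(x,t)$.

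Second, establish finiteness. Since $f_x$ has degree $d\ge 2$, the polynomial $F_x$ is not identically zero, and its leading coefficient is a unit near $x$ once $\infty$ has been excluded. Projectively, $C^n$ is cut out in $U\times\p^1$ by a nonzero homogeneous form of degree $d^n+1$ in $(z_0,z_1)$. Hence $\{(y,z): F_y(z)=0\}\subset U\times\p^{1,an}$ is closed in a proper space over $U$ and has finite fibers, which makes $C^n \rightarrow U$ proper and quasi-finite, hence finite. Locally around $(x,t)$ it is finite over a neighborhood of $x$; concretely, the ring is $\mathcal{O}(U)\langle z\rangle/(F)$ restricted to a disc, a finite $\mathcal{O}(U)$-module by Weierstrass division. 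Restricting to an affinoid neighborhood $W$ of $x$ and an appropriate $V$, we obtain $C^n_V\cap \pr_X^{-1}(W) \rightarrow W$ finite.

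Third, transfer interiority. A finite morphism has empty relative boundary, so
\[
\Int(C^n_V) \supseteq \pr_X^{-1}(\Int(W)),
\]
and $x\in\Int(W)$ because $x\in\Int(X)$ and interior is local. This gives the claim.

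The main obstacle is the finiteness step in full generality. Two things need care: points where $t$ lies at $\infty$, or where $f_y$ drops degree on the projective closure, and the case where $V$ is not a full $\pr_X$-preimage. I would handle the first by working with the homogeneous equation in $U\times\p^1$. Since $\p^1$ is proper, $U\times\p^1\rightarrow U$ is boundaryless, and the closed subspace $C^n\subset U\times\p^1$ then has empty relative boundary over $U$ directly. That yields $\Int(C^n)=\pr_X^{-1}(\Int U)$ without any finiteness argument, and restricting to the open $V$ preserves interior points.
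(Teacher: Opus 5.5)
Your proposal is correct and follows the paper's route. The paper likewise reduces, by transitivity of relative interiors (Berkovich, Prop.~2.5.8(iii)), to showing $(x,t)\in\Int(C^n_V/X)$, and obtains this from local finiteness of $\pr_X$ via Cor.~2.5.13. Where you go further is in justifying that local finiteness. The paper only appeals to finiteness of the fibres, which alone would not suffice: the inclusion of a closed disc into $\ana^{1,an}_k$ has finite fibres but is not finite at the Gauss point. Your properness argument supplies exactly this missing input, and so does your more direct observation that $C^n$ is a closed subspace of $U\times_k\p^{1,an}_k$, which has no boundary over $U$.
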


\begin{proof}
By the third point of the proposition 2.5.8 of \cite{BerkovichLivre}, one only needs to check that $(x,t) \in \Int (C^n_V/X)$.

Since this map has finite fibers, it is locally finite (lemme \ref{morphisme fini sur meme dimension est ouvert}). So, by the converse implication of the Corollary 2.5.13 of \cite{BerkovichLivre}, $(x,t) \in \Int (C^n_V/X)$. 
\end{proof}

Hence, we will be able to apply the maximum modulus principle (theorem \ref{maximum modulus}) to any point of $C^n_V$ if we suppose that $\Int (X) = X$.
\begin{defi}
A $k$-analytic space will be said without boundary if $\Int (X) = X$.
\end{defi}
It will be the setting in the remaining of this section. Let us now define the notion of persistently indifferent periodic point which will be the main condition to have a continuous motion.

\begin{defi}\label{Definition of persistently indifferent}
Let $k$ be a non-archimedean complete valued field and $X$ a $k$-analytic space. Let $f$ be an analytic family of rational maps and $x \in X$. A periodic point $z \in \p^{1,an}_{\mathcal{H}(x)}$ of $f_x$ of period $n$ of type I is said to be persistently indifferent if :
%there is a neighborhood $U$ of $x$ and a holomorphic map $w: U \rightarrow U\times_k \p^{1,an}_k$ such that $id_U = pr_U\circ w, w(x) = (x,z)$ and $f^n_\lambda(w(\lambda)) = w(\lambda), | (f^n_\lambda)'(w(\lambda))|=1, \forall \lambda \in U$.
\begin{center}
    There is a neighborhood $U$ of $(x,z)$ such that $\forall (\lambda, y) \in U$ if $y$ is a type I periodic point of $f_\lambda$ of period $n$, then $|(f_\lambda^n)'(y)| =1$. 
\end{center}
\end{defi}

\begin{rem}
There is a difference with the archimedean case here. Since the indifferent point can be in the algebraic closure of the residual field of $f_\lambda$, they can have a multiplier strictly greater than 1.
So, we cannot necessarily have an analytic motion of the indifferent point, hence we have a result on a neighborhood of the point instead of following holomorphically the point on a neighborhood.  
\end{rem}

\begin{ex}
Let us assume that $k$ is algebraically closed and $f\in \Rat_d(k)$. Then all indifferent periodic point of $f$ are persistently indifferent.

Let $(f,z) \in (\Rat_d \times \p^1)^{an}(k)$ be an indifferent point. Then, $(f^n)'(z)  \in k$, hence $U = \lbrace (y,t) \in (\Rat_d\times \p^1)^{an}, |(f^n_y)'(t) - (f^n)'(z)|<1\rbrace$ is a neighborhood of $(f,z)$. This implies that if $(y,t)$ is a periodic point of $f_y$ of period $n$, then it is indifferent. So $z$ is persistently indifferent.
\end{ex}

\begin{defi}
Let $K$ be algebraically closed, complete, valued field and $X \subset \ana^{1,an}_K$.
Let us recall the definition of unstably indifferent in the sens of Silverman \cite{TSilvermanNonArchiLambdaLemma}. Let $f : X \times_K \p^{1,an}_K\rightarrow X\times_K\p^{1,an}_K$ be an analytical family of map of degree $d$. Then, for all $x\in X, f_x : \p^{1,an}_{\mathcal{H}(x)} \rightarrow \p^{1,an}_{\mathcal{H}(x)}$ is a rational map of degree $d$. So, one can see $f'_\lambda : \p^{1,an}_{\mathcal{H}(x)} \rightarrow \p^{1,an}_{\mathcal{H}(x)}$ as a rational map of degree $e$. And so one get $f' : X\times_K \p^{1,an}_K \rightarrow X\times_K \p^{1,an}_K$ an analytical family of rational map. Let $(x,t)$ be an indifferent periodic point. By replacing $f$ by some $f^{\circ n}$, we may suppose that it is of period 1. 

Then it is unstably indifferent if $\pr_{\p^{1,an}_K}(f'(x,t)) = \eta_{0,1}$.
\end{defi}

\begin{lem}\label{Unstably and not persistently indifferent is the same in dimension one}
Let $K$ be algebraically closed, complete, valued field and $X \subset \ana^{1,an}_K$. Then a point $(x,t)$ is not persistently indifferent if and only if $(x,t)$ is unstably indifferent in the sense of Silverman.
\end{lem}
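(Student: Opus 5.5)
We prove both implications by contraposition. Replacing $f$ by $f^{\circ n}$ we may assume $t$ is a fixed point of $f_x$. After shrinking $X$ to a connected open neighbourhood of $x$ and choosing coordinates on $\p^1$ with $t\neq\infty$, the fixed-point locus $C = C^1_V$ of definition \ref{Definition of the periodic point} is a closed analytic subset of $V \subset X\times_K \p^{1,an}_K$ containing $(x,t)$. The multiplier $g(\lambda,y) := f_\lambda'(y)$ is an analytic function on $C$, so $(\lambda,y)\mapsto |g(\lambda,y)|$ is continuous on $C$. Silverman's condition says that the point $\pr_{\p^{1,an}_K}(f'(x,t))$ is the Gauss point $\eta_{0,1}$. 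Since $(x,t)$ is indifferent, we have $|g(x,t)|=1$. The whole proof is therefore a translation between two conditions on $g$ near $(x,t)$: the condition ``$|g|\equiv 1$ on the type I points of a neighbourhood'', and the condition ``the reduction of $g$ at $(x,t)$ is transcendental'', i.e. $f'(x,t)$ is the Gauss point rather than a type I point of $\mathcal{H}(x,t)$.

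Suppose first that $(x,t)$ is not unstably indifferent. Then $\pr(f'(x,t))$ is a point of absolute value $1$ different from $\eta_{0,1}$. Over the algebraically closed field $K$, the only points of the closed unit disc with $|T|=1$ other than $\eta_{0,1}$ lie in some residue disc $\{|T-a|<1\}$ with $a\in K$, $|a|=1$. The set
\[ U = \{(\lambda,y)\in C,\ |g(\lambda,y)-a|<1\} \]
is open, so it is a neighbourhood of $(x,t)$ in $C$. Thickening it inside $V$ in the same way as in the example preceding this lemma, every fixed point $(\lambda,y)\in U$ satisfies $|f'_\lambda(y)|=|a|=1$. Hence $(x,t)$ is persistently indifferent.

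Conversely, suppose $(x,t)$ is unstably indifferent and, for contradiction, persistently indifferent. Then on a neighbourhood $W$ of $(x,t)$ in $C$, every type I fixed point has $|g|=1$. Type I points are dense in $C$, because $X\subset \ana^{1,an}_K$ with $K$ algebraically closed and $C\to X$ is finite, hence open by lemma \ref{morphisme fini sur meme dimension est ouvert}. By continuity, $|g|\equiv 1$ on $W$. I now want to use that $C$ has no boundary above $X$: by the proposition preceding definition \ref{Definition of persistently indifferent}, we have $\Int(C)=C$. I also use the local structure from the proof of theorem \ref{maximum modulus}. Since both $|g|$ and $|g^{-1}|$ attain their maximum $1$ on $W$, the germ reduction gives that $\tilde g$ and $\tilde g^{-1}$ lie in every graded valuation ring of $\widetilde{\mathcal{H}(x,t)}$, by proposition \ref{elements dans tous les anneaux de valuation sont les inversibles}. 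I would then argue that $\tilde g$ is algebraic over $\tilde K$. Indeed, if $\tilde g$ were transcendental, there would be graded valuation rings $\mathcal{O}$ with $\tilde g - \tilde a\in m_{\mathcal O}$ for suitable $a$, or with $(\tilde g-\tilde a)^{-1}\notin\mathcal O$. Lemma \ref{lemma Temkin}, combined with Temkin's Theorem 4.5, would then produce points arbitrarily close to $(x,t)$ where $|g-a|<1$, and points where $|g - a|=1$ for every $a$. Since $\tilde K$ is algebraically closed, an algebraic $\tilde g$ equals some $\tilde a$, so $f'(x,t)$ lies in a residue disc and not at $\eta_{0,1}$. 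This is a contradiction.

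The main obstacle is this last step, which turns ``$|g|$ is constant equal to $1$ near $(x,t)$'' into ``$g(x,t)$ reduces to an element of $\tilde K$''. Constancy of $|g|$ alone does not prevent $\tilde g$ from being transcendental: for example $g=T$ near the Gauss point has constant modulus $1$ but transcendental reduction. So the argument has to use that $C$ is a curve finite over $X\subset\ana^1_K$. If the valuation-ring argument gets delicate, the fallback I would try first is this. Because $C$ is one-dimensional, $(x,t)$ is a point of type II, III or IV of a $K$-analytic curve. Unstable indifference means $g$ maps $(x,t)$ to $\eta_{0,1}$. By openness of finite maps of curves (lemma \ref{morphisme fini sur meme dimension est ouvert}, applied to $g$ when it is nonconstant), the image under $g$ of any neighbourhood of $(x,t)$ contains a neighbourhood of $\eta_{0,1}$, and hence contains type I points $y$ with $|y|\ne 1$. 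Pulling such a point back gives nearby type I fixed points whose multiplier has $|g|\ne 1$, which contradicts persistent indifference. If $g$ is constant, its value lies in $K$, so $g(x,t)$ is type I and the point is not unstably indifferent.
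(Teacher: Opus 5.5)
Your proof of ``not unstably indifferent $\Rightarrow$ persistently indifferent'' is the paper's: take the residue disc $\{|g-a|<1\}$ containing $g(x,t)$. Your fallback for the converse is also the paper's argument. The paper invokes Ducros's Corollaire 3.5.7 to see that $g^{-1}(\eta_{0,1})$ is zero-dimensional, so $g$ is quasi-finite, hence open, at $(x,t)$. The image of any neighbourhood of $(x,t)$ then contains points with $|T|>1$, such as $\eta_{0,r}$ with $r>1$. This also spares you the constant/nonconstant split, which would otherwise have to be made on each irreducible component of $C$ through $(x,t)$.

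One thing must be corrected throughout: you read ``type I'' as ``$K$-rigid point of $C$''. By construction, every $(\lambda,y)\in C$ is a type I fixed point of $f_\lambda$ in $\p^{1,an}_{\mathcal{H}(\lambda)}$. So persistent indifference says directly that $|g|\equiv 1$ on a neighbourhood of $(x,t)$ in $C$, and to refute it any nearby point of $C$ with $|g|\neq 1$ suffices. With your reading the argument fails for trivially valued $K$, which is exactly where the paper uses this equivalence ($T(x)z^3+z^2$ over trivially valued $\C$). There $X=\{\eta_{0,r},\ 0<r<1\}$ has no rigid points, so rigid points are not dense in $C$, and no rigid $y$ near $\eta_{0,1}$ has $|y|\neq 1$.

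Your primary germ-reduction route is genuinely different from the paper's and can be made to work, but as written it reaches no contradiction. Points near $(x,t)$ where $|g-a|<1$, and $(x,t)$ itself where $|g-a|=1$ for all $a$, are both compatible with $|g|\equiv 1$. You would additionally need the local maximum modulus principle for $g-a$, which is $\le 1$ on $W$ with equality at $(x,t)$.

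The direct finish uses what you already have: $\tilde g$ lies in every graded valuation ring of $\widetilde{\mathcal{H}(x,t)}$ over $\tilde K$.
\begin{itemize}
\item If $\tilde g$ were transcendental over $\tilde K$, then $\tilde g^{-1}$ would be a non-unit of $\tilde K[\tilde g^{-1}]$. Localizing at $(\tilde g^{-1})$ and dominating, as in the proof of proposition \ref{elements dans tous les anneaux de valuation sont les inversibles}, gives a graded valuation ring not containing $\tilde g$, a contradiction.
\item So $\tilde g$ is algebraic. Being homogeneous of degree $1$, it satisfies a monic equation over the algebraically closed field $\tilde K_1$.
\item Hence $\tilde g=\tilde a$ with $|a|=1$, so $g(x,t)\in D^{an}(a,1)$, contradicting $g(x,t)=\eta_{0,1}$.
\end{itemize}

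Your stated obstacle is not one. $|T|$ is not constant on any neighbourhood of $\eta_{0,1}$ in $\ana^{1,an}_K$; it is constant on $\{|T|=1\}$, but there $\eta_{0,1}$ is a boundary point. The completed argument uses only $(x,t)\in\Int(C)$, never $\dim X=1$. The paper's route relies on the curve structure (via Ducros's dimension theory) to get quasi-finiteness of $g$. Yours, once fixed, would also prove lemma \ref{Unstably and not persistently indifferent is the same} in any dimension, essentially by re-running the proof of theorem \ref{maximum modulus}.
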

\begin{proof}

Let $(x,t)$ be a periodic point, then we may find an open neighborhood such that $C_V$ is well defined (see definition \ref{Definition of the periodic point}). Then, we can consider $g = \pr_{\p^{1,an}}\circ f' : C_V \rightarrow \p^{1,an}_K$.

Let us assume that $g(x,t) = \eta_{0,1}$, then by the Corollaire 3.5.7 of Ducros \cite{DucrosStructureCourbeAnalytique} the fiber $g^{-1}(\eta_{0,1})$ has dimension $0$. Hence, the morphism is quasi-finite on $(x,t)$, so it is open (lemma \ref{morphisme fini sur meme dimension est ouvert}). So, $g(C_V)$ is an open that contains $g(x,t) = \eta_{0,1}$, so for all neighborhood $V$ of $(x,t)$, there exists $(y,z) \in V$ such that $|(f_y)'(z)| > 1$ and $f_y(z) = z$ and $(x,t)$ is not persistently indifferent.

Let us assume that $g(x,t) \neq \eta_{0,1}$, then there exists $U$ a neighborhood of $g(x,t)$ such that $U \subset \overline{D^{an}(0,1)} \backslash D^{an}(0,1)$ and the open $g^{-1}(U)$ is a neighborhood of $(x,t)$ wich makes $(x,t)$ persistently indifferent.  
\end{proof}

In the definition \ref{Definition of persistently indifferent}, we work with $X$ not necessarily of dimension 1.

\begin{lem}\label{Unstably and not persistently indifferent is the same}
Let $K$ be algebraically closed, complete, valued field and $X$ be a $K$-analytic space. Then, for any point $(x,t) \in X\times_K \p^{1,an}_K$, 
\begin{center}
$(x,t)$ is persistently indifferent if and only if $g(x,t) \neq \eta_{0,1}$
\end{center}
where $g$ is defined in the lemma \ref{Unstably and not persistently indifferent is the same in dimension one}.
\end{lem}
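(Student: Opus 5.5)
We follow the two-step scheme of Lemma \ref{Unstably and not persistently indifferent is the same in dimension one}, now allowing $X$ to be of arbitrary dimension. As there, we replace $f$ by $f^{\circ n}$, shrink to an open $V$ on which $C_V=C^n_V$ is defined (Definition \ref{Definition of the periodic point}), and consider the analytic map
\[
g=\pr_{\p^{1,an}_K}\circ f' : C_V \rightarrow \p^{1,an}_K .
\]
This map records the multiplier $(f^n_\lambda)'(y)$ at each type I periodic point $(\lambda,y)\in C_V$. The indifferent points are exactly those with $g(\lambda,y)\in \overline{D^{an}(0,1)}\backslash D^{an}(0,1)$.

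\textbf{Step 1: $g(x,t)\neq\eta_{0,1}$ implies persistently indifferent.} This direction is the easy one and is identical to the one-dimensional case. The point $(x,t)$ is indifferent, so $g(x,t)$ lies in the closed set $\{|T|=1\}$. Since $g(x,t)\neq \eta_{0,1}$, it lies in the open residue class
\[
\{z:\ |T(z)-a|<1\}\subset\{|T|=1\}
\]
for some $a\in K$ with $|a|=1$. Such classes are exactly the connected components of $\{|T|=1\}\setminus\{\eta_{0,1}\}$, and this is where we use that $K$ is algebraically closed. The preimage under $g$ of this class is an open neighbourhood of $(x,t)$ in $C_V$, hence of the form $W\cap C_V$ for some open $W\subset V$. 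Every type I periodic point of period $n$ lying in $W$ then has multiplier of absolute value $1$.

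\textbf{Step 2: $g(x,t)=\eta_{0,1}$ implies not persistently indifferent.} Here we must produce periodic points near $(x,t)$ with $|(f^n_\lambda)'(y)|\neq 1$. We use the maximum modulus principle (Theorem \ref{maximum modulus}), which applies to $C_V$ at $(x,t)$ by the proposition on interiors preceding the definition of a space without boundary. Suppose, for contradiction, that on a small connected affinoid neighbourhood $Z$ of $(x,t)$ in $C_V$ with $\Int(Z)=Z$, every periodic point satisfies $|(f^n)'|=1$.

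The naive route fails. The function $h=(f^n)'$ is analytic on $Z$ and $|h|=1$ everywhere on $Z$, but this gives no contradiction on its own: a non-constant $h$ can still have constant absolute value.

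So we use the reduction instead. The point $g(x,t)=\eta_{0,1}$ means that the image $\tilde h$ of $h$ in $\widetilde{\mathcal{H}(x,t)}_1$ is transcendental over $\tilde K$; otherwise $h(x,t)$ would sit in some residue class $\{|T-a|<1\}$. We then use Temkin's reduction $\widetilde{Z_{(x,t)}}$, which equals $\p_{\widetilde{\mathcal{H}(x,t)}}$ by Proposition \ref{reduction is all of p when the point is in the interior}. Since $\tilde h$ is transcendental over $\tilde K$, there is a graded valuation ring $\mathcal{O}\in\p_{\widetilde{\mathcal{H}(x,t)}}$ that does not contain $\tilde h$, or that contains $\tilde h$ in its maximal ideal. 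This is the same argument as in Proposition \ref{elements dans tous les anneaux de valuation sont les inversibles}, applied to the subring $\tilde K[\tilde h]$ or to $\tilde K[\tilde h^{-1}]$.

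By Lemma \ref{lemma Temkin}, this gives
\[
\widetilde{Z_{(x,t)}\{h\}} \neq \widetilde{Z_{(x,t)}} \quad\text{or}\quad \widetilde{Z_{(x,t)}\{h^{-1}\}}\neq \widetilde{Z_{(x,t)}} .
\]
Temkin's Theorem 4.5 then says that the germ $Z_{(x,t)}\{h^{\pm1}\}$ is a proper subdomain of $Z_{(x,t)}$. Hence arbitrarily close to $(x,t)$ there are points with $|h|>1$ or with $|h|<1$, contradicting $|h|=1$ on $Z$.

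An alternative is to prove that $g$ is open at $(x,t)$, as in the curve case. One would show that $g$ is non-constant on every irreducible component through $(x,t)$, restrict to a curve in $C_V$ through $(x,t)$ on which $g$ is non-constant, and then apply Ducros' finiteness result and Lemma \ref{morphisme fini sur meme dimension est ouvert} to that curve.

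\textbf{Main obstacle.} In either route, the difficulty is the step that turns the transcendence of $\tilde h$, or the non-constancy of $g$ near $(x,t)$, into actual points nearby with $|h|\neq1$. The Temkin-reduction argument above is the one I would try first, since it fits the tools already developed in the paper.
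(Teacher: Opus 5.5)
Your Step 1 is the same as the paper's. Your Step 2 is correct, but it takes a different route.

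\textbf{The paper's route.} The paper argues on a whole neighbourhood $U$ of $(x,t)$ in $C_V$. By Corollaire 3.5.7 of \cite{DucrosStructureCourbeAnalytique}, the fibre $g^{-1}(\eta_{0,1})$ has dimension smaller than $U$, so $g(U)$ contains some $\eta_{a,r}\neq\eta_{0,1}$. If $\max(|a|,r)>1$, a repelling point has been found. Otherwise $|a|\le 1$ and $r<1$, so $|g^*(T-a)|$ equals $1$ at $(x,t)$ without being constant on $U$. Theorem \ref{maximum modulus} then gives a point of $U$ where $|g^*(T)|>1$.

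\textbf{Your route.} You bypass both the fibre-dimension result and the maximum modulus theorem, and work directly with Temkin's reduction of the germ. The condition $g(x,t)=\eta_{0,1}$ says exactly that $\tilde h$ is transcendental over $\widetilde K$. Hence some graded valuation ring in $\p_{\widetilde{\mathcal{H}(x,t)}}=\widetilde{(C_V)_{(x,t)}}$ does not contain $\tilde h$. Temkin's Lemma 4.4 and Theorem 4.5 then give points arbitrarily close to $(x,t)$ with $|h|>1$, and likewise points with $|h|<1$. In effect you rerun the proof of Theorem \ref{maximum modulus}, using the transcendence of $\tilde h$ where that proof uses Proposition \ref{elements dans tous les anneaux de valuation sont les inversibles}.

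\textbf{What each buys.} Your version is purely local. It needs no connectedness and no point $\eta_{a,r}$ in $g(U)$, it yields attracting as well as repelling points, and it makes the dichotomy transparent: $g(x,t)\neq\eta_{0,1}$ exactly when $\tilde h\in\widetilde K$. The paper's version treats openness and the maximum principle as black boxes. It would therefore carry over to any setting where such a principle is available (for instance via \cite{GublerRabinoffPSHFunctions}) without graded reduction.

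\textbf{One small correction.} Do not ask for an affinoid $Z$ with $\Int(Z)=Z$. A $K$-affinoid space with empty boundary is finite over $K$ (Corollary 2.5.13 of \cite{BerkovichLivre}), so no such $Z$ exists in positive dimension; the statement of Theorem \ref{maximum modulus} has the same slip. Your argument only uses $(x,t)\in\Int(Z)$ for some affinoid neighbourhood $Z$ of $(x,t)$ in $C_V$. That follows from the standing assumption that $X$ has no boundary, together with the proposition on $\Int(C^n_V)$.

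That assumption is genuinely needed. Take $X=\{|\lambda|=1\}$, the family $f_\lambda(z)=\lambda z+z^2$, and the fixed point $0$ above the Gauss point of $X$. This point is persistently indifferent, yet $g=\eta_{0,1}$ there.
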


\begin{proof}
As in lemma \ref{Unstably and not persistently indifferent is the same in dimension one}, if $g(x,t)\neq \eta_{0,1}, (x,t)$ is persistently indifferent.

Assume that $g(x,t) = \eta_{0,1}$ and let $U$ be a neighborhood of $(x,t)$ in $C_V$. By the Corollaire 3.5.7 of Ducros \cite{DucrosStructureCourbeAnalytique} the fiber $g^{-1}(\eta_{0,1})$ is of dimension strictly smaller than the dimension of $U$, hence there exists $\eta_{a,r} \neq \eta_{0,1} \in g(U)$. 

If $\max(|a|, r) >1$, then $(x,t)$ is not persistently indifferent through $U$ and we are done. Let us now assume the contrary, hence $|a|\leq 1$ and $r<1$. So there exists $(y,z)\in U$ such that $|g(y,z)(T-a)| = r<1$ and so $g^*(T-a) \in \mathcal{O}(U)$ is not constant in 	absolute value. So by the theorem \ref{maximum modulus}, there exists $(l, w) \in U$ with $|g(l,w)(T-a)|>1$ and since $|a|\leq 1, |g(l,w)(T)|>1$. Hence, $(x,t)$ is not persistently indifferent through $U$, so it is not persistently indifferent. 
\end{proof}

We can now state the result about the continuous motion of the Julia set. It is a non-archimedean analogous of Mané-Sad-Sullivan's theorem.

\begin{thm}\label{JuliaSetMovesContinuouslyNonArchi}
Let $k$ be a non-archimedean complete valued field and $X$ a $k$-analytic space with no boundary. Let $f$ be an analytic family of rational maps and $x \in X$. Let us assume that $f_x$ has type I repelling points. Then the following are equivalent :
\begin{itemize}
    \item The Julia set moves continously on a neighborhood of $x$, 
    \item There exists $U \subset X$ open with $x\in U$ such that for all $y\in U$, every point of $f_y$ is attracting, repelling or persistently indifferent. 
\end{itemize}
\end{thm}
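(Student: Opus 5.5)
We prove the two directions separately, writing $\mathcal{J} := \{(y,z) : z\in J_y\}\subset X\times_k\p^{1,an}_k$ for the total Julia set. Continuity at $y$ means two inclusions. The first is lower semicontinuity: every point of $J_y$ is a limit of points of $J_{y_i}$. The second is upper semicontinuity: limits of points of $J_{y_i}$ lying over $y$ belong to $J_y$.

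Direction (ii)$\Rightarrow$(i).

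\emph{Lower semicontinuity.} This should hold for free near $x$, using repelling type I points. Extending scalars to $\widehat{\overline{\mathcal{H}(y)}}$ is legitimate by Corollary \ref{Taking the Julia set over any extension does not change it} and Lemma \ref{Taking a field extension does not change the julia set}. Proposition \ref{repelling points of type I are dense in the Julia Set} then says type I repelling periodic points are dense in $J_y$, once $f_y$ has one. To propagate the hypothesis that $f_x$ has type I repelling points to nearby $y$, take a repelling type I point $(x,z)\in C^n_V$. The condition $|(f^n_\lambda)'(w)|>1$ is open on $C^n_V$. Since $C^n_V\to X$ is finite and open (Lemma \ref{morphisme fini sur meme dimension est ouvert}), every $y$ near $x$ has a nearby repelling periodic point.

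The same openness, applied to a repelling point $(y,z)$ of $f_y$, produces points $(y_i,z_i)\in C^n_V$ converging to $(y,z)$. These are repelling, hence in $J_{y_i}$. So every repelling point of $J_y$ lies in $\overline{\bigcup J_{y_i}}$. Since this closure is closed and repelling points are dense, taking images under the projections handles the non-algebraically closed residue fields.

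\emph{Upper semicontinuity.} This is where (ii) is used. Let $(y,z)$ be a limit of $(y_i,z_i)$ with $z_i\in J_{y_i}$, and suppose $z\notin J_y$. Since $J$ is the closure of repelling type I points, we may approximate: take $z_i$ repelling periodic of period $n_i$. The key step is to bound $n_i$, showing that near a Fatou point of $f_y$ no repelling cycles of unbounded period can accumulate.

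I would first try to argue via the classification of Fatou components (Benedetto): $z$ lies in an attracting basin or an indifferent component. In the attracting case, a neighbourhood $W$ with $f^m_\lambda(W)\Subset W$ persists for $\lambda$ near $y$, by continuity of $f$ on the compact total space. This forces the points of $W$ to lie in the Fatou set of $f_\lambda$, a contradiction. In the indifferent case the component eventually maps into a disc fixed by some iterate. Persistence of indifference (hypothesis (ii)), combined with the maximum modulus principle (Theorem \ref{maximum modulus}), should show that $|(f^n_\lambda)'|$ stays equal to $1$ on the whole relevant part of $C^n_V$. The argument mirrors Lemma \ref{Unstably and not persistently indifferent is the same}: an analytic function bounded by $1$ on a boundaryless space and equal to $1$ at a point has constant modulus. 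From this, the nearby disc is still mapped isometrically by $f_\lambda$, so it lies in the Fatou set of $f_\lambda$, again a contradiction.

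Direction (i)$\Rightarrow$(ii). This is the contrapositive. Suppose that near $x$ there is $y$ with a type I periodic point $t$ that is indifferent but not persistently indifferent. By Lemma \ref{Unstably and not persistently indifferent is the same}, after base change to an algebraically closed field, $g(y,t)=\eta_{0,1}$. The open-mapping argument of that lemma then gives points $(y_i,t_i)\to(y,t)$ with $|(f^n_{y_i})'(t_i)|>1$, so $t_i\in J_{y_i}$. It also gives points with multiplier $<1$, which lie in the Fatou set.

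Because indifferent type I points are Fatou, $t\notin J_y$. So $(y,t)\in\overline{\bigcup J_{y_i}}\setminus J_y$, and continuity fails at $y$. Since $y$ is arbitrarily close to $x$, continuity fails on every neighbourhood of $x$.

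The main obstacle is the indifferent Fatou case in upper semicontinuity. There, one has to turn the pointwise "persistently indifferent" condition into a uniform statement about nearby maps using only the maximum modulus principle. This must also deal with non-algebraically closed residue fields and with cycles of unbounded period.
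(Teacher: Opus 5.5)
Your lower semicontinuity argument is fine, and your (i)$\Rightarrow$(ii) is fine in substance. The upper semicontinuity half of (ii)$\Rightarrow$(i), however, does not go through.

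\textbf{The dichotomy fails.} The split ``attracting basin or indifferent component'' is false in the generality of the theorem, because wandering Fatou components occur. The paper's own example shows this. Take $f_x(z)=T(x)z^3+z^2$ over trivially valued $\C$ (Lemma \ref{Tz^3 + Z^2 moves continuously}). Then $\mathcal{H}(x)$ has residue field $\C$, and the Gauss point is a repelling fixed point with reduction $z^2$. Moreover $f_x^m$ maps $D^{an}(2,1)$ bijectively onto $D^{an}(2^{2^m},1)$, and these residue classes are pairwise distinct. So $D^{an}(2,1)$ is a Fatou component that never returns, and it is neither an attracting basin nor an indifferent domain.

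\textbf{The indifferent case is unproved.} Knowing that $|(f^n_\lambda)'|=1$ at the continuations of the cycle gives no control of $f^n_\lambda$ on the rest of the domain containing $z$. That constancy already follows from (ii) directly, so the maximum modulus principle adds nothing there. The domain need not be a disc, and $z$ need not be periodic or of type I. That this domain is still mapped into itself isometrically for every $\lambda$ near $y$, including non-rigid $\lambda$ with larger residue fields, is exactly what would have to be proved.

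\textbf{The announced key step is missing.} You say the key step is bounding the periods $n_i$, but this is never carried out.

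\textbf{The missing idea.} The paper never works on the Fatou side.
\begin{itemize}
\item Hypothesis (ii), together with the openness of $|(f^n)'|>1$ and $|(f^n)'|<1$ on $C^n_{\mathcal U}$, makes the type of a periodic point locally constant. Hence it is constant on each connected component of $C^n_{\mathcal U}$.
\item So a repelling point $z_i$ of $f_{y_i}$, of any period $n_i$, lies on a component whose points over $y$ are repelling points of $f_y$, hence in $J_y$.
\item One then checks that $(y,z)$ is a cluster point of these fibre points. The paper does this by a separation argument, and this step needs care because the $n_i$ vary.
\item Closedness of $J_y$ then gives $z\in J_y$.
\end{itemize}
This needs neither a period bound nor a classification of Fatou components.

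\textbf{A smaller point on (i)$\Rightarrow$(ii).} Your route through Lemma \ref{Unstably and not persistently indifferent is the same} requires a base change to an algebraically closed field. You do not justify that non-persistence survives that base change. You also cannot cite the field-independence results of Subsection \ref{subsection continuity independant of the field}, since they are derived from this very theorem. The paper's direct argument is simpler. Continuity gives a neighbourhood of $(y,t)$ in $C^n_V$ containing no Julia points. Hence $|(f^n)'|\le 1$ there, with equality at $(y,t)$. Theorem \ref{maximum modulus} then forces $|(f^n)'|\equiv 1$.
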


\begin{proof}
    The proof is based on the presentation by McMullen (\cite{McMullenComplexDynamicsAndRenormalization}) of Mané-Sad-Sullivan's theorem.
    
    $\implies$ Let us assume that the Julia set depends continously on $\lambda$ on a neighborhood $U$ of $x$.
    Let $y\in U$ and let $z \in \p^{1,an}_{\mathcal{H}(y)}$ be an indifferent type I point of period $n$. 
    
   Since $z \notin J_y$ and the Julia sets moves continuously, one can find a neighborhhod $V$ of $(y,z)$ such that for all $(w, t) \in V, t \notin J_w$. Let us now consider the closed analytic space $C^n_V$ where $C_V^n$ is defined in \ref{Definition of the periodic point}. We may restrict $U$ such that $(y,z) \in C^n_V$ and for all $(w,t) \in C^n_V, |(f^n_w)'(t)|\leq 1$. So we can apply theorem \ref{maximum modulus} to see that for all $(w,t) \in C^n_V, |(f^n_w)'(t)| = 1$ and $z$ is a persistent indifferent point.
   
   $\Longleftarrow$ Let us assume that for all $y$ sufficiently close to $x$, every point of $f_y$ is attracting, repelling or persistently indifferent. Then we claim that there exists a neighborhood $V$ of $x$ such that each cycle of repelling, indifferent, attracting points remain repelling, indifferent, attracting throughout $V$. 
   
  Let $U$ be the neighborhood of $x$ where the hypothesis is verified. We may restrain $U$ such that $f_x= \frac{a_d(x)z^d+ \cdots + a_0(x)}{b_d(x)z^d+ \cdots + b_0(x)}$ with $a_i(x), b_i(x) \in \mathcal{O}(U)$. And we may assume that for all $y \in U, \infty_y \notin J_y$. Let $\mathcal{U} = U\times_k \ana^{1,an}_k$ and let $n \geq 1$. We claim that on each connected component of $C^n_\mathcal{U}$ all the periodic cycle remain repelling, indifferent or attracting.
  
Let $C$ be a connected component of $C^n_\mathcal{U}$ and $(y,t) \in C$. There is a neighborhhod $U_{y,t}$ of $(y,t)$ in $C$ such that the periodic points remain repelling, indifferent or attracting. Since $C = \bigcup_{(y,t)\in C} U_{y,t}$ they remain repelling, indifferent or attracting througout the connected component. 
	   
  Hence $V = U$ satisfies what we want.   
   
   \bigskip
   Let $U$ be a neighborhood of $x$ such that forall $y \in U, f_y$ has type I repelling points. It exists since $f_x$ has at least one type I repelling point and it is an open condition. We may suppose that throughout $U$ each cycle of attracting, repelling or indifferent remain attracting, repelling or indifferent .  
   
Let $(x_i)_{i\in I} \in U^I$ be a family of points with $x$ in the closure of this family. Let us denote $(x, J_x) := \lbrace (x,z) | z \in J_x\rbrace$ and $L_x := \overline{\lbrace (y, z) \in X\times_k \p^{1,an}_k, \exists i\in I, x_i = y~\mathrm{and}~z \in J_y\rbrace} \cap \pr^{-1}_X(x)$. 
   
   To show that $(x,J_x) \subset L_x$, it suffices to show that $R_x = \lbrace (x,z) | z$ is a repelling point of $f_x\rbrace \subset L_x$. Let $z$ be a repelling point of $f_x$. Since the points remain repelling in the connected component of $C^n_{U\times_k\ana^{1,an}_k}$ we may find $(x_i, z_i)$ with $z_i$ a repelling point and $(x_i, z_i) \rightarrow (x,z)$. Hence, $(x,z) \in L_x$. Then, $R_x \subset L_x$.
   
   Let us now show that $L_x \subset J_x$. Since the repelling, indifferent, attracting points remain repelling, indifferent, attracting throughout $U$, every family $(x_i, z_i)$ with $z_i$ a repelling point of $f_{x_i}$ gives a repelling point $z_i^{f_{x_i}}$ of $f_x$ of the same period. It is a point of the connected component of $C^n_\mathcal{U}$ in $\pr^{-1}_X(x)$. 
   
   Since $L_x =  \overline{\lbrace (y, z) \in X\times_k \p^{1,an}_k, \exists i\in I, x_i = y~\mathrm{and}~z~\mathrm{is~repelling} \rbrace} \cap \pr^{-1}_X(x)$, it is enough to verify that if $(x_i, z_i) \rightarrow (x, z)$ with $z_i$ a repelling point of $f_{x_i}$, then $z \in J_x$. For $i$ big enough, each $(x_i, z_i) \in U$ and so it gives a repelling point $z_i^{f_{x_i}}$ of $f_x$. It suffices to verify that $(x,z)$ is a cluster point of $(x, z_i^{f_{x_i}})$.  
   
   Assume that it is not true, then there exists an open $V$ of $(x,z)$ such that for all big enough $i, (x, z_i^{f_{x_i}}) \notin V$. We might reduce $V$ such that there exists $V'$ open with $V \cap V' = \emptyset$ and all $(x, z_i^{f_{x_i}})$ are in $V'$. Then all $(x_i, z_i)$ are in $V'$ for $i$ big enough witch is a contradiction to the fact that $(x_i, z_i) \rightarrow (x, z)$.
\end{proof}

Furthermore, if the rational map $f_x$ has a finite number of attracting points, then we have an other equivalent proposition :

\begin{prop}\label{JuliaSetsMovesContinuouslyIfAttractingBounded}
Let $k$ be a non-archimedean complete valued field and $X$ a $k$-analytic space with no boundary. Let $f$ be an analytic family of rational maps of degree $d \geq 2$. Let $x\in X$ such that $f_x$ has type I repelling points. 

Furthermore, let us assume that $f_x$ has a finite number of attracting points.

Then the following are equivalent :
\begin{enumerate}
    \item The Julia set depends continously on $\lambda$ on a neighborhood of $x$, 
    \item The number of attracting cycles is constant on a neighborhood of $x$,
    \item The maximum period of an attracting cycle of $f_\lambda$ is locally bounded at $x$.
\end{enumerate}
\end{prop}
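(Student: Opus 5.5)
The plan is to prove $1 \Rightarrow 2 \Rightarrow 3 \Rightarrow 1$, routing each step through Theorem \ref{JuliaSetMovesContinuouslyNonArchi}. That theorem reduces condition $1$ to the statement that near $x$ every periodic point is attracting, repelling or persistently indifferent. As in its proof, we may then use the closed analytic sets $C^n_{\mathcal{U}}$ of Definition \ref{Definition of the periodic point}, on whose connected components the type (attracting, indifferent, repelling) of a cycle is constant.

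For $1\Rightarrow 2$, take the neighborhood $V$ given by the proof of Theorem \ref{JuliaSetMovesContinuouslyNonArchi}, on which each cycle keeps its type along connected components of $C^n_\mathcal{U}$. Each attracting cycle of $f_x$ of period $n$ lies in a component $C$ of $C^n_\mathcal{U}$. The projection $C\to U$ is finite with finite fibres, hence open by Lemma \ref{morphisme fini sur meme dimension est ouvert}, so the cycle persists as an attracting cycle over a neighborhood of $x$. Since $f_x$ has only finitely many attracting cycles, we can intersect these finitely many neighborhoods. Conversely, an attracting cycle of $f_y$ with $y$ near $x$ lies in a component of $C^n_\mathcal{U}$ whose fibre over $x$ is non-empty, after shrinking $U$ to be connected and using properness of $C^n_\mathcal{U}\to U$ over the fibres. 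Its specialization at $x$ is then attracting, because type is constant on components. Counting cycles with the finite degree of $C\to U$ shows that the number is locally constant. The implication $2\Rightarrow 3$ is immediate: finitely many cycles, all continuing from cycles of $f_x$ whose periods are bounded, as just shown.

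For $3\Rightarrow 1$, let $N$ bound the periods of attracting cycles on a neighborhood $U$ of $x$. By Theorem \ref{JuliaSetMovesContinuouslyNonArchi} it suffices to show that every indifferent type I periodic point $(y,z)$ with $y\in U$ is persistently indifferent. Suppose it is not. Passing to $\widehat{\overline{\mathcal{H}(y)}}$ is harmless by Corollary \ref{Taking the Julia set over any extension does not change it}. By Lemma \ref{Unstably and not persistently indifferent is the same} we then have $g(y,z)=\eta_{0,1}$, where $g$ is the multiplier map on $C^n_V$. The map $g$ is open near $(y,z)$ (the fibre has small dimension, as in the proof of Lemma \ref{Unstably and not persistently indifferent is the same}), and the maximum modulus principle Theorem \ref{maximum modulus} applies because $X$ has no boundary. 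Together these give points $(y',z')$ arbitrarily close to $(y,z)$ where the cycle of period $n$ is attracting. This alone does not contradict the bound $N$.

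The main obstacle is to upgrade this to attracting cycles of arbitrarily large period, mimicking the complex argument in which a multiplier passing through a root of unity creates bifurcating cycles of period $nq$. I would first try to use Proposition \ref{2 attracting points implies infinitely many} and Proposition \ref{repelling in H_k implies infinitely many}. Near an attracting point that is a perturbation of an indifferent one, together with another non-repelling point, $f_{y'}$ acquires a repelling type II periodic point, and through it infinitely many non-repelling type I cycles of unbounded period. The remaining task is to show that some of these become attracting for nearby parameters. Here I expect to use the multiplier map $g$ on $C^{m}_V$ for large $m$, which is again open and subject to maximum modulus. An alternative route to a contradiction is the finiteness hypothesis at $x$ combined with Theorem \ref{NumberAttractingPointsBounded}-style counting.
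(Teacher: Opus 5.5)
Your chain $1\Rightarrow 2\Rightarrow 3$ is essentially the paper's. One fix is needed in $2\Rightarrow 3$: the phrase ``as just shown'' points back to the specialization of attracting cycles of $f_y$ to attracting cycles of $f_x$, and that step used hypothesis $1$. Instead, use only persistence: the finitely many attracting cycles of $f_x$ survive near $x$ because $|(f^n)'|<1$ is an open condition, and constancy of the count then says these continuations are all the attracting cycles.

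The genuine gap is in $3\Rightarrow 1$. You stop at the step that carries all the content (``show that some of these become attracting for nearby parameters''), and the mechanism you propose cannot supply it.
\begin{itemize}
\item The root-of-unity bifurcation fails ultrametrically. Suppose the multiplier of an $n$-cycle equals a root of unity $\zeta$ of order $q$. The cycles of period $nq$ born there have multiplier close to $(f^{nq})'=\zeta^q=1$, hence of absolute value exactly $1$. They are indifferent, not attracting.
\item The cycles produced by Propositions \ref{2 attracting points implies infinitely many} and \ref{repelling in H_k implies infinitely many} lie in residue disks mapped into themselves, so they are Fatou cycles. Nothing makes them attracting under perturbation, short of knowing they are not persistently indifferent, which is what you are trying to prove.
\item Theorem \ref{NumberAttractingPointsBounded} needs a residue-characteristic hypothesis that the statement does not assume.
\item The base change to $\widehat{\overline{\mathcal{H}(y)}}$ is not justified by Corollary \ref{Taking the Julia set over any extension does not change it}, which concerns a single map rather than a family. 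It is also unnecessary.
\end{itemize}

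The missing idea is to split by period and to exploit repelling cycles rather than attracting ones. Suppose all attracting cycles over $U$ have period $<N$, and let $(y,t)$ be indifferent of period $n$ but not persistently indifferent. Apply Theorem \ref{maximum modulus} to both $(f^n)'$ and $1/(f^n)'$ on a connected neighborhood of $(y,t)$ in $C^n_{\mathcal V}$ (this is available because there is no boundary). This yields both attracting and repelling points of period $n$ arbitrarily close to $(y,t)$.

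So for $n\geq N$ your observation does contradict hypothesis $3$, contrary to what you wrote. Every indifferent point of period $\geq N$ is persistently indifferent. As in the proof of Theorem \ref{JuliaSetMovesContinuouslyNonArchi}, repelling cycles of period $\geq N$ therefore keep their type along their components of $C^n$ and continue to repelling cycles of $f_y$.

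For $n<N$, the paper proceeds as follows:
\begin{itemize}
\item Take the repelling points $(y_i,t_i)\to(y,t)$ of period dividing $n$ given by maximum modulus.
\item Approximate each $(y_i,t_i)$ by repelling points of period $\geq N$. This is possible because $J_{y_i}$ is perfect and type I repelling points are dense in it (Proposition \ref{repelling points of type I are dense in the Julia Set}).
\item Continue these to repelling points of $f_y$, and conclude that $(y,t)$ is a cluster point of repelling points of $f_y$.
\item Hence $(y,t)\in J_y$, which is impossible because type I indifferent points lie in the Fatou set.
\end{itemize}
Transferring the problem from small periods to large periods, where the type is rigid, is exactly what your proposal lacks.
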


\begin{proof}
$ 1 \implies 2$ By theorem \ref{JuliaSetMovesContinuouslyNonArchi}, there is a neighborhood $U$ of $x$ such that the attracting, indifferent and repelling cycle periodic remain attracting, indifferent and repelling cycle periodic throughout $U$. Since $f_x$ has a finite number of attracting points, the number of attracting cycles is constant on a neighborhood.

$2 \implies 3$ Let $U$ be a neighborhood of $x$ such that for all $y\in U$, $f_y$ has $l$ attracting cycles. Since each attracting cycle of $x$ gives an attracting cycle of $y$, with period smaller or equal, for $y$ close enough to $x$. So, on a neighborhood $V$ of $x$ the maximum period of an attracting cycle is bounded on $V$. 

$3 \implies 1$ Let us assume that it is bounded by $N-1$. Let $n \geq N, y \in V$ and $(y,t)$ an indifferent periodic point of $f_y$ of period $n$. It suffices to show that $(y,t)$ is persistently indifferent. Let $\mathcal{V}$ a neighborhood of $(y,t)$ and let $\mathcal{C}$ be the connected component of $C^n_\mathcal{V}$ that contains $(y,t)$. Since there is no attracting cycle of period $n$ in $V$, the cycle remains indifferent on $C$, otherwise it would be a contradiction with the theorem \ref{maximum modulus} applied to $((f^{\circ n})')^{-1}$. So, $(y,t)$ is persistently indifferent. 

Let now $n$  be smaller than $N$ and $(y,t)$ an indifferent periodic point of period $n$. If $(y,t)$ is not persistently indifferent, then on any neighborhood of $(y,t)$ they are attracting or repelling point of period dividing $n$. If there was only attracting and indifferent points of period dividing $n$ in one of the neighborhood, it would be a contradiction with the maximum modulus principle (see theorem \ref{maximum modulus}).

Let us assume by contradiction that there exists $(y_i, t_i)$ repelling point of period dividing $n$ with $(y_i, t_i) \rightarrow (y,t)$. Since the Julia set is perfect (see for exemple \cite{BenedettoLivre}, Theorem 8.15), and the repelling points of type I are dense in the Julia set if they exist, which is the case here, there exists $(y_i, t_i^j) \rightarrow (y_i, t_i)$ for all $i$, with $(y_i, t_i^j)$ repelling points of period $l_i^j \geq N$. Since, the repelling points of period $\geq N$ remain repelling, for all $j, (y_i, t_i^j)$ gives a repelling point $(y,t^j)$ and $(y,t)$ will be a cluster point of $(y,t^j)$. Hence, $(y,t) \in J_y$ and it is a repelling point, which is a contradiction. 

So, $(y,t)$ is peristently indifferent. 

%Furthermore, since the number of attracting cycles is constant on a neighborhood of $x$, we may reduce the neighborhood $U$ such that if $y\in U$ and $(y,t)$ is an attracting cycle of period $m$ then, for any point in $C^m_{\mathcal{V}}$ is an attracting cycle, where $\mathcal{V}$ is a neighborhood of $(y,t)$. 
%
%Hence, for any periodic indifferent point of period $\leq N$ of $y\in U$, it cannot become attracting on a neighborhood, and so it remains persistently indifferent.
\end{proof}

\begin{rem}
By theorem \ref{NumberAttractingPointsBounded}, if the residual characteristic of $k$ is $0$ or is strictly smaller than the degree $d$ of the family of rational maps, then for all $x\in X, f_x$ has a finite number of attracting points $(\leq 2d-2)$ and the proposition \ref{JuliaSetsMovesContinuouslyIfAttractingBounded} applies.
\end{rem}

\subsection{Some examples of continuous families}
Let us study some examples.

\begin{lem}\label{z^2+ T has only repelling points}
Let us consider $k$ a non-archimedean, complete, algebraically closed, field with residue characteristic $\neq 2$. Let us denote $X = \ana^{1,an}_k \backslash \overline{D^{an}(0,1)}$. Let $f : X \times_k \p^{1, an}_k \rightarrow X \times_k \p^{1, an}_k$ be the analytic family of rational maps of degree $2$ such that for all $x\in X, f_x : z \mapsto z^2 + T(x)$, where $T(x)$ denotes the image of $T$ the coordinate of $\ana^{1,an}_k$ in the residual field $\mathcal{H}(x)$. 
Then, the Julia sets moves continuously on $X$.
\end{lem}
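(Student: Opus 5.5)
The plan is to apply Theorem \ref{JuliaSetMovesContinuouslyNonArchi} at every point $x\in X$. We first check its hypotheses. The space $X=\ana^{1,an}_k\backslash\overline{D^{an}(0,1)}$ is open in $\ana^{1,an}_k$, so it has no boundary. Next we show that for every $x\in X$ all type I periodic points of $f_x$ in $\p^{1,an}_{\widehat{\overline{\mathcal{H}(x)}}}$ other than $\infty$ are repelling. This gives both remaining requirements of the theorem. First, $f_x$ has type I repelling points: $z^2-z+T(x)$ has roots, and they are fixed points. Second, the condition ``every periodic point is attracting, repelling or persistently indifferent'' holds, since there are no indifferent points at all and $\infty$ is superattracting. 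Continuity of the Julia set on a neighborhood of each $x$ then gives continuity on all of $X$.

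The key computation is an estimate on the type I periodic points. Write $c=T(x)\in\mathcal{H}(x)$, so $|c|=|T(x)|>1$. Work over $K=\widehat{\overline{\mathcal{H}(x)}}$; periodic points and multipliers are insensitive to this extension, by Corollary \ref{Taking the Julia set over any extension does not change it}. For $|z|>|c|^{1/2}$ we get $|f_x(z)|=|z|^2$, so the orbit escapes to $\infty$. For $|z|<|c|^{1/2}$ we get $|f_x(z)|=|c|>|c|^{1/2}$, and the orbit escapes at the next step. Hence every finite periodic point satisfies $|z|=|c|^{1/2}$. Along such an orbit every iterate $z_j$ also has $|z_j|=|c|^{1/2}$, and since the residue characteristic is not $2$, $|f_x'(z_j)|=|2z_j|=|c|^{1/2}>1$. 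Therefore the multiplier of a cycle of period $n$ has absolute value $|c|^{n/2}>1$, and the cycle is repelling. (Note $|c|>1$ is needed so the multiplier is $>1$ rather than $=1$.)

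The point I would be most careful about is that Theorem \ref{JuliaSetMovesContinuouslyNonArchi} refers to periodic points of $f_y$ for $y$ in a neighborhood. One must make sure that ``every point of $f_y$'' is read over the algebraic closure of $\mathcal{H}(y)$; the estimate above is uniform in $y\in X$, so this causes no difficulty. It also remains to confirm that $\infty$ is superattracting, hence attracting, so it poses no issue. With all hypotheses of Theorem \ref{JuliaSetMovesContinuouslyNonArchi} verified at every $x\in X$, the Julia set moves continuously on a neighborhood of each point, and therefore on $X$.
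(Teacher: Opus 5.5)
Your proof is correct, and its core matches the paper's. Every finite periodic point of $f_x$ lies on $|z|=|T(x)|^{1/2}$. By the chain rule and $|2|=1$, a cycle of period $n$ then has multiplier of modulus $|T(x)|^{n/2}>1$. The absence of indifferent points is then fed into a general continuity criterion. Two steps differ from the paper.

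\emph{Locating the periodic points.} The paper expands $f^{\circ d}(y)=y^{2^d}+\sum_k a_{2k}y^{2k}$ and uses the sizes of the $a_{2k}$, which is a Newton polygon argument in disguise. Your ultrametric escape argument ($|f_x(z)|=|z|^2$ if $|z|>|c|^{1/2}$, and $|f_x(z)|=|c|$ if $|z|<|c|^{1/2}$) needs no information about the coefficients of the iterates. This is a genuine simplification. The equality $|a_{2k}|=|T(x)|^{2^{d-1}-k}$ asserted in the paper can fail in residue characteristic $3$; for instance $a_4=6c^2+2c$ for $f^{\circ 3}$. The paper's route really only uses the upper bound together with exactness of the two extreme coefficients.

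\emph{The concluding step.} The paper invokes Lemma \ref{If f has no indifferent point, then Julia sets moves continuously}, which gives continuity towards each $f_x$ and needs the residue characteristic to be $0$ or $>d=2$. You instead apply Theorem \ref{JuliaSetMovesContinuouslyNonArchi} directly. That theorem requires $X$ to be without boundary, which you correctly justify since $X$ is open in $\ana^{1,an}_k$. In your route the residue-characteristic hypothesis is then used only to get $|2|=1$.

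A minor point: computing periodic points and multipliers over $\widehat{\overline{\mathcal{H}(x)}}$ is elementary, and it is not what Corollary \ref{Taking the Julia set over any extension does not change it} states. Nothing in your argument depends on that citation.
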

\begin{proof}

Let $z$ be a periodic point of $f_x$ in the algebraic closure of $\h(f_x)$. Then, it is enough to show that $z$ is not an indifferent point. 

First, let us show that if there exists $d\geq 1$ with $f^d(z) = z$, then $|z| = \sqrt{|T(x)|}$. 

One has that $f^{\circ d}(y)$ is a polynomial in $y$ with $f^{\circ d}(y) = y^{2^d} + a_{2^d -2}y^{2^d -2} + a_{2^d -4}y^{2^d -4} + \cdots + a_0$ where $a_{2k}$ is a polynomial in $T(x)$ with integers coefficient and $|a_{2k}|= |T(x)|^{2^{d-1} - k}$.

And necessarily $|z| = \sqrt{|T(x)|}$.

Furthermore $(f^{\circ d})' = ((f^{\circ d-1})^2 + T(x))' = 2 f^{\circ d-1} (f^{\circ d-1})'$. 

By induction, $(f^{\circ d})' = 2^d f^{\circ d-1}f^{\circ d-2}\cdots f\times \mathrm{Id}$. So $(f^{\circ d})'(z) = 2^d z f(z) \cdots f^{\circ d-1}(z)$ and all $z, f(z), \cdots, f^{\circ d-1}(z)$ are periodic points, so they all have absolute value $\sqrt{|T(x)|}> 1$. Hence, 
\begin{center}
$|(f^{\circ d})'(z)|>1$.
\end{center}

So all periodic points are repelling, except $\infty$ which is an attracting periodic point. Therefore, we can use lemma \ref{If f has no indifferent point, then Julia sets moves continuously} to conclude.

However the continuity does not extend to the point $\eta_{0,1} \in \ana^{1,an}_k$. Indeed, the Julia set of $f_{\eta_{0,1}}$ is only one point.
\end{proof}

\begin{lem}\label{Tz^3 + Z^2 moves continuously}
Let us consider $\C$ trivially valued and let $X = D^{an}(0,1)\backslash \lbrace 0 \rbrace$. So $X = \lbrace \eta_{0,r}, 0<r<1\rbrace$. Let $f : X \times_k \p^{1, an}_k \rightarrow X \times_k \p^{1, an}_k$ be the analytic family of rational maps of degree $3$ such that for all $x\in X, f_x : z \mapsto T(x)z^3 + z^2$. 

Then, the Julia sets moves continuously on $X$. 
\end{lem}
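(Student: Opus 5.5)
The plan follows the proof of Lemma \ref{z^2+ T has only repelling points}. I will show that for every $x\in X$ the only non-repelling periodic points of $f_x$ are the superattracting fixed point $\infty$ and the superattracting fixed point $0$. There are then no indifferent periodic points at all, so the second condition of Theorem \ref{JuliaSetMovesContinuouslyNonArchi} holds trivially on $X$. The lemma cited in the previous proof (no indifferent points implies continuous motion) then gives the conclusion. That lemma also needs the existence of type I repelling points, which will come out of the computation. Note that $X=D^{an}(0,1)\setminus\{0\}$ is an open subset of $\ana^{1,an}_\C$, hence without boundary. Write $t=T(x)\in\h(x)$ with $0<|t|=r<1$, and work over $K=\widehat{\overline{\h(x)}}$. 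The residue field of $K$ has characteristic $0$, and $|n|=1$ for every non-zero integer $n$.

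First I locate the periodic points by a Newton polygon argument. In the coordinate $w=tz$ the map is conjugate to $w\mapsto w^3+t^{-1}w^2\cdot t^{0}$. More cleanly, I use the dynamics directly. If $|z|<1$, then $|tz^3|<|z|^2$, so $|f_x(z)|=|z|^2$; hence every point of the open unit disc is attracted to $0$, and $0$ is the only periodic point there. If $|z|>1/r$, then $|tz^3|>|z|^2$, so $|f_x(z)|=r|z|^3>|z|$, and $z$ escapes to $\infty$. If $1\le |z|<1/r$, then $|f_x(z)|=|z|^2$ when $|z|>1$, so the orbit grows until it either leaves the annulus or lands on a boundary sphere. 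It follows that every periodic point other than $0$ and $\infty$ lies on one of the spheres $|z|=1$ or $|z|=1/r$. The sphere $|z|=1$ maps into $|f(z)|=1$, with reduction $\tilde z^2$, so it has periodic points, for instance $z=$ roots of $tz^2+z-1=0$ near $1$. This is the delicate case. The sphere $|z|=1/r$ contains the other fixed point $z\approx -1/t$.

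Next I compute multipliers on each sphere by the chain rule, $(f^n)'(z)=\prod_{j<n} f'(f^j z)$ with $f'(z)=z(3tz+2)$.

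\emph{Case $|z|=1$.} Here $|3tz+2|=1$, so $|f'(z)|=|z|=1$, and every periodic point on this sphere would be indifferent. This breaks the naive plan. I now expect the main obstacle to be exactly this orbit, lying near the Gauss point and coming from the good-reduction part $z^2$. I would therefore change strategy. For $|z|=1$ the cycles are indifferent, but I claim they are \emph{persistently} indifferent. On the whole family, any type I periodic point near $(x,z)$ has $|z'|=1$, because the spheres are separated in absolute value. Hence $|(f^n)'|=1$ in a neighbourhood, which is exactly Definition \ref{Definition of persistently indifferent}.

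\emph{Case $|z|=1/r$.} Here $|3tz+2|\le 1$. Near the fixed point $z\approx-1/t$ one has $3tz+2\approx -1$, so $|f'(z)|=1/r>1$ and the point is repelling. More generally, for a cycle on this sphere the images stay on the sphere, so $|tz|=1$. If $|3tz+2|$ were small, then $tz\equiv -2/3$ modulo the maximal ideal, and the image would satisfy $|f(z)|=|z|^2|tz+1|=|z|^2$, leaving the sphere. So each factor of the product has absolute value $1/r$, and the cycle is repelling. These points also supply the type I repelling points required by the theorem.

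With these cases settled, every periodic point of $f_y$ for $y\in X$ is attracting, repelling, or persistently indifferent, and Theorem \ref{JuliaSetMovesContinuouslyNonArchi} applies at every $x\in X$.
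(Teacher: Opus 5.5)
Your route, computing the dynamics of $f_x(z)=z^2(1+tz)$ circle by circle, is different from the paper's and can be made to work. As written, however, it rests on a false step: the claim that every periodic point other than $0,\infty$ lies on $|z|=1$ or $|z|=1/r$.

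An orbit that reaches the outer circle need not escape. On the residue class $|z+1/t|\le 1$ of the zero $-1/t$ one has $|f_x(z)|=r^{-1}|z+1/t|$, which can take any value $\le 1/r$, so the orbit can fall back into the annulus. Indeed $f_x$ maps $\overline{D}(-1/t,1)$ bijectively onto $\overline{D}(0,1/r)$, and acts like $z\mapsto z^2$ on the circles $1<|z|<1/r$. Pulling back along these branches gives, for every $m\ge 0$, a repelling cycle of $f_x$ meeting the circle $|z|=r^{-1/2^m}$. For $m=1$ this is a $2$-cycle with $|w|=1/r$ and $|f_x(w)|=r^{-1/2}$.

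So your sentence ``for a cycle on this sphere the images stay on the sphere'' is wrong. More seriously, the absolute values of repelling periodic points of $f_x$ itself accumulate at $1$. Hence no neighbourhood $\{c_1<|w|<c_2\}$ with $c_1<1<c_2$ is free of repelling cycles, and ``the spheres are separated in absolute value'' does not produce the neighbourhood required by Definition \ref{Definition of persistently indifferent}.

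Both of your conclusions survive with a repair.

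For repelling: a cycle meeting $|z|>1$ stays in $1<|z|\le 1/r$, since from $|z|<1$ orbits tend to $0$ and the circle $|z|=1$ is invariant. Each factor $|f'(z)|=|z|\,|3tz+2|$ equals $|z|>1$ in the open annulus. On the outer circle it equals $1/r$, because non-escape forces $|1+tz|\le r$ and hence $|3tz+2|=1$. So the cycle is repelling.

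For persistence there are two options. You can use that the period $n$ is fixed in the definition: a cycle of period dividing $n$, other than $\{0\}$ and not on the unit circle, has all its points with $|w|\ge |T(y)|^{-1/2^{n-1}}$, which is uniformly $>1$ for $y$ near $x$. Alternatively, and more cleanly, take the open set $\{|w-a|<1\}\subset X\times_\C\ana^{1,an}_\C$, where $a\in\C^*$ is the residue of $z$. All its points have $|w|=1$, so every type I periodic point in it, of any period, stays on the unit circle and is indifferent.

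The existence of $a$ uses that the residue field of $\widehat{\overline{\C((T))}}$ is $\C$. This is exactly the ingredient of the paper's proof. There one reduces, via Lemma \ref{Unstably and not persistently indifferent is the same}, to showing that the multiplier never projects to $\eta_{0,1}$. That is impossible because $\mathcal{H}(x,t)$ embeds in Puiseux series over $\C$. The paper's argument never locates the periodic points and applies to any family over this $X$ with type I repelling points; your corrected argument is elementary but specific to this map.

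Your opening plan is also false. $f_x$ has infinitely many indifferent cycles on the unit circle; for example, the fixed point near $1$ has multiplier close to $2$. So Lemma \ref{If f has no indifferent point, then Julia sets moves continuously} does not apply, and your final appeal to Theorem \ref{JuliaSetMovesContinuouslyNonArchi} is the correct one.
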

\begin{proof}

First, since $\tilde{f_x} = z^2$, $f_x$ has $\eta_{0,1}$ as a type II repelling point, hence it has infinitely many non-repelling points. The fixed point of $f_x$ are $\lbrace 0, \infty, \frac{-1 + \alpha}{2T}, \frac{-1 - \alpha}{2T}\rbrace$ with $\alpha \in \mathcal{H}(f_x)$ such that $\alpha^2 = 1+ 4T(x)$. Hence, one of $\frac{-1 + \alpha}{2T(x)}, \frac{-1 - \alpha}{2T(x)}$ has absolute value $\frac{1}{|2T(x)|}>1$, and $f'_T( \frac{-1 \pm \alpha}{2T(x)}) = 3 - \frac{-1 \pm \alpha}{2T(x)}$ and so for one of them $|f'_x( \frac{-1 \pm \alpha}{2T(x)})| >1$ and $f_x$ has type I repelling points.

Since $X \subset \ana^{1,an}_\C$, we will only need to show that no periodic points are unstably periodic, by the lemma \ref{Unstably and not persistently indifferent is the same}. 

Let us assume that $(x,t)$ is an unstably periodic point so $\pr_{\p^{1,an}_k} \circ f'(x,t) = \eta_{0,1}$. So we have a morphism of valued field $\mathcal{H}(\eta_{0,1}) \rightarrow \mathcal{H}(x,t)$. Let us write $x = \eta_{0,r}$, with $0 < r < 1$, then $\mathcal{H}(x) = \C((T))$ where $|T| = r$. The field $\mathcal{H}(x,t)$ is an algebraic extension of $\mathcal{H}(x)$ and so it can be isometrically embedded in $\bigcup_{n\in \N^*} \C((T^{\frac{1}{n}}))$. But $\mathcal{H}(\eta_{0,1}) = \C(T_2)$ with $|T_2|=1$, so we can't get a valued morphism $\mathcal{H}(\eta_{0,1}) \rightarrow \mathcal{H}(x,t)$. Otherwise, $T_2$ would be send to a element of $\mathcal{H}(x,t)$ of absolute value 1. So, it is of the form $a + \sum_{\alpha \in \Q_{>0}} a_\alpha T^\alpha,$ with $a, a_\alpha \in \C$. Then, $T_2 -a$ is send to an element of norm $<1$ and $|T_2 -a| =1$.  

Hence, $(x,t)$ is not unstably periodic. So the Julia set moves continuously.  
\end{proof}

We also have some results of continuity, not on a neighborhood but towards a given rational map.

\begin{lem}\label{If f has no indifferent point, then Julia sets moves continuously}
Let $k$ be a non-archimedean, complete, valued field and $X$ a $k$-analytic space. Let $f$ be an analytic family of rational maps of degree $d\geq 2$. Let assume that $char(\tilde{k}) = 0$ or $char(\tilde{k}) = p>d$. Let $x \in X$ such that $f_x$ has repelling points and assume that $f_x$ has a finite number of indifferents point and all of them are persistently indifferent. Then, for any family $f_y \rightarrow f_x$, the Julia set of $f_y$ moves continuously towards the Julia set of $f_x$.
\end{lem}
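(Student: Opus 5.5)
The plan is to adapt the second half of the proof of theorem \ref{JuliaSetMovesContinuouslyNonArchi}, using only what happens near $f_x$. Take a family $(y_i)$ accumulating at $x$ and set
\[
L_x := \overline{\bigcup_i J_{y_i}} \cap \pr_X^{-1}(x).
\]
We must show $L_x = J_x$. The key input is that the periodic cycles of $f_x$ keep their nature (attracting, indifferent, repelling) under small perturbation. The residue characteristic hypothesis gives this through theorem \ref{NumberAttractingPointsBounded}: $f_x$ has at most $2d-2$ attracting cycles. By hypothesis it also has finitely many indifferent points, all persistently indifferent.

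\textbf{Step 1: attracting and indifferent cycles of $f_x$ persist.} For each attracting cycle of $f_x$, of period $n$, the set $\lbrace (y,t)\in C^n_{\mathcal V}, |(f_y^n)'(t)|<1\rbrace$ is an open neighborhood in $C^n_{\mathcal V}$. So nearby periodic points of that period stay attracting, and they stay in the Fatou set. For each of the finitely many indifferent cycles, definition \ref{Definition of persistently indifferent} gives a neighborhood on which the nearby periodic points of that period have multiplier of absolute value $1$. Intersecting these finitely many neighborhoods gives a neighborhood $W$ of $\pr_X^{-1}(x)\cap\lbrace$non-repelling periodic points of $f_x\rbrace$ containing no repelling periodic point of $f_y$ of the relevant periods.

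\textbf{Step 2: $J_x \subset L_x$.} Let $z$ be a type I repelling point of $f_x$ of period $n$. Near $(x,z)$ the analytic set $C^n$ projects finitely and openly onto $X$ (lemma \ref{morphisme fini sur meme dimension est ouvert}). The condition $|(f^n)'|>1$ is open, so points $(y_i,z_i)$ with $z_i$ repelling for $f_{y_i}$ accumulate at $(x,z)$, and hence $(x,z)\in L_x$. Type I repelling points are dense in $J_x$ by proposition \ref{repelling points of type I are dense in the Julia Set}; we pass to an algebraic closure using proposition \ref{Taking an algebraic field extension does not change the julia set} and corollary \ref{Taking the Julia set over any extension does not change it}. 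Since $L_x$ is closed, this gives $J_x\subset L_x$.

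\textbf{Step 3: $L_x\subset J_x$.} This is the main obstacle. By density of repelling points in each $J_{y_i}$, it suffices to show the following: if $(y_i,z_i)\to(x,z)$ with $z_i$ repelling for $f_{y_i}$, then $z\in J_x$. Suppose instead that $z$ lies in the Fatou set of $f_x$.

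First I would rule out that the periods of the $z_i$ stay bounded along a subnet. If they did, $(x,z)$ would lie in the closed set $C^n$ and be a periodic point of $f_x$. It would then be non-repelling, so $(y_i,z_i)\in W$ eventually, contradicting Step 1.

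For unbounded periods, I would use the classification of Fatou components. A neighborhood of $z$ lies in a Fatou component of $f_x$, which is eventually attracting or indifferent; the absence of wandering domains needs Benedetto's results, and this is where tameness ($p>d$) enters. In the attracting case I would build a small closed disk $D$ around the attracting cycle with $f^m_x(D)\subset \mathrm{Int}(D)$, a strict contraction. This is an open condition in the parameter, so it persists for $y$ near $x$. A forward iterate of a neighborhood of $z$ lands in $D$, also uniformly for $y$ near $x$, and then no periodic point of $f_y$ other than the attracting one can lie there. In the indifferent case I would use an affinoid domain $D$ mapped bijectively onto itself by $f_x^m$ with multipliers of absolute value $1$. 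The persistence of this property near $x$ is exactly where persistent indifference is used, applying theorem \ref{maximum modulus} to $(f^m)'$ on $D$.

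The delicate point I expect is uniformity of these invariant domains as $y\to x$ when the $y_i$ are non-classical points, whose residue fields vary. I would handle it by expressing everything with finitely many inequalities $|g(y,t)|<r$ for analytic $g$ on $X\times_k\p^{1,an}_k$; such conditions are automatically open.
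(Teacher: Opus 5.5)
Steps 1 and 2, and the bounded-period case of Step 3, are in line with the paper. The unbounded-period case of Step 3, however, has a genuine gap.

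That case rests on the claim that the Fatou component of $f_x$ containing $z$ is eventually attracting or indifferent. In other words, it assumes $f_x$ has no wandering domains over $\widehat{\overline{\mathcal{H}(x)}}$, and you attribute this to tameness. Tameness does not give it. When the residue field is not algebraic over a finite field (in particular in residue characteristic $0$, which is always tame), Benedetto showed that maps with nontrivial reduction can have wandering Fatou components.

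The results that do exclude wandering domains need hypotheses on the field of definition. The paper itself invokes one (Theorem 11.23 of \cite{BenedettoLivre}) only in proposition \ref{if classical julia set compact, then one attracting only}, where $f$ is assumed to be defined over a discretely valued subfield. In this lemma $x$ is an arbitrary point of an arbitrary $k$-analytic space, so $\mathcal{H}(x)$ need not be of that kind. Already a point of $\ana^{2,an}_{\C}$, with $\C$ trivially valued, can have a dense value group.

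Under the lemma's hypotheses you can rule out type II repelling cycles by proposition \ref{repelling in H_k implies infinitely many}. You would still need a theorem turning this into the absence of wandering components. For $z$ in a wandering component, your argument produces no neighbourhood of $(x,z)$ avoiding the $J_{y_i}$, so $L_x\subset J_x$ is not established.

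Two further ingredients are only announced, not proved:
\begin{itemize}
\item The uniform attracting tube. The attracting cycle of $f_y$ lives over $\widehat{\overline{\mathcal{H}(y)}}$ and is not cut out by analytic functions on a neighbourhood of $x$. It would have to be built inside $C^m$, using that $C^m\to X$ is finite and open.
\item The indifferent case. It uses theorem \ref{maximum modulus}, which requires $X$ to be without boundary, and this lemma does not assume that.
\end{itemize}

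The paper's proof does not look at Fatou components at all. Around each of the finitely many indifferent points of $f_x$, it takes a neighbourhood in $C^n$ on which all periodic points are indifferent. It pushes these down by the open finite map $C^n\to X$ (lemma \ref{morphisme fini sur meme dimension est ouvert}) to a neighbourhood $U$ of $x$. It then follows a repelling point $t_y$ of $f_y$, $y\in U$, along its connected component of $C^n$ to a periodic point of $f_x$, which is therefore repelling and lies in $J_x$. It concludes with the cluster-point argument from the proof of theorem \ref{JuliaSetMovesContinuouslyNonArchi}. Your bounded-period case is essentially this.

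When the periods are unbounded, though, that cluster-point step is only sketched in the paper. The implication ``the continued points $(x,z_i^{f_{x_i}})$ lie in $V'$, hence so do the $(x_i,z_i)$'' needs exactly the uniformity in the period that you flagged as delicate. So adopting the paper's route would not by itself close your gap. What is needed is some uniform control of repelling cycles of arbitrarily high period near $x$, the role played by the $\lambda$-lemma in the complex case.
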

\begin{proof}

Since all indifferent points are persistently indifferent, there is a neighborhood $U_{(x,t)} \in C^n_X$ for all indifferent points $(x,t)$ such that all points of $U_{(x,t)} \cap C^n_X$ are indifferent. Since $\pr_X : C^n_X \rightarrow X$ has finite fibers, it is open by lemma \ref{morphisme fini sur meme dimension est ouvert} and $\pr_X(U_{x,t})$ is open in $X$. Since there are a finite number of indifferent points, we might consider the neighborhood of $x, U := \bigcap U_{x,t}$.

Let $f_y \rightarrow f_x$. 

As in the proof of the theorem \ref{JuliaSetMovesContinuouslyNonArchi}, $J(f_x)$ is contained in the closure of the Julia set of the $f_y$'s. (Each repelling point of $x$ comes from a repelling point, for $y$ close enough to $x$.)

Let now $(y, t_y)$ be some repelling points converging to a point $(x,z)$. For $y$ close enough to $x, y$ belongs to $U$, hence if we look at the connected component containing $(y,t_y)$ it won't give an indifferent point at the intersection with $\pr^{-1}_X(x)$ and so it will be a repelling point of $x$. We conclude as in the proof of the theorem \ref{JuliaSetMovesContinuouslyNonArchi}.
\end{proof}

\begin{ex}
Let $k$ be a complete, non-archimedean valued field, and let $f$ be a $k$-analytical family of rational map. Let $x \in X$ be such that the classical Julia set of $f$ is non-empty and compact in $\p^{1}(\hat{\overline{k}})$. Then for any family $f_y \rightarrow f_x$, the Julia set of $f_y$ moves continuously towards the Julia set of $f_x$.

We might use proposition \ref{if classical julia set compact, then one attracting only}, to see that $f_x$ has no indifferent points and lemma \ref{If f has no indifferent point, then Julia sets moves continuously} allows us to conclude.
\end{ex}

\subsection{The continuity is independant of the field}\label{subsection continuity independant of the field}

Let $k$ be a complete, non-archimedean valued field and $K$ be a complete valued field extension. Let $X$ be a $k$-analytic space without boundary and $f$ a family of rational maps. We want to see if there is a link between the continuity of the Julia set on $X$ and the continuity of the Julia on $X_K = X \hat{\otimes} K$.

\begin{nota}
Let $k$ be a complete, non-archimedean valued field and $K$ be a complete valued field extension. Let $X$ be a $k$-analytic space without boundary and $f$ a family of rational maps. Let us denote $X_K = X \hat{\otimes} K$ and $f_K$ the family of rational map, $f_K : (X \times_k \p^{1,an}) \hat{\otimes} K \rightarrow (X \times_k \p^{1,an}) \hat{\otimes} K$. Let us denote $\pi^K : (X \times_k \p^{1,an}) \hat{\otimes} K \rightarrow X \times_k \p^{1,an}$ and $\pi^K_X : X_K \rightarrow X$ the two projections.
\end{nota}

\begin{prop}\label{If moves continuously on X, then also on X_K}
Let $k$ be a complete, non-archimedean valued field and $K$ be a complete valued field extension. Let $X$ be a $k$-analytic space without boundary and $f$ a family of rational maps. 

Let $U$ be an open subset of $X$. Let us assume that there exists $x\in U$ such that $f_x$ has type I repelling points and the Julia set moves continuously on $U$. Then, the Julia set moves continuously on $(\pi^K_X)^{-1}(U)$.
\end{prop}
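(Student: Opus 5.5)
The plan is to pass through the criterion of Theorem \ref{JuliaSetMovesContinuouslyNonArchi} twice: once downward over $X$ to extract the dynamical condition, and once upward over $X_K$ to recover continuity. Two preliminary facts are needed. First, $X_K$ has no boundary, since $\Int(X)=X$ is preserved by ground field extension (Berkovich, Proposition 2.5.8 style). Second, for $x'\in(\pi^K_X)^{-1}(x)$ the map $f_{K,x'}$ is $f_x$ viewed over $\mathcal{H}(x')\supset\mathcal{H}(x)$. Because a type I repelling point of $f_x$ (over $\widehat{\overline{\mathcal{H}(x)}}$) embeds into $\widehat{\overline{\mathcal{H}(x')}}$ with the same multiplier, $f_{K,x'}$ again has type I repelling points. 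Moreover, since having type I repelling points is an open condition (as used in the proof of Theorem \ref{JuliaSetMovesContinuouslyNonArchi}), the same holds near any point of $U$ where it holds. This reduces the problem to checking the periodic point condition of Theorem \ref{JuliaSetMovesContinuouslyNonArchi} on $(\pi^K_X)^{-1}(U)$.

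\emph{Step 1.} Apply the direction $\Rightarrow$ of Theorem \ref{JuliaSetMovesContinuouslyNonArchi} on $U$. That proof works at any point where the Julia set moves continuously. It shows that for every $y\in U$, every indifferent type I periodic point $z$ of $f_y$ has a neighbourhood $W$ in $C^n$ on which all periodic points are indifferent: the Julia sets avoid $W$, so $|(f^n)'|\le 1$ there, and Theorem \ref{maximum modulus} forces $|(f^n)'|=1$ on $W$.

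\emph{Step 2.} Transfer this to $X_K$. Let $y'\in(\pi^K_X)^{-1}(U)$, $y=\pi^K_X(y')$, and let $z'$ be an indifferent type I periodic point of $f_{K,y'}$ of period $n$. Its image $(y,z)=\pi^K(y',z')$ is a point of $C^n_V$, where $C^n_{V_K}=(C^n_V)\hat\otimes K$ and $\pi^K$ maps the first onto the second. The multiplier function $|(f^n)'|$ on $C^n_{V_K}$ is the pullback of the one on $C^n_V$, so $(y,z)$ is also indifferent. The point $z$ need not be rational over $\mathcal{H}(y)$, but it is a type I point of the closed analytic set $C^n_V$, and Step 1 applies to it. Hence there is a neighbourhood $W$ of $(y,z)$ in $C^n_V$ on which every point is indifferent. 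Then $(\pi^K)^{-1}(W)$ is a neighbourhood of $(y',z')$ on which every periodic point is indifferent, because absolute values are preserved under $\pi^K$. So $z'$ is persistently indifferent, and the converse direction of Theorem \ref{JuliaSetMovesContinuouslyNonArchi} gives continuity on $(\pi^K_X)^{-1}(U)$, applied locally at each point.

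\emph{Main obstacle.} The delicate point is Step 1. The definition of continuous motion lets one pass from ``the Julia sets avoid a neighbourhood'' to ``$|(f^n)'|\le1$'', but this has to be justified at type I points of $C^n$ that are not rational over $\mathcal{H}(y)$, and the indifferent points must not lie in the Julia set. I would handle this by applying Corollary \ref{Taking the Julia set over any extension does not change it} fibrewise: indifferent type I points are Fatou over $\widehat{\overline{\mathcal{H}(y)}}$, hence their projections lie outside $J_y$. A smaller issue is that Theorem \ref{maximum modulus} must be applied on a connected affinoid neighbourhood without boundary inside $C^n_V$. This is supplied by the proposition on $\Int(C^n_V)$ proved before Definition \ref{Definition of persistently indifferent}.
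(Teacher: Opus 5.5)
Your proposal is correct and follows the same route as the paper. Both reduce, via Theorem~\ref{JuliaSetMovesContinuouslyNonArchi}, to showing that indifferent periodic points over $(\pi^K_X)^{-1}(U)$ are persistently indifferent, push such a point down by $\pi^K$ to an indifferent (hence persistently indifferent) point over $U$, and transfer back using that $\pi^K$ preserves $|(f^n)'|$. The differences are cosmetic: the paper argues by contradiction with a sequence of repelling points converging to the indifferent point, whereas you pull back the neighbourhood directly, and you state explicitly that $X_K$ is again without boundary, which the paper uses tacitly.
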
  

\begin{proof}
Since for any $y \in (\pi^K_X)^{-1}(x),~y$ has type I repelling points and $y \in (\pi^K_X)^{-1}(U)$, it suffices to show that for any $y\in (\pi^K_X)^{-1}(U)$ and $(y,t)$ an indifferent periodic point, then it is persistently indifferent by theorem \ref{JuliaSetMovesContinuouslyNonArchi}. 

Let $y\in (\pi^K_X)^{-1}(U)$ and $(y,t)$ be an indifferent periodic point of some period $l$. By contradiction, let us assume that there exists $(y_n, t_n) \rightarrow (y,t)$ with $t_n$ repelling points of $f_{K,y_n}$ of period dividing $l$.

Then, $\pi^K(y_n, t_n) \rightarrow \pi^K(y,t)$. Let us write $\pi^K(y,t) = (\pi^K_X(y), \tilde{t})$ with $\tilde{t} \in \p^{1,an}_{\mathcal{H}(\pi^K_X(y))}$. Hence, $\tilde{t}$ is an indifferent periodic point of $f_{\pi^K_X(y)}$ and $\pi^K_X(y) \in U$. So, it is a persistently indifferent periodic point and $\pi^K(y_n,t_n) = (\pi^K_X(y_n), \tilde{t_n})$ cannot be repelling points and so  $(y_n, t_n)$ cannot be repelling either.
\end{proof}

\begin{prop}\label{If moves continuously on X_K, then also on X}
Let $k$ be a complete, non-archimedean valued field and $K$ be a complete valued field extension. Let $X$ be a $k$-analytic space without boundary and $f$ a family of rational maps. 

Let $U$ be an open subset of $X_K$. Let us assume that there exists $x\in U$ such that $f_x$ has type I repelling points and the Julia set moves continuously on $U$. Then, for any $\emptyset \neq V\subset \pi^K_X(U)$ open, the Julia set moves continuously on $V$.
\end{prop}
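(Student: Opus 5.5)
We will use Theorem \ref{JuliaSetMovesContinuouslyNonArchi} in both directions. The plan is to read off from continuity on $U$ that every indifferent periodic point over $U$ is persistently indifferent, then push this property down to points of $V$. The natural tool is the surjectivity and openness of the projections $\pi^K_X$ and $\pi^K$, together with the way periodic points and multipliers behave under base change.

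First, since the Julia set moves continuously on $U$, the implication ``$\implies$'' of Theorem \ref{JuliaSetMovesContinuouslyNonArchi}, applied at each point of $U$, gives the following: for every $y\in U$, every type I indifferent periodic point of $f_{K,y}$ is persistently indifferent. This uses that $X_K$ is without boundary, which holds because base change preserves $\Int(X_K/X)$ and $X$ has no boundary. Second, take $V\subset \pi^K_X(U)$ open and nonempty. We check the hypothesis of the converse implication at points of $V$.

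\emph{Type I repelling points.} For $v\in V$ pick $y\in U$ with $\pi^K_X(y)=v$. Repelling type I periodic points of $f_{K,y}$ exist near the point of $U$ that has them, since this is an open condition. A type I periodic point of $f_{K,y}$ lies over a type I periodic point of $f_v$ with the same multiplier absolute value, because the multiplier is computed by the same rational function, now over $\mathcal{H}(y)\supset\mathcal{H}(v)$. Hence $f_v$ has type I repelling points, at least after shrinking $V$ around a point that projects from one having such points.

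\emph{Persistent indifference.} Let $(v,s)$ be an indifferent periodic point of $f_v$ of period $l$, and suppose it is not persistently indifferent. Then there is a net $(v_n,s_n)\to(v,s)$ with $s_n$ repelling of period $l$. We lift this net. The map $\pi^K$ restricted to $C^l$ is surjective and open, since $\pi^K$ is a base change projection and is open; Berkovich proves that $\pi^K$ is open. So we may pick $(y,t)\in C^l_{K}$ above $(v,s)$ with $y\in U$, and use openness to find $(y_n,t_n)\to(y,t)$ lying over $(v_n,s_n)$. The point $t$ is indifferent and each $t_n$ is repelling, since absolute values of multipliers are preserved under the projection. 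This contradicts the persistent indifference of $(y,t)$ obtained in the first step. Hence every indifferent point over $V$ is persistently indifferent, and Theorem \ref{JuliaSetMovesContinuouslyNonArchi} ``$\Longleftarrow$'' gives continuity on $V$.

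The main obstacle is the lifting of converging nets through $\pi^K$. We must ensure that $(v,s)$ lifts to a point over $U$, not merely over some point of $(\pi^K_X)^{-1}(v)$. Openness of $\pi^K$ requires care when $K/k$ is arbitrary, and one may first reduce to $k,K$ algebraically closed using Corollary \ref{Taking the Julia set over any extension does not change it}. The second delicate point is that persistent indifference is a condition on an entire neighborhood, so we need openness of $\pi^K$ at the specific lift, which we would obtain from flatness of base change.
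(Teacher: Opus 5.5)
\textbf{The gap: type I repelling points on all of $V$.} To apply the converse direction of Theorem \ref{JuliaSetMovesContinuouslyNonArchi} at a point $v\in V$, you need $f_v$ itself to have type I repelling points. Your argument gives this only for $v$ close to $\pi^K_X(x)$: the repelling condition is open around $x$, and you then project. After that you shrink $V$.

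The statement does not allow this. $V$ is an arbitrary nonempty open subset of $\pi^K_X(U)$, possibly far from $\pi^K_X(x)$. So what you actually prove is continuity on some neighbourhood of $\pi^K_X(x)$. That weaker version would suffice for the later use in the proof of $3\Rightarrow 1$, but it is not the proposition.

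The paper bridges this by first asserting that continuity on $U$ forces every $f_y$, $y\in U$, to have type I repelling points, and then projecting them to $\pi^K_X(U)$. Your multiplier-preservation argument is exactly that second half. The first half is what your proposal lacks, and it needs a genuine argument rather than a citation. It fails, for instance, if $U$ has a component not containing $x$ over which the family is constant with good reduction; there the conclusion still holds, but for other reasons, not through the theorem. So you must either prove that type I repelling points exist at every point of $V$, or treat separately the points of $V$ where they are absent.

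\textbf{The persistent-indifference step.} This part is correct, but it follows a different route from the paper.
\begin{itemize}
\item The paper fixes $\alpha\in U$ over $y$ and restricts $U$ to the connected component of $\alpha$, so that the type of cycles is constant along $C^l$ over $U$. It shrinks $\mathcal{V}$ so that $C^l_{\mathcal V}\cap\pr_X^{-1}(y)=\{(y,t)\}$. It then shows $|(f^l)'|=1$ on all of $\mathcal{C}=(\pi^K)^{-1}(C^l_{\mathcal V})\cap(U\times_K\p^{1,an})$ and pushes this down to $C^l_{\mathcal V}$. This needs only surjectivity of $\pi^K$ plus the connectedness bookkeeping.
\item You argue by contradiction and lift a net of non-indifferent period-$l$ points through $C^l_K=C^l\hat{\otimes}_kK\to C^l$. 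This is more local and direct, but it requires that map to be open at the chosen lift.
\end{itemize}

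That openness holds for every complete extension $K/k$. However, it is a theorem of Poineau (in \cite{PoineauAngelique}), not something Berkovich proves in this generality. The paper itself only quotes Berkovich for finite extensions and for completions of normal algebraic extensions. With that reference, the worries in your last paragraph disappear:
\begin{itemize}
\item no reduction to algebraically closed fields and no flatness argument is needed;
\item a lift of $(v,s)$ over a prescribed $y\in U$ always exists, because $\mathcal{H}(v,s)\hat{\otimes}_{\mathcal{H}(v)}\mathcal{H}(y)\neq 0$.
\end{itemize}

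Finally, negating persistent indifference only produces nearby period-$l$ points with $|(f^l)'|\neq 1$, not necessarily repelling ones. Your contradiction works verbatim with that weaker conclusion.
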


\begin{proof}
Since the Julia moves continuously on $U$, for any $y\in U, f_y$ has type I repelling points and so does any $y \in (\pi^K_X)(U)$. So, it suffices to show that for any $y\in \pi^K_X(U)$ and $(y,t)$ an indifferent periodic point, then it is persistently indifferent by theorem \ref{JuliaSetMovesContinuouslyNonArchi}. 

Let $y\in \pi^K_X(U)$ and $(y,t)$ be an indifferent periodic point of some period $l$.

By reducing $U$ and keeping only the connected component of an element $\alpha$ of $(\pi^K_X)^{-1}(y)$, we may assume that $U$ is connected and so the type of the periodic point do not change throughout $U$.

Let $\mathcal{V}$ be a neighborhood of $(y,t)$ and let $C^l_\mathcal{V}$ be the analytic space of periodic points of period dividing $l$. By reducing $\mathcal{V}$ me may assume that $C^l_\mathcal{V} \cap \pr^{-1}_X(y) = \lbrace (y,t)\rbrace$. 

We want to show that for any $(w,z) \in C^l_\mathcal{V}, |(f^l_w)'(z)|=1$.

Let $\mathcal{C} := (\pi^K)^{-1}(C^l_\mathcal{V}) \cap U\times_K\p^{1,an}$. One has $\mathcal{C} \cap \pr^{-1}_{X_K}(\alpha) = \lbrace (w,z), w = \alpha$ and $\pi^K(w,z) = (y,t)\rbrace \cap U\times_K\p^{1,an}$ and all these elements are indifferent periodic points. Since the Julia set moves continuously on $U$, this indifferent point are persistently indifferent, then $\forall (w,z) \in \mathcal{C}, |(f^l_w)'(z)|=1$.

Hence, for any $(w,z) \in C^l_\mathcal{V}, |(f^l_w)'(z)|=1$.
\end{proof}

\begin{prop}
Let $k$ be a complete, non-archimedean valued field and $K$ be a complete valued field extension. Let $X$ be a $k$-analytic space without boundary and $f$ a family of rational maps. 

Let $U$ be an open subset of $X$. Let us assume that there exists $x\in U$ such that $f_x$ has type I repelling points. Then, 
\begin{center}
the Julia set moves continuously on $U \iff$ the Julia set moves continuously on $(\pi^K_X)^{-1}(U)$.
\end{center}
\end{prop}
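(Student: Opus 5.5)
The equivalence will follow by combining Proposition \ref{If moves continuously on X, then also on X_K} and Proposition \ref{If moves continuously on X_K, then also on X}. The main point is to check that their hypotheses and conclusions match the sets $U$ and $(\pi^K_X)^{-1}(U)$ exactly.

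For the direct implication, assume the Julia set moves continuously on $U$ and that $x \in U$ is such that $f_x$ has type I repelling points. Proposition \ref{If moves continuously on X, then also on X_K} applies verbatim and gives continuity on $(\pi^K_X)^{-1}(U)$.

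For the converse, set $U_K := (\pi^K_X)^{-1}(U)$ and assume the Julia set moves continuously on $U_K$. To apply Proposition \ref{If moves continuously on X_K, then also on X}, I first need a point of $U_K$ whose map has type I repelling points. Take any $x_K \in (\pi^K_X)^{-1}(x)$, which is nonempty because the projection is surjective. A type I repelling periodic point of $f_x$ is a point of $\p^1(\widehat{\overline{\mathcal{H}(x)}})$. After embedding $\widehat{\overline{\mathcal{H}(x)}}$ into $\widehat{\overline{\mathcal{H}(x_K)}}$, this point gives a type I periodic point of $f_{K,x_K}$ with the same multiplier, so it is still repelling. This is the same observation used at the start of the proof of Proposition \ref{If moves continuously on X, then also on X_K}.

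Proposition \ref{If moves continuously on X_K, then also on X} then gives continuity on every nonempty open $V \subset \pi^K_X(U_K)$. It remains to check that $\pi^K_X(U_K) = U$, which holds because $\pi^K_X$ is surjective. Taking $V = U$ concludes the proof.

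The only place I expect any friction is this last step. Surjectivity of $\pi^K_X$ holds for any ground field extension, since $\mathcal{H}(y)\hat{\otimes}_k K$ is a nonzero Banach algebra and so has nonempty spectrum. One should also note that continuity is a local property, so continuity on every open subset of $U$ covering it is the same as continuity on $U$. This matters because Theorem \ref{JuliaSetMovesContinuouslyNonArchi} characterizes continuity pointwise through persistently indifferent points.
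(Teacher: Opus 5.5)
Your proposal is correct and takes the same approach as the paper. The paper states this equivalence with no separate proof, as the combination of Propositions \ref{If moves continuously on X, then also on X_K} and \ref{If moves continuously on X_K, then also on X}. Your two extra checks are the details needed to make that combination work: lifting a type I repelling point of $f_x$ to a point of $(\pi^K_X)^{-1}(x)$, and using surjectivity of $\pi^K_X$ to get $\pi^K_X((\pi^K_X)^{-1}(U)) = U$.
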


We can also look at some link between the continuity of the Julia set on a neighborhood of $x\in X$ and the continuity of the Julia set on a neighborhood of some $y\in (\pi^K_X)^{-1}(x)$. 

\begin{prop}
Let $k$ be a complete, algebraically closed, non-archimedean valued field and $K$ be a complete valued field extension. Let $X$ be a $k$-analytic space without boundary and $f$ a family of rational maps. Let $\sigma_{K/k}^X : X\rightarrow X_K$ the continuous section from definition \ref{definition universal point and continuous section}. Let $x\in X$ such that $f_x$ has type I repelling points. Then,

\begin{align*}
\mathrm{the~Julia~set~moves~continuously} &\mathrm{~on~a~neighborhood~of~} x\\
&\iff \\
\mathrm{the~Julia~set~moves~continuously} &\mathrm{~on~a~neighborhood~of~} \sigma_{K/k}^X(x).
\end{align*} 
\end{prop}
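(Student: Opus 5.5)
The plan is to deduce this from Propositions \ref{If moves continuously on X, then also on X_K} and \ref{If moves continuously on X_K, then also on X}, using two properties of the section $\sigma := \sigma_{K/k}^X$: it is continuous, and $\pi^K_X\circ\sigma = \mathrm{Id}_X$. Both are available because $k$ is algebraically closed, so every point of $X$ is universal (Corollaire 3.14 and 3.7 of \cite{PoineauAngelique}). We also need that $f_{K,\sigma(x)}$ has type I repelling points. Since $\mathcal{H}(\sigma(x))$ is an extension of $\mathcal{H}(x)$ and $f_{K,\sigma(x)}$ is $f_x$ seen over this extension, a type I repelling periodic point of $f_x$ (taken in an algebraic closure) yields one for $f_{K,\sigma(x)}$, with the same multiplier.

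For ($\Rightarrow$), suppose the Julia set moves continuously on an open neighborhood $U$ of $x$. Proposition \ref{If moves continuously on X, then also on X_K} then gives continuity on $(\pi^K_X)^{-1}(U)$. This set is open because $\pi^K_X$ is continuous, and it contains $\sigma(x)$ because $\pi^K_X(\sigma(x)) = x \in U$. So it is a neighborhood of $\sigma(x)$ on which the Julia set moves continuously.

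For ($\Leftarrow$), suppose the Julia set moves continuously on an open neighborhood $W$ of $\sigma(x)$ in $X_K$. We need an open $V$ with $x \in V \subset \pi^K_X(W)$. The natural choice is $V := \sigma^{-1}(W)$, which is open by continuity of $\sigma$ and contains $x$. It lies in $\pi^K_X(W)$, since for $v \in V$ we have $v = \pi^K_X(\sigma(v))$ with $\sigma(v)\in W$. Proposition \ref{If moves continuously on X_K, then also on X}, applied with $U = W$ and the point $\sigma(x)$, then gives continuity on $V$, hence on a neighborhood of $x$. This step is why we do not need $\pi^K_X$ to be open, which can fail for general extensions.

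The main obstacle is justifying the properties of $\sigma$ on a general $k$-analytic space $X$ rather than on $\p^{1,an}$. These are the continuity of $\sigma$ and the identity $\pi^K_X\circ\sigma = \mathrm{Id}$. The second holds because the multiplicative norm on $\mathcal{H}(x)\hat{\otimes}K$ restricts to the norm of $\mathcal{H}(x)$. One also needs $X_K$ to be without boundary, so that Theorem \ref{JuliaSetMovesContinuouslyNonArchi} is usable inside the two cited propositions. I would check this via stability of the interior under base change, Proposition 2.5.8 of \cite{BerkovichLivre}. If the hypothesis of Proposition \ref{If moves continuously on X_K, then also on X} genuinely requires type I repelling points at a point of $W$, the observation in the first paragraph supplies it at $\sigma(x)$.
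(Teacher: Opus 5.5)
Your forward direction matches the paper's. The backward direction has a gap. You invoke Proposition \ref{If moves continuously on X_K, then also on X} using only the containment $V\subset\pi^K_X(W)$, and that containment cannot give persistence downstairs. To show that an indifferent periodic point $(y,t)$ with $y\in V$ is persistently indifferent, you take a net $(y_n,t_n)\to(y,t)$ of non-indifferent periodic points. You then need points of $(X\times_k\p^{1,an}_k)\hat{\otimes}K$ above them that converge to a point above $(y,t)$ lying over $W$. Knowing only that some point of $W$ lies above $y$ gives no such lifts when $\pi^K_X$ is not open.

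In fact the proposition fails in the form you use. Take the following data:
\begin{itemize}
\item $k$ algebraically closed of residue characteristic $\neq 2$;
\item $K$ complete, algebraically closed, containing some $b$ with $|b|=1$ and $\tilde b\notin\tilde k$;
\item $X=\{|T|<R\}$ with $R>1$, and $f_\lambda(z)=z^2+\lambda$;
\item $U\subset X_K$ the union of the disjoint open sets $\{1<|T|<R\}$, $\{|T|<1\}$, $\{|T-b|<1\}$ and $\{|T-c|<1\}$ for $c\in k$ with $|c|=1$.
\end{itemize}
The Julia set moves continuously on $U$: on the first piece by Lemma \ref{z^2+ T has only repelling points}, and on the others because $f_\lambda$ has good reduction and the Julia set is the Gauss-point section. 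Also, $U$ contains parameters with type I repelling points. Moreover $\pi^K_X(U)=X$, since $\pi^K_X(\{|T-b|<1\})=\{\eta_{0,1}\}$. Yet the fixed points of $f_{\eta_{0,1}}$ are indifferent and are limits of repelling fixed points of $f_\lambda$ with $|\lambda|>1$, so continuity fails on every neighbourhood of $\eta_{0,1}$. The culprit is that $U$ avoids the canonical lift $\sigma_{K/k}(\eta_{0,1})$.

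Your choice $V=(\sigma^X_{K/k})^{-1}(W)$ does give the stronger property $\sigma^X_{K/k}(V)\subset W$, and the paper's proof uses exactly this, through the section on the product. Since $k$ is algebraically closed, the map $\sigma_{K/k}\colon X\times_k\p^{1,an}_k\to(X\times_k\p^{1,an}_k)\hat{\otimes}K$ has three properties:
\begin{itemize}
\item it is continuous;
\item it sends $(y,t)$ to a point $(\sigma_{K/k}(y),\tilde t)$;
\item it preserves the value of $|(f^l)'|$, since this function is defined over $k$ and $\pi^K\circ\sigma_{K/k}=\mathrm{Id}$.
\end{itemize}
Suppose repelling (or attracting) points $(y_n,t_n)$ of period dividing $l$ converged to an indifferent $(y,t)$ with $y\in V$. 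Their images would then converge to the indifferent point $\sigma_{K/k}(y,t)$, which lies over $\sigma_{K/k}(y)\in W$, contradicting persistence there. Theorem \ref{JuliaSetMovesContinuouslyNonArchi} then gives continuity on $V$. You should replace your appeal to Proposition \ref{If moves continuously on X_K, then also on X} by this argument. Note that what you need is continuity of the section on $X\times_k\p^{1,an}_k$, not only on $X$; your ``main obstacle'' paragraph does not cover this.
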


\begin{proof}
$\implies$ This comes from propostion \ref{If moves continuously on X, then also on X_K}.

$\Longleftarrow$ Let $U$ be a neighborhood of $\sigma_{K/k}^X(x)$ such that the Julia set moves continuously.

Then, $V := (\sigma_{K/k}^X)^{-1}(U)$ is a neighborhood of $x$. By contradiction, let us assume that the Julia set does not move continuously on $V$. Then, there exists $y\in V$ and $(y,t)$ an indifferent periodic point of period $l$ and $(y_n, t_n)$ some repelling points with period dividing $l$ and $(y_n, t_n) \rightarrow(y,t)$.

Then, $\sigma_{K/k} (y_n, t_n) \rightarrow \sigma_{K/k}(y,t)$ with $\sigma_{K/k} : X\times \p^{1,an} \rightarrow (X\times \p^{1,an})\hat{\otimes} K$ is the continuous section. Since the Julia set moves continuously on $U$ and $\sigma_{K/k}(y,t) = (\sigma_{K/k}(y), \tilde{t})$ with $\tilde{t} \in \p^{1,an}_{\mathcal{H}(\sigma_{K/k}(y))}$ is an indifferent point, the $\sigma_{K/k}(y_n,t_n) = (\sigma_{K/k}(y_n), \tilde{t_n})$ cannot be repelling and so neither $(y_n, t_n)$.
\end{proof}

\begin{prop}
Let $k$ be a complete, non-archimedean valued field and $K$ be a complete, valued field extension. And let us assume that the extension $k\subset K$ is algebraic and finite or $K$ is the completion of a normal extension of $K$. Let $X$ be a $k$-analytic space without boundary and $f$ a family of rational maps. Let $x\in X$ such that $f_x$ has type I repelling points. Then the following are equivalent :

\begin{enumerate}
\item The Julia set moves continuously on a neighborhood of $x$,
\item The Julia set moves continuously on a neighborhood of all $y  \in (\pi_{K/k}^X)^{-1}(x)$,
\item The Julia set moves continuously on a neighborhood of one $y  \in (\pi_{K/k}^X)^{-1}(x)$.
\end{enumerate} 
\end{prop}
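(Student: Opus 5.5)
The plan is to prove the cycle $1 \Rightarrow 2 \Rightarrow 3 \Rightarrow 1$. The tools are Proposition \ref{If moves continuously on X, then also on X_K} and Proposition \ref{If moves continuously on X_K, then also on X}, together with two facts about the projection $\pi^X_{K/k} : X_K \rightarrow X$ when $K/k$ is finite, or is the completion of a normal extension. First, $\pi^X_{K/k}$ is surjective, open and closed, since for finite extensions it is a finite flat morphism, and in the normal case it is a quotient by $\mathrm{Gal}$ after completion. Second, the fiber $(\pi^X_{K/k})^{-1}(x)$ is a single orbit under the group $G$ of continuous $k$-automorphisms of $K$, which acts on $X_K$ by homeomorphisms compatible with $f_K$.

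\textbf{Step $1 \Rightarrow 2$.} Let $U$ be an open neighborhood of $x$ on which the Julia set moves continuously. Then Proposition \ref{If moves continuously on X, then also on X_K} shows that the Julia set moves continuously on $(\pi^X_{K/k})^{-1}(U)$. This set is an open neighborhood of every $y \in (\pi^X_{K/k})^{-1}(x)$, and $f_y$ still has type I repelling points, namely the images of those of $f_x$.

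\textbf{Step $2 \Rightarrow 3$.} This is trivial, since the fiber is nonempty.

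\textbf{Step $3 \Rightarrow 1$.} Let $U$ be an open neighborhood of some $y$ in the fiber on which the Julia set moves continuously. By Proposition \ref{If moves continuously on X_K, then also on X}, it moves continuously on every open $V \subset \pi^X_{K/k}(U)$. Since $\pi^X_{K/k}$ is open, $\pi^X_{K/k}(U)$ is itself an open neighborhood of $x$, and we take $V = \pi^X_{K/k}(U)$.

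The main obstacle is justifying that $\pi^X_{K/k}$ is open. For $K/k$ finite, the morphism $X_K \rightarrow X$ is finite and flat, so it has finite fibers. Each connected component of $X_K$ has the same dimension as its image, so Lemma \ref{morphisme fini sur meme dimension est ouvert} (with the remark that finite fibers suffice) gives openness.

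In the infinite normal case, I would write $K$ as the completed colimit of finite normal subextensions $k_j$. An open set of $X_K$ contains a basic open set of the form $(\pi^{K}_{k_j})^{-1}(W)$ with $W$ open in $X_{k_j}$, by the same polynomial-approximation argument used in Proposition \ref{measure on the union of field extension has same support}. Its image in $X$ is $\pi_{k_j/k}(W)$, which is open by the finite case.

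Alternatively, one can avoid openness by using the $G$-action. The set $\bigcup_{g\in G} g(U)$ is $G$-stable, so it equals $(\pi^X_{K/k})^{-1}$ of its image. Continuity of the Julia set is transported by each $g$, so it also holds on this union, which makes the image an open neighborhood of $x$. I would try the direct openness argument first and fall back on the Galois argument if needed.
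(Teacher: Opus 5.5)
Your proof is the paper's proof. It uses the same cycle $1\Rightarrow 2\Rightarrow 3\Rightarrow 1$ via Propositions \ref{If moves continuously on X, then also on X_K} and \ref{If moves continuously on X_K, then also on X}, with openness of $\pi^X_{K/k}$ as the only extra input; in the normal case the paper gets that openness from Berkovich's homeomorphism $\mathcal{M}(A\hat{\otimes}K)/\mathrm{Gal}(k^{sep}/k)\to\mathcal{M}(A)$, which is essentially your Galois fallback.

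Your primary route to openness, approximating by finite subextensions and then using the finite flat case, is also valid and does not even need normality. One preliminary claim is wrong: fibers are single orbits of the automorphism group only for normal extensions, not for finite non-normal ones. Your main argument never uses that claim, so the proof stands.
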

\begin{proof}
$1 \implies 2$ is proposition \ref{If moves continuously on X, then also on X_K}.

$2\implies 3$ is clear.

$3\implies 1$ Let $y  \in (\pi_{K/k}^X)^{-1}(x)$ such that the Julia set moves continously on one of its neighborhood $U$.

One needs to show that $\pi^K_X$ is open. If the field extension is finite, it is clear. If it is the completion of a normal extension, then it comes from Proposition 1.3.5 and Corollary 1.3.6 of \cite{BerkovichLivre} which states that for any such $K$, and any Banach algebra $A$ over $k$, then :
\begin{center}
$\mathcal{M}(A\hat{\otimes}K) /\mathrm{Gal}(k^{sep}/k) \rightarrow \mathcal{M}(A)$
\end{center}
is a homeomorphism where $k^{sep}$ is the separable closure of $k$ in $K$. The oppeness of the quotient map $\mathcal{M}(A\hat{\otimes}K) \rightarrow \mathcal{M}(A\hat{\otimes}K) /\mathrm{Gal}(k^{sep}/k)$ shows the openness of $\pi^K_X$.

Since $\pi^K_X$ is open, $\pi^K_X(U)$ is a neighborhood of $x$ and by proposition \ref{If moves continuously on X_K, then also on X}, the Julia set moves continuously on it. 
\end{proof}

\begin{prop}
Let $k$ be a complete, non-archimedean valued field and $K$ be a complete valued field extension. Let $X$ be a $k$-analytic space without boundary and $f$ a family of rational maps. Let $x\in X$ such that $f_x$ has type I repelling point. Assume that $\Int((\pi^K_X)^{-1}(x)) \neq \emptyset$. Then, the Julia set moves continuously on $\Int((\pi^K_X)^{-1}(x))$.
\end{prop}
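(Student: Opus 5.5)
We want to show that on the open set $W := \Int((\pi^K_X)^{-1}(x))$ of $X_K$ (interior taken in $X_K$) the Julia set moves continuously. We reduce to the criterion of theorem \ref{JuliaSetMovesContinuouslyNonArchi}. We will check that $X_K$ has no boundary, that every $y\in W$ has type I repelling points, and that every indifferent periodic point of $f_{K,y}$ for $y \in W$ is persistently indifferent. Since $X$ has no boundary, $X_K = X\hat{\otimes}K$ has no boundary as well, because the interior is preserved under ground field extension (Berkovich, Proposition 2.5.8). The type I repelling points are inherited from $f_x$: any $y\in W$ satisfies $\pi^K_X(y)=x$, and $\mathcal{H}(x)\hookrightarrow \mathcal{H}(y)$ is an isometric extension. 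So a repelling periodic point of $f_x$ defined over $\widehat{\overline{\mathcal{H}(x)}}$ gives repelling periodic points of $f_{K,y}$ with the same absolute value of the multiplier.

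The main step is the persistence of indifferent points. Take $y \in W$ and an indifferent type I periodic point $t$ of $f_{K,y}$ of period $n$. Consider $C^n_{\mathcal{V}}$ over a small neighborhood $\mathcal{V}$ of $(y,t)$ in $W\times_K \p^{1,an}_K$. Every point $(w,z)$ of $C^n_{\mathcal V}$ lies over $w\in W\subset (\pi^K_X)^{-1}(x)$. Hence $\pi^K(w,z)=(x,\tilde z)$ with $\tilde z$ a type I periodic point of $f_x$ of period dividing $n$, and $|(f^n_{K,w})'(z)| = |(f^n_x)'(\tilde z)|$, since the multiplier is computed in the isometric extension $\mathcal{H}(x) \subset \mathcal{H}(w,z)$.

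The periodic points of $f_x$ of period dividing $n$ form finitely many points of $\p^{1,an}_{\mathcal{H}(x)}$. So on a small enough neighborhood $\mathcal V$ the image $\pi^K(C^n_{\mathcal V})$ is the single point $\pi^K(y,t)=(x,\tilde t)$. Here we use the continuity of $\pi^K$ and the fact that the fiber $\pr_X^{-1}(x)\cap C^n$ is finite and discrete. Therefore $|(f^n_{K,w})'(z)| = |(f^n_x)'(\tilde t)| = |(f^n_{K,y})'(t)| = 1$ for all $(w,z)\in C^n_{\mathcal V}$, which is exactly the definition \ref{Definition of persistently indifferent}. No maximum modulus argument is needed here: over the fiber of $x$ the absolute values of the multipliers are frozen.

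The step I expect to be delicate is the finiteness claim in the previous paragraph. One must separate $\pi^K(y,t)$ from the other points of $C^n$ over $x$ by an open set of $X\times_k\p^{1,an}_k$, and then pull this back. This works because $\pr_X : C^n \to X$ has finite fibers, so $C^n\cap \pr_X^{-1}(x)$ is a finite Hausdorff set. Once this is done, theorem \ref{JuliaSetMovesContinuouslyNonArchi}, applied at each point of $W$ with the neighborhood $W$ itself, gives continuity of the Julia set on $W$.
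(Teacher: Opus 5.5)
Your proof is correct, but its key step differs from the paper's.

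The paper argues by contradiction. If $(y,t)$ were not persistently indifferent, there would be repelling points $(y_n,t_n)\to(y,t)$ over $\Int((\pi^K_X)^{-1}(x))$. Their images $\pi^K(y_n,t_n)=(x,\tilde t_n)$ are repelling periodic points of $f_x$, so they lie in $J_x$. They converge to $(x,\tilde t)$, an indifferent type I point, which lies in the Fatou set. This contradicts the closedness of $J_x$.

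You argue directly instead. You use that $f_x$ has only finitely many periodic points of period dividing $n$ to isolate $\pi^K(y,t)$ in the fibre over $x$. Then a whole neighbourhood of $(y,t)$ in $C^n$ over $W$ maps to that single point, so the absolute value of the multiplier is frozen there.

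Your route gives a bit more:
\begin{itemize}
\item It excludes nearby attracting points together with nearby repelling ones. The paper's contradiction only considers repelling ones and implicitly relies on the maximum modulus principle on $C^l$ to discard the attracting case.
\item It shows directly that every periodic point keeps its type throughout $W$.
\item It needs no dynamical input about $J_x$.
\end{itemize}
You also check explicitly the hypotheses of theorem \ref{JuliaSetMovesContinuouslyNonArchi} that the paper leaves implicit: $X_K$ has no boundary, every $y\in W$ has type I repelling points, and the family is taken over $W$ itself, so the neighbourhoods stay inside the fibre over $x$.

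The paper's argument is shorter. It uses only the standard facts that repelling points lie in the Julia set, indifferent type I points do not, and $J_x$ is closed.
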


\begin{proof}
Let $y \in \Int((\pi^K_X)^{-1}(x))$ and let $(y,t)$ be an indifferent point of period $l$, we need to show that it is persistently indifferent.

If not, then there exists $y_n \in \Int((\pi^K_X)^{-1}(x))$ and $(y_n,t_n)$ repelling points of $f_{y_n}$ with period dividing $l$ and $(y_n, t_n) \rightarrow (y,t)$.

Then, $\pi^K(y_n, t_n) \rightarrow \pi^K(y,t)$ and $\pi^K(y_n, t_n) = (x, \tilde{t_n}), \pi^K(y,t) = (x, \tilde{t})$ with $\tilde{t_n}, \tilde{t} \in \p^{1,an}_{\mathcal{H}(x)}$. But $(x, \tilde{t_n}) \in J_x$ since they are repelling periodic points of $f_x$ and $(x, \tilde{t}) \notin J_x$ since it is an indifferent periodic point of $f_x$, which is a contradiction to the fact that $J_x$ is closed. 

\end{proof}

\subsection{A notion of continuity on \texorpdfstring{$\M_d^{an}$}{}}\label{subsection continuity on M_d^an}

In this section, we will focus on the motion of Julia sets on $\M_d^{an}$. Let us first define this notion.

\begin{defi}\label{definition of continuous motion in M_d^an}
Let $k$ be a complete valued field and let $\M_d^{an}$ be the analytification of $\M_d$ and let $\pi^{an} : \Rat_d^{an} \rightarrow \M_d^{an}$ the analytification of the projection morphism. Let $U$ be an open subset of $\M_d^{an}$, then we say that the Julia set moves continuously on $U$ if the following condition is verified:
\begin{center}
$\forall [f] \in \mathcal{U}, \forall \tilde{f} \in (\pi^{an})^{-1}([f])$ there exists $\tilde{f} \in U_{\tilde{f}} \subset \Rat_d^{an}$ such that the Julia set moves continuously on $U_{\tilde{f}}$.
\end{center}
\end{defi}

One wishes to connect this definition with the notion of continuous motion on $\Rat_d^{an}$. M. Maculan studied the application $\pi^{an}$ in \cite{MaculanGIT} and he showed the following result.

\begin{thm}\label{Theorem of Marco Maculan}(Theorem 1.6, \cite{MaculanGIT})

Let $k$ be a complete valued field, $X$ a $k$-affine scheme and $G$ a reductive, algebraic group acting on $X$. Then the morphism $\pi^{an} : X^{an} \rightarrow (X//G)^{an}$ satisfies the following properties:
\begin{itemize}
\item $\pi^{an}$ is surjective and $G^{an}$-invariant,
\item for every $x, x' \in X^{an}$,
\begin{center}
$\pi^{an}(x) = \pi^{an}(x')$ if and only if $\overline{G^{an}\cdot x} \cap \overline{G^{an}\cdot x'} \neq \emptyset$,
\end{center}
\item for every $U \subset X^{an}$ open, $G^{an}$-stable, the projection $\pi^{an}(U)$ is open. 
\end{itemize}
\end{thm}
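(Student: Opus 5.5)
The statement is Maculan's Theorem 1.6, a result about the analytic GIT quotient, so I would prove it by reduction to algebraic GIT plus a density/compactness argument on Berkovich spaces. Write $X=\Spec A$, so $X//G=\Spec A^G$ with $A^G$ finitely generated (Hilbert--Nagata, using reductivity). I also use the Reynolds operator $R:A\to A^G$, an $A^G$-linear projection; in positive characteristic I would work with geometric reductivity instead.

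\textbf{Invariance and surjectivity.} $G^{an}$-invariance is immediate: for $g\in G^{an}$ and $\phi\in A^G$, $\phi$ evaluated at $g\cdot x$ equals $\phi$ evaluated at $x$, since the action map pulls back invariants to $1\otimes\phi$. For surjectivity, take $y\in(X//G)^{an}$ with residue field $\mathcal{H}(y)$. It corresponds to a $\mathcal{H}(y)$-point of $X//G$. Algebraic GIT says $X\to X//G$ is surjective on geometric points, so after passing to $\widehat{\overline{\mathcal{H}(y)}}$ there is a point of $X$ over it. Its image in $X^{an}$ under projection maps to $y$.

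\textbf{Fibres.} This is the key step. Suppose $\overline{G^{an}x}\cap\overline{G^{an}x'}\neq\emptyset$. By invariance and continuity, $\pi^{an}$ is constant on orbit closures, so $\pi^{an}(x)=\pi^{an}(x')$. For the converse, suppose $\pi^{an}(x)=\pi^{an}(x')=y$. I would base change to a complete algebraically closed field $K$ containing both $\mathcal{H}(x)$ and $\mathcal{H}(x')$ over $\mathcal{H}(y)$, obtaining $K$-points $\xi,\xi'$ of $X$ with the same image in $X//G(K)$. Algebraic GIT gives $\overline{G\xi}\cap\overline{G\xi'}\neq\emptyset$ in the Zariski topology. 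Over an algebraically closed field, the unique closed orbit in $\overline{G\xi}$ is reached by a one-parameter subgroup (Hilbert--Mumford/Kempf), and limits $\lambda(t)\xi$ as $t\to0$ also exist in the Berkovich topology. I would then push this down along $X_K^{an}\to X^{an}$. The main obstacle is showing that closures of $G^{an}$-orbits in $X^{an}$ contain the projections of the $G_K^{an}$-orbit closures, which requires controlling $G^{an}\cdot x$ as a non-rigid point set. My first attempt would be to use the canonical sections of universal points (Poineau) to relate orbits over $k$ and over $K$.

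\textbf{Openness.} Let $U$ be open and $G^{an}$-stable. By the fibre description, $\pi^{an}$ is a topological quotient onto its image: $\pi^{an}$ is surjective and proper-like, since $X^{an}\to(X//G)^{an}$ restricted to a Kempf-type closed subset of \emph{semistable representatives} is proper. I would show that the complement of $\pi^{an}(U)$ is closed. The fibres $F$ with $F\cap U=\emptyset$ are exactly those whose unique closed orbit avoids $U$, since $U$ is open and $G$-stable and every orbit closure in a fibre contains that closed orbit. Choosing a $G$-equivariant closed embedding $X\hookrightarrow V$ into a representation, I would use Kempf's norm-minimizing representatives to get a closed subset $M\subset X^{an}$ with $\pi^{an}|_M$ proper and surjective. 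Then $\pi^{an}(X^{an}\setminus U)\supset\pi^{an}(M\setminus U)$, the latter is closed, and it equals the complement of $\pi^{an}(U)$. Establishing this Kempf--Ness-type section in the non-archimedean setting is the second delicate point.
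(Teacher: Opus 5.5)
The paper gives no proof of this statement; it quotes Maculan. Your proposal therefore has to stand on its own.

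\textbf{First two bullets.} Invariance, surjectivity and the fibre description are fine. The step you single out as the main obstacle in the fibre argument is in fact routine. With $G^{an}\cdot x:=\sigma(\pr_2^{-1}(x))$, where $\sigma,\pr_2\colon (G\times X)^{an}\to X^{an}$ are the action and the projection, consider a pair $(g,\xi)$ with $g\in G(K)$ and $\xi\in X(K)$ induced by $\mathcal{H}(x)\hookrightarrow K$. It defines a point of $(G\times X)^{an}$ lying over $x$, whose image under $\sigma$ is the image of $g\xi$. Continuity of $X_K^{an}\to X^{an}$ then sends the Berkovich closure of $G(K)\xi$ into $\overline{G^{an}\cdot x}$, so no universal points are needed. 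To make $\xi,\xi'$ lie over the same $K$-point of $X//G$, take $K$ to be the completed algebraic closure of $\mathcal{H}(w)$ for some $w\in\mathcal{M}(\mathcal{H}(x)\hat{\otimes}_{\mathcal{H}(y)}\mathcal{H}(x'))$.

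\textbf{The gap in the openness step.} You claim that ``the fibres $F$ with $F\cap U=\emptyset$ are exactly those whose closed orbit avoids $U$.'' This is false in one direction. Openness and $G$-stability of $U$ only give the converse-type statement: if the closed orbit of a fibre meets $U$, then the whole fibre lies in $U$. So even granting a Kempf--Ness set $M$ (closed, meeting each fibre in its closed orbit, with $\pi^{an}|_M$ proper), you get
\[
\pi^{an}(M\setminus U)=\pi^{an}(X^{an}\setminus U)=\{y:\ (\pi^{an})^{-1}(y)\not\subset U\}.
\]
This is in general strictly larger than the complement of $\pi^{an}(U)$. You do not construct such an $M$, and that is where the real work lies. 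What the argument would prove is that $\pi^{an}$ maps closed $G^{an}$-stable sets to closed sets. Hence $\pi^{an}(U)$ is open when $U$ is $\pi^{an}$-\emph{saturated}, i.e.\ $(X//G)^{an}$ carries the quotient topology.

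\textbf{The third bullet as quoted is false.} Since $\pi^{an}(U)=\pi^{an}(G^{an}\cdot U)$, the bullet amounts to saying $\pi^{an}$ is an open map, and this fails when fibre dimension jumps. Let $G=\mathbb{G}_m$ act on $\ana^3$ by $t\cdot(x,y,z)=(tx,t^{-1}y,t^{-1}z)$. Then $\ana^3//G=\ana^2$ with coordinates $u=xy$, $v=xz$. The set $U=\{y\neq 0\}^{an}$ is open and $G^{an}$-stable, but $\pi^{an}(U)=\{u\neq 0\}\cup\{(0,0)\}$. To see this isn't open, take any point with $|u|=0<|v|$. Every preimage has $|x|>0$, hence $|y|=0$, so it is not in $U$. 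Such points accumulate at $(0,0)$.

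So only the saturated version is true, and your corrected argument targets exactly that. It suffices for the paper's use on $\Rat_d$: all $\SL_2$-orbits there are closed, so fibres of $\pi^{an}$ are single orbits and $\SL_2$-stable opens are automatically saturated.
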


Hence, we want to show that if $f \in \Rat^{an}_d$ has a neighborhood where the Julia set moves continuously, then we may take a bigger neighborhood which will be $\SL_2^{an}$-stable. If this is true, it will become equivalent that the Julia sets moves continuously on a neighborhood of $f$ and on a neighborhood of $\pi^{an}(f)$. 

Let us first show how to construct this $\SL_2$-stable neighborhood. 

\begin{prop}\label{Construction of SL2 invariant open}
Let $k$ be a complete valued field and $X$ be an affine $\SL_2$-invariant subscheme of $\Rat_d$. Let $f \in X^{an}$ such that there exists $f\in U$ such that the Julia set moves continuously on $U$ and all $g$ in $U$ have type I repelling points. 

Let us define 
\begin{center}
$W = \lbrace x \in X^{an}, \exists M \in \SL_{2,\mathcal{H}(x)}^{an}, M\cdot x \in U \rbrace.$
\end{center}
Then, $W$ is an open, $\SL_2$-invariant and the Julia set moves continuously on $W$. 
\end{prop}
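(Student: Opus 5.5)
I will prove the three properties of $W$ (openness, $\SL_2$-invariance, continuity of the Julia set) in that order, using the action map
\begin{center}
$\mu : \SL_2^{an} \times_k X^{an} \rightarrow X^{an},\quad (M,x) \mapsto M\cdot x.$
\end{center}
The reformulation that drives everything is that $W = \pr_2(\mu^{-1}(U))$, where $\pr_2 : \SL_2^{an}\times_k X^{an} \rightarrow X^{an}$ is the second projection. A point of $\SL_2^{an}\times_k X^{an}$ lying over $x$ is exactly a choice of $M \in \SL_{2,\mathcal{H}}^{an}$ for $\mathcal{H}$ a completed residue field over $\mathcal{H}(x)$, and these correspond to the points $M\in \SL^{an}_{2,\mathcal{H}(x)}$ in the definition of $W$.

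\textbf{Openness.} Since $\mu$ is continuous, $\mu^{-1}(U)$ is open. The projection $\pr_2$ is open because $\SL_2$ is flat over $k$; over a field the projection of a product of analytic spaces is open (Berkovich). Hence $W$ is open.

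\textbf{Invariance.} If $x\in W$, take $M$ with $M\cdot x\in U$ and let $N$ be any element over $x$. Then $(MN^{-1})\cdot(N\cdot x) = M\cdot x \in U$, after passing to a common field extension and using that points of the fibre product can be compared there. So $N\cdot x\in W$.

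\textbf{Continuity.} By Theorem \ref{JuliaSetMovesContinuouslyNonArchi}, continuity on the open set $W$ reduces to two checks: every $g\in W$ has type I repelling points, and every indifferent periodic point of $g$ is persistently indifferent. Both properties are invariant under conjugation by an $M$ over an extension, because multipliers are conjugation invariant and the Julia set transforms by $M$. They are also compatible with base change by Propositions \ref{If moves continuously on X, then also on X_K} and \ref{If moves continuously on X_K, then also on X}, which compare persistent indifference over $X$ and $X_K$ through the projections $\pi^K$.

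Concretely, let $g\in W$ and let $(g,t)$ be an indifferent periodic point. Suppose, for contradiction, that repelling points $(g_n,t_n)\rightarrow(g,t)$ exist. I lift this configuration to $\SL_2^{an}\times X^{an}\times\p^{1,an}$ near a point $(M,g,t)$ with $M\cdot g\in U$. Applying the conjugation map, which is continuous and sends periodic points to periodic points with the same multipliers, I obtain repelling points converging to an indifferent point of $M\cdot g\in U$. This contradicts persistent indifference on $U$.

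The main obstacle is lifting the converging net $(g_n,t_n)$ to $\SL_2^{an}\times X^{an}\times \p^{1,an}$. This needs the projection forgetting the $\SL_2$ factor to be open (again by flatness), so that points over $g_n$ approach $(M,g,t)$. It also needs an identification of periodic points of $g_n$ with those of $M_n\cdot g_n$ over the larger residue field, which Corollary \ref{Taking the Julia set over any extension does not change it} and $\pi(J_K)=J_k$ (Lemma \ref{Taking a field extension does not change the julia set}) should supply.
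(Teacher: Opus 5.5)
Your proof is correct and has the same skeleton as the paper's. Invariance is formal, and both openness and persistent indifference come from lifting a convergent net through $\SL_2^{an}\times_k X^{an}\to X^{an}$ and then conjugating. What differs is the lifting tool.

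\textbf{The paper's route.} It first uses that $\mathcal{H}(x)$ is non-trivially valued (because $x$ has type I repelling points) to take $M$ to be a rigid point of $\SL^{an}_{2,\mathcal{H}(x)}$. It then invokes an external lemma (Prop.~3.27 of \cite{RoyActionGroupe}) that lifts $x_n\to x$ to $(M_n,x_n)\to(M,x)$ for such rigid $M$. Openness of $W$ is proved by contradiction using angelicity.

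\textbf{Your route.} You note $W=\pr_2(\mu^{-1}(U))$ and read off openness from openness of $\pr_2$. The same openness, at an arbitrary point over $(g,t)$ lying over some $(M,g)$ with $M\cdot g\in U$, gives the subnet-lift needed in the continuity step. This avoids rigid points, angelicity and the external lemma. The type I repelling hypothesis is then only used to apply Theorem~\ref{JuliaSetMovesContinuouslyNonArchi}.

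\textbf{What needs fixing.}
\begin{itemize}
\item Everything rests on openness of $\SL_2^{an}\times_k Y\to Y$, and your justification for it is off. Flatness over a field is automatic, and flatness alone does not give openness in Berkovich geometry: an affinoid domain embedding is flat but not open. What you need is that $\SL_2^{an}$ has no boundary. For instance, $\SL_2$ is smooth, so the projection is locally the composite of an \'etale map to $\ana^{3,an}\times_k Y$ with the projection $\ana^{3,an}\times_k Y\to Y$, and both are open.
\item Persistent indifference requires excluding all non-indifferent periodic points of the same period near $(g,t)$, attracting ones as well as repelling ones. Your conjugation argument handles attracting points verbatim, so just phrase the contradiction hypothesis that way.
\item Identifying periodic points and multipliers of $g_n$ with those of $M_n\cdot g_n$ over a larger field is elementary. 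Periodic points of $g$ are algebraic over $\mathcal{H}(g)$, and multipliers are conjugation invariant. You do not need the base-change results on Julia sets that you cite for this.
\end{itemize}
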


\begin{proof}
First, $W$ is clearly $\SL_2$-invariant. 

Let us show that for $x \in W$, we may find $M \in \SL_{2,\mathcal{H}(x)}^{an}(\widehat{\overline{\mathcal{H}(x)}})$ such that $M\cdot x \in U$.  

Let us denote $\Omega : \SL_2^{an}\times X^{an} \rightarrow X^{an}$ the action of $\SL_2^{an}$. Let $x \in W$. There exists $h\in\SL_{2,\mathcal{H}(x)}^{an}$ such that $(h,x) \in \Omega^{-1}(U)$ and $\Omega^{-1}(U) \cap \pr_2^{-1}(x)$ is an open of $\SL_{2,\mathcal{H}(x)}^{an}$. Since $x$ has repelling type I points, $\mathcal{H}(x)$ is not trivially valued and so the rigid points are dense in $\SL_{2,\mathcal{H}(x)}^{an}$. Hence, there exists $M \in \SL_{2,\mathcal{H}(x)}^{an}(\widehat{\overline{\mathcal{H}(x)}}) \cap (\Omega^{-1}(U) \cap \pr_2^{-1}(x)).$

If $W$ is not open, then there exists $x\in W$ and $x_n \notin W \rightarrow x$ since the Berkovich spaces are angelic (see \cite{PoineauAngelique}).
Let $M\in \SL_{2,\mathcal{H}(x)}^{an}(\widehat{\overline{\mathcal{H}(x)}})$ such that $M\cdot x\in U$. 

By proposition 3.27 of \cite{RoyActionGroupe}, there exists $M_n \in \SL_{2,\mathcal{H}(x_n)}^{an}$ such that $(M_n, x_n) \rightarrow (M,x) \in (\SL_2\times X)^{an}$. Hence, $\Omega(M_n, x_n) \rightarrow \Omega(M,x) \in U$. So, for $n$ big enough $\Omega(M_n, x_n) \in U$ and $x_n \in W$.

Hence, $W$ is a $\SL_2$-invariant open subset of $X^{an}$. It remains to show that the Julia set moves continuously on $W$. So we need to show that for all $g \in W$ any indifferent fixed point is persistently indifferent. 

Let $g \in W$, and $t$ an indifferent point of $g$. Since $g \in W$, there exists $M \in \SL_{2,\mathcal{H}(x)}^{an}(\widehat{\overline{\mathcal{H}(x)}})$ such that $g^M \in U$. Then, $M^{-1}\cdot t$ is an indifferent point of $g^M$. 

Let us assume that $(g_n, t_n) \rightarrow (g,t)$ with $t_n$ a periodic point of $g_n$ of the same period as $t$. Then, by proposition 3.27 of \cite{RoyActionGroupe}, there exists $M_n \in \SL_{2,\mathcal{H}(x_n)}^{an}$ such that $(M_n, g_n, t_n) \rightarrow (M, g, t) \in (\SL_2\times X \times \p^1_k)^{an}$. Then $(g_n^{M_n}, M_n^{-1} \cdot t_n) \rightarrow (g^M, M^{-1}\cdot t)$ and $M_n^{-1} \cdot t_n$ is a periodic point of $g_n^{M_n}$ of same period of $M^{-1}\cdot t$. Since $M^{-1}\cdot t$ is persistently indifferent, for $n$ big enough, $M_n^{-1}\cdot t_n$ is indifferent, hence so is $t_n$ and finally $t$ is persistently indifferent. 
\end{proof}

\begin{prop}\label{Julia moves continuously on Rat_d is the same as in M_d}
Let $k$ be a complete valued field and $X$ be an affine $\SL_2$-invariant subscheme of $\Rat_d$. Let $f \in X^{an}$ such that $f$ has repelling type I points and let $\pi^{an} : X^{an} \subset \Rat_d^{an}\rightarrow (X/\SL_2)^{an}\subset \M_d^{an}$. Then,
\begin{align*}
\mathrm{There~is~a~neighborhood~of~}f \in &X^{an} \mathrm{~such~that~the~Julia~set~moves~continuously}\\
&\iff\\ 
\mathrm{There~is~a~neighborhood~of~}\pi^{an}(f) \in& (X/\SL_2)^{an} \mathrm{~such~that~the~Julia~set~moves~continuously.} 
\end{align*}
\end{prop}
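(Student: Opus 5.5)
The plan is to prove both implications by combining Proposition \ref{Construction of SL2 invariant open} with Maculan's Theorem \ref{Theorem of Marco Maculan}, and to unwind Definition \ref{definition of continuous motion in M_d^an} for the converse.

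\emph{Direction $\Rightarrow$.} Suppose the Julia set moves continuously on an open $U\ni f$ in $X^{an}$.

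First I shrink $U$ so that every $g\in U$ has type I repelling points. This is possible because $f$ has one, and having a type I repelling point is an open condition, as already used in the proof of Theorem \ref{JuliaSetMovesContinuouslyNonArchi}.

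Proposition \ref{Construction of SL2 invariant open} then produces $W=\lbrace x\in X^{an},\ \exists M\in \SL^{an}_{2,\mathcal{H}(x)},\ M\cdot x\in U\rbrace$. This set is open, $\SL_2$-invariant, contains $f$, and the Julia set moves continuously on it.

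By the third point of Theorem \ref{Theorem of Marco Maculan}, $\pi^{an}(W)$ is an open neighborhood of $\pi^{an}(f)$. It remains to check Definition \ref{definition of continuous motion in M_d^an} on $\pi^{an}(W)$: every $\tilde g\in(\pi^{an})^{-1}(\pi^{an}(W))$ must admit a neighborhood on which the Julia set moves continuously.

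\emph{Main obstacle.} The difficulty is that such a $\tilde g$ need not lie in $W$. By the second point of Theorem \ref{Theorem of Marco Maculan}, one only knows that $\overline{G^{an}\tilde g}\cap\overline{G^{an} g}\neq\emptyset$ for some $g\in W$.

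To get around this, I use that $W$ is open and $\SL_2$-invariant. The closure of the orbit of $g$ meets $\overline{G^{an}\tilde g}$ at a point $h$, and I would like $h\in W$. If the orbit of $g$ is closed this is immediate. In general $h\in\overline{G^{an}g}$ need not be in $W$.

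My plan is to work on the stable locus, i.e. restrict to the case where $X$ is the $\SL_2$-stable part of $\Rat_d$, where orbits are closed. This is where the paper's standing hypothesis that $X$ is an affine $\SL_2$-invariant subscheme should enter. Since $\Rat_d$ itself is affine and all its points are stable, orbits in $\Rat_d$ are closed with finite stabilizers. Then $\pi^{an}(\tilde g)=\pi^{an}(g)$ forces $\tilde g\in\overline{G^{an}g}=G^{an}g$ (closedness of orbits should persist after analytification).

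Consequently $\tilde g\in W$ by invariance, and $W$ itself serves as the required neighborhood. I expect justifying closedness of the analytified orbits, or an equivalent saturation statement $(\pi^{an})^{-1}(\pi^{an}(W))=W$, to be the delicate point. I would first try to derive it from Theorem \ref{Theorem of Marco Maculan} together with the properness of the action on $\Rat_d$.

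\emph{Direction $\Leftarrow$.} This is the easy direction. If the Julia set moves continuously on a neighborhood $\mathcal U$ of $\pi^{an}(f)$, then Definition \ref{definition of continuous motion in M_d^an} applied to $[f]=\pi^{an}(f)$ and to $\tilde f=f\in(\pi^{an})^{-1}([f])$ directly gives an open $U_f\ni f$ in $X^{an}$ on which the Julia set moves continuously.
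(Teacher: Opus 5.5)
Your proof follows the paper's: $\Leftarrow$ is the definition applied to $\tilde f=f$, and $\Rightarrow$ takes the $\SL_2$-invariant open set $W$ of Proposition~\ref{Construction of SL2 invariant open} and uses the openness part of Theorem~\ref{Theorem of Marco Maculan} to obtain the neighbourhood $\pi^{an}(W)$ of $\pi^{an}(f)$. The paper stops there and leaves implicit that every point of $(\pi^{an})^{-1}(\pi^{an}(W))$ has a neighbourhood of continuous motion, as Definition~\ref{definition of continuous motion in M_d^an} requires; your saturation step correctly supplies this. That step runs as follows: every point of $\Rat_d$ is GIT-stable, so algebraic orbits are closed. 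The Berkovich orbit of $x$ is then the image of the closed orbit in $X_{\mathcal{H}(x)}$ under the topologically proper map $X^{an}_{\mathcal{H}(x)}\to X^{an}$, hence closed. Maculan's second point therefore makes the fibres of $\pi^{an}$ single orbits, so $(\pi^{an})^{-1}(\pi^{an}(W))=W$.
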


\begin{rem}
Here we only consider familes of rational maps that are defined by algebraic families because it is the setting where the construction of the $M_n$ are defined in \cite{RoyActionGroupe}. Nevertheless smoooth analytic spaces are locally algebraic (see for example, 0.21 of \cite{DucrosModelTheory} which relies on Theorem 3.4.1 of \cite{BerkovichEtaleCohomology}) and so, the result will stand if we have a smooth analytic family of rational maps. 
\end{rem}

\begin{proof}
$\Longleftarrow$ It is clear by definition.

$\Longrightarrow$ Let $f\in U \subset X^{an}$ such that the Julia set moves continuously on $U$. Then, we can define $W$ as in proposition \ref{Construction of SL2 invariant open} and by theorem \ref{Theorem of Marco Maculan} (of Maculan \cite{MaculanGIT}), $\pi^{an}(W)$ is open and contains $\pi^{an}(f)$. 
\end{proof}

\section{Continuity of the Julia set in the hybrid setting}\label{Section continuity in the hybrid settings}
In this section, one will consider $\Rat_d$ as a scheme defined over $\C$. We will work over $\C_{hyb}$. 

As in non-archimedean setting, we define a notion of analytical family of rational map.

\begin{defi}
Let $X$ a $\C_{hyb}$-analytic space. An analytic family of rational maps of degree $d\geq 2$, parametrized by $X$, is an analytic map $f : X \times_{\C_{hyb}} \p^{1,an}_{\C_{hyb}} \rightarrow X\times_{\C_{hyb}} \p^{1,an}_{\C_{hyb}}$ such that $\pr_X = \pr_X \circ f$ where $\pr_X : X\times_{\C_{hyb}} \p^{1,an}_{\C_{hyb}}$ is the projection and for all $x \in X, f_x := f(x, \cdot) : \p^{1,an}_{\mathcal{H}(x)} \rightarrow \p^{1,an}_{\mathcal{H}(x)}$ is a rational map of degree $d\geq 2$.
\end{defi}

Firstly, we present some special cases where we have continuity, not necessarily on a neighborhood but towards a point. 

\begin{prop}\label{if no indifferent point, then julia set moves continuously in the hybrid setting}
Let $f \in (\Rat_d)_0$ be a rational function. Let assume that $f$ has no indifferent points. Then the Julia set moves continuously at $f$, in the sense that if $f_i \rightarrow f$, then 
\begin{center}
$\mathrm{Closure}(\lbrace (f_i, z_i), z_i \in J_{f_i}\rbrace) \cap \pr^{-1}(f) = \lbrace (f,x), x\in J_f\rbrace$ 
\end{center}
with $\pr : \Rat_d\times \p^{1,an} \rightarrow \p^{1,an}$.
\end{prop}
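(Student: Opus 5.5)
The plan is to follow the proof of theorem \ref{JuliaSetMovesContinuouslyNonArchi} and of lemma \ref{If f has no indifferent point, then Julia sets moves continuously}, adapted to $\Rat_d^{hyb}$. Here $f_i$ may lie either in the archimedean part ($\pr(f_i)>0$) or in $(\Rat_d)_0$. I will prove the two inclusions separately. The basic object is the closed analytic subset $C^n\subset \Rat_d^{hyb}\times\p^{1,hyb}$ of points of period dividing $n$. It has finite fibers over $\Rat_d^{hyb}$, so the projection $C^n\to\Rat_d^{hyb}$ is open at points over $f$. This will follow from lemma \ref{morphisme fini sur meme dimension est ouvert}, in the hybrid version of Lemanissier--Poineau quoted in the remark after it.

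\emph{Inclusion $J_f\subset$ limit set.} Since $f$ has no indifferent points, every periodic point is attracting or repelling. First I check that $f$ has repelling points at all; if not, I still need some density statement, and I would use proposition \ref{Repelling points are dense} after passing to $\widehat{\overline{\mathcal{H}(f)}}$ via lemma \ref{Taking a field extension does not change the julia set}. Repelling points are dense in $J_f$ (proposition \ref{Repelling points are dense}), so it is enough to approximate each repelling periodic point $(f,z)$.
\begin{itemize}
\item If $z$ is type I with $|(f^n)'(z)|>1$, this is an open condition on $C^n$. Openness of $C^n\to\Rat_d^{hyb}$ then gives points $(f_i,z_i)\in C^n$ converging to $(f,z)$ with $z_i$ repelling, hence $z_i\in J_{f_i}$. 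In the archimedean fibres this uses the fact that repelling points are in the Julia set.
\item If $z$ is a type II repelling point, proposition \ref{repelling in H_k implies infinitely many} produces infinitely many non-repelling periodic points, hence indifferent or attracting type I points. In fact proposition \ref{2 attracting points implies infinitely many} and the hypothesis show there cannot be such a point, since it would force indifferent points. So $f$ has no periodic points in $H$, and repelling type I points are dense.
\end{itemize}

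\emph{Inclusion limit set $\subset J_f$.} By density of repelling points in each $J_{f_i}$ (in the complex fibres by the classical result, in non-archimedean fibres by proposition \ref{Repelling points are dense}), the closure of $\bigcup J_{f_i}$ equals the closure of the set of repelling periodic points $(f_i,z_i)$. The case needing care is when the periods of these points are unbounded. I will first treat bounded period $n$. Then $(f_i,z_i)\in C^n$, a closed set, so any accumulation point $(f,z)$ lies in $C^n$. By continuity of $|(f^n)'|$ we get $|(f^n)'(z)|\ge 1$. Since $z$ is not indifferent, it is repelling, and so $z\in J_f$. For unbounded periods I follow the end of the proof of theorem \ref{JuliaSetMovesContinuouslyNonArchi}. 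Each attracting cycle of $f$ persists in a neighbourhood, via openness of the strict inequality $|(f^n)'|<1$ on $C^n$. Together with a suitable basin neighbourhood $V$ of the attracting cycles, this shows that for $i$ large, $J_{f_i}$ avoids a fixed neighbourhood of $\p^{1}\setminus J_f$. In the non-archimedean case this complement is exactly the union of basins, by proposition \ref{if classical julia set compact, then one attracting only}: there are no periodic points in $H$, so all Fatou components are attracting. The argument is that $f(\overline V)\subset V$ is an open condition in the family, so $V$ is contained in the Fatou set of $f_i$.

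\emph{Main obstacle.} This last step is the hardest: controlling repelling points of large period and the Fatou set uniformly across the hybrid fibres. Over the archimedean fibres the attracting basins of $f_i$ must contain a uniform neighbourhood coming from the trivially valued limit. I would first try to establish $f(\overline V)\subset V$ with $V$ a finite union of Berkovich discs/affinoids. That inclusion is expressed by finitely many strict inequalities on absolute values of polynomials in the coefficients, hence it is open in $\Rat_d^{hyb}$.
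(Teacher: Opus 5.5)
There is a genuine gap, at two places.

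\textbf{Existence of type I repelling points.} Your exclusion of type II repelling points does not work as written. Proposition \ref{repelling in H_k implies infinitely many} gives infinitely many non-repelling type I periodic points, and under your hypothesis these would all be attracting. Proposition \ref{2 attracting points implies infinitely many} then only returns further non-repelling points. It never produces an indifferent one, so there is no contradiction.

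The missing input is the paper's first step. $\mathcal{H}(f)$ contains $\C$ with the trivial absolute value, so its residue characteristic is $0$. Theorem \ref{NumberAttractingPointsBounded} then bounds the number of attracting cycles by $2d-2$. This bound is what excludes a type II repelling point, since such a point would force infinitely many attracting cycles. It also settles the question you leave open at the start. $f$ has infinitely many type I periodic points and only finitely many non-repelling ones, so it has type I repelling points. Proposition \ref{repelling points of type I are dense in the Julia Set} then gives their density in $J_f$. The paper also deduces from proposition \ref{2 attracting points implies infinitely many} that the only non-repelling periodic point is attracting.

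\textbf{Large periods.} Your bounded-period argument is fine: $C^n$ is closed, $|(f^n)'|$ is continuous, and a limit with $|(f^n)'(z)|\geq 1$ must be repelling. The unbounded-period step is not proved, though. Your trapping-region argument needs every point of $\p^{1,an}_{\mathcal{H}(f)}\setminus J_f$ to be eventually mapped into a trapping neighbourhood of the attracting cycle.

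Proposition \ref{if classical julia set compact, then one attracting only} gives this only when $f$ is defined over a discretely valued subfield $K$ with $\overline{K}$ dense. That fails for a general point of $(\Rat_d)_0$, whose completed residue field can have non-discrete value group. In particular, nothing you cite excludes wandering Fatou components.

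The family trapping region is also only sketched. The attracting cycle lives in $\overline{\mathcal{H}(f)}$, so it is not clear that $f(\overline V)\subset V$ is given by inequalities on polynomials in the coefficients. Properness of $\p^{1,hyb}$ over the base would be a better tool for openness, but the classification issue remains.

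\textbf{How the paper avoids this.} The paper never looks at the Fatou set of $f$. It takes a repelling $(f_i,z_i)$ of period $n_i$ and follows the connected component of $C^{n_i}$ over a neighbourhood of $f$ containing it, down to a point $(f,z_i^f)$ in the fibre over $f$. This point must be repelling: $f$ has no indifferent points, and its attracting point stays attracting on its own component. The paper then argues that the limit $(f,x)$ is a cluster point of the repelling points $(f,z_i^f)$, hence lies in the closed set $J_f$. The case $f_i\in(\Rat_d)_0$ is handled separately by lemma \ref{If f has no indifferent point, then Julia sets moves continuously}.
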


\begin{proof}
By theorem \ref{NumberAttractingPointsBounded}, we see that $f$ has type I repelling points and by proposition \ref{2 attracting points implies infinitely many}, we see that $f$ has exactly one non-repelling point, which is attracting. Furthermore, by proposition \ref{repelling points of type I are dense in the Julia Set}, the type I repelling points are dense in the Julia set. 

By lemma \ref{If f has no indifferent point, then Julia sets moves continuously}, one sees that if we have a family $f_i \in (\Rat_d)_0 \rightarrow f$ then, the Julia set of $f_i$ moves continuously towards the Julia set of $f$.

Let $f_i \in (\Rat_d)_{>0} \rightarrow f$. 

Let $z$ be a repelling point of $f$ and let $V$ be a neighborhood of $(f,z)$ small and $C^n_V$, the closed analytic space defined in the same way as in definition \ref{Definition of the periodic point}. By reducing $V$ we might consider that in the entire connected component of $C^n_V$ that contains $(f,z)$ the points are repelling. Hence, for all $i$ big enough, we may consider $z_i$ a repelling point of $f_i$ such that $(f_i, z_i) \rightarrow (f,z)$.

Hence, we have $\lbrace (f,x), x\in J_f\rbrace \subset \mathrm{Closure}(\lbrace (f_i, z_i), z_i \in J_{f_i}\rbrace) \cap \pr^{-1}(f)$.

Let us now consider $(f_i, z_i)$ with $z_i$ a repelling point of $f_i$. We can assume that $(f_i, z_i)$ converges to some $(f,x)$. We need to show that $x \in J_f$. For each $z_i$ let us denote $n_i$ the period of the cycle and we might look at $C^{n_i}_{U\times \ana^{1,an}}$ where $U$ is a neighborhood of $f$ that contains $f_i$ for $i$ big enough. Then, for each $z_i$ we might look at the connected component where $(f_i, z_i)$ belongs. It crosses $\pr^{-1}(f)$ at some point $(f, z_i^f)$ which is necessarily repelling since all the periodic points of $f$ are repelling.

Then $(f,x)$ is a cluster point of the $(f, z_i^f)$. If not we may find $V, V'$ two open subsets such that $(f,x) \in V, (f, z_i^f)\in V'$ and $V \cap V' = \emptyset$, but for $i$ big enough, it would imply that $(f_i, z_i) \in V'$ which is a contradiction to the fact that $(f_i, z_i) \rightarrow (f,x)$. 

Hence, we have $\mathrm{Closure}(\lbrace (f_i, z_i), z_i \in J_{f_i}\rbrace) \cap \pr^{-1}(f) \subset \lbrace (f,x), x\in J_f\rbrace$.
\end{proof}

\begin{rem}\label{If infinitely many non repelling then no continuous motion}
The condition of the previous proposition implies that there are a finite number of non-repellings points which is a necessary condition to have continuity of the Julia set.

Indeed, since the number of non-repelling points of $f\in \Rat_d(\C)$ is bounded by $2d-2$, there can be at most $2d-2$ non-repelling periodic point for $f\in Rat_{d,0}$. Hence, if the Julia set moves continuously at $f\in \Rat_{d,0}$, either $f$ has one attracting point and all the others are repelling or there is $\leq 2d-2$ indifferent point and no attractiong point. This comes from propositionn \ref{2 attracting points implies infinitely many} which states that if they are an attracting point and an other non repelling point, then they are infinitely many non repelling points in the non-archimedean case.
\end{rem}

\begin{ex}
Let $f(z) = z^2 + c \in (\Rat_2)_0$ with $|c| > 1$. Then the Julia set moves continuously at $f$.
\end{ex}

\begin{proof}
As seen in lemma \ref{z^2+ T has only repelling points}, $f$ has no indifferent point. Therefore, we can use proposition \ref{if no indifferent point, then julia set moves continuously in the hybrid setting} to conclude.
\end{proof}

\subsection{The polynomial case}

Let us focus on polynomials and the filled Julia set. In the same way that the Julia set is "lower semicontinuous", we have that the filled Julia set is "upper semicontinuous". Here, lower semicontinous (resp. upper semicontinuous") means that the limit of the Julia sets is contained in (resp. contains) the limit Julia set. If we work with a countable subfield of $\C$ such as $\Q[i]$, then $\Rat_{d,\Q[i]}^{hyb}$ is metrizable and this notion corresponds to lower semicontinuity with respect to the Haussdorf topology. This part has been inspired by the article of A. Douady, \cite{DouadySemiContinuite}.

Firstly, let us recall that the Julia set is lower semicontinuous in general.
\begin{lem}\label{JuliaSetSemiContinuous}
Let $k$ be a valued field. Let $X$ be a $k$-analytic space (in this case, $k$ is suppose to be complete) or a $k_{hyb}$-flowing, analytic space. Let $f : X\times \p^1 \rightarrow X\times \p^1$ an analytic family of rational maps of degree $d$. 

Let $f_{x_i} \in X \rightarrow f_x\in X$, then 
\begin{center}
$\lbrace (f_x, z), z\in J(f_x)\rbrace \subset \mathrm{Closure}(\lbrace (f_{x_i}, z), z \in J(f_{x_i})\rbrace) \cap \pr_X^{-1}(f_x)$.
\end{center}
\end{lem}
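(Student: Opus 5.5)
The plan is to follow the first half of the argument in Theorem \ref{JuliaSetMovesContinuouslyNonArchi}: reduce to repelling periodic points, which are dense in $J(f_x)$, and show that each of them is a limit of repelling periodic points of the $f_{x_i}$. The right-hand side is closed, so it suffices to show that a dense subset of $\lbrace (f_x,z), z\in J(f_x)\rbrace$ lies in it.

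\textbf{Density reduction.} By Corollary \ref{Taking the Julia set over any extension does not change it} and Lemma \ref{Taking a field extension does not change the julia set}, $J(f_x)=\pi(J_{\widehat{\overline{\mathcal{H}(x)}}})$. By Proposition \ref{Repelling points are dense}, repelling periodic points of type I or II are dense in $J_{\widehat{\overline{\mathcal{H}(x)}}}$. Pushing forward by the continuous map $\pi$, their images are dense in $J(f_x)$. In the hybrid case with $\pr(x)>0$ the fiber is $\p^1(\C)$, and density of repelling points is classical.

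\textbf{Type I repelling points.} Let $z$ be a type I repelling point of period $n$, seen as a point $(x,z)\in C^n_V$ for a small neighbourhood $V$ as in Definition \ref{Definition of the periodic point}. The set $\lbrace |(f^n_y)'(t)|>1\rbrace$ is open, so shrinking $V$ makes every point of $C^n_V$ repelling. The projection $C^n_V\to X$ has finite fibres, hence is open by Lemma \ref{morphisme fini sur meme dimension est ouvert}; for the hybrid flowing case one uses the corresponding openness result of \cite{LemanissierPoineau}. So $\pr_X(C^n_V)$ is a neighbourhood of $x$, and for $i$ large there are points $(x_i,z_i)\in C^n_V$. Each $z_i$ is a repelling periodic point of $f_{x_i}$, hence lies in $J(f_{x_i})$. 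Shrinking $V$ along a neighbourhood basis gives $(x_i,z_i)\to(x,z)$.

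\textbf{Type II repelling points: the main obstacle.} A type II repelling point $\eta$ is not in $C^n_V$. I would first try to bypass it: if $J(f_x)$ contains a type I repelling point, Proposition \ref{repelling points of type I are dense in the Julia Set} says type I repelling points are already dense, and the previous step finishes the proof.

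Otherwise I would approximate $\eta$ by classical periodic points. As in the proof of Proposition \ref{repelling in H_k implies infinitely many}, after conjugating $\eta$ to the Gauss point, $\tilde f^n$ has degree $\ge 2$. Its reduction then has infinitely many periodic points in residue classes, so there are type I periodic points of $f_x$ in every direction at $\eta$. I would choose those that are repelling, which holds for reduction periodic points with multiplier of absolute value $1$ in the residue field, giving points converging to $\eta$ only within the closed-disc picture. If this is not available, I would instead argue directly with the characterization of $J$ by non-omission: if $(x,\eta)$ had a neighbourhood $W$ missing all $J(f_{x_i})$, then the family of iterates of $f_{x_i}$ on the fibre of $W$ would omit three fixed nearby points, uniformly in $i$. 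Passing to the limit, via continuity of $f^m$ on $X\times\p^1$, would show that the forward images of a neighbourhood of $\eta$ under $f_x$ omit an open set, contradicting $\eta\in J(f_x)$. This uniform-omission argument is the step I expect to require the most care.
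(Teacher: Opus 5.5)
Your density reduction and type I step are sound; the openness of $C^n_V\to X$ is used the same way in the paper. They prove the lemma whenever $f_x$ has a type I repelling periodic point over $\widehat{\overline{\mathcal{H}(x)}}$. The gap is the remaining case, and it is not marginal. It contains every $f_x$ with potential good reduction, whose Julia set is a single type II point, and every $f_x$ whose Julia set lies in $H_k$ (as for Luo's limit maps). That is exactly the degenerating hybrid situation $x\in X_0$, $x_i\in X_{>0}$, where the nearby fibres are $\p^1(\C)$ and contain no type II points to follow.

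Neither of your fallbacks works there:
\begin{itemize}
\item \emph{Residue-class periodic points.} The type I periodic points you get in good residue classes at $\eta$ satisfy $f^m(D(a,1))\subset D(a,1)$, hence $|(f^m)'|\le 1$. A reduced multiplier of absolute value $1$ means indifferent, not repelling; compare the proof of Proposition~\ref{repelling in H_k implies infinitely many}. These points are in the Fatou set. Moreover, in this branch $f_x$ has no type I repelling points at all, by assumption.
\item \emph{Uniform omission.} This argument is circular. From $W_{x_i}\subset\mathcal{F}(f_{x_i})$ you only learn that $\bigcup_m f^m_{x_i}(W_{x_i})$ omits $J(f_{x_i})$. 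That set has empty interior and moves with $i$. To pass to the limit you need one fixed open set omitted uniformly in $i$ and $m$. Controlling where the sets $J(f_{x_i})$ sit as $i$ varies is exactly the content of the lemma.
\end{itemize}

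The missing idea is to replace periodic points by a global object that varies continuously: the equilibrium measure. This is the paper's proof. By Poineau's theorem (Th\'eor\`eme 6.26 of \cite{PoineauDynamique}), $y\mapsto\mu_{f_y}$ is continuous in both the Berkovich and hybrid settings, and $J(f_y)=\mathrm{supp}\,\mu_{f_y}$. Supports are lower semicontinuous under weak convergence. If $(x,z)$ is in the support and $W$ is a neighbourhood of it, choose a continuous $\varphi\ge 0$ with compact support in $W$ and $\int\varphi\,d\mu_{f_x}>0$. Then $\int\varphi\,d\mu_{f_{x_i}}>0$ for $i$ large, so $W$ meets $\{x_i\}\times J(f_{x_i})$. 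This handles type II points, and the passage from complex fibres to the non-archimedean one, with no local analysis of periodic points.
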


\begin{proof}
This is a direct consequence of Poineau's theorem (see Théorème 6.26 \cite{PoineauDynamique}). This theorem states that the family of equilibrium measure associated to each $f_y$ is continuous and since the Julia set of $f_y$ is the support of the equilibrium measure of $f_y$, the lower semicontinuity follows.
\end{proof}

Let us now recall the definition of the filled Julia set.
\begin{defi}\label{def filled julia set, escape radius}
Let $k$ be a complete valued field and $f$ be a polynomial of degree $d \geq 2$ defined over $k$. Then one defines the filled Julia set as a closed subset of $\p^{1,an}_k$ in the following manner :
\begin{center}
$K_f := \lbrace x \in \p^{1,an}_k, f^{\circ n}(x) \nrightarrow \infty \rbrace.$
\end{center}

We can see $\mathcal{P}_d$, the set of polynomials of degree $d$ as a closed subscheme of $\Rat_d$.

For each $f \in \mathcal{P}_d$, one can define the escape radius 
\begin{center}
$R_f := \frac{1 + |a_d| + \cdots +|a_0|}{|a_d|} = \eta_f(\frac{b_0^{2d}}{\rho})^{\frac{1}{d}} + \eta_f(\frac{a_d^d b_0^d}{\rho})^{\frac{1}{d}}+ \cdots + \eta_f(\frac{a_0^d b_0^d}{\rho})^{\frac{1}{d}}$.
\end{center}
It defines a continuous function $\mathcal{P}_d \rightarrow \R_+$. Furthermore, $K_f = \lbrace x \in \p^{1,an}, \forall n ~ |T(f^{\circ n}(x))|\leq R_f\rbrace$, where we see $f^{\circ n}(x)$ as an element of $\p^{1,an}$.
\end{defi}

\begin{prop}(Theorem 8.15, (f), of \cite{BenedettoLivre})

Let $k$ be a complete valued field and $f$ be a polynomial of degree $d \geq 2$ defined over $k$. Then, 
\begin{center}
$J_f = \delta K_f$.
\end{center}
\end{prop}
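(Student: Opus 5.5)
The statement to prove is that $J_f = \delta K_f$ for a polynomial $f$ of degree $d\geq 2$ over a complete valued field $k$. I would work in $\p^{1,an}_k$ and use the characterization of $J_f$ by non-equicontinuity, i.e. by the property that forward images of neighborhoods omit only finitely many points. Since $\infty$ is totally invariant, any such forward images omit at most the exceptional set. By Lemma \ref{Taking a field extension does not change the julia set} and Proposition \ref{Taking an algebraic field extension does not change the julia set}, the sets $J_f$ and $K_f$ are compatible with passage to $\widehat{\overline{k}}$. So I would first reduce to $k$ algebraically closed, checking that $K_f$ pulls back to $K_f$ (which is immediate from the escape-radius description in Definition \ref{def filled julia set, escape radius}).

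\textbf{Step 1: the basin of infinity.} The escape radius shows that $\{|T|>R_f\}$ is mapped into itself and converges uniformly to $\infty$. Hence the basin $A_f=\p^{1,an}_k\setminus K_f$ is open, totally invariant, and contained in the Fatou set. This gives $J_f\subset K_f$. Since $J_f$ is also disjoint from the open set $\Int(K_f)$, on which all iterates are bounded by $R_f$, every point of $\Int(K_f)$ is a Fatou point: there the images $f^{\circ n}(U)$ stay in $\{|T|\le R_f\}$ and so omit the open set $\{|T|>R_f\}$, which is infinite. Therefore $J_f\subset K_f\setminus \Int(K_f)=\delta K_f$.

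\textbf{Step 2: the reverse inclusion.} Let $x\in\delta K_f$ and let $U$ be a neighborhood of $x$. Then $U$ meets $A_f$, where the iterates tend to $\infty$. It also meets $K_f$ at $x$, whose orbit stays bounded by $R_f$. The goal is to show that $\bigcup_n f^{\circ n}(U)$ omits only finitely many points. I would take $U$ to be a connected open set, since such sets form a basis in Berkovich spaces. The union $\bigcup_n f^{\circ n}(U)$ is then open, because $f$ is open by Lemma \ref{morphisme fini sur meme dimension est ouvert}. It contains points of arbitrarily large $|T|$, and also points of $K_f$ in the compact region $\{|T|\le R_f\}$. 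If it omitted infinitely many points, $x$ would be a Fatou point. In the Berkovich setting the Fatou set is exactly where the iterates are locally equicontinuous (Benedetto's book identifies the two). On a connected Fatou neighborhood, a point whose orbit tends to $\infty$ forces, by equicontinuity in the chordal metric and connectedness, the whole neighborhood to tend to $\infty$. This contradicts $x\in K_f$.

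\textbf{Main obstacle.} The delicate step is the last one: transferring "the orbit of one point goes to $\infty$" to the whole connected Fatou component in the non-archimedean Berkovich setting. There, equicontinuity is not given by Montel's theorem in the classical form. I would try to handle it via the Fatou component classification: the component containing a point of $A_f$ is contained in $A_f$, because $A_f$ is itself a union of Fatou components. That $A_f$ is open and closed in the Fatou set follows from the uniform escape on $\{|T|>R_f\}$ together with the continuity of $x\mapsto |T(f^{\circ n}(x))|$. Failing that, I would simply cite Benedetto's Theorem 8.15 for this fact.
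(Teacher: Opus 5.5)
The paper gives no argument for this statement; it simply quotes Benedetto's Theorem 8.15(f), which is your last-resort fallback. Your Step 1 is fine: $\{|T|>R_f\}$ is forward invariant and $f^{\circ n}(\Int K_f)\subset K_f\subset\{|T|\le R_f\}$, so $J_f\subset\delta K_f$ follows directly from the omitted-values definition. The reverse inclusion, however, is the whole content of the statement, and you have not established it.

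Your first justification uses equicontinuity of the iterates in the chordal metric, which does not make sense on $\p^{1,an}_k$. No metric inducing the Berkovich topology restricts to the chordal metric on $\p^1(k)$. For instance, over an algebraically closed $k$, type I points in pairwise distinct residue classes of the unit disk are at mutual chordal distance $1$, yet they converge to $\eta_{0,1}$. Moreover, $\p^{1,an}_k$ need not even be metrizable (e.g.\ $\C$ trivially valued). Such non-classical points do occur in $\delta K_f$: for a polynomial with good reduction, $K_f=\overline{D^{an}(0,1)}$ and $\delta K_f=J_f=\{\eta_{0,1}\}$. Going from the paper's omitted-values definition of $J_f$ to any equicontinuity statement is also a genuine Montel-type theorem, not a formality.

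Your fallback is circular. Since $A_f=\p^{1,an}_k\setminus K_f$, saying that $A_f$ is closed in the Fatou set is equivalent to $\delta K_f\cap(\p^{1,an}_k\setminus J_f)=\emptyset$, which is exactly what you want to prove. Continuity of each $x\mapsto|T(f^{\circ n}(x))|$ and uniform escape on $\{|T|>R_f\}$ only show that $A_f=\bigcup_n\{|T\circ f^{\circ n}|>R_f\}$ is open. Closedness at a Fatou point of $\delta K_f$ needs control uniform in $n$ on a neighborhood, and that is precisely the missing input.

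Any of the following supplies it.

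(i) Use the canonical uniform structure of the compact space $\p^{1,an}_k$ instead of a metric, and import the theorem identifying the Fatou set with the equicontinuity locus for it. Your clopen argument then works verbatim, using an entourage $E$ with $E[\{|T|\ge R'\}\cup\{\infty\}]\subset\{|T|>R_f\}\cup\{\infty\}$ for some $R'>R_f$.

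(ii) Observe that $A_f$ is connected: it is the increasing union of the sets $f^{-n}(\{|T|>R_f\}\cup\{\infty\})$, which are connected because $\infty$ is its own only preimage. Then use Rivera-Letelier's result that every point of the Fatou component containing the superattracting point $\infty$ is attracted to $\infty$. Hence $A_f$ is itself a Fatou component, and $\delta K_f=\partial A_f\subset J_f$.

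(iii) Closest to the paper's viewpoint $J_f=\mathrm{supp}\,\mu_f$, use the Green function $G_f=\lim_n d^{-n}\log\max(1,|T\circ f^{\circ n}|)$. It is continuous, nonnegative, and vanishes exactly on $K_f$. It is harmonic on $\ana^{1,an}_k\setminus J_f$, since its Laplacian is $\mu_f$ minus a Dirac mass at $\infty$ (up to sign convention). If $x\in\delta K_f$ had a connected neighborhood $V\not\ni\infty$ disjoint from $J_f$, the minimum principle would force $G_f\equiv 0$ on $V$, contradicting $V\cap A_f\neq\emptyset$.

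Incidentally, your reduction to $\widehat{\overline k}$ is not needed. If you keep it, transporting $\delta K_f$ along $\pi$ requires $\pi$ to be open, not just continuous.
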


\begin{prop}
Let $k$ be a complete valued field or a hybrid field, then the filled Julia set is upper semicontinuous on $P^{an}_d$. 
\end{prop}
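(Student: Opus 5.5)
The plan is to prove upper semicontinuity in the following sense. Take a family $f_i \rightarrow f$ in $\mathcal{P}_d^{an}$, either over a complete field or over the hybrid field. We want
\begin{center}
$\mathrm{Closure}(\lbrace (f_i, z), z\in K_{f_i}\rbrace) \cap \pr^{-1}(f) \subset \lbrace (f,z), z\in K_f\rbrace$.
\end{center}
The tool is the characterization in definition \ref{def filled julia set, escape radius}. There $K_g = \lbrace x, \forall n~|T(g^{\circ n}(x))|\leq R_g\rbrace$, and $R_g$ is a continuous function of $g$ on $\mathcal{P}_d^{an}$. So the whole family is described by countably many non-strict inequalities between functions that are continuous in the pair $(g,x)$.

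\textbf{Step 1: escape in finitely many steps is open.} For each $n$, the map $(g,x) \mapsto |T(g^{\circ n}(x))|$ is continuous on $\mathcal{P}_d^{an}\times \ana^{1,an}$, since $g^{\circ n}(x)$ is a polynomial in $T$ whose coefficients are analytic functions of $g$. It takes values in $[0,+\infty]$ once we allow the point $\infty$ in $\p^{1,an}$. Hence the set
\begin{center}
$\mathcal{E}_n := \lbrace (g,x), |T(g^{\circ n}(x))| > R_g\rbrace$
\end{center}
is open, and its complement is closed. Near $\infty$ we use the chart $1/T$ to get continuity with value $+\infty$. Also $\infty$ itself lies in every $\mathcal{E}_n$, because it is fixed and escapes.

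\textbf{Step 2: the filled Julia sets form a closed family.} The set $\mathcal{K} := \lbrace (g,x), x\in K_g\rbrace$ equals $\bigcap_n (\mathcal{E}_n)^c$. It is therefore closed in $\mathcal{P}_d^{an}\times \p^{1,an}$, or in the hybrid product. Now let $(f,z)$ be a limit of points $(f_i,z_i)$ with $z_i\in K_{f_i}$. Each $(f_i,z_i)$ lies in $\mathcal{K}$, and $\mathcal{K}$ is closed, so $(f,z)\in\mathcal{K}$, that is, $z\in K_f$. This gives the upper semicontinuity.

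\textbf{Step 3: checking the escape-radius identity uniformly.} For this to work, the criterion "$|T(g^{\circ n}(x))|> R_g$ for some $n$ implies $x$ escapes" must hold over every residue field $\mathcal{H}(g)$. This covers non-archimedean fields, trivially valued fields, and archimedean fibers with $|\cdot|^\varepsilon$. Conversely, points of $K_g$ must never exceed $R_g$. The standard estimate goes as follows: if $|y|>R_g$, then $|g(y)| \geq |a_d||y|^d - \sum_{j<d}|a_j||y|^j > |y|$, with the sum replaced by a max in the ultrametric case. This growth forces escape. The identity is stated in definition \ref{def filled julia set, escape radius}, so it may be invoked; still, this is the step I expect to need care. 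In the hybrid case the absolute value on the fiber $\varepsilon$ is $|\cdot|^\varepsilon$, which is not archimedean-normalized for $\varepsilon<1$. So I would check that the triangle-inequality estimate above still works, or else replace $R_g$ by a suitable variant like $\max(1,\dots)$ that is still continuous.

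The remaining topological point is the behaviour at $\infty$ in the hybrid space. It is handled by working on $\p^{1}$ covered by the two charts and noting that $(\mathcal{E}_n)^c$ stays inside the compact set $\lbrace |T|\leq R_g\rbrace$. That compact set lies in the affine chart.
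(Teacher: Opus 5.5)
Your proposal is correct and follows the paper's argument: the paper takes a net $(f_i,z_i)\to(f,z)$ with $z_i\in K_{f_i}$ and passes the inequalities $|F^{\circ n}(f_i,z_i)|\le R_{f_i}$ to the limit, using continuity of $|F^{\circ n}|$ and of $g\mapsto R_g$. This is the same as your closedness of $\bigcap_n(\mathcal{E}_n)^c$. Your extra checks go through, and the paper passes over them silently: you handle the point $\infty$, and you verify the escape estimate on every fiber, which works because $|\cdot|^\varepsilon$ still satisfies the triangle inequality when $\varepsilon\le 1$.
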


\begin{proof}
Let $f \in \mathcal{P}_d^{an}$ and let $f_i \rightarrow f$. Let $z\in \p^{1,an}_{\h(f)}$ such that there exists $z_i \in K_{x_i}$ and $(f_i, z_i) \rightarrow (f,z)$. Let $U$ be a neighborhood of $(f,z)$ such that the coefficients $a_i, b_i$ of $f$ are elements of $\mathcal{O}(U)$, then $F^{\circ n} : (f,z) \mapsto f^{\circ n}(z) \in \mathcal{H}(f,z)$ is an element of $\mathcal{O}(U)$ and for $i$ big enough, $(f_i, z_i) \in U$. 

Then, $|F^{\circ n}(f_i, z_i)| \rightarrow |F^{\circ n}(f, z)|$ and $|F^{\circ n}(f_i, z_i)|\leq R_{f_i}$. Since $R_{f_i} \rightarrow R_f$, one gets $z \in K_f$. 
\end{proof}

\begin{cor}
Let $f \in (\mathcal{P}_d)_0$ be a polynomial of degree $d\geq 2$ such that $K_f$ has empty interior, then the Julia set moves continuously at $f$.
\end{cor}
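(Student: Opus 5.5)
The argument is a sandwich between lower semicontinuity of $J$ and upper semicontinuity of $K$. We have to show that for any net $f_i \rightarrow f$ in $\mathcal{P}_d^{an}$ (or in the hybrid space; the arguments above are written for both) we have
\begin{center}
$\mathrm{Closure}(\lbrace (f_i, z), z \in J_{f_i}\rbrace) \cap \pr^{-1}(f) = \lbrace (f,x), x\in J_f\rbrace$.
\end{center}

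The inclusion $\supset$ is exactly lemma \ref{JuliaSetSemiContinuous}, which follows from Poineau's continuity of equilibrium measures. It needs no hypothesis on $f$, so nothing more is required for this direction.

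For $\subset$, take $(f,z)$ in the closure, so $(f_i,z_i)\rightarrow (f,z)$ along a subnet with $z_i \in J_{f_i}$. By the preceding proposition (Theorem 8.15 (f) of \cite{BenedettoLivre}) we have $J_{f_i} = \delta K_{f_i} \subset K_{f_i}$, since $K_{f_i}$ is closed. Hence $z_i \in K_{f_i}$, and the upper semicontinuity of the filled Julia set proved just above gives $z \in K_f$. The hypothesis now enters. Because $K_f$ has empty interior in $\p^{1,an}_{\mathcal{H}(f)}$ and is closed, it equals its own boundary, so $K_f = \delta K_f = J_f$. Therefore $z \in J_f$, which proves the reverse inclusion.

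The main point to check is that the upper semicontinuity proposition really applies to nets coming from both fibres, $(\mathcal{P}_d)_0$ and $(\mathcal{P}_d)_{>0}$, of the hybrid space. It relies only on the continuity of $(g,w)\mapsto |F^{\circ n}(g,w)|$ on a neighborhood $U$ of $(f,z)$ in the hybrid space, together with the continuity of the escape radius $R_g$, and both hold there. One should also make sure that the bound $|T(f_i^{\circ n}(z_i))| \leq R_{f_i}$ passes to the limit uniformly in $n$. This is fine because the limit is taken for each fixed $n$ separately, and the characterization $K_f=\lbrace x, \forall n,\ |T(f^{\circ n}(x))|\le R_f\rbrace$ then gives $z\in K_f$.

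Finally, the notion of interior and boundary has to be taken in the fibre $\p^{1,an}_{\mathcal{H}(f)}$, where Benedetto's $J_f=\delta K_f$ holds. With that convention the argument closes.
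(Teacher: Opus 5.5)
Your proof is correct and is the intended argument. The paper gives no written proof of this corollary; it follows immediately from the lower semicontinuity of $J$ (Lemma \ref{JuliaSetSemiContinuous}), the upper semicontinuity of $K$ proved just before it, and $J_f=\delta K_f$, which under the empty-interior hypothesis yields $K_f=J_f$. Your sandwich argument is exactly this.
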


\begin{rem}
The case where $K_f$ has empty interior is the case where $f$ has only one non-repelling point which is attracting. Furthermore, in this case the classical Julia set is non-empty and compact, by Trucco \cite{TruccoWanderingFatouComp} if $K$ is the algebraic closure of a discretely valued field.  

In fact, Trucco showed that if $f \in (P_d^{an})_0$, then the Julia set is the disjoint union of the grand orbit of a finite number of repelling type II points and the classical Julia set (Theorem A, \cite{TruccoWanderingFatouComp}). If there is a type II repelling point, then there are infinitely many non-repelling points, hence there cannot be a continuous motion of the Julia sets, see remark \ref{If infinitely many non repelling then no continuous motion}.

Furthermore, he proved that in the polynomial case, the fact that the classical Julia set is non-empty and compact is equivalent to the fact that all the non-infinity periodic points are repelling (Corollary 7.15, \cite{TruccoWanderingFatouComp}). Hence, this implies the following result. 
\end{rem}

\begin{prop}
Let $f \in (\mathcal{P}_d)_0$ be a polynomial of degree $d \geq 2$. Then, the Julia sets moves continuously at $f$ if and only if $f$ has only repelling points and $\infty$ as an attracting point. 
Furthermore if $\mathcal{H}(f)$ is discretely valued, then it is also equivalent to the classical Julia set of $f$ being compact in $\p^1(\hat{\overline{\mathcal{H}(f)}})$.
\end{prop}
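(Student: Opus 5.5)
The proof splits into the two directions of the first equivalence, followed by the discretely valued addendum. Throughout, "the Julia set moves continuously at $f$" means continuity towards $f$, in the sense of proposition \ref{if no indifferent point, then julia set moves continuously in the hybrid setting}: for every net $f_i \rightarrow f$ in the hybrid space $\mathcal{P}_d^{hyb}$, the closure of $\bigcup J_{f_i}$ meets $\pr^{-1}(f)$ exactly in $J_f$.

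\emph{Sufficiency.} Assume every finite periodic point of $f$ is repelling and $\infty$ is the only non-repelling one. Then $f$ has no indifferent periodic points. Proposition \ref{if no indifferent point, then julia set moves continuously in the hybrid setting} then applies directly and gives continuity at $f$ along nets in both $(\mathcal{P}_d)_0$ and $(\mathcal{P}_d)_{>0}$. As an alternative route, the upper semicontinuity of $K_f$ together with lemma \ref{JuliaSetSemiContinuous} also works. By Trucco (Corollary 7.15 and Theorem A of \cite{TruccoWanderingFatouComp}), the hypothesis gives $J_f = J_{f,classical} = K_f$, so $K_f$ has empty interior. The corollary above then yields the conclusion as well.

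\emph{Necessity.} Assume the Julia set moves continuously at $f$, and suppose some finite periodic point is non-repelling. The aim is a contradiction with remark \ref{If infinitely many non repelling then no continuous motion}. Since $f$ is a polynomial, $\infty$ is a superattracting fixed point. Proposition \ref{2 attracting points implies infinitely many} then gives a repelling periodic point in $H_{\mathcal{H}(f)}$ and hence infinitely many non-repelling periodic points. These non-repelling points persist in a neighbourhood, and each must be a limit of non-repelling periodic points of nearby complex maps $f_i \in (\mathcal{P}_d)_{>0}$. But each complex $f_i$ has at most $d-1$ non-repelling finite cycles. This is the step I expect to be the main obstacle: making that limiting argument rigorous, by approximating $f$ by complex polynomials and controlling the periods of the non-repelling cycles. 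I would first try the following. Take the type II repelling point given by Trucco's Theorem A; it lies in $J_f$ and is not of type I. Then show directly that it cannot be a limit of points of $J_{f_i}$ with $f_i$ complex, in the spirit of the introduction's remark that type I points are needed. Concretely, I would use that the fibres over $\pr>0$ contain only points of $\p^1(\C)$, and that the flow sends limits of type I points to points $\eta_{0,r}$ on the trajectory.

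\emph{Addendum.} For the discretely valued case, I would apply proposition \ref{if classical julia set compact, then one attracting only} with $\C_v = \widehat{\overline{\mathcal{H}(f)}}$. It gives: the classical Julia set is non-empty and compact if and only if $f$ has no periodic points in $H$ and no type IV points in $J_f$, and in that case there is one attracting point and all others are repelling. Conversely, if all finite periodic points are repelling, Trucco's Corollary 7.15 directly gives $J_f = J_{f,classical}$, which is compact. Lemma \ref{compact of k are compact in berkovich} is then used to pass between compactness in $\p^1$ and closedness in $\p^{1,an}$.
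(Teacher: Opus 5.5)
Your sufficiency direction is the paper's (Proposition~\ref{if no indifferent point, then julia set moves continuously in the hybrid setting}). Your addendum is also essentially the paper's: Trucco's Corollary 7.15 with Lemma~\ref{compact of k are compact in berkovich}, and using Proposition~\ref{if classical julia set compact, then one attracting only} for the implication from compactness is an equally valid substitute.

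For necessity you take a different and shorter route. The paper argues as follows:
\begin{itemize}
\item continuity leaves only finitely many non-repelling points (Remark~\ref{If infinitely many non repelling then no continuous motion});
\item hence there is no type II repelling periodic point (Proposition~\ref{repelling in H_k implies infinitely many});
\item hence the classical Julia set is compact, by Theorem A of \cite{TruccoWanderingFatouComp};
\item Corollary 7.15 then implicitly converts this into the periodic-point condition.
\end{itemize}
You instead use that $\infty$ is superattracting, so a single finite non-repelling periodic point already forces infinitely many, via Proposition~\ref{2 attracting points implies infinitely many}. This bypasses Trucco's results in this direction, so no tameness or discreteness assumption enters, and it yields the periodic-point condition directly. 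Both routes rest on the same remark.

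What needs correcting is your plan for the step you flag as the obstacle. The ``first try'' cannot work. By Lemma~\ref{JuliaSetSemiContinuous}, for every net $f_i\to f$, complex nets included, every point of $J_f$ lies in the closure of $\bigcup_i J_{f_i}$. This includes the type II repelling point you want to use. Your picture, in which limits of complex points are confined to trajectories of points $\eta_{0,r}$, does not hold over the base $\mathcal{P}_d^{hyb}$. There the fibre above $f$ is $\p^{1,an}_{\mathcal{H}(f)}$ with $\mathcal{H}(f)$ non-trivially valued, and complex Julia points do accumulate on its type II points. Since the inclusion of $J_f$ in the limit of the $J_{f_i}$ always holds, a contradiction can only come from Julia points of the $f_i$ accumulating on a Fatou point of $f$.

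That is exactly your first sketch, and it needs no control of periods:
\begin{itemize}
\item Take $d$ distinct finite non-repelling cycles of $f$, with points $z_j$ of period $n_j$. These are type I Fatou points.
\item Continuity at $f$ therefore gives neighbourhoods $N$ of $f$ and $W_j$ of $(f,z_j)$ such that $W_j$ is disjoint from the Julia set of every $g\in N$. Otherwise one can build a net violating continuity.
\item The projection of $C^{n_j}$ to the base is finite and flat, since the leading coefficient of $g^{n_j}(z)-z$ is a power of $a_d$. Hence it is open.
\item So every complex $g\in N$ close enough to $f$ has a point $w_j$ with $(g,w_j)\in W_j$ and $g^{n_j}(w_j)=w_j$. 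Such $g$ exist because $(\mathcal{P}_d)_{>0}$ is dense in $\mathcal{P}_d^{hyb}$. Each $w_j$ is non-repelling because it lies outside $J_g$.
\item By continuity of $(g,w)\mapsto g^m(w)$ for $m<n_j$, the $g$-cycles of $w_1,\dots,w_d$ stay near the $d$ pairwise disjoint cycles of $f$, so they are distinct.
\end{itemize}
Then $g$ is a complex polynomial of degree $d$ with $d$ finite non-repelling cycles, contradicting the Fatou--Shishikura bound $d-1$.
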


\begin{proof}
The second if and only if comes directly from the corollary 7.15 of Trucco, \cite{TruccoWanderingFatouComp}, and lemma \ref{compact of k are compact in berkovich}. 

The first implication comes from theorem A of Trucco, \cite{TruccoWanderingFatouComp}, since it shows that the classical Julia set is not compact if and only if $f$ has a type II repelling point which implies that it has infinitely many non-repelling points.

The converse implication is proposition \ref{if no indifferent point, then julia set moves continuously in the hybrid setting}. 
\end{proof}

\subsection{A Continuous motion in the hybrid setting}

As in the non-archimedean case, we will try to find some conditions to ensure that if $f$ is a family of rational maps then, the Julia sets move continuously on some open subset. 

If $f_x$ is a element such that $x\in X_{>0}$, then we might find an open subset $U \subset X_{>0}$ and the continuity on a neighborhood of $X$ and the Julia sets will move continuously if they are stable in the sense of Mané-Sad-Sullivan, \cite{MSS}. 

Since we can study the continuity in the archimedean setting and in the non-archimedean setting, we could think that the Julia sets will move continuously if there is a neighborhood $U$ of $x$ such that the Julia set moves continuously on $U \cap X_{>0}$ and on $X \cap X_0$. Unfortunately, it is not true. 

\begin{ex}\label{continuity in hybrid is not continuity on U_0 and U_>0}
Let $t \in \C$ with $|t|<1$. Let $f_t(z) = tz^3 + z^2$, then the critical point of $f_t$ are $z_t = 0$ and $z_t = -\frac{2}{3t}$, hence for $|t|<1$ the critical points are never in the Julia set. So the Julia moves continuously, in the archimedean case, by the caracterisation of the continuity of the Julia set with the crititcal points (see for example Theorem 4.2, 7. of \cite{McMullenComplexDynamicsAndRenormalization}). 

We saw in lemma \ref{Tz^3 + Z^2 moves continuously} that the Julia set moves continuously on $D^{an}(0,1)_{(\C, |\cdot|_0)}$. But if we take $f_T \in (Rat_d)_0$ it has infinitely many non-repelling points, so the Julia set cannot move continuously in the hybrid case since in the archimedean case, there is only finitely many non-repelling points.  
\end{ex}

We will focus on the continuity motion on a flowing neighborhood since we want to apply it on the compactification $\Rat_d^\urcorner$, as defined by Poineau \cite{PoineauCompactificationHybride}, of $\Rat_d(\C)$. But this condition is superfluous.

\begin{lem}\label{Julia set is invariant through the flow}
Let $\C$ be equipped with the hybrid norm and $X$ a flowing $\C_{hyb}$-analytic space. Let $f$ be an analytic family of rational maps. Then, the Julia set is inviariant through the flow.

In the sense that if $(x,z) \in J_x$ with $J_x$ embedded in $X\times \p^{1,hyb}$, then for any $\alpha \in (0, \frac{1}{\pr (x,z)}, (x,z)^\alpha \in J_{x^\alpha}$.
\end{lem}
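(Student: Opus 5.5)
The plan is to reduce the statement to the fact that the flow acts on each fiber as a change of normalisation of the absolute value, under which the dynamics of $f_x$ is literally the same map. Fix $(x,z)$ with $z\in J_x$ and $\alpha\in I^*_{(x,z)}$ (so that $x^\alpha$ is defined, with $\pr(x^\alpha)=\alpha\,\pr(x)$). By the lemma on the flow, $\mathcal{H}(x)$ and $\mathcal{H}(x^\alpha)$ are isomorphic as fields; the isomorphism sends $|\cdot|_x$ to $|\cdot|_x^\alpha$. Moreover, since $f$ is defined over the flowing space with coefficients $a_i,b_i$ given by analytic functions in a flowing chart, and $|a_i(x^\alpha)|=|a_i(x)|^\alpha$, the rational map $f_{x^\alpha}$ is the image of $f_x$ under this field isomorphism. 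I would first make this precise: the fiber $\pr_X^{-1}(x^\alpha)\cong \p^{1,an}_{\mathcal{H}(x^\alpha)}$ is identified with $\p^{1,an}_{\mathcal{H}(x)}$ via $y\mapsto y^\alpha$, i.e. the point $(x,y)$ goes to $(x,y)^\alpha=(x^\alpha,y^\alpha)$ (one checks on polynomials that $|P((x,y)^\alpha)|=|P(x,y)|^\alpha$).

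Next I would check that this identification $\phi_\alpha:\p^{1,an}_{\mathcal{H}(x)}\to\p^{1,an}_{\mathcal{H}(x^\alpha)}$ is a homeomorphism conjugating $f_x$ to $f_{x^\alpha}$. Homeomorphism: the topology on both Berkovich lines is the coarsest making $y\mapsto|P(y)|$ continuous, and $t\mapsto t^\alpha$ is a homeomorphism of $\R_{\ge0}$ for $\alpha>0$; the inverse is $\phi_{1/\alpha}$ (this is where $\alpha\in I^*$ matters: in the archimedean fibers $\alpha\pr(x)\le1$ keeps us in the allowed range). Conjugacy: $f_{x^\alpha}(\phi_\alpha(y))$ is the seminorm $P\mapsto|P\circ f(y)|^\alpha$, which equals $\phi_\alpha(f_x(y))$.

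Finally, the Julia set is defined topologically-dynamically (points every neighbourhood of which has forward orbit omitting only finitely many points in the non-archimedean fiber; in the archimedean fibers it is the classical Julia set, characterised the same way with "at most two points"), so it is preserved by any homeomorphism conjugating the maps: $\phi_\alpha(J_x)=J_{x^\alpha}$. Alternatively one can invoke that the equilibrium measure is characterised by $f^*\mu=d\mu$ and avoidance of the exceptional set, both preserved under $\phi_\alpha$, so its support is carried over. Hence $(x,z)^\alpha\in J_{x^\alpha}$.

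The main obstacle I expect is the case where $\pr(x)>0$ and $\alpha\pr(x)$ changes the nature of the fiber only by a power of the archimedean norm: one must make sure that $\mathcal{H}(x)\cong\C$ with $|\cdot|^{\varepsilon}$ still gives the same analytic structure on $\p^1(\C)$ (it does, the underlying topology is unchanged), and that the case $x\in X_0$ with $\alpha\in(0,\infty)$ is handled by the trivially-valued residue field with a rescaled nontrivial valuation. Writing the identification of fibers via the flow carefully in a flowing local model, and noting it glues, is the only genuinely technical point.
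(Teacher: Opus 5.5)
Your argument is correct, but it takes a different route from the paper.

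\textbf{The paper's route.} It uses density of repelling periodic points in $J_x$ to reduce to a repelling point $z$. It then checks by hand that $z^\alpha$ is repelling for $f_{x^\alpha}$:
\begin{itemize}
\item for type I, via $|f'_{x^\alpha}(z^\alpha)|=|f'_x(z)|^\alpha>1$;
\item for type II, by transporting the matrix $M$ with $M\cdot z=\eta_{0,1}$ to $M^\alpha$ and observing that the degree of the reduction is unchanged.
\end{itemize}
Passing from the repelling points to their closure tacitly uses continuity of the flow and closedness of $J_{x^\alpha}$.

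\textbf{Your route.} You prove once that the flow restricts to a homeomorphism $\phi_\alpha$ between the fibres over $x$ and $x^\alpha$ conjugating $f_x$ to $f_{x^\alpha}$. You then use that the Julia set is defined intrinsically from the topology and the dynamics.

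\textbf{What each buys.} Your approach gives the equality $\phi_\alpha(J_x)=J_{x^\alpha}$ directly; the paper only obtains the inclusion, which is all the statement asks. It treats archimedean and non-archimedean fibres uniformly, with no case distinction on the type of the repelling point. It also does not depend on the density theorem, which the paper quotes only over algebraically closed fields, although $\mathcal{H}(x)$ generally is not algebraically closed. The paper's computation, in turn, makes explicit how multipliers scale under the flow. That is the mechanism reused later for the blow-up of multipliers.

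\textbf{A step to state more carefully.} That $f_{x^\alpha}$ is the transport of $f_x$ does not follow from $|a_i(x^\alpha)|=|a_i(x)|^\alpha$, since absolute values do not determine the elements. What you need is $a_i(x^\alpha)=\sigma_\alpha(a_i(x))$ for the field isomorphism $\sigma_\alpha\colon\mathcal{H}(x)\to\mathcal{H}(x^\alpha)$; equivalently, the analytic map $f$ commutes with the flow. This is true:
\begin{itemize}
\item it holds for rational functions without poles;
\item hence it holds, for $\beta$ near a given parameter, for their locally uniform limits, i.e.\ for analytic functions;
\item the map $\beta\mapsto\sigma_\beta^{-1}(a_i(x^\beta))$ is therefore locally constant on the connected trajectory $\{x^\beta\}$, which lies in the domain because the space is flowing, and so it is constant.
\end{itemize}
The paper relies on the same fact implicitly, for instance in $|f'_{x^\alpha}(z^\alpha)|=|f'_x(z)|^\alpha$ and $(f_{x^\alpha})^{M^\alpha}=(f_x^M)^\alpha$.
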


\begin{proof}
Let $(x,z) \in J_x$. Since the repelling points are dense in $J_x$ (proposition \ref{Repelling points are dense}), it is enough to show that $(x,z)^\alpha \in J_{x^\alpha}$ with $z$ a repelling point.

If $z$ is a type I repelling point, then $z^{\alpha}$ is also a type I repelling point of $x^\alpha$ since $|f'_{x^\alpha}(z^\alpha)| = |f'_x(z)|^\alpha$ and $(x,z)^\alpha \in J_{x^\alpha}$.

If $z$ is a type II repelling point, then there exists $M \in \SL_2(\overline{\mathcal{H}(x)})$ such that $M\cdot z = \eta_{0,1}$ and $f_x^M$ has a reduction in $\tilde{\mathcal{H}(x)}$ of degree at least 2.

Since, $M^\alpha \cdot z^\alpha = (M\cdot z)^\alpha$ and $(f_x^\alpha)^{M^\alpha} = (f_x^M)^\alpha$ and the reduction of $f^M_x$ and $(f_x^\alpha)^{M^\alpha}$ have same degree, $z^\alpha$ is a type II repelling point of $f_x^\alpha$ and $(x,z)^\alpha \in J_{x^\alpha}$.
\end{proof}

\begin{lem}\label{Julia set moves continously on neighborhood is the same as on a flowing neighborhood}
Let $\C$ be equipped with the hybrid norm and $X$ a flowing $\C_{hyb}$-analytic space. Let $f$ be an analytic family of rational maps. Let $x\in X$ and $U$ a neighborhood of $x$ such that the Julia set moves continuously on $U$. 

Then, the Julia set moves continuously on $T(U)$, the trajectory of $U$ which is an open flowing neighborhood.
\end{lem}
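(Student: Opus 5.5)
The plan is to transport the continuity condition along the flow, using that the flow is a homeomorphism between pieces of trajectories that preserves fibres and Julia sets. Let $y\in T(U)$ and let $(y_i)_{i\in I}$ be a family in $T(U)$ with $y$ in its closure. We want
\begin{center}
$\overline{\bigcup_i J_{y_i}}\cap \pr_X^{-1}(y) = J_y.$
\end{center}
The inclusion $\supset$ is the lower semicontinuity of Lemma \ref{JuliaSetSemiContinuous}, which holds on any flowing space. Only $\subset$ needs an argument.

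First, $T(U)$ is open and flowing: the flow $\Phi$ is open, so $T(U)=\Phi(D\cap(U\times\R_{>0}))$ is open, and it is flowing by definition. Write $y=u^\alpha$ with $u\in U$ and $\alpha\in I_u^*$.

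The key step is to pull the family back into $U$. Since $\Phi$ is continuous and open, $\Phi(\cdot,\alpha^{-1})$ is a local homeomorphism on the region where it is defined. It sends $y$ to $u$, and it sends a neighborhood of $y$ onto a neighborhood of $u$ that can be taken inside $U$. Restricting to the indices with $y_i$ close to $y$ (which still accumulate at $y$), we get points $u_i:=y_i^{1/\alpha}\in U$ accumulating at $u$.

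The same flow acts on $X\times\p^{1,hyb}$ and commutes with $\pr_X$ and with the family $f$. It therefore maps the fibre over $u_i$ homeomorphically to the fibre over $y_i$. By Lemma \ref{Julia set is invariant through the flow}, applied in both directions, it sends $J_{u_i}$ onto $J_{y_i}$ and $J_u$ onto $J_y$.

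Now take $(y,z)$ in the closure of $\bigcup_i J_{y_i}$. Applying the continuous map $\Phi(\cdot,1/\alpha)$, the point $(y,z)^{1/\alpha}$ lies in the closure of $\bigcup_i J_{u_i}$ and over $u$. Continuity of the Julia set on $U$ then gives $(y,z)^{1/\alpha}\in J_u$. Flowing back by $\alpha$ yields $(y,z)\in J_y$.

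The main obstacle is the degenerate case $\pr(y)=0$ with $\alpha$ arbitrary in $]0,+\infty[$, together with the endpoints of $I_x$. There one must check that the flow by a fixed exponent is really defined and continuous on a whole neighborhood of $y$ in $X\times\p^{1,hyb}$, which requires $\alpha\,\pr(\cdot)\le 1$ nearby. I would handle this by first shrinking to a neighborhood on which $\pr$ stays below $1/\alpha$ (possible since $\pr(y)\alpha\le 1$). If $\pr(y)=1/\alpha$ exactly, I would replace the single exponent by a slightly smaller $\alpha'$ and a point $u'=u^{\alpha/\alpha'}$, using that $T(U)$ contains the whole trajectory. This reduces to the interior case via continuity of $\Phi$ on $D$ (Poineau, Prop.~2.10).
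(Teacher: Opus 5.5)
Your proposal is correct and follows the paper's proof. In both, you flow $y$ and the nearby $y_i$ into $U$, invoke continuity of the Julia set on $U$, and flow back using Lemma \ref{Julia set is invariant through the flow}.

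The only divergence is the endpoint case, where $\pr(y)>0$ and the pull-back exponent is $1/\pr(y)$, i.e.\ $u\in X_1$. There the paper uses the varying exponents $\alpha_i=1/\pr(y_i)$ together with joint continuity of $\Phi$, whereas you shift to a fixed exponent $\beta'<1/\pr(y)$. This works, but two details of your justification need correcting:
\begin{itemize}
\item The reason $y^{\beta'}\in U$ is that $U$ is open and $\beta\mapsto y^{\beta}$ is continuous, not that $T(U)$ contains whole trajectories.
\item The replacement point is $u^{\gamma}$ with $\gamma=\beta'\pr(y)<1$. Your $u^{\alpha/\alpha'}$ with $\alpha'$ slightly smaller than $\alpha$ has exponent $>1$, which is undefined since $\pr(u)=1$.
\end{itemize}
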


\begin{proof}
Let $y \in T(U)$ and $y_i \rightarrow y$. Since $y \in T(U)$, there exists $\alpha \in (0, \frac{1}{\pr (y)}]$ such that $y^\alpha \in U$. 

If $\pr(y) \neq 0$ and $\alpha = \frac{1}{\pr (y)}$, then we define $\alpha_i = \frac{1}{\pr (y_i)}$. If not then for $i$ big enough, $\alpha < \frac{1}{\pr (y_i)}$ and we define $\alpha_i = \alpha$.

Either way, we have $y_i^{\alpha_i} \rightarrow y^\alpha \in U$.

Let $z_i \in J_{y_i}$ and let $(y,z)$ a cluster point of $(y_i, z_i)$. We want to show that $z \in J_y$. Let us assume that $(y_i, z_i) \rightarrow (y,z)$. 

Since $(y_i, z_i)^{\alpha_i} \rightarrow (y,z)^\alpha$ and the Julia set moves continuously on $U, (y,z)^\alpha$ is a point of the embedding of $J_{y^\alpha}$ in $X\times\p^{1,hyb}$. So, $z\in J_y$.
\end{proof}

As in non-archimedean case, we will need a notion of persistently indifferent point but in the hybrid setting.
First let us state a lemma. 

\begin{lem}
Let $\C$ be equipped with the hybrid norm and $X$ a flowing $\C_{hyb}$-analytic space. Let $f$ be an analytic family of rational maps. Then, the closed analytic space of periodic point of period $n, C^n_X$ is flowing.
\end{lem}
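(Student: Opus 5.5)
We must show that $C^n_X$ is flowing. This means that if $(x,z) \in C^n_X$ and $\alpha \in I^*_{(x,z)}$, then $(x,z)^\alpha \in C^n_X$. It also means that $C^n_X$ is locally a closed analytic subset of a flowing open set.

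The plan is to work locally on a flowing local model. First choose a flowing open $U \subset X$ on which the coefficients $a_i,b_i$ of $f$ are in $\mathcal{O}(U)$, as in definition \ref{Definition of the periodic point}. Then take the flowing open set $\mathcal{V} = U \times \ana^{1,hyb}$, and symmetrically the chart at $\infty$. On $\mathcal{V}$ the set $C^n_{\mathcal{V}}$ is the zero locus of the function $F(x,z) = P_1(x,z)$, where $f^n_x(z) - z = P_1/P_2$, with $P_1, P_2$ polynomials in $z$ whose coefficients are polynomials in the $a_i, b_i$. By construction $\mathcal{V}$ is flowing, so $T(\mathcal{V}) = \mathcal{V}$.

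The key step is the identity $|F((x,z)^\alpha)| = |F(x,z)|^\alpha$. In the algebraic case, where the coefficients lie in $\C[T_1,\dots,T_m]$ and $F$ is a polynomial, this is exactly the definition of the point $(x,z)^\alpha$ in the lemma defining the flow. For a general flowing analytic space, $F$ is a local uniform limit of such polynomials, or of rational functions without poles on the model. The identity passes to the limit because the flow is continuous, and because $|\cdot|^\alpha$ is continuous and commutes with uniform limits on compact sets. So $|F(x,z)|=0$ implies $|F((x,z)^\alpha)|=0$ for every $\alpha\in I^*_{(x,z)}$. Note that $(x,z)^\alpha$ lies over $x^\alpha \in U$ because $U$ is flowing, and it lies in the chart because $\mathcal{V}$ is flowing.

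This shows $T(C^n_{\mathcal{V}}) = C^n_{\mathcal{V}}$, since each point lies on its own trajectory. Gluing over a flowing cover of $X$ gives the same for $C^n_X$, and each piece is a closed analytic subset of a flowing local model.

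The main obstacle is justifying the multiplicativity $|F(y^\alpha)| = |F(y)|^\alpha$ for analytic, not just polynomial, functions. I would handle it using Poineau's description of the structure sheaf on flowing models, where sections are locally uniform limits of rational functions. If needed, I would restrict to algebraic families such as $\Rat_d^{hyb}$, where $F$ is literally a polynomial in the coordinates.
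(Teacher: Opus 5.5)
Your proposal is correct and is the paper's argument made explicit: the paper only says the claim is clear from the definition $C^n_V=\{(x,z)\in V,\ |f^n_x(z)-z|=0\}$, which is exactly your observation that zero loci are stable under the flow because $|F(y^\alpha)|=|F(y)|^\alpha$. One detail is worth stating outright: when $y$ and $y^\alpha$ do not lie in a common neighbourhood on which $F$ is a uniform limit of rational functions, the identity still propagates along the connected set $I_y^*$, since the value of $F$ at $y^\alpha$, transported through $\mathcal{H}(y)\cong\mathcal{H}(y^\alpha)$, is locally constant in $\alpha$ by continuity of the flow; this is what your appeal to continuity should be taken to mean.
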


\begin{proof}
It is clear by definition of $C^n_X$.
\end{proof}

\begin{defi}
Let $\C$ be equipped with the hybrid norm and $X$ a flowing $\C_{hyb}$-analytic space. Let $f$ be an analytic family of rational maps and $x \in X_0$. A periodic point $z \in \p^{1,an}_{\mathcal{H}(x)}$ of $f_x$ of period $n$ of type I is said to be hybridly persistently non reppelling if it is persistently indifferent on $X_0$ and there is a flowing neighborhood $U$ of $(x,z)$ such that if $V$ be the connected component of $C^n_U$ containing $(x,z)$ and $V^1, \cdots, V^m$ are the connected components of $V_{>0}$, then for all $i$ we are in one of the following cases : 
\begin{itemize}
    \item for all $(y,t) \in V^i,~t$ is an attracting point of $f_y$, 
    \item for all $(y,t) \in V^i,~t$ is an indifferent point of $f_y$ not in the Julia set $J_{f_y}$. 
\end{itemize}
\end{defi}

\begin{rem}
This definition is really a local definition : if $U$ is the neighborhood in the definition, then any other flowing neighborhood $U'\subset U$ will satisfy the conditions of the definition. 
\end{rem}

We can now state the result about the continuous motion in the hybrid case. 

\begin{thm}\label{Julia set moves continuously hybrid setting}
Let $\C$ be equipped with the hybrid norm and $X$ a flowing $\C_{hyb}$-analytic space. Let $f$ be an analytic family of rational maps and $x \in X$ such that $x$ has type I repelling points. 

\begin{itemize}
\item If $x \in X_{>0}$, then the Julia set moves continuously on a neighborhood of $x$ if and only if it moves continuously on a neighborhood $U$ of $x$ in the fiber $X_{\pr(x)}$,

\item If $x\in X_0$, then the Julia set moves continuously on a neighborhood of $x$ if and only if there is a flowing neighborhood $U$ of $x$ such that :
\begin{enumerate}
\item the Julia sets moves continuously on $U_{>0}$,
\item for all $y\in U_0$, every indifferent point of $f_y$ is hybridly persistently non-repelling.
\end{enumerate} 
\end{itemize} 
\end{thm}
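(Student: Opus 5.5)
The proof splits according to whether $x$ lies in $X_{>0}$ or in $X_0$. The basic tools are lemma \ref{Julia set is invariant through the flow} and lemma \ref{Julia set moves continously on neighborhood is the same as on a flowing neighborhood}, which let us pass freely between neighborhoods and flowing neighborhoods. We also use the lower semicontinuity of lemma \ref{JuliaSetSemiContinuous}, which always gives one of the two inclusions. So in each case only the upper inclusion remains: a limit of points of $J_{x_i}$ must lie in $J_x$. As in the proof of theorem \ref{JuliaSetMovesContinuouslyNonArchi}, it is enough to check this for limits of repelling type I points $(x_i,z_i)$. For these we follow the connected component of $C^{n}_{\mathcal U}$ containing $(x_i,z_i)$ down to the fiber over $x$, as in proposition \ref{if no indifferent point, then julia set moves continuously in the hybrid setting}.

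\textbf{Case $x\in X_{>0}$.} The fibre map $\pr$ is continuous and $X_{>0}$ is open. Moreover, for $\varepsilon>0$ the flow $y\mapsto y^{\alpha}$ identifies $X_\varepsilon$ with $X_1$ homeomorphically and respects Julia sets (lemma \ref{Julia set is invariant through the flow}). We then take a neighborhood of the form $T(U)$, with $U\subset X_{\pr(x)}$ a neighborhood on which the Julia set moves continuously. Given $y_i\to y$ in $T(U)$, we rescale with exponents $\alpha_i\to\alpha$ exactly as in lemma \ref{Julia set moves continously on neighborhood is the same as on a flowing neighborhood}, bringing everything into the single fibre $X_{\pr(x)}$. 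Continuity there gives continuity on $T(U)$. The converse direction is trivial, by restricting to the fibre.

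\textbf{Case $x\in X_0$, direction ($\Rightarrow$).} Condition 1 is immediate by restriction. For condition 2, take $y\in U_0$ and an indifferent type I point $(y,t)$ of period $n$. Restricting to $X_0$ and applying theorem \ref{JuliaSetMovesContinuouslyNonArchi} (the fibre $X_0$ is without boundary, and the maximum modulus principle applies), we get that $t$ is persistently indifferent on $X_0$. Since $t\notin J_y$, continuity gives a flowing neighborhood $V$ of $(y,t)$ on which no point lies in a Julia set. On each connected component $V^i$ of $V_{>0}$ the periodic points are therefore non-repelling. Complex analysis on $V^i$ (the open mapping theorem applied to the multiplier, restricted to the curve $C^n$) then forces one of two alternatives: either the multiplier has constant modulus $1$, giving persistently indifferent Fatou points, or it has modulus $<1$ everywhere, giving attracting points. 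Indeed, if the modulus of the multiplier attains $1$ without being constant, it takes values $>1$ nearby, which is excluded.

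\textbf{Case $x\in X_0$, direction ($\Leftarrow$).} Take a sequence of repelling points $(x_i,z_i)\to(x,z)$ with $x_i\in U$. If infinitely many $x_i$ lie in $U_0$, we conclude by theorem \ref{JuliaSetMovesContinuouslyNonArchi} on $U_0$, whose hypothesis is provided by condition 2. If infinitely many lie in $U_{>0}$, we argue as follows. Along the component of $C^{n_i}$ through $(x_i,z_i)$, the point in the fibre over $x_{0}$-type parameters near $x$ cannot be indifferent. Were it indifferent, its hybrid persistence would force every component of the nearby $V_{>0}$ to consist of attracting or Fatou-indifferent points, contradicting the fact that $z_i$ is repelling. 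Hence the component meets the fibre over $x$ in a repelling point $z_i^{x}\in J_x$, and $(x,z)$ is a cluster point of these points, so $z\in J_x$. Continuity at the points of $U_{>0}$ is exactly condition 1.

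The main obstacle is this last step: the periods $n_i$ are unbounded, while hybrid persistence is only a local statement for each fixed period. Controlling the limit of points of growing period therefore requires an argument in the spirit of the perfectness and density argument in proposition \ref{JuliaSetsMovesContinuouslyIfAttractingBounded}. The plan is to approximate first by repelling points in $J_x$ of high period, which remain repelling under perturbation, and then to close up.
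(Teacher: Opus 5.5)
Your outline is the paper's proof. For $x\in X_{>0}$ you rescale by the flow into the single fibre $X_{\pr(x)}$. For $x\in X_0$ and ($\Rightarrow$) you restrict to $X_0$, apply Theorem~\ref{JuliaSetMovesContinuouslyNonArchi}, and then apply the maximum modulus principle on the complex fibres of the components of $(C^n_V)_{>0}$, transported by the flow. ($C^n$ is not a curve in general, but maximum modulus on a connected complex space is all that is needed.) For ($\Leftarrow$) you follow the component of $C^{n_i}$ through a repelling $(x_i,z_i)$ to a point $(x,z_i^x)$ over $x$, which hybrid persistence forces to be repelling. Using Lemma~\ref{JuliaSetSemiContinuous} for $J_x\subset L_x$ instead of persistence of repelling points is a harmless variation.

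Like the paper, you use two things silently. First, every point of such a component of $(C^{n_i}_V)_{>0}$ must be repelling; this comes from condition 1 via Ma\~n\'e--Sad--Sullivan. Second, closures should be tested with nets rather than sequences, since hybrid spaces over $\C$ are not known to be angelic.

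The gap is the step you flag yourself, and it carries the whole upper inclusion: you assert that $(x,z)$ is a cluster point of the $(x,z_i^x)$ without proof. Your proposed repair does not close it. Continuing high-period repelling points of $f_x$ to the $x_i$ only reproves $J_x\subset L_x$. The perfectness/density argument of Proposition~\ref{JuliaSetsMovesContinuouslyIfAttractingBounded} ends with ``$(y,t)$ will be a cluster point of $(y,t^j)$'', which is the very claim in question, so appealing to it is circular.

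The paper handles this step with the separation argument from the proof of Theorem~\ref{JuliaSetMovesContinuouslyNonArchi}. If $(x,z)$ were not a cluster point, one chooses disjoint opens $V\ni(x,z)$ and $V'\supset\{(x,z_i^x)\}$ and concludes that $(x_i,z_i)\in V'$ eventually. That last implication presupposes that $(x_i,z_i)$ stays close to its own continuation $(x,z_i^x)$, uniformly as $n_i\to\infty$.

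For bounded periods this closeness is automatic. Pass to a subnet of constant period $n$; then $(x,z)\in C^n$, and local connectedness of $C^n$ lets you take $z_i^x=z$. For unbounded periods it is an equicontinuity statement of $\lambda$-lemma/Montel type for the motion of periodic points. Neither the period-by-period hypothesis of hybrid persistence nor the separation argument as written provides it.

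Your diagnosis is therefore right. To finish, you must either prove such a uniform closeness or obtain $\limsup J_{x_i}\subset J_x$ by another route.
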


\begin{rem}
In the non-archimedean case, one has to assume that $X$ does not have a boundary, but it is not needed here. The reason is that the definition differs on the case of Berkovich spaces over a complete field and in the general case. In the general case one only considers spaces without boundary.

For instance, the closed ball center on $0$ of radius $1$ in $\ana^{1,an}_k$ is an analytic space but the closed ball center on $0$ of radius $1$ in $\ana^{1,hyb}_\C$ is not.  
\end{rem}

\begin{proof}
By lemma \ref{Julia set moves continously on neighborhood is the same as on a flowing neighborhood}, we can work with flowing neighborhood.

First let us assume that $x\in X_{> 0}$. If $U$ is a flowing neighborhood of $x$, then so is $U_{>0}$. Hence, we may assume that $U$ is of the form $U_{>0}$.

$\implies$ If the Julia set moves continuously on $U$ then it moves continuously on $U_{\pr(x)} = U \cap X_{\pr(x)}$.

$\Longleftarrow$ Let us assume that the Julia set moves continuously on a neighborhood $U_{\pr(x)}$ of $x$ in $X_{\pr(x)})$. Then, $U := T(U_{\pr(x)})$ is a flowing open subset. 

Let $f\in U$ and let $f_i \in U \rightarrow f$. Let us denote $\varepsilon = \pr(f), \varepsilon_i = \pr(f_i)$. 

One needs to check that 
\begin{center}
$\mathrm{Closure}(\lbrace (f_i, z_i), z_i \in J_{f_i}\rbrace) \cap \pr^{-1}_X(f) = \lbrace (f, z), z\in J_f\rbrace$. 
\end{center}

Since the repelling points remain repelling locally, one has $\lbrace (f, z), z\in J_f\rbrace \subset \mathrm{Closure}(\lbrace (f_i, z_i), z_i \in J_{f_i}\rbrace) \cap \pr^{-1}_X(f)$.

Let $(f_i, z_i) \rightarrow (f,z)$ with $z_i \in J_{f_i}$. One needs to check that $z \in J_f$. We have that $(f_i, z_i)^{\frac{\pr(x)}{\varepsilon_i}} \rightarrow (f,z)^{\frac{\pr(x)}{\varepsilon}}$ and since the Julia set moves continuously on $U_{\pr(x)}$, one has $z^{\frac{\pr(x)}{\varepsilon}} \in J_{f^{\frac{\pr(x)}{\varepsilon}}}$. Hence, $z\in J_f$ by lemma \ref{Julia set is invariant through the flow}.

Now let us assume that $x\in X_0$. 

$\implies$ Let us assume that the Julia set moves continuously on a flowing neighborhood $U$ of $x$. Then it moves continuously on the open $U_{>0}$ and the open $U_0$ of $X_0$. 

Let $y \in U_0$. Since the Julia set moves continuously on $U_0$, all indifferent point of $y$ are persistently indifferent on $U_0$. Let $(y,t)$ be an indifferent point of $y$ of period $n$. Let $V$ be a flowing neighborhood of $(y,t)$ and let $C = C^n_V$. It is a flowing analytic space. Let $V^i$ be a connected component of $C_{>0}$ and $0 < \varepsilon \leq 1$, then $V_\varepsilon^i$ is a connected component of $C_\varepsilon$. Since $t \notin J_y$ we may suppose that for all $(f,z) \in V^i, z\notin J_f$. So for all, $(f,z) \in V^i_\varepsilon, |(f^n)'(z)|\leq 1$. We have two separate cases, either it atteins 1 or not. If it atteins 1, by the maximum modulus principle, it is contant to the value 1 on all $V^i_\varepsilon$, otherwise it is always $<1$. Since $V^i$ is flowing, this behavior remains true in all $V^i$ : either for all points $(f,z) \in V^i, |(f^n)'(z)| <1$ or for all points $(f,z) \in V^i, |(f^n)'(z)| = 1$. And $(y,t)$ is hybridly persistently non-repelling. 

$\Longleftarrow$ Let us assume that there exists a flowing neighborhood $U$ of $x$ such that the Julia set moves continuously on $U_{>0}$ and for all $y\in U_0$, every indifferent point of $y$ is hybridly persistently non-repelling. 

Let us show that the Julia set moves continuously on $U$. 
Let $f\in U$. If $f\in U_{>0}$ since the Julia set moves continuously on $U_{>0}$, we have that for all $f_i \rightarrow f,$ the $\mathrm{Closure}(\lbrace (f_i, z_i), z_i \in J_{f_i}\rbrace) \cap \pr^{-1}_X(f) = \lbrace (f, z), z\in J_f\rbrace$.     

Let us now assume that $f\in U_0$. Let $f_i \rightarrow f$. Let $(f_i, z_i) \rightarrow (f,z)$ with $z_i$ repelling point. One needs to show that $z\in J_f$. Let us look at the two subnets $\lbrace f_i, f_i \in U_0\rbrace, \lbrace f_i, f_i \in U_{>0}\rbrace$. Since at least one of them has an infinite number of elements, we may suppose that the net is of one of this form to show that $z$ has to be in $J_f$.  

Since all its indifferent points are persistently indifferent in $U_0$, if $f_i \in U_0 \rightarrow f$, the Julia set of $f_i$ moves continuously to $J_f$ by theorem \ref{JuliaSetMovesContinuouslyNonArchi} and $z\in J_f$. 

So, let $f_i \in U_{>0} \rightarrow f$. As in the previous case, one needs to show :  
\begin{center}
$\mathrm{Closure}(\lbrace (f_i, t_i), t_i \in J_{f_i}\rbrace) \cap \pr^{-1}_X(f) = \lbrace (f, t), t\in J_f\rbrace$. 
\end{center}

Since the repelling points remain repelling locally, one has $\lbrace (f, t), t\in J_f\rbrace \subset \mathrm{Closure}(\lbrace (f_i, t_i), t_i \in J_{f_i}\rbrace) \cap \pr^{-1}_X(f)$.

Each $z_i$ is in a connected component $V^i$ of some $(C^{n_{z_i}}_V)_{>0}$ where $V$ is a flowing open subset containing $(f_i, z_i)$ such that $\pr_X(V) = U$. Let $z^f_i$ be a point of $(C^{n_{z_i}}_V) \cap \pr^{-1}_X(f)$ such that it is a cluster point of $V^i$. Since $z_i$ is repelling, $z^f_i$ cannot be indifferent, since the indifferent point of $f$ are persistently hybridly non-repelling. Hence, they are repelling. 

As in the proof of theorem \ref{JuliaSetMovesContinuouslyNonArchi}, $(f,z)$ is a cluster point of $(f_i, z^f_i)$. 

So, $z\in J_f$ and  $\mathrm{Closure}(\lbrace (f_i, z_i), z_i \in J_{f_i}\rbrace) \cap \pr^{-1}_X(f) \subset \lbrace (f, z), z\in J_f\rbrace$.

Finally, we conclude that the Julia set moves continuously on $U$.
\end{proof}

\begin{rem}
We have here a difference with the archimedean and non-archimedean case : an indifferent periodic point of a point $f_x \in X_0$ can come from an attracting point of some $f_{x_i} \in X_{>0} \rightarrow f_x$. This cannot happen in the other cases : the type of the attracting point never changes throughout a neighborhood.

But we do not have an example where such a case occurs. 
\end{rem}

\subsection{A notion of continuity on \texorpdfstring{$\M_d^{hyb}$}{}}

In this section we aim to have a similar result as in the subsection \ref{subsection continuity on M_d^an}. First let us give the definition of having a continuous motion of the Julia set in $\M_d^{hyb}$. We will define it in the same way as in definition \ref{definition of continuous motion in M_d^an}.

\begin{defi}\label{definition of continuous motion in M_d^hyb}
Let $\C$ be equipped with the hybrid norm and let $\M_d^{hyb}$ be the analytification of $\M_d$. Let $U$ be a open, flowing subset of $\M_d^{hyb}$. We say that the Julia set moves continuously on $U$ if the following condition is verified:
\begin{center}
$\forall [f] \in U, \forall \tilde{f} \in (\pi^{hyb})^{-1}([f])$ there exists a neighborhood $U_{\tilde{f}}$ of $\tilde{f}$ in $\Rat_d^{hyb}$ such that the Julia set moves continuously on $U_{\tilde{f}}$, 
\end{center}
where $\pi^{hyb} : \Rat_d^{hyb} \rightarrow \M_d^{hyb}$ is the analytification of the projection morphism.
\end{defi}

As in the subsection \ref{subsection continuity on M_d^an}, we would like to have a correspondence between the notion of continuous motion on $\Rat_d^{hyb}$ and the notion of continuous motion on $\M_d^{hyb}$. 

First let us show that, in this setting, if the Julia set moves continuously on a neighborhood of $f$ then, we can suppose that this neighborhood is $\SL_2$-invariant. This corresponds to the proposition \ref{Construction of SL2 invariant open}. 

\begin{prop}\label{Construction of SL2 invariant open in hybrid setting}

Let $\C$ be equipped with the hybrid norm and $X$ be an affine $\SL_2$-invariant subscheme of $\Rat_d$. Let $f \in X^{hyb}$. Assume that there exists a neighborhood $U$ of $f$ such that the Julia set moves continuously on $U$ and $U$ is flowing. 

Let us define 
\begin{center}
$W = \lbrace x \in X^{hyb}, \exists M \in \SL_{2,\mathcal{H}(x)}^{an}, M\cdot x \in U \rbrace.$
\end{center}
Then, $W$ is an open, $\SL_2$-invariant, flowing and the Julia set moves continuously on $W$.
\end{prop}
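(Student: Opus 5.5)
The plan follows the proof of Proposition \ref{Construction of SL2 invariant open}, with two additions: $W$ must also be flowing, and the final step has to rely on the hybrid characterisation of continuity (Theorem \ref{Julia set moves continuously hybrid setting}) rather than on the purely non-archimedean criterion of persistently indifferent points. That $W$ is $\SL_2$-invariant is immediate from its definition. For flowing, take $x\in W$ with $M\cdot x\in U$ and $\alpha\in I^*_x$. The fields $\mathcal{H}(x)$ and $\mathcal{H}(x^\alpha)$ are isomorphic, so $M$ gives an element $M^\alpha\in\SL_{2,\mathcal{H}(x^\alpha)}$. Since the action is algebraic, the flow commutes with it: $(M\cdot x)^\alpha=M^\alpha\cdot x^\alpha$, as in the proof of Lemma \ref{Julia set is invariant through the flow}. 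Because $U$ is flowing, $M^\alpha\cdot x^\alpha\in U$, so $x^\alpha\in W$.

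For openness, argue by contradiction as before, using that hybrid spaces are angelic (\cite{PoineauAngelique}). Suppose $x_n\notin W$ and $x_n\to x\in W$. First reduce to a rigid $M$. If $\pr(x)>0$, the field $\mathcal{H}(x)$ is $\C$ or $\R$ and rigid points are trivially available. If $\pr(x)=0$, use the type I repelling points and density of rigid points, exactly as in the non-archimedean proof. Alternatively, and preferably, use flowing: replace $x$ by some $x^\alpha$, noting that $\SL_2$-orbits meet $U$ or not uniformly along trajectories. Then apply proposition 3.27 of \cite{RoyActionGroupe} over $\C_{hyb}$ to lift $(M,x)$ to a net $(M_n,x_n)\to(M,x)$. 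Since $\Omega(M_n,x_n)\to\Omega(M,x)\in U$ and $U$ is open, eventually $x_n\in W$, a contradiction. The main obstacle is this lifting step across the fibre $X_0$, where $x_n$ may lie in $X_{>0}$ while $x\in X_0$. I would first check that the construction of \cite{RoyActionGroupe} is valid for geometric base rings such as $\C_{hyb}$. If it is only stated over fields, I would instead use openness of the action map $\Omega:\SL_2^{hyb}\times X^{hyb}\to X^{hyb}$, which follows from it being a composition of an isomorphism $(M,x)\mapsto(M,M\cdot x)$ with a projection. Indeed, $W=\pr_2(\Omega^{-1}(U))$ up to identifying $\SL_{2,\mathcal{H}(x)}$-points with the fibre of $\SL_2^{hyb}\times X^{hyb}\to X^{hyb}$ over $x$. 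Projections of analytic products are open, so $W$ is open directly, and this avoids nets altogether.

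For continuity of the Julia set on $W$, take $g\in W$ and $M$ with $M\cdot g\in U$, and let $g_i\to g$ with $(g_i,z_i)\to(g,z)$ and $z_i\in J_{g_i}$. Lift as above to $(M_i,g_i,z_i)\to(M,g,z)$. Conjugating, $(g_i^{M_i},M_i^{-1}z_i)\to(g^M,M^{-1}z)$, and Julia sets are equivariant: $J_{h^N}=N^{-1}J_h$. Continuity on $U$ then gives $M^{-1}z\in J_{g^M}$, hence $z\in J_g$. The reverse inclusion follows from lower semicontinuity, Lemma \ref{JuliaSetSemiContinuous}. If the lifting of $M_i$ is unavailable, fall back on Theorem \ref{Julia set moves continuously hybrid setting}. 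On $W_{>0}$, continuity is the classical complex statement, which is conjugation-invariant. On $W_0$, conjugation by $M$ carries indifferent points to indifferent points, and it preserves the local structure of $C^n$ and of its components in $X_{>0}$, so hybridly persistently non-repelling points are carried to such points. Continuity on $U$ thus transfers to a flowing neighbourhood of $g$.
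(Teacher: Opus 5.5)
Your $\SL_2$-invariance and flowing arguments are the same as the paper's. The gap is exactly at the point the paper singles out as the only new check for $x\in W_0$.

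\textbf{Missing step: $\mathcal{H}(x)$ is non-trivially valued for $x\in W_0$.} To pick a rigid $M$ you need $\mathcal{H}(x)$ to be non-trivially valued. Your ``use the type I repelling points'' presupposes that $x$ has them. A priori only $M\cdot x$ does, for some $M\in\SL^{an}_{2,\mathcal{H}(x)}$ that need not be rigid. The paper closes this with Proposition 4.14 of \cite{RoyActionGroupe}: if $M\cdot x$ has no potential good reduction, neither has $x$, while a map over a trivially valued field has good reduction. A direct argument also works. Over $\widehat{\overline{\mathcal{H}(M)}}$ the map $M\cdot x$ is conjugate to $x$. If $\mathcal{H}(x)$ were trivially valued, $x$ would have good reduction over every extension, hence no repelling type I periodic points, and neither would $M\cdot x$.

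Your ``preferable'' route through the flow cannot help. For $x\in X_0$ the trajectory stays in $X_0$, and $\mathcal{H}(x^\alpha)$ is $\mathcal{H}(x)$ with $|\cdot|^\alpha$, so triviality of the valuation is unchanged.

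Two smaller points. First, the paper stresses that hybrid spaces over $\C$ are not known to be angelic, so the contradiction must be run with nets. Second, the identity $W=\pr_2(\Omega^{-1}(U))$ is a neat reformulation, but openness of $\SL_2^{hyb}\times X^{hyb}\to X^{hyb}$ at points over $X_0$ is only asserted. This is a stronger form of the lifting you were unsure about, needed at every point of the fibre rather than only at rigid ones.

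\textbf{The continuity step does not work as written.} The equivariance $J_{h^N}=N^{-1}J_h$ holds for two maps viewed over a common field. Here $g$, $M\cdot g$ and the point $(M,g)$ have different residue fields. Julia sets are compatible with projections (Lemma \ref{Taking a field extension does not change the julia set}) but not with preimages under transcendental extensions. So neither inference is justified:
\begin{itemize}
\item $z_i\in J_{g_i}\Rightarrow M_i^{-1}z_i\in J_{g_i^{M_i}}$ needs the lifted $M_i$ to be rigid, which the lifting does not provide.
\item $M^{-1}z\in J_{g^M}\Rightarrow z\in J_g$ fails even for $M$ rational over $\mathcal{H}(g)$, because $\mathcal{H}(g)$ can be transcendental over $\mathcal{H}(M\cdot g)$.
\end{itemize}
A concrete counterexample to the second inference: take $h(w)=tw^3+w^2$ with $0<|t|<1$ as in Lemma \ref{Tz^3 + Z^2 moves continuously}, a unit $s$ with transcendental reduction, and $g(w)=h(w+s)-s$. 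Then $\mathcal{H}(g)=\widehat{\mathcal{H}(h)(s)}$, and the translation $w\mapsto w+s$, rational over $\mathcal{H}(g)$, conjugates $g$ to $h$. It sends $(g,0)$ to $(h,s)$, and $s$ projects to the repelling Gauss point $\eta_{0,1}\in J_h$ of $\p^{1,an}_{\mathcal{H}(h)}$. Yet $0\notin J_g$: the residue class of $s$ lies in a transcendental direction that $h$ maps isomorphically forever, so it is Fatou.

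\textbf{The repair is your fallback, made precise, which is the paper's route.} Work only with type I periodic points. Their lifts along $(M_n,g_n,t_n)\to(M,g,t)$ remain periodic of the same period with the same multiplier modulus. This transfers (hybrid) persistent non-repellingness of indifferent points from $M\cdot g$ to $g$. Then conclude with Theorem \ref{Julia set moves continuously hybrid setting}. That theorem also requires $g$ to have type I repelling points, which is again the missing step above.
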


\begin{proof}
First let us assume that $f\in X_{>0}$. Then, we may assume that $U \subset X_{>0}$. Then, for all $M\in \SL_2(\C), M\cdot U$ is an open that contains $M\cdot f$ and the Julia set moves continuously on this open. Furthermore, $W = \bigcup_{M\in \SL_2(\C)} M\cdot U$, hence it is open, $\SL_2$-invariant and the Julia set moves continuously on $W$.

If $f \in X_0$, the proof that $W$ is open and that the Julia set moves continuously on $W$ is the same as in the proof of proposition \ref{Construction of SL2 invariant open}. The only thing we need to check is that if $x\in W_0$ then $\mathcal{H}(x)$ is not trivially valued and one needs to consider generalised sequences and not sequences as we do not know if the hybrid spaces are angelic. 

Let $x \in W_0$ and let $M\in \SL_{2,\mathcal{H}(x)}^{an}$ such that $M\cdot x \in U$. Hence, $M\cdot x$ does not have potential good reduction : it has repelling type I points since the Julia moves continuously on $U$. Since $M$ is not necessarily a rigid point of $\SL_{2,\mathcal{H}(x)}^{an}$, one needs propositon 4.14 of \cite{RoyActionGroupe} to conclude that $x$ does not have potential good reduction either and so $\mathcal{H}(x)$ cannot be trivially valued. 

We are left to show that $W$ is flowing. Let $x \in W$ and $\alpha \in (0, \frac{1}{\pr(x)}]$, where $\pr : X^{hyb} \rightarrow \mathcal{M}(\C_{hyb})$. Let $M \in \SL_{2, \mathcal{H}(x)}^{an}(\widehat{\overline{\mathcal{H}(x)}})$ such that $M\cdot x \in U$. Then, $(M\cdot x)^\alpha = M^\alpha \cdot x^\alpha$ and $M^\alpha \in \SL_{2, \mathcal{H}(x^\alpha)}^{an}(\widehat{\overline{\mathcal{H}(x^\alpha)}})$. 

Since, $U$ is flowing, $(M\cdot x)^\alpha \in U$ and so $x^\alpha \in W$ and $W$ is flowing. 
\end{proof}

Unfortunately, there is no result about the openess of the morphism $\pi^{hyb} : \Rat_d^{hyb} \rightarrow \M_d^{hyb}$ which would be the counterpart of Maculan's theorem (see theorem \ref{Theorem of Marco Maculan}).

Nevertheless, there is a weaker result. 

\begin{prop}\label{pi^hyb is open under some conditions} (Proposition 4.25 of \cite{RoyActionGroupe})

Let $X$ be an affine scheme over a non-trivially valued field $k,~ G$ be a reductive, algebraic group acting on $X$ with a closed action. 

Let $V \subset X^{hyb}$ be a flowing, $G$-invariant open subset of $X^{hyb} \backslash (\pi^{hyb})^{-1}((X/G)^\beth)$, then 
$\pi^{hyb}(V)$ is open in $(X/G)^{hyb}$.
\end{prop}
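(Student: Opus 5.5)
The plan is to reduce to a fibrewise statement by using the flow, and then to do the genuinely hybrid work only at points of the central fibre. Write $Y = X/G$ and $\pi = \pi^{hyb} : X^{hyb} \to Y^{hyb}$.

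\textbf{Step 1 (compatibility with the flow).} Since $\pi$ is the analytification of a morphism of $k$-schemes, it satisfies $\pi(x^\alpha) = \pi(x)^\alpha$ for every $\alpha \in I_x$. Hence $\pi(T(V)) = T(\pi(V))$. As $V$ is flowing, $\pi(V)$ is flowing.

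Moreover, $V$ avoids $\pi^{-1}(Y^\beth)$, so $\pi(V) \subset Y^+ = Y^{hyb} \setminus Y^\beth$. To describe $Y^+$, fix generators $g_1, \dots, g_m$ of $\mathcal{O}(Y)$ and set $N(y) = \max_j |g_j(y)|$. Then $N > 1$ exactly on $Y^+$, and $N(y^\alpha) = N(y)^\alpha$.

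Consequently every flowing subset of $Y^+$ is determined by its trace on the slice $S = \lbrace N = e\rbrace$. Rescaling along the flow, $y \mapsto y^{1/\log N(y)}$, is continuous, and by Poineau's continuity and openness of the flow it identifies flowing open subsets of $Y^+$ with open subsets of $S$. So it suffices to prove that $\pi(V) \cap S$ is open in $S$.

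\textbf{Step 2 (points of positive norm).} Let $y \in \pi(V)$ with $\pr(y) = \varepsilon > 0$. Over $\pr^{-1}(]0,1])$ the hybrid space is locally a product of the fibre $X_\varepsilon$ with an interval, up to the flow. Indeed $X_\varepsilon$ is the analytification of $X$ over $\hat k_\varepsilon$, a non-trivially valued complete field.

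Maculan's theorem (theorem \ref{Theorem of Marco Maculan}) gives that $\pi(V_\varepsilon)$ is open in $Y_\varepsilon$. Combined with flow-invariance, this shows that $T(\pi(V_\varepsilon))$ is an open neighbourhood of $y$ contained in $\pi(V)$.

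\textbf{Step 3 (points of the central fibre).} Let $y = \pi(v)$ with $v \in V_0$. Maculan's theorem applied to the trivially valued field $k_0$ shows that $\pi(V_0)$ is open in $Y_0$. The remaining difficulty is to absorb nets $y_i \to y$ with $y_i \in Y_{>0}$.

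I would argue by contradiction. Suppose there is a net $y_i \to y$, all in $S$, with $y_i \notin \pi(V)$. By surjectivity of $\pi$ we can lift each $y_i$ to some $x_i$. The key step is then a lifting statement: the $x_i$ can be chosen so that a subnet converges to a point $x$ of the fibre $\pi^{-1}(y)$ lying in the closed orbit. This is the hybrid analogue of proposition 3.27 and proposition 4.14 of \cite{RoyActionGroupe}.

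To obtain it, I would embed $X$ $G$-equivariantly in an affine space. Closedness of the action means that each fibre of $\pi$ contains a unique closed orbit, and that this orbit is the one of minimal norm in a Kempf--Ness sense. Choosing $x_i$ of minimal norm in $\pi^{-1}(y_i)$ should give points with $N(x_i)$ bounded, because $N(y_i)$ is bounded on $S$. Compactness of the set of points of bounded norm in $X^{hyb}$ then provides a convergent subnet, whose limit maps to $y$.

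Once this is done, $G$-invariance of $V$ finishes the argument. The closed orbit over $y$ lies in the closure of $G^{an}\cdot v$, so by Maculan's description of the fibres of $\pi$ one can translate by elements of $G$ (rigid points after a field extension, as in proposition \ref{Construction of SL2 invariant open}) to move the limit point into $V$. Openness of $V$ then gives that $x_i$ is eventually in $V$, so $y_i \in \pi(V)$ eventually, which is a contradiction.

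\textbf{Main obstacle.} The hard part is exactly this norm-bounded lifting across the central fibre, together with the check that the limit of the lifts lies in the orbit closure of a point of $V$. Here the hypotheses are used in an essential way. Avoiding $\pi^{-1}(Y^\beth)$ is what makes the normalisation $N = e$ possible, and the closed-action hypothesis is what gives uniqueness and minimality of closed orbits.
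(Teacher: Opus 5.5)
The paper does not prove this statement itself (it is quoted from \cite{RoyActionGroupe}, Proposition 4.25), so your proposal must stand on its own. As written it has a genuine gap at the end, and it comes from a misreading of the hypothesis. A \emph{closed} action, in Mumford's sense, is one all of whose orbits are closed. The property you invoke, that each fibre of $\pi$ contains a unique closed orbit, holds for every reductive action and adds nothing. Your inference ``the closed orbit over $y$ lies in $\overline{G^{an}\cdot v}$, so the limit point can be translated into $V$'' is invalid, because an orbit-closure relation never puts a point into an \emph{open} set. For example, take $\mathbb{G}_m$ scaling $\mathbb{A}^1$ with $V=\mathbb{G}_m$ and $v=1$: your minimal-norm lifts are all $0$, and no translate of $0$ lies in $V$. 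What closedness really gives is that geometric fibres of $\pi$ are single orbits. Compare two points of a fibre over a common algebraically closed complete extension of $\mathcal{H}(\pi(x))$, and use $\Omega^{-1}(V)=\pr_2^{-1}(V)$ (which is what $G$-invariance means, $\Omega$ being the action). You then get $\pi^{-1}(\pi(V))=V$. So the limit $x$ of your lifts lies in $V$ simply because $\pi(x)=y\in\pi(V)$; no translation and no density of rigid points is needed.

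Step 1 is also false, though dispensable. Since $Y^\beth\subset Y_0$, the set $Y^{hyb}\setminus Y^\beth$ contains all of $Y_{>0}$, including points with $N\le 1$. Moreover, the flow parameter on $Y_\varepsilon$ is capped at $1/\varepsilon$, so along a trajectory $N$ only takes values in $(1,N(y)^{1/\varepsilon}]$. Trajectories with $N(y)^{1/\varepsilon}<e$ therefore never meet $S$. You only need $N(y_i)$ to be bounded, which already follows from $y_i\to y$. Relatedly, your reading of the $Y^\beth$-hypothesis is off. In the paper's framework it makes $\mathcal{H}(x)$ non-trivially valued for $x\in V_0$, which is what lifting group elements along nets requires (compare the proof of Proposition \ref{Construction of SL2 invariant open in hybrid setting}).

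Finally, the bounded lifting, which you rightly call the heart of the matter, is only asserted. Mere properness would not suffice: the bound must survive raising to the power $\varepsilon_i\to 0$, so it has to be of power type. For $k=\C$ such a bound is available. Embed $X$ as a closed $G$-stable subvariety of a representation with a $K$-invariant Hermitian norm and linear coordinates $h_l$, and let $p_j$ be homogeneous generators of the invariants, of degrees $d_j$. On Kempf--Ness minimal vectors one has $\max_l|h_l(x)|_{\C}\le C\max_j|p_j(x)|_{\C}^{1/d_j}$. This follows from homogeneity, compactness of the unit sphere, and the fact that the only closed orbit in the nullcone is $\{0\}$. Hence the hybrid norm of $x_i$ is at most $C^{\varepsilon_i}\max_j|p_j(y_i)|^{1/d_j}$, which stays bounded. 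For non-archimedean $k$, which the statement allows, the fibres contain non-rigid points. There you would need a non-archimedean (Burnol-type) analogue uniform in $\varepsilon$, which the proposal does not address.

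With these repairs, your route (power-bounded lifts of points, compactness, saturation) is a genuinely different and more direct argument than lifting group elements. Granting Maculan's theorem on the trivially valued fibre, as you do, it uses neither the flowing hypothesis nor the avoidance of $(\pi^{hyb})^{-1}((X/G)^\beth)$.
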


\begin{rem}
By proposition 5.4 of \cite{RoyActionGroupe}, in the case where $X = \Rat_d$ and $G = \SL_2$, then $(\pi^{hyb})^{-1}((X/G)^\beth)$ is the set of rational maps with potential good reduction.
\end{rem}

Finally, we can get a result about the continuous motion on $\Rat_d^{hyb}$ and the continuous motion on $M_d^{hyb}$ as in proposition \ref{Julia moves continuously on Rat_d is the same as in M_d}.

\begin{prop}\label{Julia moves continuously on Rat_d is the same as in M_d in hybrid setting}
Let $\C$ be equipped with the hybrid norm and $X$ be an affine $\SL_2$-invariant subscheme of $\Rat_d$. Let $f \in X^{hyb}$ and let $\pi^{hyb} : X^{hyb} \subset \Rat_d^{hyb}\rightarrow (X/\SL_2)^{hyb}\subset \M_d^{hyb}$. Then,
\begin{align*}
\mathrm{There~is~a~flowing~neighborhood~of~}f& \in X^{hyb} \mathrm{~s.~t.~the~Julia~set~moves~continuously}\\
&\iff\\ 
\mathrm{There~is~a~flowing~neighborhood~of~} \pi^{hyb}(f)& \in (X/\SL_2)^{hyb} \mathrm{~s.~t.~the~Julia~set~moves~continuously.} 
\end{align*}
\end{prop}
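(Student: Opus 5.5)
The plan follows the proof of Proposition \ref{Julia moves continuously on Rat_d is the same as in M_d}. We replace Maculan's openness theorem by Proposition \ref{pi^hyb is open under some conditions}, and Proposition \ref{Construction of SL2 invariant open} by Proposition \ref{Construction of SL2 invariant open in hybrid setting}.

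The implication $\Longleftarrow$ is immediate from Definition \ref{definition of continuous motion in M_d^hyb}. If a flowing neighborhood $\mathcal{U}$ of $\pi^{hyb}(f)$ has the continuity property, then, taking $[f]=\pi^{hyb}(f)$ and $\tilde f = f$, there is a neighborhood $U_f$ of $f$ in $\Rat_d^{hyb}$ on which the Julia set moves continuously. Intersecting with $X^{hyb}$ and applying Lemma \ref{Julia set moves continously on neighborhood is the same as on a flowing neighborhood}, the trajectory $T(U_f)$ is a flowing neighborhood of $f$ on which the Julia set still moves continuously.

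For $\Longrightarrow$, start from a flowing neighborhood $U$ of $f$ on which the Julia set moves continuously. Form $W=\{x\in X^{hyb},\ \exists M\in \SL^{an}_{2,\mathcal{H}(x)},\ M\cdot x\in U\}$. By Proposition \ref{Construction of SL2 invariant open in hybrid setting}, $W$ is open, flowing and $\SL_2$-invariant, and the Julia set moves continuously on it. It remains to show that $\pi^{hyb}(W)$ is open; it then serves as the flowing neighborhood of $\pi^{hyb}(f)$. It is flowing because $\pi^{hyb}$ commutes with the flow and $W$ is flowing. It satisfies Definition \ref{definition of continuous motion in M_d^hyb} because every preimage $\tilde g$ of a point of $\pi^{hyb}(W)$ lies in $W$. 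This last point needs a check: by Theorem \ref{Theorem of Marco Maculan}-type fibre descriptions, two points with the same image have intersecting orbit closures. Since $W$ is open and invariant, an orbit closure meeting $W$ forces the orbit itself to meet $W$, but the converse direction requires closed orbits. The action of $\SL_2$ on $\Rat_d$ has closed orbits (it is proper, the stable locus being all of $\Rat_d$), so the fibres of $\pi^{hyb}$ are single orbits over each residue field and $\tilde g\in W$.

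For openness, apply Proposition \ref{pi^hyb is open under some conditions}. That requires $W$ to avoid $(\pi^{hyb})^{-1}((X/\SL_2)^\beth)$, which by the remark following that proposition is the locus of maps with potential good reduction. The hypothesis implicit in the paper's setting is that $f$ has type I repelling points; then, as in the proof of Proposition \ref{Construction of SL2 invariant open in hybrid setting}, every point of $U$, and hence of $W$ via \cite{RoyActionGroupe} Prop.~4.14, has type I repelling points. On the fibre $X_0$ this excludes potential good reduction, since good reduction gives Julia set a single type II point. On $X_{>0}$ the $\beth$ part does not meet the archimedean fibres at all. So $W\subset X^{hyb}\setminus(\pi^{hyb})^{-1}((X/\SL_2)^\beth)$.

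The main obstacle is exactly this step, verifying that $W$ avoids the potential good reduction locus. In particular one must ensure that $U$ can be shrunk (keeping it flowing) so that no point of $U_0$ has potential good reduction. I would first try to deduce this from continuity of the Julia set itself: a point with good reduction has a Julia set that is a single type II point, which is incompatible with the lower semicontinuity of Lemma \ref{JuliaSetSemiContinuous} against nearby archimedean Julia sets carrying type I repelling points.
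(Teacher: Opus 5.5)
Your outline is the paper's proof. You saturate $U$ into $W$ via Proposition \ref{Construction of SL2 invariant open in hybrid setting}, check that $W$ misses $(\pi^{hyb})^{-1}((X/\SL_2)^\beth)$, get openness of $\pi^{hyb}(W)$ from Proposition \ref{pi^hyb is open under some conditions}, and read off Definition \ref{definition of continuous motion in M_d^hyb}. Your check that every preimage of a point of $\pi^{hyb}(W)$ lies in $W$ (closed orbits, $\Rat_d$ lying in the stable locus) covers a step the paper passes over with ``by definition of $W$''. That part is extra care, not a deviation.

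The problem is the step you flag. The statement has no type I hypothesis at $f$, unlike Proposition \ref{Julia moves continuously on Rat_d is the same as in M_d}. So the absence of potential good reduction must come from continuity itself; the paper simply asserts this at every point of $W_0$.

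Your shrinking idea does reduce the question to the single point $f$. The locus $(\pi^{hyb})^{-1}((X/\SL_2)^\beth)$ is closed and flow-invariant. So once $f$ lies outside it, intersecting $U$ with its complement still gives a flowing neighborhood of $f$. Then $W$ avoids the locus because $\pi^{hyb}(M\cdot x)=\pi^{hyb}(x)$.

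However, your sketched justification uses the wrong inclusion. Lemma \ref{JuliaSetSemiContinuous} only gives $J_g\subset\lim J_{g_i}$, which is fully compatible with $J_g$ being a single non-rigid point. You need the reverse inclusion, i.e.\ the continuity hypothesis, tested against an archimedean net:
\begin{enumerate}
\item Take $g_i\in U_{>0}$ with $g_i\to g$; such nets exist since $X_{>0}$ is dense in $X^{hyb}$.
\item Fix $n$ large enough that each $g_i$ has a repelling point $z_i$ with $g_i^n(z_i)=z_i$. This is possible because a complex map has at most $2d-2$ non-repelling cycles, while $g_i^n$ has $d^n+1$ fixed points.
\item Pass to a subnet $(g_i,z_i)\to(g,w)$. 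Since $C^n$ is closed, $w$ is a type I point of $\p^{1,an}_{\mathcal{H}(g)}$, and continuity forces $w\in J_g$.
\item If $g$ had potential good reduction, $J_g$ would be the image of a single type II point of $\p^{1,an}_{\widehat{\overline{\mathcal{H}(g)}}}$ (Lemma \ref{Taking a field extension does not change the julia set}). It would then contain no type I point, a contradiction.
\end{enumerate}
With this argument in place of the appeal to lower semicontinuity, your proof goes through.
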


\begin{rem}
As in proposition \ref{Julia moves continuously on Rat_d is the same as in M_d}, here one only looks at algebraic family of rational maps, but it really is needeed since, the goal will be to study the motion of the Julia set in $\Rat_d^\urcorner$ and $\M_d^\urcorner$ and the hybrid compactification is only defined for a variety over a field $k$. 
\end{rem}

\begin{proof}
$\Longleftarrow$ It is clear.

$\implies$ Let $U$ be a flowing neighborhood of $f \in X^{hyb}$ such that the Julia set moves continuously. We may consider $W$ as in proposition \ref{Construction of SL2 invariant open in hybrid setting} which is $\SL_2$-invariant and flowing. Since the Julia set moves continuously on $W$, for each $f \in W_0$, $f$ has repelling type I points and so $f$ does not have potential good reduction and so $W \subset X^{hyb} \backslash (\pi^{hyb})^{-1}((X/G)^\beth)$. Hence, we may apply the proposition \ref{pi^hyb is open under some conditions} to show that $\pi^{hyb}(W)$ is a neighborhood of $\pi^{hyb}(f)$ and the Julia set moves continuously on $\pi^{hyb}(W)$ by definition of $W$.
\end{proof}

And we also want to compare the continuity on a neighborhood of $f$ in the compactification, $\Rat_d^\urcorner$, of $\Rat_d$ and the continuity on a neighborhood on the image of $f$ in the compactification of $\M_d : \M_d^\urcorner$.

Let us recall the homeomorphism between the quotient of $\Rat_d^\urcorner$ and the compactification og $\M_d^\urcorner$.

\begin{thm}\label{homeomorphism between compactifications} (Théorème 4.23, \cite{RoyActionGroupe})

Let $\C$ be equipped with the hybrid norm and $X$ be an affine $\SL_2$-invariant subscheme of $\Rat_d$. The canonical morphism 
\begin{center}
$\omega : (X^\urcorner \backslash \lbrace f~$have potential good reduction$\rbrace \rightarrow (X/\SL_2)^\urcorner$
\end{center}
is a homemorphism.
\end{thm}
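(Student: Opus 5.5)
The plan is to construct $\omega$ directly from the analytified quotient map $\pi^{hyb} : X^{hyb} \rightarrow (X/\SL_2)^{hyb}$, to pass it to the two flow quotients, and then to check separately that the induced map is well defined, continuous, bijective (on $\SL_2$-classes) and open. I read the statement, as the sentence preceding it says, as an identification of the quotient of $X^\urcorner$ (minus the maps with potential good reduction) by the $\SL_2$-action with $(X/\SL_2)^\urcorner$.

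\textbf{Step 1: restricting and descending $\pi^{hyb}$.} Set $V := X^{hyb} \backslash (\pi^{hyb})^{-1}((X/\SL_2)^\beth)$. By the remark following proposition \ref{pi^hyb is open under some conditions} (proposition 5.4 of \cite{RoyActionGroupe}), $(\pi^{hyb})^{-1}((X/\SL_2)^\beth)$ is exactly the locus of maps with potential good reduction. Hence $V$ is the complement of that locus, and $q(V)$ is the source of $\omega$.
\begin{itemize}
\item Since $X^\beth \subset (\pi^{hyb})^{-1}((X/\SL_2)^\beth)$, the set $V$ lies in $X^+$, and $\pi^{hyb}(V)$ lies in $(X/\SL_2)^+$.
\item $\pi^{hyb}$ is the analytification of a morphism defined over $\C$, so it commutes with the flow: $\pi^{hyb}(x^\alpha) = \pi^{hyb}(x)^\alpha$. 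It therefore sends trajectories to trajectories, and $V$ is flowing.
\item It is also $\SL_2$-invariant.
\end{itemize}
Consequently $\pi^{hyb}|_V$ descends to a map $\omega : q(V)/\SL_2 \rightarrow (X/\SL_2)^\urcorner$. Continuity of $\omega$ follows from the universal property of the quotient topologies.

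\textbf{Step 2: bijectivity.} I would argue fibre by fibre over $\mathcal{M}(\C_{hyb}) = [0,1]$.
\begin{itemize}
\item \emph{Surjectivity.} On each fibre, $\pi^{hyb}$ is the analytification over $\hat{\C}_\varepsilon$, which is surjective by theorem \ref{Theorem of Marco Maculan}. A point of $(X/\SL_2)^+$ outside $(X/\SL_2)^\beth$ then lifts to a point of $V$.
\item \emph{Injectivity.} Suppose $\pi^{hyb}(x)$ and $\pi^{hyb}(y)$ lie on the same trajectory. After applying the flow to one of them, I may assume $\pi^{hyb}(x)=\pi^{hyb}(y)$ and $\pr(x)=\pr(y)$. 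Theorem \ref{Theorem of Marco Maculan} then gives $\overline{\SL_2^{an}\cdot x}\cap\overline{\SL_2^{an}\cdot y}\neq\emptyset$.
\end{itemize}
To conclude injectivity I need that, away from potential good reduction, orbits are closed, so that $x$ and $y$ lie in the same orbit. This is the role of the hypothesis that the action is closed, as in proposition \ref{pi^hyb is open under some conditions}, combined with the stability of maps without potential good reduction (proposition 4.14 of \cite{RoyActionGroupe}). Here the orbit must be taken over the field extension $\mathcal{H}(x)$, that is, through $\SL_{2,\mathcal{H}(x)}^{an}$.

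\textbf{Step 3: openness, the expected main obstacle.} Given an $\SL_2$-invariant open subset of $q(V)$, its preimage $W$ in $V$ is flowing, $\SL_2$-invariant and open. Proposition \ref{pi^hyb is open under some conditions} then shows that $\pi^{hyb}(W)$ is open in $(X/\SL_2)^{hyb}$. This image is flowing, so its image in $(X/\SL_2)^\urcorner$ is open. Hence $\omega$ is open, and a continuous open bijection is a homeomorphism.

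I expect the delicate points to be the following. First, making the injectivity of Step 2 rigorous at non-rigid points of the trivially valued fibre $X_0$, where $\mathcal{H}(x)$ varies and one must use the orbits of $\SL_{2,\mathcal{H}(x)}^{an}$. Second, making sure that the saturation under the action and the saturation under the flow commute, so that the quotient by $\SL_2$ and the quotient by $\Phi$ can be taken in either order. If the orbit-closure argument fails directly, I would fall back on the approach of proposition \ref{Construction of SL2 invariant open}: approximate points by rigid points over $\widehat{\overline{\mathcal{H}(x)}}$ and use proposition 3.27 of \cite{RoyActionGroupe} to lift convergent nets of group elements.
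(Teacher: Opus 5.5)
The paper gives no proof of this statement; it imports it from \cite{RoyActionGroupe}, so your argument has to stand on its own. Its architecture is reasonable: restrict $\pi^{hyb}$ to $V$, descend through the flow and the action, then check continuity, bijectivity and openness. Step~1 and the surjectivity half of Step~2 are fine. The genuine gap is injectivity. You reduce it to ``orbits are closed away from potential good reduction''. You credit this to the closed-action hypothesis together with a stability property of maps without potential good reduction (Proposition~4.14 of the source). That is not the mechanism, because potential good reduction has nothing to do with orbit closure. Every point of $\Rat_d$ is GIT-stable for $\SL_2$ (closed orbit, finite stabiliser), so $X\to X/\SL_2$ is a geometric quotient on all of $X$. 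The locus $\mathcal{B}$ is removed only because $\pi^{hyb}$ sends it into $(X/\SL_2)^\beth$, i.e.\ outside $(X/\SL_2)^+$, so that $\omega$ is defined at all. In this paper, Proposition~4.14 is used only to transfer the absence of potential good reduction from $M\cdot x$ to $x$. Moreover, even granting closed scheme-theoretic orbits, the orbit-closure criterion of Theorem~\ref{Theorem of Marco Maculan} is about Berkovich orbits. You never show these are closed at non-rigid points of $X_0$; you only flag it as delicate.

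The missing step is that, on each fibre $X_\varepsilon$, Berkovich orbits are exactly the fibres of $\pi^{an}$. Suppose $\pi^{an}(x)=\pi^{an}(y)=w$.
\begin{itemize}
\item Choose a complete field $L$ containing $\mathcal{H}(x)$ and $\mathcal{H}(y)$ over $\mathcal{H}(w)$, namely a point of $\mathcal{M}(\mathcal{H}(x)\hat{\otimes}_{\mathcal{H}(w)}\mathcal{H}(y))$. For $\varepsilon>0$ everything is already over $\C$.
\item Then $x$ and $y$ are $L$-points of $X$ with the same image in $X/\SL_2$. Since the quotient is geometric, there is $g\in\SL_2(\widehat{\overline{L}})$ with $g\cdot x=y$.
\item The pair $(g,x)$ defines a point of $\SL^{an}_{2,\mathcal{H}(x)}$ carrying $x$ to $y$.
\end{itemize}
This gives injectivity without Maculan's criterion and shows that ``same orbit'' is an equivalence relation. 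Since $\pi^{hyb}(x^\alpha)=\pi^{hyb}(x)^\alpha$, it also shows that this relation commutes with the flow, which settles your second delicate point. Your rigid-approximation fallback via Proposition~3.27 would not help here: it lifts convergent nets of group elements rather than identifying fibres.

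Finally, Step~3 is a valid deduction but not a proof of the hard part. For flowing, $\SL_2$-saturated open subsets of $V$, Proposition~\ref{pi^hyb is open under some conditions} is literally the openness of $\omega$. That proposition is numbered after Théorème~4.23 in the source. You must therefore check that it is proved independently there, or else prove the cross-fibre openness of $\pi^{hyb}$ near $X_0$ yourself; otherwise this step is circular.
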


\begin{cor}\label{continuously on compactification of Ratd is the same as on compactification of Md}
Let $\C$ be equipped with the hybrid norm and $X$ be an affine $\SL_2$-invariant subscheme of $\Rat_d$. Let $f\in X^\urcorner$ and $\omega : (X^\urcorner \backslash \mathcal{B})/\SL_2 \rightarrow (X/\SL_2)^\urcorner$ the homeomorphism of theorem \ref{homeomorphism between compactifications}. Then,
\begin{center}
There is a neighborhood of $f \in X^\urcorner$  such that the Julia set moves continuously
\end{center}
\begin{center}
$\iff$
\end{center}
\begin{center}
There is a neighborhood of $\omega(f) \in (X/\SL_2)^\urcorner$  such that the Julia set moves continuously.
\end{center}
\end{cor}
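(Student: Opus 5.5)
The plan is to reduce the statement to Proposition \ref{Julia moves continuously on Rat_d is the same as in M_d in hybrid setting}, using the topology of the compactifications. Recall that a subset of $X^\urcorner$ is open exactly when its preimage under $q : X^+ \rightarrow X^\urcorner$ is an open flowing subset of $X^+$; the same holds for $(X/\SL_2)^\urcorner$. By Lemma \ref{Julia set is invariant through the flow}, continuity of the Julia set is compatible with the flow. So ``the Julia set moves continuously on a neighborhood of $f$ in $X^\urcorner$'' means: there is a flowing open $U \subset X^+$, containing a lift $\tilde f$ of $f$, on which the Julia set moves continuously. By Lemma \ref{Julia set moves continously on neighborhood is the same as on a flowing neighborhood}, this is equivalent to the Julia set moving continuously on some (not necessarily flowing) neighborhood of $\tilde f$. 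I will state this first, so that both sides of the corollary become statements about flowing neighborhoods in $X^{hyb}$ and $(X/\SL_2)^{hyb}$.

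For $\Rightarrow$, take such a flowing $U \ni \tilde f$. If $f \in \delta X$, the repelling points of type I on $U_0$ guarantee, as in the proof of Proposition \ref{Julia moves continuously on Rat_d is the same as in M_d in hybrid setting}, that no point of $U$ has potential good reduction. If $f \in X^{an}$, I will shrink $U$ inside $X_{>0}$, which avoids $(\pi^{hyb})^{-1}((X/\SL_2)^\beth)$ anyway. Next, I form the $\SL_2$-saturation $W$ of $U$ given by Proposition \ref{Construction of SL2 invariant open in hybrid setting}. It is open, flowing and $\SL_2$-invariant, the Julia set moves continuously on it, and it lies in $X^{hyb}\backslash (\pi^{hyb})^{-1}((X/\SL_2)^\beth)$. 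Proposition \ref{pi^hyb is open under some conditions} then makes $\pi^{hyb}(W)$ open. Since $\pi^{hyb}$ commutes with the flow, $\pi^{hyb}(W)$ is flowing and, after intersecting with $(X/\SL_2)^+$, its image in $(X/\SL_2)^\urcorner$ is open. This image is exactly $\omega(q(W))$, which contains $\omega(f)$. By definition of $W$ and Definition \ref{definition of continuous motion in M_d^hyb}, the Julia set moves continuously there.

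For $\Leftarrow$, take an open $O \ni \omega(f)$ in $(X/\SL_2)^\urcorner$ on which the Julia set moves continuously. Its preimage in $(X/\SL_2)^+$ is open and flowing. Pulling it back by $\pi^{hyb}$ gives an open, flowing, $\SL_2$-invariant set $V \subset X^{hyb}$ containing $\tilde f$. Definition \ref{definition of continuous motion in M_d^hyb} then gives, for every point of $V$, a neighborhood on which the Julia set moves continuously. Continuity is a local property at each base point, so the Julia set moves continuously on all of $V$. The image $q(V \cap X^+)$ is then an open neighborhood of $f$ in $X^\urcorner$ with the required property.

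The main obstacle is the bookkeeping between $X^+$ and $X^{hyb}$ and the role of $\omega$. Concretely, one must check that $\pi^{hyb}$ sends $X^+$ minus the potential good reduction locus into $(X/\SL_2)^+$, and that the open set produced on the $\M_d$ side really corresponds under $\omega$ to the saturated open on the $\Rat_d$ side. I would handle this by using that $\omega$ is a homeomorphism (Theorem \ref{homeomorphism between compactifications}) and that $q$ and the analogous quotient map on $(X/\SL_2)^+$ commute with $\pi^{hyb}$. Once that is in place, openness is transported through $\omega$ and no further argument is needed.
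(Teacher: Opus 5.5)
Your proposal is correct and takes essentially the same approach as the paper. The paper gives no separate proof of this corollary: it follows from Proposition~\ref{Julia moves continuously on Rat_d is the same as in M_d in hybrid setting} (via the $\SL_2$-invariant flowing open set $W$ of Proposition~\ref{Construction of SL2 invariant open in hybrid setting} and the openness result of Proposition~\ref{pi^hyb is open under some conditions}), combined with the homeomorphism $\omega$ of Theorem~\ref{homeomorphism between compactifications} and the quotient topologies on $X^\urcorner$ and $(X/\SL_2)^\urcorner$; you simply make explicit the translation between neighborhoods in the compactifications and flowing open subsets of $X^+$ and $(X/\SL_2)^+$ that the paper leaves implicit.
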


\bigskip

Let us look at the continuity over the hyperbolic connected components of $\M_d(\C)$. Let us recall that a conjugacy class of rational map $[f]$ is called hyperbolic if the orbit of every critical point converges to some attracting periodic cycle. The space $\mathcal{H}$ of hyperbolic rational maps is open in $\M_d(\C)$ and a connected component is called a hyperbolic connected component.

Given this definition, it means that the Julia moves continuously (and holomorphically) on each of the hyperbolic connected components by \cite{MSS}.

First let us recall the notion of blowing-up of multipliers as used by Favre \cite{FavreBlowupMultipliers} and Gong \cite{GongMultiplierScale}.

\begin{defi}
Let $f_i$ be a net of degenerating rational map and $z_i$ repelling points of period $l$. Then, we say that the multipliers blow-up if 
\begin{center}
$\liminf \frac{\log |(f_i^l)'(z_i)|}{\log |\res f_i|} >0$.
\end{center}
\end{defi}

\begin{rem}
In the setting of hybrid spaces, the previous condition is equivalent to asking that if $\varepsilon_i \rightarrow 0$ is such that $f$ is a cluster point of $f_i^{\varepsilon_i}$ in $\Rat_d^{hyb}$ and $\Phi(f)$ is a cluster point of $f_i$ in $\Rat_d^\urcorner$, then
\begin{center}
$\liminf |(f_i^l)'(z_i)|^{\varepsilon_i} >1.$
\end{center}
It comes from the fact that since for all $a_j, b_j, |\frac{a_j^{2d}(f_i)}{\res(f_i)}|^{\varepsilon_i} \rightarrow |\frac{a_j^{2d}(f)}{\res(f)}|$. For $\max(|a_j(f_i)|,| b_j(f_i)|) = 1$, we find $\varepsilon_i \approx \frac{1}{\log|\res f_i|}$.
\end{rem}

We will now look at a condition for a degenerating net $\tilde{f}_i$ to have a Julia set which degenerates continuously. To hope for continuity it is necessary that all of the multipliers of the repelling points blow-up. If it was not the case then, we would have repelling points $z_i$ of $f_i$ of same period $l$ which gives a periodic point $z$ of $f$ of period $l$ and if the multipliers do not blow-up, then $z$ would be an indifferent periodic point and so not in the Julia set which contradicts the continuity of the Julia set.

It is also sufficient if we are in good hyperbolic connected components. One requires that there is no indifferent point in the Julia set of the hyperbolic component, otherwise one cannot have a continuous degeneration of the Julia set since there are no indifferent points in the Julia set in the non-archimedean case.

\begin{thm}\label{InGoodHyperbolicComponentsJuliaSetContinuousIFFMulitpliersBlowUp}
Let $U$ be a hyperbolic connected component of $\M_d(\C)$ such that the Julia sets do not have indifferent periodic point. Let $\tilde{f}_i$ be a degenerating net in $U$.

Then the Julia set of $\tilde{f}_i$ degenerates continuously if and only if all of the multipliers of its repelling points blow-up.
\end{thm}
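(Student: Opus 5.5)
The plan is to work in $\Rat_d^{hyb}$ and pass to $\Rat_d^\urcorner$ and $\M_d^\urcorner$ via Corollary \ref{continuously on compactification of Ratd is the same as on compactification of Md} and Lemma \ref{Julia set is invariant through the flow}. Choose representatives $f_i$ of $\tilde f_i$ and scalings $\varepsilon_i \rightarrow 0$ so that $f_i^{\varepsilon_i}$ accumulates at a point $f \in (\Rat_d)_0$, as in the remark after the definition of blow-up. "The Julia set degenerates continuously" then means that
\[
\mathrm{Closure}(\lbrace (f_i^{\varepsilon_i}, z), z\in J_{f_i^{\varepsilon_i}}\rbrace)\cap \pr^{-1}(f) = \lbrace (f,z), z\in J_f\rbrace
\]
for every such cluster point $f$. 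By passing to subnets it suffices to treat a net converging to a single $f$.

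\textbf{Necessity.} This is the argument sketched just before the statement. Suppose some multipliers fail to blow up. Then there are repelling points $z_i$ of $f_i$ of fixed period $l$ with $\liminf |(f_i^l)'(z_i)|^{\varepsilon_i} \leq 1$. Passing to a subnet, $(f_i^{\varepsilon_i}, z_i^{\varepsilon_i})$ converges inside the flowing closed set $C^l$ to a type I periodic point $(f,z)$. By continuity of $|(f^l)'|$ we have $|(f^l)'(z)|\le 1$, while lower semicontinuity gives $|(f^l)'(z)| \ge 1$. Hence $z$ is indifferent, and so lies in the Fatou set of $f$. But $z$ is a limit of Julia points, which contradicts continuity.

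\textbf{Sufficiency.} The inclusion $J_f\subset$ limit is Lemma \ref{JuliaSetSemiContinuous}, so only the reverse inclusion needs work. I would show that, under blow-up, $f$ has no indifferent points at all. Fix a cycle of $f$ of period $n$. Because the fibres $C^n_V \to X$ are finite and open, every type I periodic point $(f,z)$ is a limit of periodic points $(f_i^{\varepsilon_i}, z_i)$ of period dividing $n$. Each such $z_i$ is either one of the finitely many attracting points (hyperbolicity, at most $2d-2$ cycles) or repelling.
\begin{itemize}
\item If the $z_i$ are repelling, blow-up gives $|(f^n)'(z)| >1$.
\item If the $z_i$ are attracting, then $z$ is attracting or indifferent.
\end{itemize}
The hypothesis that the Julia sets in $U$ have no indifferent points, together with the fact that $U$ is hyperbolic, should exclude persistent neutral limits. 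More precisely, I would argue via Remark \ref{If infinitely many non repelling then no continuous motion} and Proposition \ref{2 attracting points implies infinitely many}: $f$ has at most $2d-2$ non-repelling cycles, all arising as limits of the attracting cycles of $f_i$. If one of them were merely indifferent, blow-up of the nearby repelling multipliers together with the maximum modulus principle on the flowing component of $C^n$ would be violated.

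\textbf{Conclusion and main obstacle.} Once $f$ is known to have no indifferent points, Proposition \ref{if no indifferent point, then julia set moves continuously in the hybrid setting} gives the continuity at $f$, completing the proof. The step I expect to be the main obstacle is exactly this exclusion of indifferent points of $f$ that come from attracting cycles of $f_i$, i.e. the phenomenon flagged in the remark after Theorem \ref{Julia set moves continuously hybrid setting}. To handle it, I would first try to show that an attracting cycle of $f_i$ degenerating to an indifferent one forces, by Proposition \ref{2 attracting points implies infinitely many}, a repelling type II point of $f$. That in turn would give infinitely many non-repelling cycles of $f$, which cannot all be limits of the finitely many attracting cycles of the $f_i$. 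Hence some of them would be limits of repelling points of $f_i$ whose multipliers do not blow up, contradicting the hypothesis.
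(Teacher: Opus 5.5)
Your necessity argument is correct and is essentially the one the paper sketches just before the statement.

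\textbf{The gap is in sufficiency.} You try to show that the limit map $f$ has no indifferent cycles at all, so that you can invoke Proposition \ref{if no indifferent point, then julia set moves continuously in the hybrid setting}. That claim is never established, and the tools you name do not reach it.

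First, any attracting cycle of $f_i$ whose multiplier satisfies $\varepsilon_i\log|\lambda_i|\to 0$ becomes an indifferent cycle of $f$, since $|(f^n)'(z)|=\lim|\lambda_i|^{\varepsilon_i}=1$. This happens, for instance, when the multiplier is kept fixed inside the hyperbolic component. If it happens to every attracting cycle, then $f$ has no attracting cycle at all. In that case Proposition \ref{2 attracting points implies infinitely many} says nothing, because it needs an attracting cycle of $f$ together with another non-repelling one. This configuration, finitely many indifferent cycles and no attracting one, is exactly the case left open in Remark \ref{If infinitely many non repelling then no continuous motion}. The paper itself says, in the remark following Theorem \ref{Julia set moves continuously hybrid setting}, that it cannot rule out indifferent points of $f$ coming from attracting ones.

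Second, your appeal to the maximum modulus principle on a flowing component of $C^n$ fails. The principle holds fibre by fibre, but nothing prevents $|(f^n)'|$ from being $<1$ on every fibre $\varepsilon>0$ and $=1$ over $0$. This is precisely the hybrid phenomenon the paper stresses.

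\textbf{The missing idea is that you do not need to exclude such points.} The paper derives sufficiency from Theorem \ref{Julia set moves continuously hybrid setting}.
\begin{itemize}
\item Its condition 1, continuity on $U_{>0}$, holds on the hyperbolic component by Ma\~n\'e--Sad--Sullivan.
\item Its condition 2 only asks that indifferent points of $f$ be hybridly persistently non-repelling. That means they are limits of attracting cycles of the $f_i$ and never of repelling ones.
\end{itemize}
Since the $f_i$ are hyperbolic, blow-up gives condition 2 directly. It is just your own first bullet: $|(f^n)'(z)|=\lim|(f_i^n)'(z_i)|^{\varepsilon_i}>1$ whenever the $z_i$ are repelling.

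Concretely, the component argument in the proof of Proposition \ref{if no indifferent point, then julia set moves continuously in the hybrid setting} only uses one fact. The component of $C^{n}$ through a repelling $(f_i,z_i)$ must meet the fibre over $f$ at a repelling point. Blow-up supplies this whether or not $f$ has indifferent cycles coming from attracting ones.
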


\begin{proof}
By corollary \ref{continuously on compactification of Ratd is the same as on compactification of Md} one only needs to show that the result stands for any $f_i \in \omega^{-1}(\tilde{f_i})$.

Let $f\in \delta \Rat_d$ be a cluster point of $f_i$. We may assume that $f_i \rightarrow f$. One needs to show that $J(f_i) \rightarrow J(f) \iff$ any multiplier of repelling point blows up.

Let $z_i$ be a repelling point of period $l$ of each $f_i$ and let $C^l$ denote the closed analytic subspace of $\Rat_d^{hyb} \times \p^{1,hyb}$ which characterizes the periodic point of period $l$. We may assume that $z_i$ are all in the same connected component of $C^l$. Let $z \in C^l\cap \pr^{-1}(f)$ a cluster point of the $z_i$. We may assume that $(f_i, z_i) \rightarrow (f,z)$ and so, there exists $\varepsilon_i$ such that $(f_i, z_i)^\epsilon_i \rightarrow (f,z)$ in $(\Rat_d \times \p^1)^{hyb}$.

Then $z$ is a repelling point of period $l$ if and only if $|(f^l)'(z)| = \lim |(f_i^l)'(z_i)|^{\varepsilon_i} >1$ if and only if the multipliers of the repelling points $z_i$ blow-up.

Since the $f_i$ do not have indifferent point in their Julia set, then by theorem \ref{Julia set moves continuously hybrid setting}:
\begin{align*}
J(f_i) \rightarrow J(f) &\iff \mathrm{~all~indifferent~point~of~}f\mathrm{~come~from~non~repelling~points~of~}f_i\\
&\iff \mathrm{~no~indifferent~point~of~}f\mathrm{~comes~from~repelling~points~of~}f_i\\
&\iff \mathrm{~all~multipliers~blow-up.}
\end{align*}
\end{proof}

\begin{rem}
During this section, we worked with net instead of sequences because it gives a more general setting and because the hybrid spaces over $\C$ are not known to be angelic, nevertheless it is possible to only work with sequences and not losing any generality. To do this, instead of considering hybrid spaces as spaces over $\C$ with the hybrid norm, one can consider $\Q[i]$ equipped with its archimedean place and then equip it with a hybrid norm. Hence, theses hybrid spaces are angelic and all the results presented remain true. Instead of seing $\Rat_d$ as a $\C$ variety, one sees it as a $\Q[i]$ variety. The drawback of this is that $\Rat_{d, \Q[i]}^\urcorner$ does not have the same boundary as $\Rat_{d, \C}^\urcorner$. They both have $\Rat_d(\C)$ as a open dense subset but the boundary of $\Rat_{d, \Q[i]}^\urcorner$ is smaller : we have a surjective projection $\delta \Rat_{d, \C}^\urcorner \rightarrow \delta \Rat_{d, \Q[i]}^\urcorner$. Hence some nets share the same limit on $\Rat_{d, \Q[i]}^\urcorner$ but not on $\Rat_{d, \C}^\urcorner$.
\end{rem}

\bibliographystyle{alpha}
\bibliography{biblio_these}
\end{document}